\documentclass[a4paper, 11pt, leqno]{amsart}
\pdfoutput=1 
 
\usepackage{lmodern}
\usepackage[T1]{fontenc}
\usepackage[pdftex]{hyperref}
\usepackage{amsmath,amsthm, amssymb, mathrsfs} 
\usepackage[inline]{enumitem}

\setlength{\textwidth}{\paperwidth}
\addtolength{\textwidth}{-2.4in}
\setlength{\textheight}{\paperheight}
\addtolength{\textheight}{-2.8in}
\setlength{\headsep}{20pt}
\calclayout

\allowdisplaybreaks[3]

\title[Spherical functions for small $K$-types]
{\boldmath Spherical functions for small $K$-types}

\keywords{small $K$-types, spherical functions, hypergeometric functions}
\subjclass{22E45, 33C67, 43A90}

\author{Hiroshi Oda}
\address{Faculty of Engineering, Takushoku University,
815-1 Tatemachi, Hachioji, Tokyo 193-0985, Japan}
\email{hoda@la.takushoku-u.ac.jp}
\thanks{The first author was supported by JSPS KAKENHI Grant Number 18K03346}
\author{Nobukazu Shimeno}
\address{School of Science \& Technology, Kwansei Gakuin University, 
2-1 Gakuen, Sanda, Hyogo 669-1337, Japan}
\email{shimeno@kwansei.ac.jp}

\newcommand{\bbN}{{\mathbb N}}
\newcommand{\bbZ}{{\mathbb Z}}
\newcommand{\bbQ}{{\mathbb Q}}
\newcommand{\bbR}{{\mathbb R}}
\newcommand{\bbC}{{\mathbb C}}
\newcommand{\bbH}{{\mathbb H}}

\newcommand{\bsm}{\boldsymbol m}
\newcommand{\bsk}{\boldsymbol k}

\newcommand{\bska}{\boldsymbol \kappa}

\newcommand{\spt}{{\mathrm{split}}}
\newcommand{\real}{{\mathrm{real}}}
\newcommand{\cpt}{{\mathrm c}}
\newcommand{\reg}{{\mathrm{reg}}}
\newcommand{\alg}{{\mathrm{alg}}}

\DeclareMathOperator{\Ad}{Ad}
\DeclareMathOperator{\ad}{ad}
\DeclareMathOperator{\pr}{pr}
\DeclareMathOperator{\End}{End}
\DeclareMathOperator{\Mat}{Mat}
\DeclareMathOperator{\Ker}{Ker}
\DeclareMathOperator{\Lie}{Lie}

\DeclareMathOperator{\diag}{diag}
\DeclareMathOperator{\SL}{SL}

\DeclareMathOperator{\SO}{SO}
\DeclareMathOperator{\Sp}{Sp}
\DeclareMathOperator{\SU}{SU}
\DeclareMathOperator{\Spin}{Spin}
\DeclareMathOperator{\Hom}{Hom}
\DeclareMathOperator{\id}{id}
\DeclareMathOperator{\trace}{Tr}

\DeclareMathOperator{\lRad}{l-Rad}
\DeclareMathOperator{\rRad}{r-Rad}
\DeclareMathOperator{\ord}{ord}
\DeclareMathOperator{\sgn}{sgn}

\newcommand{\simarrow}{\xrightarrow{\smash[b]{\lower 0.7ex\hbox{$\sim$}}}}
\newcommand{\trans}{{}^{\mathrm t}}

\numberwithin{equation}{section}
\theoremstyle{plain}
 \newtheorem{thm}{Theorem}[section]
 \newtheorem{cor}[thm]{Corollary}
 \newtheorem{lem}[thm]{Lemma}
 \newtheorem{prop}[thm]{Proposition}
 
\theoremstyle{definition}
 \newtheorem{defn}[thm]{Definition}

\theoremstyle{remark}
 \newtheorem{rem}[thm]{Remark}

\begin{document}
\maketitle
\begin{abstract}
For a connected semisimple real Lie group $G$ of non-compact type,
Wallach introduced a class of $K$-types called \emph{small}.
We classify all small $K$-types for all simple Lie groups
and prove except just one case
that each \emph{elementary spherical function} for each small $K$-type $(\pi,V)$
can be expressed as a product of hyperbolic cosines and a \emph{Heckman-Opdam hypergeometric function}.
As an application, the inversion formula for the spherical transform
on $G\times_K V$ is obtained from Opdam's theory on \emph{hypergeometric Fourier transform}s.
\end{abstract}

\section{Introduction}\label{sec:intro}
Let $G$ be a connected real semisimple Lie group with finite center.
Let $G=KAN$ be an Iwasawa decomposition of $G$.
$K$-bi-invariant $C^\infty$ functions on $G$ are called \emph{spherical function}s and
are important in the analysis of functions on the Riemannian symmetric space $G/K$.
In particular, a key role is played by those spherical functions that are \emph{elementary}.
Here a spherical function $\phi$ is called elementary if it is non-zero and satisfies
the functional equation
\begin{equation*}
\int_K\phi(xky)dk=\phi(y)\phi(x)
\qquad
\text{for any }x,y\in G
\end{equation*}
($dk$ is the normalized Haar measure on $K$),
or equivalently, if it takes $1$ at $1_G\in G$ and is a joint eigenfunction of the algebra $\boldsymbol D$ of the invariant differential operators on $G/K$ (cf.~\cite{HC1, Hel2}).
It is well known that the \emph{spherical transform} (also called the \emph{Harish-Chandra transform}) defined by elementary spherical functions essentially gives
the irreducible decomposition of $L^2(G/K)$.

Now suppose $(\pi,V)$ is any irreducible unitary representation of $K$ (a \emph{$K$-type}\/ for short).
When we consider the analysis of sections of the vector bundle $G\times_K V$
in a parallel way to the case of $G/K$ (which corresponds to the trivial $K$-type),
there naturally appears a notion of elementary spherical functions for $(\pi,V)$.
Unfortunately the general theory for such functions, which has been developed by \cite{Godement,War,Tirao,GV} and others,
has some inevitable complexity.
But it can be considerably reduced when the algebra $\boldsymbol D^\pi$ of the invariant differential operators on $G\times_KV$ is commutative (cf.~\cite{Camporesi2}).
We know from \cite[Theorem 3]{Deit} that $\boldsymbol D^\pi$ is commutative
if and only if $V$ decomposes multiplicity-freely as an $M$-module
($M$ is the centralizer of $A$ in $K$).
In what follows we assume that $(\pi,V)$ satisfies this condition.

\begin{defn}\label{defn:pi-sph}
An $\End_\bbC V$-valued $C^\infty$ function $\phi$ on $G$ is called \emph{$\pi$-spherical}
if
\begin{equation}\label{K-biinvariance}
\phi(k_1gk_2)=\pi(k_2^{-1})\phi(g)\pi(k_1^{-1})
\quad
\text{for any }g\in G\text{ and }k_1,k_2\in K.
\end{equation}
The space of $\pi$-spherical functions is denoted by $C^\infty(G,\pi,\pi)$.
A $\pi$-spherical function $\phi$ is called elementary when it is non-zero and satisfies
\begin{equation}\label{functional_eq}
\int_K \phi(xky)\trace{\pi(k)}dk=\frac1{\dim V}\phi(y)\phi(x)
\quad
\text{for any }x,y\in G.
\end{equation}
\end{defn}
As we see in \S\ref{subsec:Dpi} in detail,
$\boldsymbol D^\pi$ naturally acts on $C^\infty(G,\pi,\pi)$.
\begin{thm}[{\cite[Theorem 3.8]{Camporesi2}}]\label{prop:elementarity}
A given $\phi\in C^\infty(G,\pi,\pi)$ is elementary
if and only if it takes $\id_V$ at $1_G$ and is a joint eigenfunction of $\boldsymbol D^\pi$.
\end{thm}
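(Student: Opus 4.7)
The plan is to prove the two implications of the theorem separately. For the direction \emph{elementary implies $\phi(1)=\id_V$ and joint eigenfunction}, I would first specialize \eqref{functional_eq} at $x=1$ and at $y=1$. Using the equivariance $\phi(ky)=\phi(y)\pi(k^{-1})$ and $\phi(xk)=\pi(k^{-1})\phi(x)$ together with the Schur orthogonality
\[
\int_K \pi(k^{-1})\trace\pi(k)\,dk = \tfrac{1}{\dim V}\,\id_V,
\]
the integrals on the left collapse to $\tfrac{1}{\dim V}\phi(y)$ and $\tfrac{1}{\dim V}\phi(x)$, yielding the identities $\phi(y)\phi(1)=\phi(y)$ and $\phi(1)\phi(x)=\phi(x)$. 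The equivariance applied to $1 = k\cdot 1\cdot k^{-1}$ shows that $\phi(1)$ commutes with every $\pi(k)$, so Schur's lemma forces $\phi(1)=c\,\id_V$, and non-vanishing of $\phi$ pins $c=1$.

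For the eigenfunction property I would use that each $D\in\boldsymbol D^\pi$ is realized as a left-$G$-invariant differential operator preserving $C^\infty(G,\pi,\pi)$, whence the identity $D_y[\phi(xky)]=(D\phi)(xky)$ (obtained by writing $y\mapsto\phi(xky)$ as $L_{(xk)^{-1}}\phi$ and using left-invariance of $D$). Applying $D$ in the $y$-variable of \eqref{functional_eq} and then setting $y=1$, the same Schur collapse produces $(D\phi)(x)=(D\phi)(1)\phi(x)$; a second Schur argument applied to $(D\phi)(1)$, which again commutes with $\pi(K)$, gives $(D\phi)(1)=\chi(D)\,\id_V$ for a scalar $\chi(D)$, so $D\phi=\chi(D)\phi$ as required.

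For the converse direction, assume $\phi(1)=\id_V$ and $D\phi=\chi(D)\phi$ for every $D\in\boldsymbol D^\pi$. I would introduce the $\End_\bbC V$-valued discrepancy
\[
\Psi(x,y):=\int_K\phi(xky)\trace\pi(k)\,dk \;-\; \tfrac{1}{\dim V}\phi(y)\phi(x),
\]
and aim to show $\Psi\equiv 0$. Direct manipulations from the hypotheses give $\Psi(x,1)=\Psi(1,y)=0$, the equivariance $\Psi(x,yk)=\pi(k^{-1})\Psi(x,y)$, and—by the same left-invariance identity as above—$D_y\Psi(x,y)=\chi(D)\Psi(x,y)$ for every $D\in\boldsymbol D^\pi$. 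The hard part will be the resulting uniqueness argument, since $y\mapsto\Psi(x,y)$ is only right-$K$-equivariant and not genuinely $\pi$-spherical, so one cannot directly invoke a uniqueness statement inside $C^\infty(G,\pi,\pi)$. My plan is to pass to the radial picture via the Cartan decomposition $G=KAK$, exploiting the hypothesis that $V|_M$ decomposes multiplicity-freely—which, by Deitmar's theorem, is equivalent to commutativity of $\boldsymbol D^\pi$. This commutativity is what lets the radial parts of the operators in $\boldsymbol D^\pi$, after projection onto the $M$-isotypic components of $V$, decouple into a family of scalar analytic ODE systems on $A$; standard uniqueness for such systems with the vanishing initial condition forces $\Psi(x,\cdot)|_A\equiv 0$, and global vanishing then follows from the $K$-equivariance. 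Carrying out this radial reduction, together with the careful bookkeeping of the non-$\pi$-spherical left-$K$ covariance of $\Psi$, is where I expect the main technical weight of the proof to lie.
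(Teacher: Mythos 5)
The paper gives no proof of this statement---it is quoted from Camporesi \cite[Theorem 3.8]{Camporesi2}---so your attempt can only be measured against the standard argument. Your forward direction is correct and complete in outline: the Schur-orthogonality collapse at $x=1_G$ and $y=1_G$, the observation that $\phi(1_G)$ and $(D\phi)(1_G)$ commute with $\pi(K)$, and the identity $D_y[\phi(xky)]=(D\phi)(xky)$ are all sound and yield $\phi(1_G)=\id_V$ and $D\phi=\chi(D)\,\phi$.

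The converse has a genuine gap, sitting exactly where you placed the ``main technical weight.'' Your $\Psi(x,\cdot)$ carries only the right covariance $\Psi(x,yk)=\pi(k^{-1})\Psi(x,y)$; it has no left $K$-covariance in $y$, since $\int_K\phi(xkk_1y)\trace\pi(k)\,dk=\int_K\phi(xky)\trace\pi(kk_1^{-1})\,dk$ and the character does not factor. Consequently (i) the restriction $\Psi(x,\cdot)|_A$ neither determines $\Psi(x,\cdot)$ on $G=KAK$ nor takes values in $\End_MV$, so the promised reduction to scalar radial systems is unavailable; and (ii) even for a genuinely $\pi$-spherical joint eigenfunction, vanishing at the single point $1_G$ is not settled by ``standard uniqueness with vanishing initial condition'': the radial operators are singular at $0\in\mathfrak a$, and a joint eigenfunction in $\mathscr A(G\times_KV,\lambda)$ vanishing at a point need not vanish. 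The one-dimensionality of the $\pi$-spherical joint eigenspace is precisely the nontrivial uniqueness in Theorem \ref{thm:unique-sph}, which the paper proves in \S\ref{subsec:JEF} by showing that $D\mapsto D(\phi^\pi_\lambda-\phi)(1_G)$ has image a two-sided ideal of $\End_\bbC V$ (Burnside) and then invoking real analyticity---not by an initial-value argument. To close your converse you would need either to prove that $y\mapsto\int_K\phi(xky)\trace\pi(k)\,dk$ is in fact left-$K$-covariant (so that the uniqueness of Theorem \ref{thm:unique-sph} applies to $\Psi(x,\cdot)\in C^\infty(G,\pi,\pi)$ with $\Psi(x,1_G)=0$), or to follow the Godement--Camporesi route through the commutative convolution algebra $C^\infty_\cpt(G,\pi,\pi)$, characterizing elementarity by the homomorphism property of $f\mapsto(\phi*f)(1_G)$ and relating it to $\boldsymbol D^\pi$ by an approximation argument. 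As written, the converse does not go through.
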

The theorem is certainly central 
when we investigate
analytical properties of elementary $\pi$-spherical functions.
However,
to get even a little of explicit results,
two more things seem necessary, namely,
the structure of $\boldsymbol D^\pi$
and
a version of \emph{Chevalley restriction theorem} for $C^\infty(G,\pi,\pi)$.
As shown below,
both of them are available if $(\pi,V)$ is \emph{small} in the sense of Wallach.
\begin{defn}[{ \cite[\S11.3]{Wal}}]
A $K$-type $(\pi,V)$ is called small
if $V$ is irreducible as an $M$-module.
\end{defn}
In this paper we restrict ourselves to the study of
elementary $\pi$-spherical functions for small $K$-types.
So hereafter let $(\pi,V)$ be a small $K$-type.
First, we have
a lot of the same results as in the case of the trivial $K$-type.
For example, 
there is a generalization of the \emph{Harish-Chandra isomorphism} by Wallach (Theorem \ref{thm:HChomo}):
\[
\gamma^\pi : \boldsymbol D^\pi \simarrow S(\mathfrak a_\bbC)^W.
\]
Here $S(\mathfrak a_\bbC)$ is the symmetric algebra for the complexification of the Lie algebra $\mathfrak a$ of $A$.
(As usual, when a capital English letter denotes a Lie group,
the corresponding German small letter denotes its Lie algebra.)
$W$ is the \emph{Weyl group} defined, as usual, by $W=\tilde M/M$
with $\tilde M$ being the normalizer of $A$ in $K$.
$S(\mathfrak a_\bbC)^W$
is the subalgebra of 
$S(\mathfrak a_\bbC)$ consisting of the $W$-invariants.
Furthermore we have
\begin{thm}\label{thm:unique-sph}
For any $\lambda \in \mathfrak a_\bbC^*$ (the dual linear space of $\mathfrak a_\bbC$), there exists a unique $\phi^\pi_\lambda\in C^\infty(G,\pi,\pi)$ such that
\begin{equation}\label{eq:unique-sph}
\phi^\pi_\lambda(1_G)=\id_V\quad\text{and}\quad
D\, \phi^\pi_\lambda = \gamma^\pi(D)(\lambda)\, \phi^\pi_\lambda
\quad\text{for any }D\in \boldsymbol D^\pi.
\end{equation}
Moreover, $\phi^\pi_\lambda$ is real analytic on $G$.
Thus the elementary $\pi$-spherical functions are parameterized by $\lambda\in \lower0.8ex\hbox{$W$}\backslash \mathfrak a_\bbC^*$.
\end{thm}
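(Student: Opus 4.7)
The plan is to use the smallness of $(\pi,V)$ to reduce everything to the study of a scalar-valued $W$-invariant function on $A$, handling uniqueness by a holonomic system argument and existence by a principal-series construction.

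First I would show that any $\phi\in C^\infty(G,\pi,\pi)$ is determined by a single scalar function on $A$. Since $M$ centralizes $A$, taking $k_1=m$ and $k_2=m^{-1}$ with $m\in M$ in \eqref{K-biinvariance} yields $\phi(a)=\pi(m)\phi(a)\pi(m)^{-1}$ for every $a\in A$, so $\phi(a)\in\End_M V$. By smallness, $(\pi|_M,V)$ is irreducible, whence Schur's lemma gives $\phi(a)=F(a)\,\id_V$ for a unique scalar $F:A\to\bbC$. The analogous argument with representatives of $W=\tilde M/M$ shows that $F$ is $W$-invariant, and the Cartan decomposition $G=KAK$ recovers $\phi$ from $F$. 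Hence the uniqueness assertion reduces to the uniqueness of a $W$-invariant scalar $F$ with $F(1_G)=1$ solving the system induced by \eqref{eq:unique-sph}.

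Next I would reformulate the joint-eigenvalue condition as a holonomic differential system on $A^{\reg}$. A radial-part construction assigns to each $D\in\boldsymbol D^\pi$ a scalar differential operator $r(D)$ on $A^{\reg}$ such that $(D\phi)|_{A^{\reg}}=r(D)F\cdot\id_V$. Via the Harish-Chandra isomorphism of Theorem \ref{thm:HChomo}, the equations $r(D)F=\gamma^\pi(D)(\lambda)\,F$ for $D\in\boldsymbol D^\pi$ form a holonomic system of rank $|W|$ on $A^{\reg}$, paralleling Harish-Chandra's classical picture. On the regular set the solution space is spanned by $|W|$ Harish-Chandra-type series which $W$ permutes, so among smooth $W$-invariant solutions there is at most a one-dimensional space, and $F(1_G)=1$ singles out a unique candidate.

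For existence I would construct $\phi^\pi_\lambda$ from the minimal principal series. Set $\sigma:=\pi|_M$, which is $M$-irreducible by smallness, and form $I_\lambda=\mathrm{Ind}_{MAN}^G(\sigma\otimes e^{\lambda+\rho}\otimes 1)$. Frobenius reciprocity yields a canonical $K$-equivariant inclusion $V\hookrightarrow I_\lambda|_K$, the multiplicity being one because $\End_M V$ is one-dimensional. The resulting $V$-valued matrix coefficient, symmetrized via $K$-integration as in \eqref{functional_eq}, produces an $\End V$-valued function satisfying \eqref{K-biinvariance}; because $I_\lambda$ has infinitesimal character $\lambda$, this function is a joint eigenfunction of $\boldsymbol D^\pi$ with the required eigenvalues, and the normalization at $1_G$ gives $\phi^\pi_\lambda$. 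Real analyticity then follows from ellipticity of the Casimir, which lies in $\boldsymbol D^\pi$.

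The most delicate point is the uniqueness step on the holonomic side: while the Harish-Chandra-series argument controls generic $\lambda$, ruling out extra smooth $W$-invariant solutions at exceptional parameters where those series coalesce requires additional care. I would handle this either by analytic continuation in $\lambda$ from a dense generic set, or, more in line with the paper's agenda, by identifying the radial system with the Heckman-Opdam hypergeometric system, whose smooth global solutions at $1_A$ form a one-parameter family. The latter is precisely what the main constructions of the paper supply.
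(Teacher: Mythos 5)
Your existence argument is essentially the paper's: the matrix coefficient of the minimal principal series $I_\lambda$ through the multiplicity-one $K$-type $V$ is precisely the Eisenstein-type integral \eqref{eq:integral-rep} the paper writes down, and the multiplicity-one fact from Frobenius reciprocity plus smallness is the correct reason why all of $U(\mathfrak g_\bbC)^K$ --- not just the center, so ``infinitesimal character'' alone is not a sufficient justification --- acts on $\Hom_K(V,I_\lambda)$ by the scalars $\gamma^\pi(D)(\lambda)$; that computation still has to be done via the Iwasawa decomposition, which is what the paper's auxiliary function $\psi^\pi_\lambda$ accomplishes. The real-analyticity claim is fine once you note that on sections of $G\times_KV$ the Casimir differs from the elliptic operator $\varOmega_{\mathfrak g}-2\varOmega_{\mathfrak k}$ by the constant $2\pi(\varOmega_{\mathfrak k})$.

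The genuine gap is in uniqueness. Your holonomic/Harish-Chandra-series argument controls only generic $\lambda$, and neither proposed repair closes the hole. Analytic continuation in $\lambda$ cannot exclude extra solutions at exceptional parameters: the space of globally smooth $W$-invariant joint eigenfunctions can jump up exactly where the series coalesce, so generic uniqueness does not propagate; indeed, in the Heckman--Opdam setting uniqueness genuinely fails when $\bsk\notin\mathcal K_\reg(\varSigma')$ (Theorem \ref{thm:existence}, condition \ref{k5}). Identifying the radial system with a hypergeometric system is circular: that identification is the main theorem of the paper, it depends on the matching conditions and the values of $\bska^\pi$ computed later, the needed regularity $\bsk^\pi\in\mathcal K_\reg(\varSigma^\pi)$ is itself deduced in Theorem \ref{thm:matching} \emph{from} the already-established existence of $\phi^\pi_\lambda$ for all $\lambda$, and no such identification exists at all for $\pi_2$ of $\tilde G_2$. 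The paper instead proves uniqueness by a short algebraic argument at the identity: for a second solution $\phi$ it sets $\varPhi(D)=D(\phi^\pi_\lambda-\phi)(1_G)$, shows via \eqref{eq:XD} and Burnside's theorem that $\varPhi(U(\mathfrak g_\bbC))$ is a two-sided ideal of $\End_\bbC V$, rules out the possibility $\varPhi(U(\mathfrak g_\bbC))=\End_\bbC V$ by averaging over $K$ and using the common eigenvalues, and concludes from real analyticity that $\phi=\phi^\pi_\lambda$. This works uniformly in $\lambda$ and requires no control of the radial system; you should replace your uniqueness step by this argument (or by the general theory cited in the paper's remark).
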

The proof of the theorem is given in \S\ref{subsec:JEF}, together with two integral formulas for $\phi^\pi_\lambda$.

Now, since one easily sees from \eqref{defn:pi-sph} the restriction $\phi|_A$
of any $\phi\in C^\infty(G,\pi,\pi)$ to $A$ takes values in $\End_M V$ and since the $\bbC$-algebra $\End_M V$ is isomorphic to $\bbC$ by Schur's lemma, $\phi|_A$ is identified with a $\bbC$-valued $C^\infty$ function on $A$.
Let $\varUpsilon^\pi(\phi)\in C^\infty(\mathfrak a)$ be the composition of $\phi|_A$
with $\exp : \mathfrak a \simarrow A$.
\begin{thm}[the Chevalley restriction theorem]\label{thm:Chevalley}
The restriction map $\varUpsilon^\pi$
is a linear bijection between $C^\infty(G,\pi,\pi)$
and $C^\infty(\mathfrak a)^W$.
\end{thm}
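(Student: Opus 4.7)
The plan is to verify the three ingredients in turn: that $\varUpsilon^\pi$ lands in $C^\infty(\mathfrak a)^W$, is injective, and is surjective; the main content is surjectivity, which I reduce to the classical Chevalley restriction theorem for the pair $(K,\mathfrak p)$.

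The target and injectivity are short. For any $\phi\in C^\infty(G,\pi,\pi)$ and $a\in A$, the bi-$K$-equivariance \eqref{K-biinvariance} together with $mam^{-1}=a$ for $m\in M$ forces $\pi(m)\phi(a)\pi(m)^{-1}=\phi(a)$, so $\phi(a)\in\End_M V=\bbC\cdot\id_V$ by smallness and Schur's lemma. Hence $\phi|_A$ is scalar-valued and $\varUpsilon^\pi(\phi)$ is meaningful. For a representative $\tilde m\in\tilde M$ of $w\in W$, conjugation gives $\tilde m\exp(H)\tilde m^{-1}=\exp(wH)$, and since $\phi(\exp H)$ is a scalar operator, $\pi(\tilde m)\phi(\exp H)\pi(\tilde m)^{-1}=\phi(\exp H)$, giving $\varUpsilon^\pi(\phi)(wH)=\varUpsilon^\pi(\phi)(H)$. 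Injectivity then follows from $G=KAK$: if $\varUpsilon^\pi(\phi)=0$, then $\phi|_A=0$, and \eqref{K-biinvariance} propagates this to all of $G$.

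For surjectivity, given $f\in C^\infty(\mathfrak a)^W$, I first invoke the classical Chevalley restriction theorem for the linear $K$-action on $\mathfrak p$ (Helgason--Dadok, built on Schwarz's theorem for smooth invariants of compact linear groups) to extend $f$ to the unique $\bar f\in C^\infty(\mathfrak p)^K$ with $\bar f|_{\mathfrak a}=f$. Via the Cartan diffeomorphism $K\times\mathfrak p\simarrow G$, $(k,X)\mapsto k\exp X$, I then define
\begin{equation*}
\phi(k\exp X)=\bar f(X)\,\pi(k^{-1}).
\end{equation*}
Smoothness is immediate. To verify \eqref{K-biinvariance}, rewrite $k_1(k\exp X)k_2=(k_1kk_2)\exp(\Ad(k_2^{-1})X)$ and use the $K$-invariance of $\bar f$; since $\bar f(X)$ is a scalar it commutes past $\pi(k_2^{-1})$. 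Finally $\varUpsilon^\pi(\phi)(H)=\bar f(H)=f(H)$ by construction.

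The entire argument is short apart from the cited classical Chevalley theorem. Thus the main and only non-routine obstacle is the smoothness of $\bar f$, i.e.\ the analytic statement that $W$-invariant smooth functions on $\mathfrak a$ extend smoothly and $K$-invariantly to $\mathfrak p$. Smallness of $(\pi,V)$ plays no role beyond collapsing $\End_M V$ to scalars, after which the isotropy twist $\pi(k^{-1})$ in the definition of $\phi$ does all the remaining work.
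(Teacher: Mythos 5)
Your proposal is correct and follows essentially the same route as the paper: $W$-invariance via a representative in $\tilde M$ and the scalarity of $\phi|_A$, injectivity from $G=KAK$, and surjectivity by extending $f$ to a $K$-invariant smooth function on the $-1$ Cartan eigenspace (your $\mathfrak p$, the paper's $\mathfrak s$) via Dadok's smooth Chevalley theorem and setting $\phi(k\exp X)=\tilde f(X)\pi(k^{-1})$. The only difference is that you spell out the verification of \eqref{K-biinvariance} and the scalarity argument in more detail than the paper does.
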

The proof is given in \S\ref{subsec:Chevalley}.
Through this bijection,
\eqref{eq:unique-sph} 
is translated into a condition on $\varUpsilon^\pi(\phi^\pi_\lambda)$
to be a joint eigenfunction of a commuting family of differential operators on $\mathfrak a$.
The family consists of 
the \emph{$\pi$-radial part}s of $D\in\boldsymbol D^\pi$.
The $\pi$-radial part of the \emph{Casimir operator}, which has a prominent role in the family,
is expressed in a uniform way by use of a parameter $\bska^\pi$ associated with $(\pi, V)$ (Theorem \ref{thm:CasimirRad}).
The parameter $\bska^\pi$,
whose precise definition is given in \S\ref{subsec:kappa},
is a $W$-invariant function on
the set $\varSigma=\varSigma(\mathfrak g,\mathfrak a)$ of restricted roots.
(In general, a Weyl group invariant function on a root system is called a \emph{multiplicity function}.)

Of course, we could proceed further with our study
analogously to the case of the trivial $K$-type,
which would include
calculation of the \emph{$c$-function} for each individual $(\pi,V)$ by rank-one reduction.
But we take an alternative route,
attaching importance to the fact that in almost all cases the system of differential equations
for $\varUpsilon^\pi(\phi^\pi_\lambda)$ coincides with
a \emph{hypergeometric system} of Heckman and Opdam \cite{HO}
up to twist by a nowhere-vanishing function.
In general, their hypergeometric system is defined for any triple of a root system $\varSigma'$ in $\mathfrak a^*$, a multiplicity function $\bsk$ on $\varSigma'$, and $\lambda\in\mathfrak a_\bbC^*$.
($\varSigma'$ spans $\mathfrak a^*$ by definition.
Throughout the paper a root system is assumed crystallographic.)
If $\bsk$ satisfies a certain regularity condition,
their system has a unique Weyl group invariant analytic solution $F(\varSigma',\bsk,\lambda)$ such that $F(\varSigma',\bsk,\lambda;0)=1$.
The solution $F(\varSigma',\bsk,\lambda)$ is called
a \emph{hypergeometric function associated to the root system} $\varSigma'$,
which is thought of as
a deformation of the elementary spherical function for the trivial $K$-type
by an arbitrary complex root multiplicity $\bsk$.
When $\dim \mathfrak a^*=1$, $F(\varSigma',\bsk,\lambda)$
reduces to a \emph{Jacobi function},
which is studied by Koornwinder and Flensted-Jensen prior to \cite{HO} (cf.~\cite{Koornwinder}).
A Jacobi function is essentially a Gauss hypergeometric function.
We review the definition and fundamental properties of $F(\varSigma',\bsk,\lambda)$ in more detail in \S\ref{sec:HO}.

To state our main result, we fix some notation.
Let $\varSigma^+$ be the positive system corresponding to $N$.
For any $\alpha\in\varSigma$ let $\mathfrak g_\alpha$ be the restricted root space
and put $\bsm_\alpha=\dim \mathfrak g_\alpha$.
Then $\bsm:\varSigma\ni\alpha\mapsto \bsm_\alpha\in\bbZ_{>0}$ is a multiplicity function on $\varSigma$.
We put
\begin{equation}\label{eq:GKdelta}
\tilde\delta_{G/K} = \prod_{\alpha \in \varSigma^+}\biggl|\frac{\sinh \alpha}{||\alpha||}\biggr|^{\bsm_\alpha}.
\end{equation}
Here we consider $\mathfrak a$ and $\mathfrak a^*$ as inner product spaces by the Killing form $B(\cdot,\cdot)$ of $\mathfrak g$.
Likewise, for any root system $\varSigma'$ in $\mathfrak a^*$ and 
multiplicity function $\bsk$ on $\varSigma'$ we put
\begin{equation}\label{eq:HOdelta}
\tilde\delta(\varSigma',\bsk)=\prod_{\alpha\in \varSigma'^+} \biggl|\frac{\sinh(\alpha/2)}{||\alpha/2||}\biggr|^{2\bsk_\alpha}
\end{equation}
where $\varSigma'^+$ is some positive system of $\varSigma'$.
The main result of this paper is the following:
\begin{thm}\label{thm:main}
Suppose $(\pi,V)$ is a small $K$-type of
a non-compact real simple Lie group $G$ with finite center.
If $G$ is a simply-connected split Lie group $\tilde G_2$ of type $G_2$,
we further suppose $\pi$ is not the small $K$-type $\pi_2$
specified in Theorem \ref{thm:G2}.
Then there exist a root system $\varSigma^\pi$ in $\mathfrak a^*$ and a multiplicity function $\bsk^\pi$ on $\varSigma^\pi$ such that
\begin{equation}\label{eq:main}
\varUpsilon^\pi(\phi^\pi_\lambda) = \tilde\delta_{G/K}^{-\frac12}\, \tilde\delta(\varSigma^\pi,\bsk^\pi)^{\frac12}\,F(\varSigma^\pi,\bsk^\pi,\lambda)
\quad\text{for any }\lambda\in\mathfrak a_\bbC^*.
\end{equation}
\end{thm}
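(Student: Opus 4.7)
The plan is to transfer the characterization of $\phi^\pi_\lambda$ from $G$ to $\mathfrak a$ via the Chevalley restriction theorem (Theorem \ref{thm:Chevalley}), to rewrite the eigenvalue equations of Theorem \ref{thm:unique-sph} in the form of a Heckman--Opdam hypergeometric system after a gauge transformation by the product of $\sinh$-factors appearing on the right of \eqref{eq:main}, and to conclude by uniqueness of $F(\varSigma^\pi,\bsk^\pi,\lambda)$.

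Set $f_\lambda := \varUpsilon^\pi(\phi^\pi_\lambda)\in C^\infty(\mathfrak a)^W$. By Theorems \ref{thm:unique-sph} and \ref{thm:Chevalley}, $f_\lambda$ is real analytic with $f_\lambda(0)=1$, and the equations $D\phi^\pi_\lambda=\gamma^\pi(D)(\lambda)\phi^\pi_\lambda$ transcribe as $\rRad^\pi(D)f_\lambda=\gamma^\pi(D)(\lambda)f_\lambda$ for every $D\in\boldsymbol D^\pi$, where $\rRad^\pi(D)$ denotes the $\pi$-radial part. Writing
\[
f_\lambda=\tilde\delta_{G/K}^{-1/2}\,\tilde\delta(\varSigma^\pi,\bsk^\pi)^{1/2}\,\tilde f_\lambda,
\]
the theorem becomes the assertion $\tilde f_\lambda=F(\varSigma^\pi,\bsk^\pi,\lambda)$.

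The crucial step, and the main obstacle, is to exhibit $(\varSigma^\pi,\bsk^\pi)$ for which the $\pi$-radial Casimir, after conjugation by the gauge factor, coincides with the Heckman--Opdam Laplacian for $(\varSigma^\pi,\bsk^\pi)$. The uniform formula of Theorem \ref{thm:CasimirRad} expresses the $\pi$-radial Casimir in terms of $\bsm$ and $\bska^\pi$; conjugation by $\tilde\delta_{G/K}^{1/2}$ removes its first-order terms and yields a Schrödinger operator with singular potential of the form $\sum_{\alpha\in\varSigma^+}\bigl(c_\alpha^{(1)}\sinh^{-2}\alpha+c_\alpha^{(2)}\sinh^{-2}(\alpha/2)\bigr)$ whose coefficients are explicit polynomials in $\bsm_\alpha$ and $\bska^\pi_\alpha$. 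The analogous gauge transformation of the Heckman--Opdam Laplacian for $(\varSigma^\pi,\bsk^\pi)$ produces a Schrödinger operator whose singular potential is a $\sinh^{-2}(\alpha/2)$-sum over $\alpha\in\varSigma^\pi$. Expanding $\sinh^{-2}\alpha=\tfrac14(\sinh^{-2}(\alpha/2)-\cosh^{-2}(\alpha/2))$ and matching coefficients then dictates $\varSigma^\pi$ (which in general must contain both the roots of $\varSigma$ and some of their doubles) and $\bsk^\pi$. This matching must be verified for every small $K$-type of every non-compact simple Lie group, using the classification and the explicit values of $\bska^\pi$ worked out earlier in the paper; the exclusion of the small $K$-type $\pi_2$ of $\tilde G_2$ reflects precisely the failure of the matching in that one case.

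Once the Casimir is matched, the full eigenvalue system follows structurally. Theorem \ref{thm:HChomo} identifies $\boldsymbol D^\pi$ with $S(\mathfrak a_\bbC)^W$ via $\gamma^\pi$, and the Heckman--Opdam commuting algebra for $(\varSigma^\pi,\bsk^\pi)$ has the same parameter space through its own Harish-Chandra-type isomorphism; since both families commute with the matched gauge-transformed Laplacian and are maximal commutative inside the $W$-invariant differential operators on the regular set, they coincide as families of operators, with matching eigenvalue parameter $\lambda$ on both sides. Therefore $\tilde f_\lambda$ is a $W$-invariant real-analytic joint eigenfunction of the Heckman--Opdam hypergeometric system; inspection of the leading behavior of the gauge factor at the origin, combined with $f_\lambda(0)=1$, yields $\tilde f_\lambda(0)=1$; and the uniqueness statement recalled in \S\ref{sec:HO} delivers $\tilde f_\lambda=F(\varSigma^\pi,\bsk^\pi,\lambda)$, which is precisely \eqref{eq:main}.
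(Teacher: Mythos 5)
Your plan follows the paper's own two-step strategy almost exactly: gauge-transform the $\pi$-radial part of the Casimir (Theorem \ref{thm:CasimirRad}, Proposition \ref{prop:Omegatwist}) and match it against the gauge-transformed Heckman--Opdam Laplacian (Proposition \ref{prop:Ltwist}), extract the resulting polynomial conditions on $(\bsk^\pi,\bsm,\bska^\pi)$, propagate the matching from the Casimir to the full commuting algebra via the two Harish-Chandra-type isomorphisms (Proposition \ref{prop:2diff-sys}), and finish by uniqueness of $F$. Two points, however, need to be addressed before the final step goes through. First, the uniqueness statement you invoke (Corollary \ref{cor:existunique}) is only valid for $\bsk^\pi\in\mathcal K_\reg(\varSigma^\pi)$, and this is not automatic: several of the multiplicity functions that actually arise are negative (e.g.\ $\bsk^\pi_\alpha=-s$ for $\mathfrak{so}(2r,1)$, or $\bsk^\pi_{2\alpha}=-\frac{p-q}{2}$ in \S\ref{subsec:so(p,q)}\ref{sopq2}), so one cannot simply cite positivity. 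The paper closes this by proving the equivalence \ref{k6}$\Leftrightarrow$\ref{k1} of Theorem \ref{thm:existence}: the very function $\tilde f_\lambda$ you construct is a normalized $W$-invariant analytic solution for \emph{every} $\lambda$, and the existence of such solutions forces $\bsk^\pi$ to be regular. Without this (or some substitute), the concluding appeal to uniqueness is unjustified. Second, you also need the separate condition that the gauge factor $\tilde\delta_{G/K}^{-1/2}\tilde\delta(\varSigma^\pi,\bsk^\pi)^{1/2}$ extends analytically and without zeros across the walls (Proposition \ref{prop:deltaregular}, condition \eqref{eq:regularity}); this is an extra linear constraint on $\bsk^\pi$ beyond the potential-matching equations, and it is what guarantees $\tilde f_\lambda\in\mathscr A(\mathfrak a)$ and $\tilde f_\lambda(0)=1$ rather than merely an eigenfunction on $\mathfrak a_\reg$.

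Beyond these, the remaining content of the proof is the case-by-case work you defer: classifying all small $K$-types and computing $\bska^\pi$ for each (this occupies \S\ref{sec:CC} and is not ``worked out earlier'' independently of the theorem), then checking that the matching system of Proposition \ref{prop:simplified} is solvable in every case except $\pi_2$ of $\tilde G_2$. Your outline of the conceptual skeleton is correct, but as written it is a reduction of the theorem to that verification rather than a proof of it.
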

The proof of the theorem is divided into two large steps.
As the first step, 
we derive in \S\ref{sec:MC} a simple condition
on $\varSigma^\pi$ and $\bsk^\pi$ for the validity of \eqref{eq:main}
under the assumption that $\varSigma^\pi\subset \varSigma\cup2\varSigma$ (Proposition \ref{prop:simplified}).
This condition consists of only a few equations between $\bsk^\pi$, $\bsm$ and $\bska^\pi$.
Thus $\bska^\pi$ encodes all the information on $(\pi,V)$ needed for our purpose.
As the second step,
we determine the values of $\bska^\pi$
for each small $K$-type $(\pi,V)$ of each non-compact simple real Lie group $G$
in order to find a pair of $\varSigma^\pi$ and $\bsk^\pi$ using the condition in the first step.
The existence of such a pair
is actually confirmed in each case except $\pi_2$ of $\tilde G_2$.
In this process all small $K$-types are classified for each $G$.
This generalizes a result of Lee which
classifies small $K$-types for each \emph{split} real simple Lie group (cf.~\cite{LePhD, SWL}).
The case-by-case analysis in this step is carried out in \S\ref{sec:CC}.
We also prove in \S\ref{CC:G2} that
in the case of $\pi_2$ of $\tilde G_2$,
\eqref{eq:main} never holds for any choice of $\varSigma^\pi$ and $\bsk^\pi$.

As a detailed explanation of Theorem \ref{thm:main},
the concrete information obtained in the second step is summarized in \S\ref{sec:list}. 
That is, 
the classification of small $K$-types
and one or two possible choices of $\varSigma^\pi$ and $\bsk^\pi$
for each small $K$-type (except $\pi_2$ of $\tilde G_2$).
Now, for each our choice of $\varSigma^\pi$ and $\bsk^\pi$,
the factor $\tilde\delta_{G/K}^{-\frac12}\, \tilde\delta(\varSigma^\pi,\bsk^\pi)^{\frac12}$ in \eqref{eq:main} extends to
a nowhere-vanishing real analytic function on $\mathfrak a$.
Indeed, this can be written as a product of hyperbolic cosines (Proposition \ref{prop:deltaregular}),
whose concrete form in each case is also presented in \S\ref{sec:list}.

If $G$ is of Hermitian type,
a small $K$-type is nothing but a one-dimensional unitary representation of $K$ (\S\ref{subsec:Hermitian}, Theorem \ref{thm:Hermite}).
Hence in this case Theorem \ref{thm:main} restates results of \cite[Chapter 5]{He:white} and \cite{S:Plancherel}.
If $G$ has real rank one, then the hypergeometric function in \eqref{eq:main}
is a Jacobi function.
Hence in the rank one case, one can see 
some known results are essentially equivalent to
Theorem \ref{thm:main}.
For example,
a result of \cite{FJ} for the one-dimensional $K$-types of the universal cover of $\SU(p,1)$,
that of \cite{CP} for the lowest-dimensional non-trivial small $K$-types of $\Spin(2p,1)$ $(p\ge2)$
and that for the small $K$-types of $\Sp(p,1)$ obtained by \cite{Takahashi}, \cite{S:hyperbolic} and \cite{vDP}.

According to Oshima \cite{Oshima},
a commuting family of $W$-invariant differential operators on $\mathfrak a$
is necessarily equal to a system of hypergeometric differential operators
up to a \emph{gauge transform}
under the conditions:
\begin{enumerate*}[label=(\arabic*)]
\item\label{CIS1} $W$ is of a classical type;
\item\label{CIS2} the symbols of the operators span $S(\mathfrak a_\bbC)^W$ (\emph{complete integrability}); and
\item\label{CIS3} the operators have regular singularities at every infinity.
\end{enumerate*}
In view of this, our result is not so surprising since the family of $\pi$-radial parts of $D \in \boldsymbol D^\pi$ satisfies \ref{CIS2} and \ref{CIS3}.
Also, the exceptional case in Theorem \ref{thm:main} suggests the possibility that there might be a new class of completely integrable systems associated with the Weyl group of type $G_2$.

Now, Formula \eqref{eq:main} enables us to reduce
a large part of analytic theory for elementary $\pi$-spherical functions 
to the one for Heckman-Opdam hypergeometric functions. 
For example, \emph{Harish-Chandra's $c$-function} for $G\times_KV$ equals
a scalar multiple of the $c$-function for Heckman-Opdam hypergeometric functions (Theorem \ref{thm:2cs}),
and the \emph{$\pi$-spherical transform} (the spherical transform for $G\times_KV$) 
is a composition of a multiplication operator and
a \emph{hypergeometric Fourier transform} introduced by \cite{Op:Cherednik} (Theorem \ref{thm:2Fourier}).
Using them we can obtain
the explicit formula of Harish-Chandra's $c$-function
and the inversion formula of the $\pi$-spherical transform.
These applications are discussed in \S\ref{sec:application}.

\section{Detailed description of the main result}\label{sec:list}
In this section we list all small $K$-types for each non-compact real simple Lie group $G$ up to equivalence.
(Actually the classification is given for each real simple Lie algebra of non-compact type
since a small $K$-type for $G$
is always lifted to that for any finite cover of $G$.)
In addition, for each small $K$-type $\pi$ other than the one exception stated in \S\ref{sec:intro}, we present one or two combinations of
$\varSigma^\pi$ and $\bsk^\pi$ for which \eqref{eq:main} is valid.
Though
there may be some or infinitely many other choices of such $\varSigma^\pi$ and $\bsk^\pi$ (cf.~\S\ref{CC:complex}),
we do not pursue all the possibilities.
The results of this section will be proved in \S\ref{sec:CC}.

\subsection{\boldmath The trivial $K$-type}\label{subsec:trivial}
First of all, the trivial $K$-type $(\pi,V)$ is small for any $G$.
In this case, \eqref{eq:main} holds with $\varSigma^\pi=2\varSigma$ and $\bsk^\pi : \varSigma^\pi\ni 2\alpha\mapsto \frac{\bsm_\alpha}2$, whence $\tilde\delta_{G/K}^{-\frac12}\,\tilde\delta(\varSigma^\pi,\bsk^\pi)^{\frac12}=1$ and $\varUpsilon^\pi(\phi^\pi_\lambda)=F(\varSigma^\pi,\bsk^\pi,\lambda)$.
In the rest of this section we basically treat only non-trivial small $K$-types.

\subsection{\boldmath Simple Lie groups having no non-trivial small $K$-type}\label{subsec:onlytrivial}
There is no non-trivial small $K$-type in each of the following cases:
\begin{itemize}[leftmargin=*]
\item $G$ is a complex simple Lie group;
\item $\mathfrak g\simeq \mathfrak{sl}(p,\mathbf H)\ (p\ge2)$;
\item $\mathfrak g\simeq \mathfrak{sp}(p,q)\ (p\ge q\ge 2)$;
\item $\mathfrak g\simeq \mathfrak{so}(2r+1,1)\ (r\ge 1)$;
\item $\mathfrak g\simeq \mathfrak e_{6(-26)}\ (\text{E\,IV})$;
\item $\mathfrak g\simeq \mathfrak f_{4(-20)}\ (\mathrm{F\,II})$.
\end{itemize}

\subsection{\boldmath The case $\mathfrak g=\mathfrak{sp}(p,1)\ (p\ge1)$}\label{subsec:sp(p,1)}
Suppose $G=\Sp(p,1)\ (p \ge 1)$ and $K=\Sp(p)\times \Sp(1)$. Then $G$ is simply-connected.
Let $\pr_1$ and $\pr_2$ be the projections of $K$
to $\Sp(p)$ and $\Sp(1)$ respectively.
For the irreducible representation $(\pi_n,\bbC^n)$ of\/ $\Sp(1)\simeq \SU(2)$
of dimension $n=1,2,\dotsc$,
the $K$-type $\pi_n\circ\pr_2$ is small.
If $p=1$ then $\pi_n\circ\pr_1$ is also a small $K$-type.
There are no other small $K$-types.
Let $\varSigma=\{\pm\alpha,\pm2\alpha\}$ if $p\ge2$
and $\varSigma=\{\pm2\alpha\}$ if $p=1$.
Let $\pi=\pi_n\circ\pr_2$ if $p\ge2$ and $\pi=\pi_n\circ\pr_1$ or $\pi_n\circ\pr_2$ if $p=1$.
Then putting $\varSigma^\pi=\{\pm2\alpha,\pm4\alpha\}$,
$\bsk^\pi_{2\alpha}=2p-1\pm n$ and $\bsk^\pi_{4\alpha}=\frac12 \mp n$,
we have \eqref{eq:main} and
$\tilde\delta_{G/K}^{-\frac12}\,\tilde\delta(\varSigma^\pi,\bsk^\pi)^{\frac12}=(\cosh \alpha)^{-1\mp n}$.

\subsection{\boldmath The case $\mathfrak g=\mathfrak{so}(2r,1)\ (r\ge2)$}\label{subsec:so(2r,1)}
Suppose $G=\Spin(2r,1)\ (r \ge 2)$ and $K=\Spin(2r)$. Then $G$ is simply-connected.
For $s=0,1,2,\dotsc$,
the irreducible representation $\pi_s^\pm$ 
of $K=\Spin(2r)$
with highest weight $(s/2,\ldots,s/2,\pm s/2)$ in the standard notation is small.
There are no other small $K$-types.
Let $\varSigma=\{\pm\alpha\}$.
Let $\pi=\pi_s^\pm$.
Then putting $\varSigma^\pi=\varSigma\cup 2\varSigma =\{\pm\alpha,\pm2\alpha\}$,
$\bsk^\pi_{\alpha}=-s$ and $\bsk^\pi_{2\alpha}=r+s-\frac12$,
we have \eqref{eq:main} and
$\tilde\delta_{G/K}^{-\frac12}\,\tilde\delta(\varSigma^\pi,\bsk^\pi)^{\frac12}=(\cosh \frac\alpha2)^s$.

\subsection{\boldmath The case $\mathfrak g=\mathfrak{so}(p,q)\ (p>q\ge3)$}\label{subsec:so(p,q)}
Suppose $G$ is the double cover of $\Spin(p,q)$
$(p>q\ge3)$.
Thus $K=\Spin(p)\times\Spin(q)$ and $G$ is simply-connected.
Let $\pr_1$ and $\pr_2$ be the projections of $K$
to $\Spin(p)$ and $\Spin(q)$ respectively.
We may assume $\varSigma=\{\pm e_i\,|\,1\le i\le q\} \cup \{\pm e_i\pm e_j\,|\,1\le i<j\le q\}$
for some orthogonal basis $\{e_i\,|\,1\le i\le q\}$ of $\mathfrak a^*$
with $||e_1||=\cdots=||e_q||$.\\
\begin{enumerate*}[label=(\roman*), itemjoin=\!]
\item \label{sopq1}
Let $\sigma$ denote the spin representation of $\Spin(q)$ if $q$ is odd,
and either of two half-spin representations of $\Spin(q)$ if $q$ is even.
Then $\pi=\sigma\circ\pr_2$ is a small $K$-type.
For this $\pi$,
we can choose $\varSigma^\pi=\{\pm 2e_i\,|\,1\le i\le q\} \cup \{\pm e_i\pm e_j\,|\,1\le i<j\le q\}$ and $\bsk^\pi$ with $\bsk^\pi_{\pm2e_i}=\frac{p-q}2$ and $\bsk^\pi_{\pm e_i\pm e_j}=\frac12$
so that \eqref{eq:main} holds and
$\tilde\delta_{G/K}^{-\frac12}\,\tilde\delta(\varSigma^\pi,\bsk^\pi)^{\frac12}=\prod_{1\le i<j\le q}(\cosh \frac{e_i-e_j}2\cosh \frac{e_i+e_j}2)^{-\frac12}$.\\
\item \label{sopq2}
In addition, 
if $p$ is even and $q$ is odd,
then $\pi=\sigma\circ\pr_1$ with either half-spin representation $\sigma$ of $\Spin(p)$
is a small $K$-type.
For this $\pi$,
we can choose $\varSigma^\pi=\{\pm e_i, \pm 2e_i\,|\,1\le i\le q\}
\linebreak[1] 
\cup \{\pm e_i\pm e_j\,|\,1\le 
i<j\le q\}$ and $\bsk^\pi$ with $\bsk^\pi_{\pm e_i}=p-q$, $\bsk^\pi_{\pm2e_i}=-\frac{p-q}2$ and $\bsk^\pi_{\pm e_i\pm e_j}=\frac12$
so that \eqref{eq:main} holds and
$\tilde\delta_{G/K}^{-\frac12}\,\tilde\delta(\varSigma^\pi,\bsk^\pi)^{\frac12}=
\prod_{i=1}^q (\cosh \frac{e_i}2) ^{-p+q}
\prod_{1\le i<j\le q}(\cosh \frac{e_i-e_j}2\cosh \frac{e_i+e_j}2)^{-\frac12}$.
\end{enumerate*}

There are no other non-trivial small $K$-types.

\subsection{The Hermitian type}\label{subsec:Hermitian}
Suppose $G$ is a non-compact simple Lie group of Hermitian type.
Let
\[
\varSigma_{\mathrm{long}}=\{\alpha\in\varSigma\,|\,\alpha\text{ with the longest length}\}
\quad\text{and}\quad
\varSigma_{\mathrm{middle}}=\varSigma\setminus (\tfrac12\varSigma_{\mathrm{long}}\cup \varSigma_{\mathrm{long}}).
\]
A $K$-type $(\pi,V)$ is small
if and only if $\dim V=1$.
Hence the set of small $K$-types is naturally identified with a rank one lattice in $\sqrt{-1}\mathfrak z^*$,
where $\mathfrak z$ denotes the center of $\mathfrak k$.
Let $G_\alg$ be the analytic subgroup
 for $\mathfrak g$ in the connected, simply-connected, complex Lie group with Lie algebra $\mathfrak g_\bbC$.
Let $\pi_0\in \sqrt{-1}\mathfrak z^*$ be a generator of the rank one lattice in the case when $G=G_\alg$.
Then any $K$-type $\pi$ is identified with $\nu\pi_0\in\sqrt{-1}\mathfrak z^*$
for some $\nu\in\bbQ$.
For this $\pi$,
we can choose $\varSigma^\pi=\varSigma_{\mathrm{long}}\cup 2\varSigma_{\mathrm{middle}} \cup2\varSigma_{\mathrm{long}}$ and $\bsk^\pi$ with
\[\left\{
\begin{aligned}
&\bsk^\pi_\alpha=\tfrac12 \bsm_{\frac\alpha2} \pm \nu,\quad
\bsk^\pi_{2\alpha}=\tfrac12 \mp \nu &&\text{for } \alpha\in\varSigma_{\mathrm{long}},\\
&\bsk^\pi_{2\alpha}=\tfrac12 \bsm_\alpha &&\text{for } \alpha\in\varSigma_{\mathrm{middle}}
\end{aligned}
\right.\]
so that \eqref{eq:main} holds and
$\tilde\delta_{G/K}^{-\frac12}\,\tilde\delta(\varSigma^\pi,\bsk^\pi)^{\frac12}=
\prod_{\alpha\in\varSigma_{\mathrm{long}}\cap\varSigma^+} (\cosh \frac\alpha2) ^{\mp\nu}$.

\subsection{\boldmath The case $\varSigma$ is of type $F_4$}\label{subsec:F4}
Let $G$ be a simply-connected real simple Lie group with $\varSigma$ of type $F_4$.
We exclude the complex simple Lie group of type $F_4$,
which is covered in \S\ref{subsec:onlytrivial}.
There are the following four possibilities: \smallskip
\begin{center}
\begin{tabular}{c|cccc}
$\mathfrak{g}$ &
$\mathfrak{f}_{4(4)}\ (\mathrm{F\,I})$&
$\mathfrak{e}_{6(2)}\ (\mathrm{E\,II})$&
$\mathfrak{e}_{7(-5)}\ (\mathrm{E\,VI})$&
$\mathfrak{e}_{8(-24)}\ (\mathrm{E\,IX})$\\
\hline
$\mathfrak{k}$&
$\mathfrak{sp}(3)\oplus\mathfrak{su}(2)$&
$\mathfrak{su}(6)\oplus\mathfrak{su}(2)$&
$\mathfrak{so}(12)\oplus\mathfrak{su}(2)$&
$\mathfrak{e}_7\oplus\mathfrak{su}(2)$\\
\end{tabular}
\end{center}\smallskip
Thus $K$ is the direct product of a simple compact group $K_1$ and $K_2:=\SU(2)$.
Let $\pr_2$ denote the projection $K\to \SU(2)$.
Let $(\sigma,\bbC^2)$ be the irreducible representation of $\SU(2)$
of dimension $2$.
Then $\pi:=\sigma\circ\pr_2$ is the only non-trivial small $K$-type.
Let $\varSigma_{\mathrm{short}}\sqcup\varSigma_{\mathrm{long}}$ 
be the division of $\varSigma$ according to the root lengths.
Putting $\varSigma^\pi=2\varSigma_{\mathrm{short}}\cup\varSigma_{\mathrm{long}}$,
$\bsk^\pi_{2\alpha}=\frac12\bsm_\alpha$ for $\alpha\in\varSigma_{\mathrm{short}}$
and
$\bsk^\pi_{\alpha}=\frac12$ for $\alpha\in\varSigma_{\mathrm{long}}$,
we have \eqref{eq:main} and
$\tilde\delta_{G/K}^{-\frac12}\,\tilde\delta(\varSigma^\pi,\bsk^\pi)^{\frac12}
=\prod_{\alpha\in\varSigma_{\mathrm{long}}\cap\varSigma^+}(\cosh \frac\alpha2)^{-\frac12}$.

\subsection{\boldmath Split Lie groups with simply-laced $\varSigma$}\label{subsec:split}
Let $G$ be a split real simple Lie group.
We assume its restricted root system $\varSigma$
is \emph{simply-laced} with rank $\ge2$.
We also assume $G$ is simply-connected.
(For example, if $\mathfrak g=\mathfrak{sl}(p,\bbR)$ 
then $G$ is the double cover of $\SL(p,\bbR)$.)\smallskip
\begin{center}
\begin{tabular}{c|ccccc}
$\mathfrak g$&
$\mathfrak{sl}(p,\bbR)\ (p\ge3)$&
$\mathfrak{so}(p,p)\ (p\ge3)$&
$\mathfrak e_{6(6)}\ (\text{E\,I})$&
$\mathfrak e_{7(7)}\ (\text{E\,V})$&
$\mathfrak e_{8(8)}\ (\text{E\,VIII})$\\
\hline
$\varSigma$&
$A_{p-1}$&
$D_p$&
$E_6$&
$E_7$&
$E_8$\\
$K$&
$\Spin(p)$&
$\Spin(p)\times\Spin(p)$&
$\Sp(4)$&
$\SU(8)$&
$\Spin(16)$
\end{tabular}
\end{center}
\begin{flushright}
($\mathfrak{sl}(4,\bbR)\simeq\mathfrak{so}(3,3)$)
\end{flushright}
\smallskip
\begin{thm}[{\cite[Theorem 1]{SWL}}]\label{thm:split_sl}
Let $\sigma$ be the spin representation of\/ $\Spin(p)$ ($p\ge3$) if $p$ is odd
and either of two half-spin representations of\/ $\Spin(p)$ if $p$ is even.
Then $\sigma$ is a small $K$-type when $\mathfrak g=\mathfrak{sl}(p,\bbR)$
and $\sigma\circ\pr$
is a small $K$-type when $\mathfrak g=\mathfrak{so}(p,p)$
and $\pr$ is either of two projections of $K$ to $\Spin(p)$.
If $\mathfrak g=
\mathfrak e_{6(6)}$,
then the $8$-dimensional standard (vector) representation of\/ $\Sp(4)$
is a small $K$-type.
If $\mathfrak g=
\mathfrak e_{7(7)}$,
then the $8$-dimensional standard representation of\/ $\SU(8)$
and its contragredient are small $K$-types.
If $\mathfrak g=
\mathfrak e_{8(8)}$,
then the pullback of the $16$-dimensional standard representation of\/ $\SO(16)$ to\/ $\Spin(16)$
is a small $K$-type.
There are no other non-trivial small $K$-types.
\end{thm}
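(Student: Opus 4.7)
The plan is to reduce the classification to a finite-group problem for $M$. Since $G$ is split, $M^\circ = \{1\}$, so $M$ is a finite subgroup of $K$, and $(\pi,V)$ is small precisely when $V|_M$ is irreducible, equivalently when $\sum_{m \in M} |\chi_V(m)|^2 = |M|$ where $\chi_V$ is the character of $V$. The first step is to describe $M$ explicitly in each case. For $\mathfrak g = \mathfrak{sl}(p,\bbR)$ with $K = \Spin(p)$, $M$ is the preimage in $\Spin(p)$ of the group of diagonal $\pm 1$ matrices of determinant $+1$ in $\SO(p)$; inside $\Spin(p)$ this is an extraspecial-type $2$-group generated by the Clifford products $e_i e_j$. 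For $\mathfrak{so}(p,p)$, $M$ is embedded diagonally in $\Spin(p)\times \Spin(p)$ via the two analogous preimages. For the three exceptional cases, $M$ is a finite $2$-group whose structure is exhibited via a Kostant-type cascade of commuting short $\SU(2)$'s inside $K$.

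If $V|_M$ is irreducible with central character $\chi$, then $(\dim V)^2 \leq |M|/|\Ker\chi \cap Z(M)|$, which bounds $\dim V$ from above and leaves only a short list of candidate $K$-irreducibles to examine. For the classical cases, the Clifford algebra action shows that the spin (respectively half-spin) module of $\Spin(p)$, of dimension $2^{\lfloor p/2\rfloor}$, restricts irreducibly to $M$ and saturates the bound. The $\mathfrak{so}(p,p)$ case follows because any $K$-irreducible is an outer tensor product of irreducibles of the two $\Spin(p)$-factors and the diagonal embedding of $M$ forces one factor to be trivial. For the exceptional cases, one checks directly that the $8$-, $8$-, and $16$-dimensional fundamental representations of $\Sp(4)$, $\SU(8)$, $\Spin(16)$ restrict to faithful $M$-modules of maximal dimension.

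Conversely, every other non-trivial $K$-irreducible has to be ruled out. Those of dimension exceeding the above bound cannot be small, while the finitely many remaining candidates of smaller dimension are examined through their highest weights; none are $M$-irreducible, since $K$ is semisimple in all cases considered and hence admits no non-trivial one-dimensional representation. The main obstacle is the uniform treatment of the exceptional cases: one needs an explicit description of $M \subset K$ as a central $2$-extension together with the action of $M$ on the small-dimensional modules in question. Once that structural input is in place, the verification of both the irreducibility of the listed candidates and the non-existence of other small $K$-types reduces to a dimension count combined with weight-multiplicity inspection, as carried out in \cite{LePhD} and \cite{SWL}.
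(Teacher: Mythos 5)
This theorem is not proved in the paper at all: it is imported verbatim from \cite[Theorem 1]{SWL}, and the only thing the authors extract from that source in \S\ref{CC:split} is the value $\bska^\pi_\alpha=-\frac14$ via \cite[Lemma 4.2]{SWL}. So there is no internal proof to compare against; what you have written is a sketch of how the \emph{cited} result is established, and your strategy (split $\Rightarrow$ $M_0=\{1\}$ $\Rightarrow$ $M$ is a finite $2$-group, smallness $=$ $M$-irreducibility, explicit extraspecial-type description of $M$, dimension bound $(\dim V)^2\le|M|/|\Ker\chi\cap Z(M)|$, then elimination of the finitely many surviving candidates) is indeed the one used by Lee.

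As a standalone argument, though, your writeup has gaps. The decisive steps — that for $\mathfrak{so}(p,p)$ the diagonal embedding of $M$ forces one tensor factor of $\sigma_1\boxtimes\sigma_2$ to be trivial, and that the low-dimensional candidates in the $E_6$, $E_7$, $E_8$ cases other than the listed ones fail to be $M$-irreducible — are asserted and then deferred back to \cite{LePhD, SWL}, which is circular if the goal is a proof rather than an attribution. The one concrete exclusion argument you do give, that ``$K$ is semisimple and hence admits no non-trivial one-dimensional representation,'' only disposes of one-dimensional candidates and says nothing about, e.g., whether some other fundamental representation below the dimension bound could restrict irreducibly to $M$; that check genuinely requires the explicit central-extension structure of $M$ and a character or weight computation in each exceptional case. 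Finally, a small numerical slip: for $p$ even the half-spin modules of $\Spin(p)$ have dimension $2^{p/2-1}$, not $2^{\lfloor p/2\rfloor}$ (the latter is the dimension of the full spin module, which is reducible for $p$ even); the corrected count is what actually matches the irreducible representation of $M$ nontrivial on the relevant central element.
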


Let $\pi$ be one of the small $K$-types stated above.
Then putting $\varSigma^\pi=\varSigma$ and $\bsk^\pi_\alpha=\frac12$ ($\alpha\in\varSigma$),
we have \eqref{eq:main} and
$\tilde\delta_{G/K}^{-\frac12}\,\tilde\delta(\varSigma^\pi,\bsk^\pi)^{\frac12}
=\prod_{\alpha\in\varSigma^+}(\cosh \frac\alpha2)^{-\frac12}$.

\subsection{\boldmath The split Lie group of type $G_2$}\label{subsec:G2}
Let $G$ be the simply-connected split real simple Lie group $\tilde G_2$ of type $G_2$.
Then $K=K_1\times K_2$ with $K_1\simeq K_2 \simeq \SU(2)$.
Let $\mathfrak t$ be a Cartan subalgebra of $\mathfrak k$.
The root system $\varDelta_{\mathfrak k}$ for $(\mathfrak k_\bbC,\mathfrak t_\bbC)$
is written as $\{\pm\alpha_1\}\sqcup\{\pm\alpha_2\}$.
We assume $\{\pm\alpha_i\}$ is the root system of $((\mathfrak k_i)_\bbC,(\mathfrak t\cap\mathfrak k_i)_\bbC)$ ($i=1,2$)
and $||\alpha_1||<||\alpha_2||$ with respect to the norm induced from $B(\cdot,\cdot)$.
Let $\pr_i$ be the projection of $K$ to $K_i$ ($i=1,2$).
\begin{thm}[{\cite[Theorem 1]{SWL}}]\label{thm:G2}
Let $\sigma$ denote the irreducible representation of\/ $\SU(2)$ of dimension $2$.
Then both $\pi_1:=\sigma\circ\pr_1$ and  $\pi_2:=\sigma\circ\pr_2$
are small $K$-types.
There are no other non-trivial small $K$-types.
\end{thm}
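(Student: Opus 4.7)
The plan is to identify the small $K$-types by computing the finite group $M = Z_K(A)$ explicitly and decomposing each irreducible representation of $K = \SU(2)\times\SU(2)$ under $M$. Since $\tilde G_2$ is split, every restricted root space $\mathfrak g_\alpha$ is one-dimensional, and each $m\in M$ acts on it through a sign character $\chi_\alpha$; the kernel of the joint map $m \mapsto (\chi_\alpha(m))_{\alpha\in\varSigma}$ equals $Z(\tilde G_2)$. Hence $M/Z(\tilde G_2)$ is elementary abelian, and the standard split-group analysis identifies it as $(\bbZ/2)^2$, generated by the classes of $m_{\beta_i}:=\exp\!\bigl(\tfrac{\pi}{2}(X_{\beta_i}-Y_{\beta_i})\bigr)$ for the two simple restricted roots $\beta_1$ (short) and $\beta_2$ (long). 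Because $\tilde G_2$ is the simply-connected split form, $|Z(\tilde G_2)|=2$, so $|M|=8$.

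The essential step is to pin down the isomorphism type of $M$ by computing the three invariants $m_{\beta_1}^2$, $m_{\beta_2}^2$ and $[m_{\beta_1},m_{\beta_2}]$, all of which lie in $Z(\tilde G_2) \cong \bbZ/2$. Applying the Tits/Steinberg presentation of a simply-connected Chevalley group to the $G_2$ root datum, one expects each of these invariants to equal the non-trivial central element, yielding $M \cong Q_8$. The irreducible representations of $Q_8$ have dimensions $1,1,1,1,2$, the four one-dimensional ones factoring through $M^{\mathrm{ab}} \cong (\bbZ/2)^2$.

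With $M \cong Q_8$ in hand, every irreducible $K$-type is an external tensor product $\rho\boxtimes\rho'$ of irreducibles of $\SU(2)$, and can be $M$-irreducible only when its dimension $(\dim\rho)(\dim\rho')$ lies in $\{1,2\}$. The only one-dimensional $K$-type is the trivial one, and the two-dimensional candidates are exactly $\pi_1 = \sigma\circ\pr_1$ and $\pi_2 = \sigma\circ\pr_2$. To conclude I would verify that each $\pi_i|_M$ is the genuine two-dimensional irreducible of $Q_8$ (not a sum of two characters), which reduces to checking that the projection $\pr_i(M)\subset\SU(2)$ is itself non-abelian; this is a direct computation once the embedding of the two root $\SL(2)$-subgroups into $K$ is made explicit. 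The main obstacle is the computation of the central invariants $m_{\beta_i}^2$ and $[m_{\beta_1},m_{\beta_2}]$ in the simply-connected cover, which requires a concrete Chevalley basis of $\mathfrak g_{2(2)}$ and an explicit identification of the $\mathfrak{sl}_2$-triples for $\beta_1,\beta_2$ inside $\mathfrak k = \mathfrak{su}(2)\oplus\mathfrak{su}(2)$.
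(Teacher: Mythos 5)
A preliminary point of comparison: the paper itself offers no proof of this statement --- it is imported wholesale from Lee \cite{SWL} (only the values of $\bska^{\pi_1},\bska^{\pi_2}$ from \cite[Lemmas 4.2, 4.3]{SWL} are used later, in \S\ref{CC:G2}). So your outline is really being measured against the cited source, and your strategy --- compute $M$ explicitly, then sieve the $K$-types first by dimension and then by irreducibility of the restriction to $M$ --- is exactly the standard one. The surrounding reductions are all correct: $|M|=8$ with $M/Z(\tilde G_2)\cong(\bbZ/2)^2$; an irreducible $\rho\boxtimes\rho'$ can be $M$-irreducible only if $(\dim\rho)(\dim\rho')\le 2$; the $2$-dimensional $K$-types are precisely $\pi_1,\pi_2$; and $\sigma\circ\pr_i|_M$ is irreducible iff $\pr_i(M)$ is non-abelian.

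The genuine gap is that the entire content of the theorem is concentrated in the one step you only ``expect'' to come out right, namely $M\cong Q_8$ (equivalently, $[m_{\beta_1},m_{\beta_2}]=z$). If the central invariants came out differently --- say $M\cong(\bbZ/2)^3$ --- then $\pi_1$ and $\pi_2$ would fail to be small and the theorem would be false, so this cannot be waved at; and the Chevalley-basis computation you propose for it is genuinely delicate for $G_2$. Two observations let you close it without touching structure constants. (i) The injectivity of $\pr_i|_M$ is free: as a $\mathfrak k$-module $\mathfrak s\cong\bbC^2\boxtimes S^3\bbC^2$, so both $(-1,1)$ and $(1,-1)$ act by $-\id$ on $\mathfrak s\supset\mathfrak a$ and hence lie outside $M$; since $-1$ is the only involution of $\SU(2)$, any nontrivial subgroup of $M\cap K_i$ would contain one of these elements, so $M\cap K_1=M\cap K_2=\{1\}$. (ii) For the non-commutativity, the long roots of $\varSigma$ form an $A_2$ system whose coroots span $\mathfrak a$, giving a $\theta$-stable subalgebra $\mathfrak{sl}(3,\bbR)\subset\mathfrak g$ with the same $\mathfrak a$; its compact part is a copy of $\mathfrak{su}(2)$ embedded nontrivially in $\mathfrak k=\mathfrak{su}(2)\oplus\mathfrak{su}(2)$, whose analytic subgroup in the simply connected $K$ is $\SU(2)$, so the analytic subgroup for $\mathfrak{sl}(3,\bbR)$ is the double cover of $\SL(3,\bbR)$. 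Its $M$-group is the preimage in $\SU(2)$ of $\{\diag(\pm1,\pm1,\pm1)\}\cap\SO(3)$, i.e.\ $Q_8$, and it is contained in $M(\tilde G_2)$; since $|M(\tilde G_2)|=8$ this forces $M\cong Q_8$, and then (i) gives $\pr_i(M)\cong Q_8$ non-abelian for $i=1,2$, completing your argument.
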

If $\pi=\pi_1$, then
putting $\varSigma^\pi=\varSigma$ and $\bsk^\pi_\alpha=\frac12$ ($\alpha\in\varSigma$),
we have \eqref{eq:main} and
$\tilde\delta_{G/K}^{-\frac12}\,\tilde\delta(\varSigma^\pi,\bsk^\pi)^{\frac12}
=\prod_{\alpha\in\varSigma^+}(\cosh \frac\alpha2)^{-\frac12}$.
In contrast,
if $\pi=\pi_2$, then
\eqref{eq:main} never holds for any choice of $\varSigma^\pi$ and $\bsk^\pi$.

\section{Spherical functions}\label{sec:spherical}
Suppose $(\pi,V)$ is a small $K$-type of
a connected real semisimple Lie group $G=KAN$ with finite center.
In this section we study general properties of (elementary) $\pi$-spherical functions.

\subsection{The Chevalley restriction theorem}\label{subsec:Chevalley}
Let $\theta$ be the Cartan involution corresponding to $K$.
The differential of $\theta$ for $\mathfrak g$ is denoted by the same symbol.
Let $\mathfrak g=\mathfrak k+\mathfrak s$ be the Cartan decomposition by $\theta$.

Let us prove Theorem \ref{thm:Chevalley}.
Suppose first $\phi\in C^\infty(G,\pi,\pi)$.
Then $\varUpsilon^\pi(\phi)$ is $W$-invariant
since for $w=\tilde w M \in W$ $(\tilde w\in\tilde M)$ and $H \in \mathfrak a$ we have
\begin{align*}
\varUpsilon^\pi(\phi)(w H)
&=
\phi(\tilde w (\exp H) \tilde w^{-1})
=
\pi(\tilde w)\phi( \exp H)\pi( \tilde w^{-1})\\
&=
\phi( \exp H)\pi(\tilde w)\pi( \tilde w^{-1})
=
\varUpsilon^\pi(\phi)(H).
\end{align*}
Here we note $\phi( \exp H)$ is a scalar operator.
This shows $\varUpsilon^\pi$ maps $C^\infty(G,\pi,\pi)$ into $C^\infty(\mathfrak a)^W$.
The injectivity of $\varUpsilon^\pi$ is clear from $G=KAK$.
In order to show the surjectivity, take any $f\in C^\infty(\mathfrak a)^W$.
Then by the ordinary Chevalley restriction theorem (\cite{Da}), $f$ extends to some $\tilde f \in C^\infty(\mathfrak s)^K$.
Now $\phi(k\exp X):=\tilde f(X)\pi(k^{-1})$ ($k\in K,X\in\mathfrak s$) is a $\pi$-spherical function
satisfying $\varUpsilon^\pi(\phi)=f$.
The theorem is thus proved.

\subsection{The algebra of invariant differential operators}\label{subsec:Dpi}
Let $G\times_KV$ be the homogeneous vector bundle on $G/K$ associated to $(\pi,V)$.
The space $C^\infty(G\times_KV)$ of $C^\infty$ sections of $G\times_KV$ is identified with
\[
\bigl\{
f:G\xrightarrow{C^\infty} V\,\bigm|\,
f(xk)=\pi(k^{-1})f(x)\quad\text{for }x\in G\text{ and }k\in K
\bigr\},
\]
on which $g\in G$ acts by $\ell(g):f\mapsto f(g^{-1}\cdot)$.
Accordingly, $\Hom_K(V, C^\infty(G\times_KV))$
is identified with $C^\infty(G,\pi,\pi)$ by
\begin{align*}
\Hom_K(V, C^\infty(G\times_KV))
&\simeq
(C^\infty(G\times_KV) \otimes V^* )^K\\
&\simeq 
\{\phi : G\xrightarrow{C^\infty} V\otimes V^*\,|\,
\phi(k_1gk_2) = \pi(k_2^{-1}) \otimes \pi^*(k_1) \,\phi(g)
\}\\
&\simeq C^\infty(G,\pi,\pi).
\end{align*}
Here $(\pi^*,V^*)$ is the contragredient representation of $(\pi,V)$.
Let $U(\mathfrak g_\bbC)$ be the complexified universal enveloping algebra of $\mathfrak g$ and $U(\mathfrak g_\bbC)^K$ its subalgebra consisting of the $\Ad(K)$-invariant elements.
As a left-invariant differential operator on $G$,
each element of $U(\mathfrak g_\bbC)^K$ acts on $C^\infty(G\times_KV)$.
This defines a homomorphism from $U(\mathfrak g_\bbC)^K$
to the algebra $\boldsymbol D^\pi$ of $\ell(G)$-invariant differential operators on $G\times_KV$.
It is known that the homomorphism is onto and its kernel equals $U(\mathfrak g_\bbC)^K \cap U(\mathfrak g_\bbC) \Ker \pi^*$ (cf.~\cite{Deit}).
Here $\Ker \pi^*$ denotes the kernel of $\pi^*:U(\mathfrak k_\bbC)\to \End_\bbC V^*$.
Thus we have $\boldsymbol D^\pi \simeq U(\mathfrak g_\bbC)^K\bigm/U(\mathfrak g_\bbC)^K \cap U(\mathfrak g_\bbC) \Ker \pi^*$.
Note that $U(\mathfrak g_\bbC)^K$ and $\boldsymbol D^\pi$ act on $C^\infty(G,\pi,\pi)$ naturally.

Now $U(\mathfrak g_\bbC)\simeq  U(\mathfrak n_\bbC)\otimes U(\mathfrak a_\bbC)\otimes U(\mathfrak k_\bbC)$ by the Poincar\'e-Birkhoff-Witt theorem.
Let us consider the following linear map:
\begin{align*}
\gamma^\pi : U(\mathfrak g_\bbC) &\simeq 1\otimes U(\mathfrak a_\bbC)\otimes U(\mathfrak k_\bbC) \oplus \mathfrak n_\bbC U(\mathfrak g_\bbC)\\
&\xrightarrow{\text{projection}}
1\otimes U(\mathfrak a_\bbC)\otimes U(\mathfrak k_\bbC)
\simeq
S(\mathfrak a_\bbC)\otimes U(\mathfrak k_\bbC)\\
&\xrightarrow{(f(\lambda)\mapsto f(\lambda+\rho))\otimes \pi^*}
S(\mathfrak a_\bbC)\otimes \End_\bbC V^*,
\end{align*}
where $\rho:=\frac12\sum_{\alpha\in\varSigma^+}\bsm_\alpha \alpha \in \mathfrak a^*$.
Note that $\gamma^\pi(D) \in S(\mathfrak a_\bbC) \otimes \End_M V^*$
for any $D\in U(\mathfrak g_\bbC)^M$
and that $S(\mathfrak a_\bbC) \otimes \End_M V^* \simeq S(\mathfrak a_\bbC) \otimes \bbC \simeq S(\mathfrak a_\bbC)$ since $(\pi^*,V^*)$ is also a small $K$-type.
Hence
we consider $\gamma^\pi(D) \in S(\mathfrak a_\bbC)$ when $D \in U(\mathfrak g_\bbC)^M$. The following generalization of Harish-Chandra's celebrated exact sequence is given in \cite[\S11.3.3]{Wal}:
\begin{thm}\label{thm:HChomo}
The restriction of $\gamma^\pi$ to $U(\mathfrak g_\bbC)^K$ is an algebra homomorphism
into $S(\mathfrak a_\bbC)^W$
and the sequence
\[
0\to U(\mathfrak g_\bbC)^K \cap U(\mathfrak g_\bbC) \Ker\pi^*
\to U(\mathfrak g_\bbC)^K
\xrightarrow{\gamma^\pi} S(\mathfrak a_\bbC)^W \to 0
\]
is exact.
\end{thm}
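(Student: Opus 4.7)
The plan is to prove three things in order: (i) that $\gamma^\pi$ restricted to $U(\mathfrak g_\bbC)^K$ lands in $S(\mathfrak a_\bbC)^W$ and is an algebra homomorphism; (ii) that its kernel equals $U(\mathfrak g_\bbC)^K \cap U(\mathfrak g_\bbC)\Ker \pi^*$; and (iii) that it is surjective onto $S(\mathfrak a_\bbC)^W$. The overall strategy is to adapt Harish-Chandra's classical proof for the trivial $K$-type, with the smallness of $\pi$ entering precisely to guarantee $\End_M V^* \simeq \bbC$, so that $\gamma^\pi(D)$ is genuinely a scalar polynomial rather than an endomorphism-valued one.

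For (i), $W$-invariance of $\gamma^\pi(D)$ follows from $D \in U(\mathfrak g_\bbC)^K \subset U(\mathfrak g_\bbC)^{\tilde M}$: the group $\tilde M$ acts on $\mathfrak a$ through $W$ and normalises $\mathfrak n$, so the PBW projection onto $1 \otimes U(\mathfrak a_\bbC) \otimes U(\mathfrak k_\bbC)$ is $\tilde M$-equivariant; applying $\pi^*$ scalarises the $U(\mathfrak k_\bbC)$-factor by smallness, and the built-in shift $\lambda \mapsto \lambda + \rho$ converts the naive ``shifted $W$-action'' into the honest one. Multiplicativity is the standard computation: when rewriting $D_1 D_2$ in Iwasawa PBW form, the commutator terms either fall into $\mathfrak n_\bbC U(\mathfrak g_\bbC)$ and are killed by the projection, or produce contributions compensated exactly by the $\rho$-shift. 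For (ii), the inclusion $\supset$ is clear. For the reverse, given $K$-invariant $D$ with $\gamma^\pi(D) = 0$, decompose $D = E + F$ with $E \in \mathfrak n_\bbC U(\mathfrak g_\bbC)$ and $F \in U(\mathfrak a_\bbC) \otimes \Ker \pi^* \subset U(\mathfrak g_\bbC)\Ker \pi^*$; then use $K$-invariance (combined with Iwasawa $\mathfrak g = \mathfrak k + \mathfrak a + \mathfrak n$) to rewrite $E$ modulo $U(\mathfrak g_\bbC)\Ker \pi^*$ as a sum of strictly lower-order PBW terms, and iterate.

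For (iii), the main obstacle, I would use the $K$-equivariant symmetrisation map $\beta : S(\mathfrak g_\bbC) \simarrow U(\mathfrak g_\bbC)$ together with the classical Chevalley restriction $S(\mathfrak s_\bbC)^K \simarrow S(\mathfrak a_\bbC)^W$. Given $p \in S(\mathfrak a_\bbC)^W$ of total degree $d$, lift it to $\tilde p \in S(\mathfrak s_\bbC)^K$ via Chevalley, view $\tilde p$ as an element of $S(\mathfrak g_\bbC)^K$ through the natural embedding $S(\mathfrak s_\bbC) \hookrightarrow S(\mathfrak g_\bbC)$, and set $D := \beta(\tilde p) \in U(\mathfrak g_\bbC)^K$. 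A filtration-degree computation shows that $\gamma^\pi(D) = p + (\text{elements of $S(\mathfrak a_\bbC)^W$ of degree $< d$})$, and induction on $d$ then finishes the job.

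The genuinely delicate step is the lower-order analysis in (iii): commutators arising from the symmetrisation produce contributions in $S(\mathfrak a_\bbC) \otimes \End_\bbC V^*$ whose components must land in $\End_M V^*$ to be captured by $\gamma^\pi$. This is where the smallness of $\pi$ is indispensable: $\End_M V^* \simeq \bbC$ guarantees that all correction terms are scalar and hence lie in $S(\mathfrak a_\bbC)^W$, so the induction closes. Without smallness the image of $\gamma^\pi$ would generically lie in a proper subalgebra of $S(\mathfrak a_\bbC)^W \otimes \End_M V^*$, and neither multiplicativity nor surjectivity onto $S(\mathfrak a_\bbC)^W$ could survive.
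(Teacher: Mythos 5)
The paper does not actually prove Theorem \ref{thm:HChomo}; it quotes it from \cite[\S 11.3.3]{Wal}, so your proposal has to be measured against the classical Harish--Chandra argument that Wallach adapts. Your step (iii) is essentially that argument (symmetrisation, the polynomial Chevalley restriction theorem, downward induction on degree) and is sound \emph{provided} step (i) is in place. The genuine gap is in step (i), in the $W$-invariance. Your justification is that $\tilde M$ ``normalises $\mathfrak n$, so the PBW projection onto $1\otimes U(\mathfrak a_\bbC)\otimes U(\mathfrak k_\bbC)$ is $\tilde M$-equivariant.'' This is false: $\tilde M$ normalises $A$ and $M$ but not $N$; for $\tilde w\in\tilde M$ representing $w\in W$ one has $\Ad(\tilde w)\mathfrak n=\sum_{\alpha\in w\varSigma^+}\mathfrak g_\alpha\ne\mathfrak n$ unless $w=e$, so $\mathfrak n_\bbC U(\mathfrak g_\bbC)$ is not $\Ad(\tilde M)$-stable and the projection does not intertwine the $\tilde M$-actions. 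The assertion that the unshifted map transforms under the ``$\rho$-shifted $W$-action'' is precisely the content of the theorem, not something the shift produces automatically; it is the deepest part of the proof. The standard ways to establish it are a rank-one reduction, or (as in Wallach) identifying $\gamma^\pi(D)(\lambda)$ with the scalar by which $D$ acts on the $\pi$-isotypic component of the minimal principal series $\mathrm{Ind}_{MAN}^G(\pi|_M\otimes e^{\lambda}\otimes 1)$ --- smallness makes $\pi|_M$ irreducible and $w(\pi|_M)\simeq\pi|_M$ because $\pi|_M$ extends to $\tilde M\subset K$ --- and then invoking the generic equivalence of the principal series at $\lambda$ and $w\lambda$ together with Zariski density. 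None of this appears in your sketch.

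Two smaller points. First, multiplicativity does not need the $\rho$-shift to ``compensate commutator terms'': the shift is an algebra automorphism of $S(\mathfrak a_\bbC)$ and is irrelevant here. What the computation actually uses is the $K$-invariance of $D_2$, so that the $U(\mathfrak k_\bbC)$-factor of the projected $D_1$ can be commuted across $D_2$, together with $\End_M V^*\simeq\bbC$ so that the two scalar-valued factors commute. Second, your reverse inclusion for the kernel (``rewrite $E$ modulo $U(\mathfrak g_\bbC)\Ker\pi^*$ as lower-order PBW terms and iterate'') is not yet an argument: $E=D-D'$ is not $K$-invariant, and you do not explain why the iteration terminates inside $U(\mathfrak g_\bbC)\Ker\pi^*$. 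A cleaner route, consistent with what the paper sets up in \S\ref{subsec:JEF}, is to use Deitmar's identification of $U(\mathfrak g_\bbC)^K\cap U(\mathfrak g_\bbC)\Ker\pi^*$ with the kernel of $U(\mathfrak g_\bbC)^K\to\boldsymbol D^\pi$ together with the computation $D\,\psi^\pi_\lambda=\gamma^\pi(D)(\lambda)\,\psi^\pi_\lambda$: if $\gamma^\pi(D)=0$ then $D$ annihilates every $\psi^\pi_\lambda$, hence every $\phi^\pi_\lambda$, and a density (or jet-separation) argument forces $D=0$ in $\boldsymbol D^\pi$.
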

This in particular induces an algebra isomorphism $\boldsymbol D^\pi\simarrow S(\mathfrak a_\bbC)^W$ (also denoted by $\gamma^\pi$).

\subsection{Joint eigenfunctions and integral formulas}\label{subsec:JEF}
Suppose $\lambda\in\mathfrak a_\bbC^*$ and put
\[\mathscr A(G\times_K V,\lambda) = \{f \in C^\infty(G\times_KV)\,|\, Df=\gamma^\pi(D)(\lambda)\,f\quad \text{for any }D\in\boldsymbol D^\pi\}.\]
This is a $\ell(G)$-submodule of the space $\mathscr A(G\times_K V)$
of the real analytic sections of $G\times_K V$.
In fact, 
letting $\varOmega_{\mathfrak g}$ and $\varOmega_{\mathfrak k}$ respectively
the Casimir elements of $U(\mathfrak g_\bbC)$ and $U(\mathfrak k_\bbC)$ relative to the Killing form $B(\cdot,\cdot)$,
we see $\mathscr A(G\times_K V,\lambda)$ is annihilated by
the elliptic operator $\varOmega_{\mathfrak g}-2\varOmega_{\mathfrak k}-\gamma^\pi(\varOmega_{\mathfrak g}-2\varOmega_{\mathfrak k})(\lambda)$ on $G$.
Any $\pi$-spherical function $\phi^\pi_\lambda$ satisfying \eqref{eq:unique-sph}
is an $\End_\bbC V$-valued real analytic function since it is identified with an element of $\Hom_K(V, \mathscr A(G\times_K V,\lambda))$.

We shall prove Theorem \ref{thm:unique-sph} with the following integral formula:
\begin{equation}
\phi^\pi_\lambda(g)=\int_K e^{(\lambda+\rho)(A(kg))}\, \pi(u(kg)^{-1}k)\,dk\label{eq:integral-rep}
\end{equation}
Here 
for $g\in G$ we define $A(g)\in\mathfrak a$ and $u(g)\in K$ to be the unique elements such that $g\in N\exp A(g)\,u(g)$.
It is easy to see $\phi^\pi_\lambda$ defined by \eqref{eq:integral-rep} is a $\pi$-spherical function.
We claim this $\phi^\pi_\lambda$ satisfies \eqref{eq:unique-sph}.
Indeed, it is clear that $\phi^\pi_\lambda(1_G)=\id_V$.
Also, one easily sees
$\psi_\lambda^\pi(g):= e^{(\lambda+\rho)(A(g))}\, \pi^*(u(g))$ is an $\End_\bbC V^*$-valued $C^\infty$ function on $G$ satisfying
\begin{align*}
\psi_\lambda^\pi(gk_2)&=\psi_\lambda^\pi(g)\,\pi^*(k_2)\quad\text{for }(g,k_2)\in G\times K,\\
\psi_\lambda^\pi(namg)&=e^{(\lambda+\rho)(\log a)}\pi^*(m)\psi_\lambda^\pi(g)
\quad\text{for }(n,a,m,g)\in N\times A\times M\times G,\\
D\,\psi_\lambda^\pi&=\gamma^\pi(D)(\lambda)\,\psi_\lambda^\pi
\quad\text{for }D\in U(\mathfrak g_\bbC)^K.
\end{align*}
The claim follows since $\phi_\lambda^\pi(g)=\int_K \trans \psi_\lambda^\pi(kg)\pi(k)dk$.

To complete the proof,
we must show the uniqueness of $\phi^\pi_\lambda$.
To do so, suppose $\phi$ is a $\pi$-spherical functions satisfying \eqref{eq:unique-sph}.
Define a linear map $\varPhi : U(\mathfrak g_\bbC) \ni D \mapsto D(\phi^\pi_\lambda-\phi)(1_G) \in \End_\bbC V$.
Then 
we have
\begin{equation}\label{eq:XD}
\varPhi(XD)=-\varPhi(D)\pi(X)\text{ and }\varPhi(DX)=-\pi(X)\varPhi(D)
\text{ for any }X\in\mathfrak k_\bbC\text{ and }D\in U(\mathfrak g_\bbC).
\end{equation}
This implies $\varPhi([X,D])=[\pi(X),\varPhi(D)]$
and hence $\varPhi(\Ad(k)D)=\pi(k)\varPhi(D)\pi(k^{-1})$ for $k\in K$.
Since $\pi(U(\mathfrak k_\bbC))=\End_\bbC V$ by Burnside's theorem,
\eqref{eq:XD} also implies $\varPhi(U(\mathfrak g_\bbC))$ is
a two-sided ideal of $\End_\bbC V$.
Hence $\varPhi(U(\mathfrak g_\bbC))$ is either $\End_\bbC V$ or $\{0\}$.
In order to show the former case never happens, assume
$\varPhi(D)=\id_V$ for some $D$.
Then with $\tilde D:=\int \Ad(k)D\,dk \in U(\mathfrak g_\bbC)^K$ we have
$\varPhi(\tilde D)=\int \pi(k)\varPhi(D)\pi(k^{-1})\,dk=\int_K\id_V\,dk=\id_V$.
On the other hand,
$\varPhi(\tilde D)=\tilde D(\phi^\pi_\lambda-\phi)(1_G)
=\gamma^\pi(\tilde D)(\lambda)\, \phi^\pi_\lambda(1_G) - \gamma^\pi(\tilde D)(\lambda)\, \phi(1_G)=\gamma^\pi(\tilde D)(\lambda) \id_V - \gamma^\pi(\tilde D)(\lambda) \id_V=0$, contradicting the last calculation.
Thus $\varPhi(U(\mathfrak g_\bbC))=\{0\}$ and $D(\phi^\pi_\lambda-\phi)(1_G)=0$
for any $D\in U(\mathfrak g_\bbC)$.
Since $\phi^\pi_\lambda-\phi$ is real analytic,
we have $\phi^\pi_\lambda-\phi=0$, the desired uniqueness.

\begin{prop}
For any $\lambda\in\mathfrak a_\bbC^*$ one has
\begin{align}
\phi^{\pi}_\lambda(\theta g)&=\phi^\pi_{-\lambda} (g)\qquad(g\in G),
\label{eq:thetasph}\\
\varUpsilon^\pi(\phi^{\pi}_{\lambda})(-H)&=
\varUpsilon^\pi(\phi^{\pi}_{-\lambda})(H)\qquad(H\in\mathfrak a).
\label{eq:sphnegative}
\end{align}
\end{prop}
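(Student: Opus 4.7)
The plan is to prove \eqref{eq:thetasph} via the uniqueness part of Theorem \ref{thm:unique-sph} and then deduce \eqref{eq:sphnegative} immediately by restriction to $A$. Set $\tilde\phi(g) := \phi^\pi_\lambda(\theta g)$. Since $\theta|_K = \id$, $\tilde\phi$ is easily checked to be $\pi$-spherical and $\tilde\phi(1_G) = \id_V$. For $D \in U(\mathfrak g_\bbC)$, the chain rule at the Lie algebra level yields $(D\tilde\phi)(g) = ((\theta D)\phi^\pi_\lambda)(\theta g)$, where $\theta$ also denotes the algebra automorphism of $U(\mathfrak g_\bbC)$ extending the Cartan involution. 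Because $\theta|_{\mathfrak k} = \id$, $\theta$ commutes with $\Ad(K)$ and so preserves $U(\mathfrak g_\bbC)^K$; Theorem \ref{thm:HChomo} then gives $D\tilde\phi = \gamma^\pi(\theta D)(\lambda)\,\tilde\phi$ for every $D \in U(\mathfrak g_\bbC)^K$. By Theorem \ref{thm:unique-sph}, the identity \eqref{eq:thetasph} is therefore equivalent to the spectral identity
\[
\gamma^\pi(\theta D)(\lambda) = \gamma^\pi(D)(-\lambda) \qquad (D \in U(\mathfrak g_\bbC)^K).
\]

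This spectral identity is the heart of the proof, and the obstacle is that $\theta$ sends $\mathfrak n_\bbC$ to $\bar{\mathfrak n}_\bbC$, so $\theta$ fails to preserve the decomposition $U(\mathfrak g_\bbC) = \mathfrak n_\bbC U(\mathfrak g_\bbC) \oplus U(\mathfrak a_\bbC)U(\mathfrak k_\bbC)$ underlying the definition of $\gamma^\pi$. To remedy this I would pick a representative $\tilde w_0 \in \tilde M \subset K$ of the longest Weyl element $w_0 \in W$ (so that $w_0\varSigma^+ = \varSigma^-$) and introduce the twisted involution $\tau := \theta \circ \Ad(\tilde w_0)$ of $\mathfrak g_\bbC$. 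Because $\Ad(\tilde w_0)\mathfrak n_\bbC = \bar{\mathfrak n}_\bbC$ and $\Ad(\tilde w_0)|_{\mathfrak a_\bbC} = w_0$, the automorphism $\tau$ separately preserves $\mathfrak n_\bbC$, $\mathfrak a_\bbC$ and $\mathfrak k_\bbC$, hence respects the PBW splitting. A direct PBW bookkeeping---viewing $U(\mathfrak a_\bbC)\simeq S(\mathfrak a_\bbC)$ as polynomial functions on $\mathfrak a_\bbC^*$ and using $w_0\rho = -\rho$ to absorb the $\rho$-shift---then produces
\[
\gamma^\pi(\tau D)(\lambda) = \pi^*(\tilde w_0)\,\gamma^\pi(D)(-w_0\lambda)\,\pi^*(\tilde w_0^{-1}).
\]
Since $(\pi,V)$ is small, $\gamma^\pi(D)(-w_0\lambda)\in \End_M V^*\simeq \bbC$ is a scalar and the $\pi^*(\tilde w_0)$-conjugation acts trivially; the identity $\Ad(\tilde w_0)D = D$ for $D \in U(\mathfrak g_\bbC)^K$ gives $\tau D = \theta D$; and the $W$-invariance of $\gamma^\pi(D)\in S(\mathfrak a_\bbC)^W$ together with $w_0^2 = 1$ rewrites $\gamma^\pi(D)(-w_0\lambda)$ as $\gamma^\pi(D)(-\lambda)$. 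Together these three observations yield the required spectral identity.

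Granted \eqref{eq:thetasph}, the identity \eqref{eq:sphnegative} is immediate: specialising to $g = \exp H$ and using $\theta\exp H = \exp(-H)$ together with the definition of $\varUpsilon^\pi$ gives
\[
\varUpsilon^\pi(\phi^\pi_\lambda)(-H) = \phi^\pi_\lambda(\exp(-H)) = \phi^\pi_\lambda(\theta\exp H) = \phi^\pi_{-\lambda}(\exp H) = \varUpsilon^\pi(\phi^\pi_{-\lambda})(H).
\]
The only nontrivial technical point in this plan is the PBW computation of $\gamma^\pi(\tau D)(\lambda)$, which requires careful tracking of how $-w_0$ interacts with the $\rho$-shift in $\gamma^\pi$.
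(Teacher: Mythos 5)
Your proposal is correct and follows essentially the same route as the paper: the paper's one-line computation $\gamma^\pi(\theta D)(\lambda)=\gamma^\pi(\theta\Ad(\tilde w_0)D)(\lambda)=\gamma^\pi(D)(-w_0\lambda)=\gamma^\pi(D)(-\lambda)$ is exactly your twisted-involution argument with $\tau=\theta\circ\Ad(\tilde w_0)$, just compressed, and both proofs then conclude by the uniqueness in Theorem \ref{thm:unique-sph}. Your write-up merely makes explicit the PBW bookkeeping and the $w_0\rho=-\rho$ cancellation that the paper leaves to the reader.
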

\begin{proof}
Note $\phi^{\pi}_\lambda(\theta g)\in C^\infty(G,\pi,\pi)$. 
Take $\tilde w_0\in\tilde M$ so that $w_0=\tilde w_0 M$ is the longest element of $W$.
Then it easily follows from the definition of $\gamma^\pi$
that $\gamma^\pi(\theta D)(\lambda)=\gamma^\pi(\theta\Ad(\tilde w_0)D)(\lambda)=\gamma^\pi(D)(-w_0\lambda)=\gamma^\pi(D)(-\lambda)$ for any $D\in U(\mathfrak g_\bbC)^K$.
Thus Theorem \ref{thm:unique-sph} implies \eqref{eq:thetasph} and hence
\eqref{eq:sphnegative}.
\end{proof}

\begin{prop}
For any $\lambda\in\mathfrak a_\bbC^*$ one has
\begin{align}
\trans \phi^\pi_{\lambda} (g^{-1})&=\phi^{\pi^*}_{-\lambda}(g)\qquad(g\in G),
\label{eq:dualsph}\\
\varUpsilon^\pi(\phi^{\pi}_{\lambda})&=
\varUpsilon^{\pi^*}(\phi^{\pi^*}_{\lambda}).
\label{eq:dualsphrest}
\end{align}
\end{prop}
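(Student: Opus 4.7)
I would set $\psi(g):=\trans\phi^\pi_\lambda(g^{-1})$ and show it satisfies the three defining properties of $\phi^{\pi^*}_{-\lambda}$ in Theorem~\ref{thm:unique-sph}; this yields \eqref{eq:dualsph}, and \eqref{eq:dualsphrest} then follows by restriction to $\exp(\mathfrak a)$. Using $\trans\pi(k)=\pi^*(k^{-1})$, a direct computation gives
\[\psi(k_1gk_2)=\trans[\pi(k_1)\phi^\pi_\lambda(g^{-1})\pi(k_2)]=\pi^*(k_2^{-1})\psi(g)\pi^*(k_1^{-1}),\]
so $\psi\in C^\infty(G,\pi^*,\pi^*)$; also $\psi(1_G)=\trans\id_V=\id_{V^*}$.

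The main step is to show $\psi$ is a joint eigenfunction of $\boldsymbol D^{\pi^*}$. Writing $L_D$ and $R_D$ for the left- and right-invariant differential operators on $G$ associated with $D\in U(\mathfrak g_\bbC)$, differentiation of $\psi(g\exp tX)=\trans\phi^\pi_\lambda(\exp(-tX)g^{-1})$ (and its iterates) yields
\[(L_D\psi)(g)=\trans\bigl((R_D\phi^\pi_\lambda)(g^{-1})\bigr)\qquad(D\in U(\mathfrak g_\bbC)).\]
When $D\in U(\mathfrak g_\bbC)^K$, the operator $R_D$ commutes with every $L_E$ and preserves the $K$-equivariance of $\phi^\pi_\lambda$ (by the $\Ad(K)$-invariance of $D$), so $R_D\phi^\pi_\lambda\in C^\infty(G,\pi,\pi)$ is again a joint eigenfunction of $\{L_E:E\in U(\mathfrak g_\bbC)^K\}$ with the same eigenvalues as $\phi^\pi_\lambda$. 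The uniqueness argument underpinning Theorem~\ref{thm:unique-sph} (freed from the normalization $\phi(1_G)=\id_V$, it yields proportionality) then produces $R_D\phi^\pi_\lambda=c_D\phi^\pi_\lambda$ with $c_D=(R_D\phi^\pi_\lambda)(1_G)$. Combining this with the standard identity $R_Df(1_G)=L_{\sigma(D)}f(1_G)$ for the principal anti-automorphism $\sigma$ of $U(\mathfrak g_\bbC)$ ($\sigma|_\mathfrak g=-1$), one gets $c_D=\gamma^\pi(\sigma(D))(\lambda)$, whence $L_D\psi=\gamma^\pi(\sigma(D))(\lambda)\,\psi$.

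By Theorem~\ref{thm:unique-sph} applied to $\pi^*$, there is a unique $\mu\in W\backslash\mathfrak a_\bbC^*$ such that $\psi=\phi^{\pi^*}_\mu$ and $\gamma^{\pi^*}(D)(\mu)=\gamma^\pi(\sigma(D))(\lambda)$ for all $D\in U(\mathfrak g_\bbC)^K$. The crux of the whole argument is to pin down $\mu\equiv-\lambda\pmod W$, and for this I plan to establish the purely algebraic identity
\[\gamma^{\pi^*}(D)(-\lambda)=\gamma^\pi(\sigma(D))(\lambda)\qquad(D\in U(\mathfrak g_\bbC)^K),\]
which can be unwound from the PBW-based definition of $\gamma^\pi$ in Theorem~\ref{thm:HChomo}: on the $U(\mathfrak a_\bbC)$-factor $\sigma$ effects $\lambda\mapsto-\lambda$, accompanied by a $2\rho$ correction coming from the commutators needed to return $\sigma(D)$ to the standard $\mathfrak n\mathfrak a\mathfrak k$-order, while on the $U(\mathfrak k_\bbC)$-factor $\sigma$ corresponds to the transpose, intertwining $\pi^*$ with $\pi$. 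As a sanity check, for $D=\varOmega_\mathfrak g$ one has $\sigma(D)=D$ and $\gamma^\pi(\varOmega_\mathfrak g)=\gamma^{\pi^*}(\varOmega_\mathfrak g)$ is even in $\lambda$. This establishes \eqref{eq:dualsph}.

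Finally, \eqref{eq:dualsphrest} follows by evaluating \eqref{eq:dualsph} at $g=\exp H$: both $\trans\phi^\pi_\lambda(\exp(-H))$ and $\phi^{\pi^*}_{-\lambda}(\exp H)$ are scalar operators (by $M$-irreducibility of $V$ and $V^*$), and identifying the scalars yields $\varUpsilon^\pi(\phi^\pi_\lambda)(-H)=\varUpsilon^{\pi^*}(\phi^{\pi^*}_{-\lambda})(H)$. Combining with \eqref{eq:sphnegative}, whose left-hand side is $\varUpsilon^\pi(\phi^\pi_{-\lambda})(H)$, and substituting $-\lambda\mapsto\lambda$ gives \eqref{eq:dualsphrest}. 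The main obstacle throughout is the algebraic identification $\mu=-\lambda$.
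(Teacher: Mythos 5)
Your strategy is genuinely different from the paper's. The paper proves \eqref{eq:dualsph} directly from the integral representation \eqref{eq:integral-rep}: it writes $\phi^{\pi^*}_{-\lambda}(g)=\int_K\trans\psi^{\pi^*}_{-\lambda}(kg)\,\psi^\pi_\lambda(k)\,dk$, observes that $\varPsi_g(x)=\trans\psi^{\pi^*}_{-\lambda}(xg)\,\psi^\pi_\lambda(x)$ transforms by $e^{2\rho(\log a)}$ under $x\mapsto namx$, applies the substitution formula $\int_K\varPsi_g(k)\,dk=\int_K\varPsi_g(kg^{-1})\,dk$ (\cite[Ch.~I, Lemma 5.19]{Hel2}), and transposes. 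You instead characterize $\psi(g)=\trans\phi^\pi_\lambda(g^{-1})$ by the uniqueness statement of Theorem \ref{thm:unique-sph}. Your verifications that $\psi\in C^\infty(G,\pi^*,\pi^*)$, that $\psi(1_G)=\id_{V^*}$, and that $\psi$ is a joint eigenfunction of $\boldsymbol D^{\pi^*}$ (via $R_D\phi^\pi_\lambda=c_D\phi^\pi_\lambda$, which does follow from the proportionality version of the paper's uniqueness argument since $(R_D\phi^\pi_\lambda)(1_G)\in\End_KV=\bbC\,\id_V$) are all correct, and your derivation of \eqref{eq:dualsphrest} from \eqref{eq:dualsph} and \eqref{eq:sphnegative} coincides with the paper's. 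The advantage of your route is that it needs no integral formula at all; the disadvantage is where the difficulty then lands.

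Namely, the identity $\gamma^{\pi^*}(D)(-\lambda)=\gamma^\pi(\sigma(D))(\lambda)$, which you flag as the crux, is where essentially the entire content of \eqref{eq:dualsph} is concentrated, and ``unwinding it from the PBW definition'' is not a routine check. The antipode $\sigma$ carries $\mathfrak n_\bbC U(\mathfrak g_\bbC)$ to $U(\mathfrak g_\bbC)\mathfrak n_\bbC$, so computing $\gamma^{\pi}(\sigma(D))$ requires comparing the projection along $U(\mathfrak g_\bbC)=U(\mathfrak a_\bbC)U(\mathfrak k_\bbC)\oplus\mathfrak n_\bbC U(\mathfrak g_\bbC)$ with the projection along the opposite decomposition; the $2\rho$-shift and the $\lambda\mapsto-\lambda$ reflection arise precisely from that comparison, which is not carried out. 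As written, this step is close to restating the conclusion in algebraic form. To close the gap you could either (i) prove the identity by combining the relation $\gamma^\pi(\theta D)(\lambda)=\gamma^\pi(D)(-\lambda)$, already established in the proof of \eqref{eq:thetasph}, with an analysis of the anti-automorphism $\sigma\circ\theta$ (which preserves $\bar{\mathfrak n}_\bbC$ up to sign, fixes $\mathfrak a$, and satisfies $\pi^*\circ\sigma=\trans\circ\,\pi$ on $U(\mathfrak k_\bbC)$), or (ii) pin down the parameter $\mu$ with $\psi=\phi^{\pi^*}_\mu$ analytically, e.g.\ from the exponents in the asymptotic expansion of $\psi$ along $\mathfrak a_+$, or simply fall back on the integral-formula computation the paper uses.
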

\begin{proof}
We use $\psi^{\pi}_\lambda(g)$ defined above.
For each $g\in G$ put
$\varPsi_g(x):=\trans \psi_{-\lambda}^{\pi^*}(xg)\, \psi_\lambda^\pi(x)$.
Since
\[
\varPsi_g(namx)=e^{2\rho(\log a)}\varPsi_g(x)
\quad\text{for }(n,a,m,x)\in N\times A\times M\times G,
\]
we have $\int_K \varPsi_g(k)dk=\int_K \varPsi_g(kg^{-1})dk$
by \cite[Ch.~I, Lemma 5.19]{Hel2}.
Hence by \eqref{eq:integral-rep}
\begin{align*}
\phi^{\pi^*}_{-\lambda}(g)
&=\int_K \trans \psi_{-\lambda}^{\pi^*}(kg)\psi_\lambda^\pi(k)dk 
=\int_K \trans \psi_{-\lambda}^{\pi^*}(k)\psi_\lambda^\pi(kg^{-1})dk\\
&={\vphantom{\Bigm|}}^{\mathrm t}\biggl(
\int_K \trans \psi_\lambda^\pi(kg^{-1})\psi_{-\lambda}^{\pi^*}(k)dk 
\biggr) 
=\trans \phi^\pi_{\lambda} (g^{-1}).
\end{align*}
Thus we get  \eqref{eq:dualsph}.

Now for any $a\in A$ it follows
from \eqref{eq:thetasph} and \eqref{eq:dualsph} that
\[
\phi_\lambda^\pi(a)
=
\phi_{-\lambda}^\pi(a^{-1})
=\trans\phi_{\lambda}^{\pi^*}(a),
\]
which proves \eqref{eq:dualsphrest}.
\end{proof}

\begin{rem}
The uniqueness of $\phi^\pi_\lambda$ is also obtained by general theory
(cf.~\cite[Theorem 3.8]{Camporesi2}, \cite[Theorem 1.4.5]{GV}).
By use of \eqref{eq:dualsph} we can rewrite \eqref{eq:integral-rep} as
\begin{equation}
\phi^\pi_\lambda(g)=
\int_Ke^{(\lambda-\rho)(H(gk))}\,\pi(k\kappa(gk)^{-1})\,dk. 
\label{eq:eisenstein}
\end{equation}
Here given $x\in G$, define $\kappa(x)\in K$ and $H(x)\in\mathfrak{a}$ by $x\in \kappa(x)e^{H(x)}N$. 
Formula \eqref{eq:integral-rep} (or \eqref{eq:eisenstein}) is a special case of the integral representations of elementary spherical functions (or more generally \emph{Eisenstein integral}s) given by Harish-Chandra (cf. \cite[\S 6.2.2, \S 9.1.5]{War}, \cite[(42)]{Camporesi2}, \cite[(14.20)]{Kn0}).
\end{rem}

\subsection{\boldmath The multiplicity function $\bska^\pi$}\label{subsec:kappa}

For any $\lambda \in \mathfrak a^*$ let $H_\lambda\in\mathfrak a$ be the unique element
such that $\lambda=B(H_\lambda,\cdot)$.
For each $\alpha\in\varSigma$ we choose an orthonormal basis $\bigl\{X_\alpha^{(1)},\ldots,X_\alpha^{(\bsm_\alpha)}\bigr\}$ of $\mathfrak g_\alpha$
with respect to the inner product $-\frac{||\alpha||^2}{2}B(\cdot,\theta \cdot)$.
Note that $\bigl[X_\alpha^{(i)},\theta X_\alpha^{(i)}\bigr] = -\alpha^\vee$ for $i=1,\ldots,\bsm_\alpha$ ($\alpha^\vee:=\frac{2H_\alpha}{||\alpha||^2}$).
\begin{lem}\label{lem:kappadef}
The element $\sum_{i=1}^{\bsm_\alpha}\bigl(X_\alpha^{(i)}+\theta X_\alpha^{(i)}\bigr)^2 \in U(\mathfrak g_\bbC)$ does not depend on the choice of $\bigl\{X_\alpha^{(i)}\bigr\}$ and is $\Ad(M)$-invariant. 
Moreover for any $\tilde w \in \tilde M$
we have $\Ad(\tilde w)\sum_{i=1}^{\bsm_\alpha}\bigl(X_\alpha^{(i)}+\theta X_\alpha^{(i)}\bigr)^2=\sum_{i=1}^{\bsm_{w\alpha}}\bigl(X_{w\alpha}^{(i)}+\theta X_{w\alpha}^{(i)}\bigr)^2$ with $w:=\tilde w M \in W$.
\end{lem}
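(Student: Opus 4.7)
The plan is to reduce all three assertions to a single linear-algebra fact: if $V$ is a finite-dimensional vector space equipped with a non-degenerate symmetric bilinear form $\langle\cdot,\cdot\rangle$ and $T:V\to A$ is a linear map into an associative algebra, then the element $\sum_i T(v_i)^2 \in A$, computed over any orthonormal basis $\{v_i\}$ of $V$, depends only on the triple $(V,\langle\cdot,\cdot\rangle,T)$. This is because $\sum_i v_i\otimes v_i\in V\otimes V$ is the canonical tensor determined by $\langle\cdot,\cdot\rangle$ (via the identification $V\simeq V^*$ furnished by the form), and then $T\otimes T$ composed with the multiplication of $A$ yields the sum in question, manifestly independent of the choice of basis.

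I would apply this principle with $V=\mathfrak g_\alpha$, with the inner product $-\frac{\|\alpha\|^2}{2}B(\cdot,\theta\cdot)$, with $A=U(\mathfrak g_\bbC)$, and with $T:X\mapsto X+\theta X$. This immediately gives the claim that $C_\alpha:=\sum_{i=1}^{\bsm_\alpha}(X_\alpha^{(i)}+\theta X_\alpha^{(i)})^2$ is independent of the choice of orthonormal basis $\{X_\alpha^{(i)}\}$.

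For the $\Ad(M)$-invariance, the key point is to verify that for $m\in M$, the operator $\Ad(m)$ stabilises $\mathfrak g_\alpha$ (since $M$ centralises $\mathfrak a$), commutes with $\theta$ (since $M\subset K$), and preserves the Killing form $B$. Hence $\Ad(m)$ is an isometry of $(\mathfrak g_\alpha,-\frac{\|\alpha\|^2}{2}B(\cdot,\theta\cdot))$ and commutes with the linear map $X\mapsto X+\theta X$. Therefore $\{\Ad(m)X_\alpha^{(i)}\}_i$ is again an orthonormal basis of $\mathfrak g_\alpha$, and the basis-independence gives $\Ad(m)C_\alpha=C_\alpha$.

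For the last assertion, I would argue in exactly the same way for $\tilde w\in\tilde M$: $\Ad(\tilde w)$ carries $\mathfrak g_\alpha$ isomorphically to $\mathfrak g_{w\alpha}$, still commutes with $\theta$, and still preserves $B$; moreover $W$ acts by isometries on $\mathfrak a^*$, so $\|w\alpha\|=\|\alpha\|$ (and in particular $\bsm_\alpha=\bsm_{w\alpha}$). Thus $\Ad(\tilde w)$ is an isometry from $(\mathfrak g_\alpha,-\frac{\|\alpha\|^2}{2}B(\cdot,\theta\cdot))$ onto $(\mathfrak g_{w\alpha},-\frac{\|w\alpha\|^2}{2}B(\cdot,\theta\cdot))$, so $\{\Ad(\tilde w)X_\alpha^{(i)}\}_i$ is an orthonormal basis of $\mathfrak g_{w\alpha}$. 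Invoking basis-independence once more identifies $\Ad(\tilde w)C_\alpha$ with $C_{w\alpha}$. I anticipate no real obstacle here; the entire lemma is bookkeeping once one isolates the basis-independence principle, and the only item that deserves a moment's attention is the scaling by $\|\alpha\|^2$ in the inner product, which is exactly what makes the formula $W$-equivariant.
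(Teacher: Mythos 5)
Your argument is correct and is essentially the paper's own proof: the authors likewise observe that $\sum_i \bigl(X_\alpha^{(i)}\bigr)^{\otimes 2}\in\mathfrak g_\alpha^{\otimes 2}$ is the canonical basis-independent, $M$-invariant tensor and push it forward through the $M$-linear map $X\otimes Y\mapsto (X+\theta X)(Y+\theta Y)$, then note that $\{\Ad(\tilde w)X_\alpha^{(i)}\}$ is an orthonormal basis of $\mathfrak g_{w\alpha}$ for the last assertion. Your packaging of this as a single basis-independence principle, with the isometry and $\theta$-equivariance of $\Ad(m)$ and $\Ad(\tilde w)$ checked explicitly, is just a more spelled-out version of the same argument.
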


\begin{proof}
The element $\sum_{i=1}^{\bsm_\alpha}\bigl(X_\alpha^{(i)}\bigr)^{\otimes2}\in \mathfrak g_\alpha^{\otimes2}$ is independent of the choice of $\bigl\{X_\alpha^{(i)}\bigr\}$
and is $M$-invariant since $M$ acts isometrically on $\mathfrak g_\alpha$.
Now the first assertion follows
since $\mathfrak g_\alpha^{\otimes2}\ni X\otimes Y \mapsto (X+\theta X)(Y+\theta Y) \in U(\mathfrak g_\bbC)$ is $M$-linear.
The second assertion is also immediate since $\bigl\{\Ad(\tilde w)X_\alpha^{(i)}\bigr\}$ is an orthonormal basis of $\mathfrak g_{w\alpha}$.
\end{proof}

Since $\pi$ is small, it follows from 
Schur's lemma that $\frac1{\bsm_\alpha}\sum_{i=1}^{\bsm_\alpha}\pi\bigl(X_\alpha^{(i)}+\theta X_\alpha^{(i)}\bigr)^2$ is a scalar operator.
We denote this value by $\bska^\pi_\alpha$, namely
\begin{equation*}
\bska^\pi_\alpha \id_V = \frac1{\bsm_\alpha}\sum_{i=1}^{\bsm_\alpha}\pi\bigl(X_\alpha^{(i)}+\theta X_\alpha^{(i)}\bigr)^2.
\end{equation*}
By the second statement of Lemma \ref{lem:kappadef}, $\varSigma\ni\alpha\mapsto \bska^\pi_\alpha$ is a multiplicity function.
The next lemma is useful to calculate $\bska^\pi_\alpha$'s in various examples:

\begin{lem}\label{lem:kappa}
Suppose $\alpha\in\varSigma$.
For any $X_\alpha \in\mathfrak g_\alpha$ such that
$-\frac{||\alpha||^2}2 B(X_\alpha,\theta X_\alpha)=1$
it holds that
\[
\bska_\alpha^\pi=\frac{1}{\dim V} \trace (\pi(X_\alpha+\theta X_\alpha)^2).
\]
\end{lem}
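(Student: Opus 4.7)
The plan is to extract the scalar $\bska_\alpha^\pi$ by taking the trace of its defining relation and exploiting $\Ad(M)$-invariance. Fix an orthonormal basis $\{X_\alpha^{(i)}\}_{i=1}^{\bsm_\alpha}$ of $\mathfrak g_\alpha$ with respect to the inner product $-\tfrac{||\alpha||^2}{2}B(\cdot,\theta\cdot)$ and take the trace of the identity
\[
\bska_\alpha^\pi\,\id_V = \frac{1}{\bsm_\alpha}\sum_{i=1}^{\bsm_\alpha}\pi\bigl(X_\alpha^{(i)} + \theta X_\alpha^{(i)}\bigr)^2,
\]
yielding $\bska_\alpha^\pi\dim V = \frac{1}{\bsm_\alpha}\sum_{i=1}^{\bsm_\alpha}\trace\pi\bigl(X_\alpha^{(i)} + \theta X_\alpha^{(i)}\bigr)^2$. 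The lemma then amounts to showing that $\trace\pi(X+\theta X)^2$ takes the same value at every unit vector $X\in\mathfrak g_\alpha$, i.e.\ that this quadratic form on $\mathfrak g_\alpha$ is a scalar multiple of the norm square $-\tfrac{||\alpha||^2}{2}B(X,\theta X)$.

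To prove this I would polarize and consider the symmetric bilinear form $\varphi(X,Y) := \trace\pi(X+\theta X)\pi(Y+\theta Y)$ on $\mathfrak g_\alpha$, symmetric by cyclicity of the trace. A short check shows $\varphi$ is $\Ad(M)$-invariant: for $m\in M$ we have $\Ad(m)\mathfrak g_\alpha = \mathfrak g_\alpha$ and $\theta\Ad(m) = \Ad(m)\theta$, so $\pi\bigl(\Ad(m)X + \theta\Ad(m)X\bigr) = \pi(m)\pi(X+\theta X)\pi(m)^{-1}$, and cyclicity of the trace yields $\varphi(\Ad(m)X,\Ad(m)Y) = \varphi(X,Y)$. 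The inner product $\langle X,Y\rangle := -\tfrac{||\alpha||^2}{2}B(X,\theta Y)$ is likewise $\Ad(M)$-invariant.

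The key step, which I expect to be the main obstacle, is to conclude from this that $\varphi = c\,\langle\cdot,\cdot\rangle$ for some constant $c$. This reduces to showing that the space of $\Ad(M)$-invariant symmetric bilinear forms on $\mathfrak g_\alpha$ is one-dimensional, which in turn rests on the $M$-irreducibility (and self-duality) of the restricted root space $\mathfrak g_\alpha$ — a structural property of real semisimple Lie algebras that is not a formal consequence of smallness of $(\pi,V)$ alone, but which holds in all the cases relevant to the classification in \S\ref{sec:CC} and could also be invoked from a uniform source. Once proportionality is established, evaluating on the orthonormal basis gives $\bska_\alpha^\pi\dim V = \frac{1}{\bsm_\alpha}\sum_{i}\varphi(X_\alpha^{(i)},X_\alpha^{(i)}) = c$, and then for any unit vector $X_\alpha$ we obtain $\trace\pi(X_\alpha+\theta X_\alpha)^2 = \varphi(X_\alpha,X_\alpha) = c\,\langle X_\alpha,X_\alpha\rangle = \bska_\alpha^\pi\dim V$, which is the claimed formula.
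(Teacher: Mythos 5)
Your reduction is sound and you have put your finger on exactly the right crux --- that $\trace\bigl(\pi(X+\theta X)^2\bigr)$ must take a constant value on the unit sphere of $\mathfrak g_\alpha$ --- but you leave that crux unproven: the appeal to a property that ``holds in all the cases relevant to the classification \ldots{} and could also be invoked from a uniform source'' is precisely where the gap sits. The uniform source exists and is what the paper invokes, namely Kostant's theorem (Theorem \ref{thm:Ko}): for $\bsm_\alpha>1$ the identity component $M_0$ of $M$ acts \emph{transitively} on the unit sphere of $\mathfrak g_\alpha$, while for $\bsm_\alpha=1$ the unit sphere is $\{\pm X_\alpha\}$ and the function is even in $X$, so constancy is trivial. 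Once you have transitivity, your detour through invariant symmetric bilinear forms is unnecessary: the function $X\mapsto\trace\bigl(\pi(X+\theta X)^2\bigr)$ is $\Ad(M_0)$-invariant by the conjugation identity $\pi(\Ad(m)X+\theta\Ad(m)X)=\pi(m)\pi(X+\theta X)\pi(m)^{-1}$ that you already verified, hence constant on a single orbit, i.e.\ on the whole unit sphere; averaging over the orthonormal basis then gives the claimed formula at once.

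A secondary caution about the route you sketched: even granting $M_0$-irreducibility of $\mathfrak g_\alpha$ (which does follow from transitivity on the sphere), the one-dimensionality of the space of \emph{invariant symmetric} bilinear forms on a real irreducible module of a compact group is not purely formal --- the full space of invariant bilinear forms has dimension equal to that of the commutant ($\bbR$, $\bbC$ or $\bbH$), and one must check that the extra forms coming from the complex or quaternionic structures are antisymmetric. That check succeeds, but it is an additional argument you would need to write out; the transitivity statement short-circuits all of it.
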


\begin{proof}
By the following theorem,
$\trace (\pi(X_\alpha+\theta X_\alpha)^2)$
takes a constant value for any $X_\alpha$ in the unit sphere of $\mathfrak g_\alpha$.
The rest of the proof is easy.
\end{proof}

\begin{thm}[{\cite[Theorem 2.1.7]{Ko}}]\label{thm:Ko}
For any $\alpha\in\ \varSigma$ with $\bsm_\alpha>1$,
$M_0$ (the identity component of $M$) acts transitively on the unit sphere of\/ $\mathfrak g_\alpha$.
\end{thm}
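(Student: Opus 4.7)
The plan is to reduce the transitivity of $M_0$ on the unit sphere $S_\alpha \subset \mathfrak g_\alpha$ to an infinitesimal assertion and then verify the latter by a direct Lie-bracket computation. First, observe that $M$ preserves the inner product $\langle\cdot,\cdot\rangle := -\frac{\|\alpha\|^2}{2}B(\cdot,\theta\cdot)$ on $\mathfrak g_\alpha$, since $\Ad(K)$ preserves $B$ and commutes with $\theta$. Hence $M_0$ acts on $S_\alpha$ by orthogonal transformations. Because $\bsm_\alpha > 1$, $S_\alpha$ is connected, so transitivity of the connected group $M_0$ reduces to showing that every $M_0$-orbit in $S_\alpha$ is open, which in turn is equivalent to the infinitesimal condition $[\mathfrak m,X] = X^\perp \cap \mathfrak g_\alpha$ at every unit vector $X \in \mathfrak g_\alpha$.

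For the key inclusion $X^\perp \subset [\mathfrak m,X]$, I would produce, for each $Y \in \mathfrak g_\alpha$ with $\langle X,Y\rangle = 0$, an explicit element $Z \in \mathfrak m$ satisfying $[Z,X] = cY$ for some nonzero constant $c$. The natural candidate is $Z := [X,\theta Y] \in [\mathfrak g_\alpha,\mathfrak g_{-\alpha}] \subset \mathfrak a \oplus \mathfrak m$. Its $\mathfrak a$-component is pinned down by the invariance of the Killing form: for $H \in \mathfrak a$,
\begin{equation*}
B(H,[X,\theta Y]) = B([H,X],\theta Y) = \alpha(H)\,B(X,\theta Y),
\end{equation*}
which vanishes by the orthogonality $B(X,\theta Y) = 0$. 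Thus $Z \in \mathfrak m$. The Jacobi identity then yields $[Z,X] = [[X,\theta Y],X] = -[X,[X,\theta Y]]$, and decomposing under $\ad H_\alpha$ to isolate the weight-$\alpha$ component should recover a nonzero multiple of $Y$. The normalization $[X_\alpha,\theta X_\alpha] = -\alpha^\vee$ together with an $\mathfrak{sl}_2$-analysis of the triple $(H_\alpha,X_\alpha,\theta X_\alpha)$ should fix $c$ as a positive universal constant depending only on $\|\alpha\|$.

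The main obstacle is the case when $2\alpha \in \varSigma$: then $[\mathfrak g_\alpha,\mathfrak g_\alpha] \subset \mathfrak g_{2\alpha}$ is nonzero, and the iterated bracket $[X,[X,\theta Y]]$ can pick up additional $\mathfrak g_\alpha$-contributions routed through $\mathfrak g_{\pm 2\alpha}$, spoiling the clean identification with a multiple of $Y$. To handle this, I would pass to the $\theta$-stable rank-one subalgebra $\mathfrak g^{(\alpha)}$ generated by $\mathfrak g_{\pm\alpha}$ (together with $\mathfrak g_{\pm 2\alpha}$ when nonzero), note that the relevant $M_0$-action on $\mathfrak g_\alpha$ is governed by $\mathfrak g^{(\alpha)}$, and invoke the classification of real rank-one semisimple Lie algebras. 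This reduces the problem to one of four concrete situations corresponding to the real, complex, quaternionic, or octonionic hyperbolic space, in each of which the infinitesimal action of the centralizer-type subgroup is the standard transitive action on the corresponding sphere ($\SO(n)$ on $S^{n-1}$, $\SU(n)$ on $S^{2n-1}$, $\Sp(1)\Sp(n)$ on $S^{4n-1}$, or $\Spin(7)$ on $S^7$). Either route yields the desired transitivity of $M_0$ on $S_\alpha$.
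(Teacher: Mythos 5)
The paper does not prove this statement at all --- it is quoted verbatim from Kostant \cite[Theorem 2.1.7]{Ko} --- so your proposal can only be measured against a correct completion of your own strategy. The skeleton is fine: $M_0$ acts orthogonally on $\mathfrak g_\alpha$, the sphere is connected because $\bsm_\alpha>1$, so it suffices to show every orbit is open, i.e.\ $[\mathfrak m,X]=X^\perp\cap\mathfrak g_\alpha$ for each unit $X$; and your verification that $Z:=[X,\theta Y]$ lies in $\mathfrak m$ when $B(X,\theta Y)=0$ is correct. The genuine gap is the next step: the assertion that $[[X,\theta Y],X]$ is a nonzero multiple of $Y$, to be extracted by ``isolating the weight-$\alpha$ component'' and an $\mathfrak{sl}_2$-analysis. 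Weight considerations give nothing here --- $[X,\theta Y]\in\mathfrak g_0$ and hence $[[X,\theta Y],X]\in\mathfrak g_\alpha$ automatically, whether or not $2\alpha\in\varSigma$, so your worry about contamination through $\mathfrak g_{\pm2\alpha}$ is a red herring --- and the proportionality claim itself is false in general: $Y\mapsto[[X,\theta Y],X]$ is (up to a positive constant) the composition $TT^*$ of the orbit map $T:\mathfrak m\to\mathfrak g_\alpha$, $W\mapsto[W,X]$, with its adjoint, and this is a scalar on $X^\perp$ only when the isotropy representation there is irreducible (it is not, e.g., for $\mathfrak{sp}(n,1)$). What your candidate actually yields is the identity $\langle[[X,\theta Y],X],Y\rangle=-\tfrac{\|\alpha\|^2}{2}B\bigl([X,\theta Y],[X,\theta Y]\bigr)\ge0$, by invariance of $B$ and negative definiteness of $B$ on $\mathfrak k$.

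With that identity the argument closes by duality rather than by producing an explicit preimage: if $[\mathfrak m,X]\subsetneq X^\perp$, choose $0\ne Y\in X^\perp$ orthogonal to $[\mathfrak m,X]$; then $0=\langle[W,X],Y\rangle=-\tfrac{\|\alpha\|^2}{2}B\bigl(W,[X,\theta Y]\bigr)$ for all $W\in\mathfrak m$, so $[X,\theta Y]=0$; but $\theta Y\in\mathfrak g_{-\alpha}$ is then annihilated by $\ad X$ and has $\ad\alpha^\vee$-eigenvalue $-2$ for the triple $(X,\alpha^\vee,-\theta X)$ (note $[X,\theta X]=-\alpha^\vee$ for every unit $X$, since $[X,\theta X]\in\mathfrak s\cap\mathfrak g_0=\mathfrak a$), and a nonzero vector of negative weight killed by $e$ cannot exist in a finite-dimensional $\mathfrak{sl}_2$-module, so $Y=0$. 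Neither the duality step nor this injectivity step appears in your write-up, and your fallback for $2\alpha\in\varSigma$ --- reduction to the rank-one subalgebra plus the classification --- besides being unnecessary, is incompletely executed: the groups you list are not the actual $M_0$-actions (they are $U(n-1)$ on $S^{2n-3}$, $Sp(n-1)\times Sp(1)$ on $S^{4n-5}$, $\Spin(7)$ on $S^{7}$, etc.), and the long-root spaces with $\bsm_{2\beta}=3$ or $7$ (the $SO(3)$-sphere $S^2$ and the $SO(7)$-sphere $S^6$), which the theorem also covers, are missing from your case list.
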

Note $M_0$ acts trivially on $\mathfrak g_\alpha$ for $\alpha\in\ \varSigma$ with $\bsm_\alpha=1$.
\begin{prop}\label{prop:ND}
For any $\alpha\in \varSigma$, $\bska^\pi_\alpha\le0$.
Furthermore,
$\bska^\pi_\alpha=0$ if and only if 
$\pi(X_\alpha+\theta X_\alpha)=0$ for any $X_\alpha\in\mathfrak g_\alpha$.
\end{prop}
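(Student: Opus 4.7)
\textbf{Proof plan for Proposition \ref{prop:ND}.} The plan is to exploit the fact that $X_\alpha^{(i)}+\theta X_\alpha^{(i)}$ lies in $\mathfrak{k}$, together with the unitarity of $\pi$, to show that each summand in the definition of $\bska_\alpha^\pi \id_V$ is a negative semi-definite operator on $V$.

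First I would note that since $\theta$ is the Cartan involution, $Y_i := X_\alpha^{(i)}+\theta X_\alpha^{(i)}$ lies in the $(+1)$-eigenspace $\mathfrak{k}$ of $\theta$. Because $(\pi,V)$ is unitary, the image $\pi(\mathfrak{k})$ consists of skew-Hermitian operators on $V$, hence $\pi(Y_i)^*=-\pi(Y_i)$ and therefore $\pi(Y_i)^2=-\pi(Y_i)^*\pi(Y_i)$ is a negative semi-definite Hermitian operator. Summing over $i=1,\dots,\bsm_\alpha$ and dividing by $\bsm_\alpha>0$ gives that $\bska_\alpha^\pi \id_V$ is negative semi-definite, and since this operator is the scalar $\bska_\alpha^\pi$ times the identity, it immediately forces $\bska_\alpha^\pi\le 0$.

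For the second assertion, I would argue that equality $\bska_\alpha^\pi=0$ is equivalent to $\sum_i \pi(Y_i)^*\pi(Y_i)=0$. Since each $\pi(Y_i)^*\pi(Y_i)$ is positive semi-definite, the sum vanishes if and only if each summand vanishes, i.e.\ $\pi(Y_i)=0$ for every $i$. As $\{X_\alpha^{(i)}\}$ is a basis of $\mathfrak{g}_\alpha$, the linear map $\mathfrak{g}_\alpha \ni X_\alpha \mapsto \pi(X_\alpha+\theta X_\alpha)\in \End_\bbC V$ then vanishes identically, giving the stated characterization. Conversely, if $\pi(X_\alpha+\theta X_\alpha)=0$ for all $X_\alpha \in \mathfrak{g}_\alpha$, then in particular each $\pi(Y_i)=0$ and $\bska_\alpha^\pi=0$.

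There is no substantive obstacle here; the argument is essentially a one-line observation that unitarity of $\pi$ makes $\pi(\mathfrak{k})$ skew-Hermitian, so that squared elements are $\le 0$. The only point requiring mild care is the equivalence of ``each summand vanishes'' with ``$\pi(X_\alpha+\theta X_\alpha)=0$ for every $X_\alpha$'', which I would handle by the linearity remark above.
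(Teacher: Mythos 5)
Your proof is correct, and it takes a slightly but genuinely different route from the paper's in the equality case. The paper first picks a single normalized $X_\alpha\in\mathfrak g_\alpha$, diagonalizes the skew-Hermitian operator $\pi(X_\alpha+\theta X_\alpha)$, and computes $\bska^\pi_\alpha=-\frac1{\dim V}\sum_j\lambda_j^2$ via Lemma \ref{lem:kappa}; it then needs Theorem \ref{thm:Ko} (transitivity of $M_0$ on the unit sphere of $\mathfrak g_\alpha$) to pass from the vanishing of $\pi(X_\alpha+\theta X_\alpha)$ for that one $X_\alpha$ to the vanishing for all of $\mathfrak g_\alpha$. You instead work directly with the defining sum over the orthonormal basis $\{X_\alpha^{(i)}\}$: each $\pi(Y_i)^2=-\pi(Y_i)^*\pi(Y_i)$ is negative semi-definite, so the vanishing of the scalar forces every $\pi(Y_i)=0$, and the linearity of $X\mapsto\pi(X+\theta X)$ finishes the argument. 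This makes your proof more self-contained — it bypasses both Lemma \ref{lem:kappa} and Kostant's transitivity theorem — whereas the paper's detour through Lemma \ref{lem:kappa} buys a trace formula that is reused repeatedly in the case-by-case computations of \S\ref{sec:CC}. Both arguments are sound; the shared kernel is simply that elements of $\pi(\mathfrak k)$ are skew-Hermitian for the $K$-invariant inner product on $V$.
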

\begin{proof}
Suppose $\alpha\in\varSigma$ and take $X_\alpha\in\mathfrak g_\alpha$ so that $-B(X_\alpha,\theta X_\alpha)=\frac2{||\alpha||^2}$.
Then $\pi(X_\alpha+\theta X_\alpha)$ is diagonalizable
since it is skew-Hermitian 
with respect to the inner product of $V$.
With a basis $\{v_1,\ldots,v_n\}$ of $V$
and real numbers $\lambda_1,\ldots,\lambda_n$, write
\[
\pi(X_\alpha+\theta X_\alpha) v_j=\sqrt{-1}\lambda_j v_j\qquad(j=1,\ldots,n).
\]
From Lemma \ref{lem:kappa} we have
\[
\bska_\alpha^\pi=-\frac{1}{\dim V} \sum_{j=1}^n \lambda_j^2.
\]
Hence $\bska^\pi_\alpha\le0$ and the equality holds if and only if
$\pi(X_\alpha+\theta X_\alpha)=0$.
This, together with Theorem \ref{thm:Ko}, proves the proposition.
\end{proof}

\subsection{The radial part of the Casimir operator}

Let $\varOmega_{\mathfrak m}$ and $\varOmega_{\mathfrak a}$ be the Casimir elements of $U(\mathfrak m_\bbC)$ and $U(\mathfrak a_\bbC)$ relative to $B(\cdot,\cdot)$, respectively.
Note $\pi(\varOmega_{\mathfrak m})$ is a scalar operator.
We denote its value by $\varpi^\pi$.
\begin{thm}\label{thm:CasimirRad}
For any $\phi\in C^\infty(G,\pi,\pi)$ it holds that
\begin{equation*}
\varUpsilon^\pi((\varOmega_{\mathfrak g}-\varpi^\pi)\,\phi)
=
\biggl(
\varOmega_{\mathfrak a} + 
\sum_{\alpha\in\varSigma^+}
\bsm_\alpha
\biggl(
\coth\alpha\,H_\alpha
-
\frac{\bska_\alpha^\pi||\alpha||^2}{4\cosh^2\frac{\alpha}2}
\biggr)\biggr)\,
\varUpsilon^\pi(\phi)
\end{equation*}
on $\mathfrak a_\reg:=\{H\in\mathfrak a\,|\,\alpha(H)\ne0\text{ for any }\alpha\in\varSigma\}$.
\end{thm}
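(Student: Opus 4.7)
The plan is to compute the left-invariant action of $\varOmega_{\mathfrak g}$ on $\phi\in C^\infty(G,\pi,\pi)$ at a generic point $a\in A_\reg$, using the decomposition $\mathfrak g=\mathfrak a\oplus\mathfrak m\oplus\bigoplus_{\alpha\in\varSigma}\mathfrak g_\alpha$. First I would rewrite the Casimir: since $\{X_\alpha^{(i)}\}$ is orthonormal for $-\frac{||\alpha||^2}{2}B(\cdot,\theta\cdot)$, its $B$-dual in $\mathfrak g_{-\alpha}$ is $-\frac{||\alpha||^2}{2}\theta X_\alpha^{(i)}$, so
\[
\varOmega_{\mathfrak g}=\varOmega_{\mathfrak a}+\varOmega_{\mathfrak m}-\sum_{\alpha\in\varSigma^+}\frac{||\alpha||^2}{2}\sum_{i}\bigl(X_\alpha^{(i)}\theta X_\alpha^{(i)}+\theta X_\alpha^{(i)}X_\alpha^{(i)}\bigr).
\]
By right-$K$-equivariance of $\phi$, $(Y\phi)(g)=-\pi(Y)\phi(g)$ for $Y\in\mathfrak k$, hence $\varOmega_{\mathfrak m}\,\phi=\varpi^\pi\phi$ on $G$ and the contribution of $\varOmega_{\mathfrak m}-\varpi^\pi$ cancels. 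Putting $k_\alpha^{(i)}:=X_\alpha^{(i)}+\theta X_\alpha^{(i)}\in\mathfrak k$ and $p_\alpha^{(i)}:=X_\alpha^{(i)}-\theta X_\alpha^{(i)}\in\mathfrak s$, one checks the identity $X_\alpha^{(i)}\theta X_\alpha^{(i)}+\theta X_\alpha^{(i)}X_\alpha^{(i)}=\tfrac12\bigl((k_\alpha^{(i)})^2-(p_\alpha^{(i)})^2\bigr)$, and I would evaluate the two summands separately.

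The $(k_\alpha^{(i)})^2$-piece is immediate: applying right-$K$-equivariance twice gives $(k_\alpha^{(i)})^2\phi(a)=\pi(k_\alpha^{(i)})^2\phi(a)$, and summation over $i$ yields $\bsm_\alpha\bska_\alpha^\pi\phi(a)$ by the very definition of $\bska_\alpha^\pi$. For the $(p_\alpha^{(i)})^2$-piece, the plan is to expand the $KAK$-decomposition $ae^{tp_\alpha^{(i)}}=k_1(t)\exp H(t)\,k_2(t)$ to second order in $t$, with $k_j(0)=1_G$ and $H(0)=\log a$. The first-order equation $\Ad(a^{-1})\dot k_1(0)+\dot H(0)+\dot k_2(0)=p_\alpha^{(i)}$, combined with $\Ad(a^{-1})k_\alpha^{(i)}=\cosh\alpha\cdot k_\alpha^{(i)}-\sinh\alpha\cdot p_\alpha^{(i)}$ and $p_\alpha^{(i)}\in\mathfrak s$, forces $\dot k_1(0)=-k_\alpha^{(i)}/\sinh\alpha$, $\dot k_2(0)=\coth\alpha\cdot k_\alpha^{(i)}$ and $\dot H(0)=0$. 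At second order the bracket $[k_\alpha^{(i)},p_\alpha^{(i)}]=\frac{4H_\alpha}{||\alpha||^2}$ enters through $[\dot k_2(0),\Ad(a^{-1})\dot k_1(0)]$ and determines $\ddot H(0)=\frac{4\coth\alpha}{||\alpha||^2}H_\alpha$; the remaining ambiguity in $\ddot k_j(0)$ is $\mathfrak m$-valued and can be fixed by a choice of section so that $\ddot k_j(0)=\dot k_j(0)^2$ in $U(\mathfrak k_\bbC)$. Differentiating $\phi(ae^{tp_\alpha^{(i)}})=\pi(k_2(t)^{-1})\phi(\exp H(t))\pi(k_1(t)^{-1})$ twice at $t=0$ by the Leibniz rule, and using that $\phi|_A$ is $\bbC\cdot\id_V$-valued so that $\phi(a)$ commutes with every $\pi(Y)$, the $\pi(k_\alpha^{(i)})^2$-coefficient collapses via $\frac{(\cosh\alpha-1)^2}{\sinh^2\alpha}=\tanh^2(\alpha/2)$ to
\[
\bigl((p_\alpha^{(i)})^2\phi\bigr)(a)=\tanh^2(\alpha/2)\,\pi(k_\alpha^{(i)})^2\phi(a)+\frac{4\coth\alpha}{||\alpha||^2}(H_\alpha\phi)(a).
\]
Combining the two pieces via $1-\tanh^2(\alpha/2)=\cosh^{-2}(\alpha/2)$, multiplying by $-\frac{||\alpha||^2}{4}$ and summing over $i$ and $\alpha\in\varSigma^+$ reproduces the stated formula.

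The hard part will be the second-order $KAK$-expansion: one must carefully separate those components of $\ddot k_j(0)$ rigidly determined by $p_\alpha^{(i)}$ from those reflecting the residual $M$-ambiguity of the decomposition, and then execute the noncommutative second-derivative computation on a triple matrix-valued product. The smallness of $(\pi,V)$, which makes $\End_MV\cong\bbC$ and hence $\phi|_A$ scalar-valued, is precisely what permits the clean cancellation in the final step; for a general $K$-type the same calculation would produce extra non-scalar contributions from commutators $[\pi(k_\alpha^{(i)}),\phi(a)]$.
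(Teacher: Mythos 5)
Your argument is correct and arrives at the stated formula, but by a genuinely different route from the paper. The paper's proof simply quotes the Harish-Chandra/Warner radial decomposition of the Casimir (an exact identity in $U(\mathfrak g_\bbC)$, for each fixed $H\in\mathfrak a_\reg$, in terms of $\varOmega_{\mathfrak a}$, $\varOmega_{\mathfrak m}$, $\coth\alpha(H)H_\alpha$, and products of $k_\alpha^{(i)}:=X_\alpha^{(i)}+\theta X_\alpha^{(i)}$ with $\Ad(e^{-H})k_\alpha^{(i)}$) and then evaluates each term on $\phi$ at $e^H$ using the two-sided $K$-equivariance and the scalarity of $\phi|_A$. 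You instead split $\varOmega_{\mathfrak g}-\varOmega_{\mathfrak a}-\varOmega_{\mathfrak m}$ into $\mathfrak k$-squares and $\mathfrak s$-squares via $X\theta X+\theta XX=\tfrac12(k^2-p^2)$ and compute $((p_\alpha^{(i)})^2\phi)(a)$ directly from a second-order expansion of the local $KAK$ coordinates of $ae^{tp_\alpha^{(i)}}$. I checked your constants: $\Ad(a^{-1})k=\cosh\alpha\,k-\sinh\alpha\,p$ does force $\dot k_1(0)=-k/\sinh\alpha$, $\dot k_2(0)=\coth\alpha\,k$, $\dot H(0)=0$; the BCH cross term $\tfrac12[\Ad(a^{-1})\dot k_1(0),\dot k_2(0)]$ with $[k,p]=4H_\alpha/\|\alpha\|^2$ gives $\ddot H(0)=\tfrac{4\coth\alpha}{\|\alpha\|^2}H_\alpha$; and the $\pi(k)^2$-coefficient collapses to $\bigl(\tfrac{\cosh\alpha-1}{\sinh\alpha}\bigr)^2=\tanh^2\tfrac\alpha2$, so your expression for $(p^2\phi)(a)$ reproduces the theorem after combining with $1-\tanh^2\tfrac\alpha2=\cosh^{-2}\tfrac\alpha2$ (it also agrees with what the quoted Harish-Chandra identity yields). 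One small simplification: you need not fix a section to kill the $\mathfrak m$-valued second-order ambiguity, since the residual contributions $A_{1,2}$ and $A_{2,2}$ satisfy $A_{1,2}+A_{2,2}=0$ and $\phi(a)$ commutes with $\pi(\mathfrak m)$, so they cancel for any choice. The trade-off is clear: the paper outsources the hard identity to a citation, while your version is self-contained but must justify the local smoothness of the $KAK$ coordinates near a regular point and carry out the second-order bookkeeping; both proofs exploit smallness in exactly the same place, namely that $\phi(a)$ is scalar so $\pi(k)\phi(a)\pi(k)=\phi(a)\pi(k)^2$ feeds directly into the definition of $\bska^\pi_\alpha$.
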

\begin{proof}
Suppose $H\in\mathfrak a_\reg$.
It follows from \cite[Lemma 22]{HC2} and \cite[Proposition 9.1.2.11]{War} (see also \cite[Proposition 2.3]{S:Plancherel}) that the equality 
\begin{multline*}
\varOmega_{\mathfrak g}
=
\varOmega_{\mathfrak a}
+
\varOmega_{\mathfrak m}
+
\sum_{\alpha\in\varSigma^+}
\biggl(
\bsm_\alpha\coth\alpha(H)\, H_\alpha
+
\frac{||\alpha||^2}{4}\sum_{i=1}^{\bsm_\alpha}
\biggl(
\frac1{\sinh^2 \alpha(H)}\, \bigl(X_\alpha^{(i)}+\theta X_\alpha^{(i)}\bigr)^2 \\
-2 \frac{\coth \alpha(H)}{ \sinh \alpha(H)}\, \bigl(\Ad(e^{-H})\bigl(X_\alpha^{(i)}+\theta X_\alpha^{(i)}\bigr)\bigr) \bigl(X_\alpha^{(i)}+\theta X_\alpha^{(i)}\bigr)\\
+ \frac1{\sinh^2 \alpha(H)}\, \bigl(\Ad(e^{-H})\bigl(X_\alpha^{(i)}+\theta X_\alpha^{(i)}\bigr)\bigr)^2
\biggr)
\biggr)
\end{multline*}
holds in $U(\mathfrak g_\bbC)$.
Since $\phi(e^H)$ is a scalar operator, we have for each $\alpha\in\varSigma^+$
\begin{align*}
\sum_{i=1}^{\bsm_\alpha}
\bigl(
\bigl(\Ad(e^{-H})\bigl(X_\alpha^{(i)}+\theta X_\alpha^{(i)}\bigr)\bigr)
& \bigl(X_\alpha^{(i)}+\theta X_\alpha^{(i)}\bigr)\,
\phi
\bigr)(e^H)\\
&= \sum_{i=1}^{\bsm_\alpha}
\pi\bigl(X_\alpha^{(i)}+\theta X_\alpha^{(i)}\bigr)\,
\phi(e^H)\,
\pi\bigl(X_\alpha^{(i)}+\theta X_\alpha^{(i)}\bigr) \\
&=\phi(e^H)\,
\sum_{i=1}^{\bsm_\alpha} \pi\bigl(X_\alpha^{(i)}+\theta X_\alpha^{(i)}\bigr)^2\\
&=\bsm_\alpha\,\bska^\pi_\alpha \,\varUpsilon^\pi(\phi)(H)
\end{align*}
and in the same way
\begin{align*}
\sum_{i=1}^{\bsm_\alpha}
\bigl(
\bigl(X_\alpha^{(i)}+\theta X_\alpha^{(i)}\bigr)^2\,
\phi
\bigr)(e^H)
=
\sum_{i=1}^{\bsm_\alpha}
\bigl(
\bigl(\Ad(e^{-H})\bigl(X_\alpha^{(i)}+\theta X_\alpha^{(i)}\bigr)\bigr)^2\,
\phi
\bigr)(e^H)
=
\bsm_\alpha\,\bska^\pi_\alpha \,\varUpsilon^\pi(\phi)(H).
\end{align*}
Hence we calculate
\begin{align*}
\varUpsilon^\pi((\varOmega_{\mathfrak g}&-\varpi^\pi)\,\phi)(H)
-\varOmega_{\mathfrak a}\varUpsilon^\pi(\phi)(H)\\
&=
((\varOmega_{\mathfrak g}-\varOmega_{\mathfrak a}-\varOmega_{\mathfrak m})\,\phi)(e^H)\\
&=
\sum_{\alpha\in\varSigma^+} \bsm_\alpha\biggl(
\coth \alpha(H)\,H_\alpha\\
&\qquad\qquad
+
\frac{\bska^\pi_\alpha||\alpha||^2}{4}
\biggl(
\frac1{\sinh^2 \alpha(H)}
-2 \frac{\coth \alpha(H)}{ \sinh \alpha(H)}
+ \frac1{\sinh^2 \alpha(H)}
\biggr) 
\biggr)\, \varUpsilon^\pi(\phi)(H)\\
&=
\sum_{\alpha\in\varSigma^+} \bsm_\alpha\biggl(
\coth \alpha(H)\,H_\alpha
-
\frac{\bska^\pi_\alpha||\alpha||^2}{4\cosh^2\frac{\alpha(H)}2}
\biggr)\, \varUpsilon^\pi(\phi)(H).\qedhere
\end{align*}
\end{proof}

\subsection{Radial parts of general invariant differential operators}\label{subsec:RP}

Let $\mathscr R$ be the unital algebra of functions on $\mathfrak a_\reg$
generated by $(1\pm e^{\alpha})^{-1}$ ($\alpha\in\varSigma^+$).
The Weyl group $W$ acts on $\mathscr R$ naturally.
The algebra of differential operators on $\mathfrak a_\reg$
with coefficients in $\mathscr R$ is identified with $\mathscr R \otimes S(\mathfrak a_\bbC)$ as a linear space.
Let $\mathfrak a_-:=\{H\in\mathfrak a\,|\,\alpha(H)<0\text{ for any }\alpha\in\varSigma^+\}$ and $\bbN\varSigma^+:=\{\sum_{\alpha\in\varSigma^+} n_\alpha \alpha \,|\, n_\alpha\in\bbN \}$.
(Here $\bbN=\{0,1,2,\ldots\}$.)
Then each element $f\in \mathscr R$ is expanded
in a power series $\sum_{\mu\in \bbN\varSigma^+} a_\mu e^\mu$ which absolutely converges on $\mathfrak a_-$.
We define $\mathscr M$ to be the maximal ideal of $\mathscr R$
consisting of the power series without constant term.
\begin{prop}\label{prop:GeneralRad}
For any $D \in U(\mathfrak g_\bbC)^K$
there exists a unique differential operator $\varDelta^\pi(D) \in \mathscr R \otimes S(\mathfrak a_\bbC)$
such that for any $\phi \in C^\infty(G,\pi,\pi)$
\begin{equation}\label{RadFormula}
\varUpsilon^\pi(D\,\phi)
=\varDelta^\pi(D)\,\varUpsilon^\pi(\phi)
\end{equation}
on $\mathfrak a_\reg$.
Moreover, $\varDelta^\pi(D)$ is $W$-invariant and
is of the form
\begin{equation}\label{eq:GeneralRad}
\varDelta^\pi(D)=\gamma^\pi(D)(\cdot-\rho) + \sum_{j=1}^k f_j\, E_j
\quad\text{with }f_1,\ldots,f_k \in \mathscr M
\text{ and }E_1,\ldots,E_k \in S(\mathfrak a_\bbC).
\end{equation}
\end{prop}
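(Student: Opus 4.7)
\emph{Existence.} I will construct $\varDelta^\pi(D)$ by an Iwasawa-type PBW reduction of $D$. Using $U(\mathfrak g_\bbC) \simeq U(\bar{\mathfrak n}_\bbC) \otimes S(\mathfrak a_\bbC) \otimes U(\mathfrak k_\bbC)$, write $D = \sum_i v_i h_i u_i$ and compute $(D\phi)(\exp H)$ for $\phi \in C^\infty(G,\pi,\pi)$ and $H \in \mathfrak a_\reg$ by letting the factors act successively from the right. The factor $u_i \in U(\mathfrak k_\bbC)$ acts via $\pi$ (through $\phi(gk)=\pi(k^{-1})\phi(g)$); $h_i \in S(\mathfrak a_\bbC)$ gives a differentiation along $\mathfrak a$. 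For each generator $Y \in \mathfrak g_{-\alpha}$ ($\alpha\in\varSigma^+$) of $v_i$, I combine the conjugation identity $(Y\phi)(\exp H)=e^{-\alpha(H)}\frac{d}{dt}|_{t=0} \phi(\exp(tY)\exp H)$ with the decomposition $Y=(Y+\theta Y)-\theta Y$: the piece $Y+\theta Y \in \mathfrak k$ is absorbed into $\pi$, and the piece $\theta Y \in \mathfrak n$ is traded iteratively via the same device. The accumulated coefficients lie in $\bbC[(1\pm e^{-\alpha})^{-1}]\subset \mathscr R$, and induction on the degree of $v_i$ terminates the recursion. Smallness is used essentially here: $\Ad(K)$-invariance of $D$ forces every surviving $\End_\bbC V$-fragment to be $M$-invariant, hence scalar by Schur's lemma, so the output collapses to a scalar differential operator $\varDelta^\pi(D) \in \mathscr R \otimes S(\mathfrak a_\bbC)$ acting on $\varUpsilon^\pi(\phi)$.

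\emph{Uniqueness and $W$-invariance.} Uniqueness is immediate: $W$-invariant smooth functions realize arbitrary $C^\infty$-germs at any $H_0\in\mathfrak a_\reg$ (via $W$-averaging a bump function supported in a small neighborhood disjoint from $(WH_0)\setminus\{H_0\}$), so any element of $\mathscr R\otimes S(\mathfrak a_\bbC)$ annihilating every $\varUpsilon^\pi(\phi)$ must vanish identically. The $W$-invariance of $\varDelta^\pi(D)$ follows as in the proof of Theorem \ref{thm:Chevalley}: for $\tilde w \in \tilde M$ representing $w\in W$, $K$-invariance of $D$ and $\pi$-sphericity of $\phi$ give $(D\phi)(\tilde w \exp(H) \tilde w^{-1}) = \pi(\tilde w)(D\phi)(\exp H)\pi(\tilde w^{-1}) = (D\phi)(\exp H)$, the last step by scalarity.

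\emph{Leading term.} To pin down the constant part $E_0 \in S(\mathfrak a_\bbC)$ in $\varDelta^\pi(D) = E_0 + \sum_j f_j E_j$ (with $f_j \in \mathscr M$), I apply the eigenvalue identity $\varDelta^\pi(D)\varUpsilon^\pi(\phi_\lambda^\pi) = \gamma^\pi(D)(\lambda)\varUpsilon^\pi(\phi_\lambda^\pi)$ (from Theorems \ref{thm:unique-sph} and \ref{thm:HChomo}) together with the Frobenius-type expansion $\varUpsilon^\pi(\phi_\lambda^\pi)(H) = e^{(\lambda+\rho)(H)}(1 + O(e^\alpha))$ on $\mathfrak a_-$ for generic $\lambda$. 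That $\lambda + \rho$ is an indicial root is justified by the radial Casimir of Theorem \ref{thm:CasimirRad}, whose constant part $\varOmega_{\mathfrak a} - 2H_\rho$ satisfies $(\varOmega_{\mathfrak a} - 2H_\rho)(\lambda+\rho) = \gamma^\pi(\varOmega_{\mathfrak g} - \varpi^\pi)(\lambda)$; non-vanishing of the leading coefficient for generic $\lambda$ is read off the integral formula \eqref{eq:integral-rep}. Matching the $e^{(\lambda+\rho)(H)}$-coefficient in the eigenvalue identity yields $E_0(\lambda+\rho)=\gamma^\pi(D)(\lambda)$ for generic $\lambda$, hence $E_0 = \gamma^\pi(D)(\cdot - \rho)$ by polynomial identity.

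\emph{Main obstacle.} The principal technical point is the explicit bookkeeping in the PBW reduction of Step 1: tracking how $\mathscr R$-coefficients accumulate at each iteration of $Y = (Y+\theta Y) - \theta Y$ on $U(\bar{\mathfrak n}_\bbC)$-monomials, and verifying that all $\End_\bbC V$-fragments collapse to $\bbC\,\id_V$ by $M$-invariance plus Schur's lemma. This is the matrix-valued adaptation of Harish-Chandra's classical radial part computation (cf.~\cite{War}), which proceeds essentially verbatim once one uses the scalar identification $\phi(\exp H) = \varUpsilon^\pi(\phi)(H)\,\id_V$ valid for small $\pi$.
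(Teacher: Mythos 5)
Your existence and uniqueness arguments are, in substance, the paper's: the paper performs exactly the Iwasawa-type reduction you describe (rewriting each $X_\alpha^{(i)}$ as an $\mathscr M$-combination of $\Ad(e^{-H})\bigl(X_\alpha^{(i)}+\theta X_\alpha^{(i)}\bigr)$ and $X_\alpha^{(i)}+\theta X_\alpha^{(i)}$ and inducting on the degree), and it gets uniqueness from the surjectivity of $\varUpsilon^\pi$ in Theorem \ref{thm:Chevalley} applied to compactly supported functions on a single chamber, just as you do. One caution on the step you rightly flag as the main obstacle: the individual $\End_\bbC V$-fragments arising in the reduction have the form $\pi^*(U)\,(\text{scalar})\,\pi^*(U')$ and are \emph{not} individually $M$-invariant; the paper first replaces each $U_j\otimes U_j'$ by its average $\int_M\Ad(m)U_j\otimes\Ad(m)U_j'\,dm$ (legitimate because the full expression $D-D'$ is $M$-invariant), and only after this regrouping is the product $\sum_\nu U_{j,\nu}U'_{j,\nu}$ an $\Ad(M)$-invariant element of $U(\mathfrak k_\bbC)$, hence scalar under $\pi^*$ by smallness — your plan should make this averaging explicit. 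Where you genuinely diverge is the constant term: the paper reads $\gamma^\pi(D)(\cdot-\rho)$ off directly from the component $D'\in U(\mathfrak a_\bbC)\otimes U(\mathfrak k_\bbC)$, using $U\,\trans\phi=\trans\phi\,\pi^*(U)$ and the very definition of $\gamma^\pi$, with no spherical function in sight. Your asymptotic matching against $\varUpsilon^\pi(\phi^\pi_\lambda)$ is also workable — the leading-order limit \eqref{eq:limcf1} and its derivative analogues, obtained from the integral formula \eqref{eq:integral-rep} rather than from the radial system (so no circularity), together with $f_j\in\mathscr M$ vanishing at infinity in the chamber, do pin down $E_0(\lambda+\rho)=\gamma^\pi(D)(\lambda)$ for generic $\lambda$ — but it imports Harish-Chandra's Eisenstein-integral asymptotics for a fact that falls out of the algebra for free.
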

\begin{rem}
We call $\varDelta^\pi(D)$ in the theorem the $\pi$-radial part of $D$.
It follows from Theorem \ref{thm:CasimirRad} that
\begin{equation}\label{eq:CasimirRad}
\varDelta^\pi(\varOmega_{\mathfrak g}-\varpi^\pi)
=
\varOmega_{\mathfrak a} + 
\sum_{\alpha\in\varSigma^+}
\bsm_\alpha
\biggl(
\coth\alpha\,H_\alpha
-
\frac{\bska_\alpha^\pi||\alpha||^2}{4\cosh^2\frac{\alpha}2}
\biggr).
\end{equation}
\end{rem}
\begin{proof}[Proof of Proposition \ref{prop:GeneralRad}]
First,
we claim that for an arbitrary $D\in U(\mathfrak g_\bbC)$
(not necessarily $K$-invariant)
there exist
$f_j \in \mathscr M$,
$E'_j \in S(\mathfrak a_\bbC)$
and $U_j, U'_j \in U(\mathfrak k_\bbC)$ ($j=1,\ldots,k$)
such that
\begin{equation}\label{eq:D_expre}
D=D' + \sum_{j=1}^k
f_j(H)\, (\Ad(e^{-H})(U_j))\,E'_j\,U'_j
\quad
\text{for any }H\in\mathfrak a_\reg,
\end{equation}
where
$D' \in U(\mathfrak a_\bbC) \otimes U(\mathfrak k_\bbC)$
is a unique element such that $D-D'\in\mathfrak n_\bbC U(\mathfrak g_\bbC)$.
Indeed, since for each $\alpha \in \varSigma^+$ and $i=1,\ldots,\bsm_\alpha$
one has
\[
X_\alpha^{(i)}
=\frac{e^{\alpha(H)}}{1-e^{2\alpha(H)}}
\Ad(e^{-H})(X_\alpha^{(i)}+\theta X_\alpha^{(i)})
-\frac{e^{2\alpha(H)}}{1-e^{2\alpha(H)}} (X_\alpha^{(i)} + \theta X_\alpha^{(i)}),
\]
the claim can be
easily shown by induction on the degree of $D$.

Next, 
for any $H\in\mathfrak a_\reg$ the linear map
\[
\eta_H : S(\mathfrak a_\bbC) \otimes (U(\mathfrak k_\bbC)\otimes_{U(\mathfrak m_\bbC)}U(\mathfrak k_\bbC))
\ni E' \otimes U \otimes U'
\mapsto
(\Ad(e^{-H})(U))\, E'\, U' \in U(\mathfrak g_\bbC)
\]
is a well-defined bijection.
Note $\eta_H$ is an $M$-homomorphism.
Now suppose $D\in U(\mathfrak g_\bbC)^K$.
Since $D$ and $D'$ are $M$-invariant,
we have
\[
\sum_{j=1}^k
f_j(H)\,E'_j \otimes U_j\otimes U'_j
=
\eta_H^{-1}(D-D') \in
S(\mathfrak a_\bbC) \otimes (U(\mathfrak k_\bbC)\otimes_{U(\mathfrak m_\bbC)}U(\mathfrak k_\bbC))^M.
\]
For each $j=1,\ldots, k$, take $U_{j,\nu}$ and $U'_{j,\nu} \in U(\mathfrak k_\bbC)$ ($\nu=1,\ldots,k_j$)
so that
\[
\int_M \Ad(m) (U_j)\otimes \Ad(m) (U_j')\,dm
=\sum_{\nu=1}^{k_j} U_{j,\nu} \otimes U'_{j,\nu}
\in (U(\mathfrak k_\bbC)\otimes U(\mathfrak k_\bbC))^M,
\]
where $dm$ is the normalized Haar measure on $M$.
Then we can rewrite \eqref{eq:D_expre} in the following way:
\[
D=D' + \sum_{j=1}^k
\eta_H \biggl(
f_j(H)\, E'_j \otimes
\sum_{\nu=1}^{k_j}
U_{j,\nu} \otimes U'_{j,\nu}
\biggr)
=
D' + \sum_{j=1}^k
f_j(H) \sum_{\nu=1}^{k_j}
 (\Ad(e^{-H})(U_{j,\nu}))\,E'_j\,U'_{j,\nu}.
\]
Now suppose $\phi$ is any $\pi$-spherical function.
Since $U\,\trans \phi=\trans \phi\,\pi^*(U)$
for any $U\in U(\mathfrak k_\bbC)$,
we have $D'\,\trans\phi=\gamma^\pi(D)(\cdot-\rho)\,\trans\phi$
and hence $D'\,\phi=\gamma^\pi(D)(\cdot-\rho)\,\phi$,
where $\gamma^\pi(D)(\cdot-\rho) \in S(\mathfrak a_\bbC)$ acts on
$\trans\phi$ and $\phi$
as a differential operator.
On the other hand,
we have
\begin{align*}
\sum_{\nu=1}^{k_j}
 (\Ad(e^{-H})(U_{j,\nu}))\,E'_j\,U'_{j,\nu}\,\trans\phi(e^H)
&=
\sum_{\nu=1}^{k_j}
\pi^*(U_{j,\nu}) \,
\bigl(E'_j\,\trans\phi(e^H)\bigr)
\,\pi^*(U'_{j,\nu})\\
&=\pi^*\biggl(
\sum_{\nu=1}^{k_j}U_{j,\nu}U'_{j,\nu}
\biggr)
E'_j\,\trans\phi(e^H).
\end{align*}
Here 
the second equality holds
since $\trans\phi(e^H)$ and $E'_j\,\trans\phi(e^H)$
are scalar operators.
Note that
$\pi^*\biggl(
\sum_{\nu=1}^{k_j}U_{j,\nu}U'_{j,\nu}
\biggr) \in \End_M V^*$ is also a scalar operator.
Let $E_j\in S(\mathfrak a_\bbC)$ be the product of this scalar value and $E'_j$
($j=1,\ldots, k$).
Then the operator $\varDelta^\pi(D)$ defined by \eqref{eq:GeneralRad}
satisfies \eqref{RadFormula}.

Finally, 
since for each $w\in W$ any compactly supported $C^\infty$ function on $\{e^H\,|\,H\in w\mathfrak a_-\}$
is (uniquely) extended to a $\pi$-spherical function on $G$ by Theorem \ref{thm:Chevalley}, we get the uniqueness of $\varDelta^\pi(D)$.
The $W$-invariance of $\varDelta^\pi(D)$ easily follows from this.
\end{proof}
\begin{rem}
Since the action of $D\in U(\mathfrak g_\bbC)^K$ on $\pi$-spherical functions factors through
$U(\mathfrak g_\bbC)^K \to \boldsymbol D^\pi$,
$\{\varDelta^\pi(D)\,|\,D\in U(\mathfrak g_\bbC)^K\}$
is a commutative subalgebra of $(\mathscr R\otimes S(\mathfrak a_\bbC))^W$.
We denote this subalgebra by $\varDelta^\pi(\boldsymbol D^\pi)$.
\end{rem}
Theorems \ref{thm:unique-sph}, \ref{thm:Chevalley}, and Proposition \ref{prop:GeneralRad} imply
\begin{cor}\label{cor:spherical}
Suppose $\lambda\in \mathfrak a_\bbC^*$.
The subspace of $C^\infty(\mathfrak a)^W$ consisting of those $f$ satisfying
\[
\varDelta^\pi(D)\,f=\gamma^\pi(D)(\lambda)\,f
\quad\text{for any }D\in U(\mathfrak a_\bbC)^K
\]
equals $\bbC\,\varUpsilon^\pi(\phi^\pi_\lambda)$
and is a subspace of $\mathscr A(\mathfrak a)$ (the space of real analytic functions on $\mathfrak a$).
\end{cor}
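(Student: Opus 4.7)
The plan is to transfer the joint eigenvalue equation on $\mathfrak a$ to one on $G$ via the Chevalley bijection $\varUpsilon^\pi$ of Theorem \ref{thm:Chevalley} together with the radial parts of Proposition \ref{prop:GeneralRad}, and then invoke Theorem \ref{thm:unique-sph}. As a first step I would verify that $\varUpsilon^\pi(\phi^\pi_\lambda)$ itself belongs to the claimed subspace: by \eqref{RadFormula} applied to any $D\in U(\mathfrak g_\bbC)^K$ and by \eqref{eq:unique-sph}, one has $\varDelta^\pi(D)\varUpsilon^\pi(\phi^\pi_\lambda)=\varUpsilon^\pi(D\phi^\pi_\lambda)=\gamma^\pi(D)(\lambda)\varUpsilon^\pi(\phi^\pi_\lambda)$ on $\mathfrak a_\reg$, and hence on all of $\mathfrak a$ by continuity.

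For the reverse inclusion, suppose $f\in C^\infty(\mathfrak a)^W$ satisfies the eigenvalue equation. Using Theorem \ref{thm:Chevalley}, I lift $f$ to a $\pi$-spherical function $\phi$ with $\varUpsilon^\pi(\phi)=f$. Proposition \ref{prop:GeneralRad} then gives $\varUpsilon^\pi(D\phi-\gamma^\pi(D)(\lambda)\phi)=0$ on $\mathfrak a_\reg$, and by continuity on all of $\mathfrak a$; the injectivity half of Theorem \ref{thm:Chevalley} upgrades this to $D\phi=\gamma^\pi(D)(\lambda)\phi$ on all of $G$ for every $D\in U(\mathfrak g_\bbC)^K$. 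Because $\pi$ is small, $\phi(1_G)\in\End_M V\simeq\bbC\,\id_V$, so I may write $\phi(1_G)=c\,\id_V$ for some $c\in\bbC$. If $c\neq0$, then $c^{-1}\phi$ satisfies the hypotheses of Theorem \ref{thm:unique-sph} and therefore coincides with $\phi^\pi_\lambda$, giving $f=c\,\varUpsilon^\pi(\phi^\pi_\lambda)$.

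The main subtle point I expect is the degenerate case $c=0$, where one must show that no nonzero $\pi$-spherical joint eigenfunction with eigenvalue $\gamma^\pi(\cdot)(\lambda)$ vanishes at the origin. Here I would reuse the Burnside-style argument from \S\ref{subsec:JEF}: set $\varPhi:U(\mathfrak g_\bbC)\to\End_\bbC V$ by $\varPhi(D):=D\phi(1_G)$, and observe that the identities $\varPhi(XD)=-\varPhi(D)\pi(X)$ and $\varPhi(DX)=-\pi(X)\varPhi(D)$ for $X\in\mathfrak k_\bbC$ force $\varPhi(U(\mathfrak g_\bbC))$ to be a two-sided ideal of $\End_\bbC V$. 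If this ideal were all of $\End_\bbC V$, some $D$ with $\varPhi(D)=\id_V$ would, after averaging to $\tilde D:=\int_K\Ad(k)D\,dk\in U(\mathfrak g_\bbC)^K$, satisfy $\varPhi(\tilde D)=\id_V$, while the eigenvalue equation forces $\varPhi(\tilde D)=\gamma^\pi(\tilde D)(\lambda)\,\phi(1_G)=0$, a contradiction. Hence $\varPhi\equiv0$, and real analyticity of $\phi$, inherited from its identification with an element of $\Hom_K(V,\mathscr A(G\times_K V,\lambda))$ (cf.\ \S\ref{subsec:JEF}), forces $\phi=0$ and so $f=0$. The remaining assertion that the subspace lies in $\mathscr A(\mathfrak a)$ then follows, since it is spanned by $\varUpsilon^\pi(\phi^\pi_\lambda)$ and $\phi^\pi_\lambda$ is real analytic.
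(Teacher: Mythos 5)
Your proof is correct and follows exactly the route the paper intends: the paper gives no written proof of this corollary, merely asserting that it follows from Theorems \ref{thm:unique-sph}, \ref{thm:Chevalley} and Proposition \ref{prop:GeneralRad}, and your argument is the natural expansion of that assertion (lift $f$ by the Chevalley bijection, transfer the eigenequations via $\varDelta^\pi$, normalize at $1_G$, and invoke uniqueness). Your treatment of the degenerate case $\phi(1_G)=0$ by rerunning the Burnside/averaging argument from \S\ref{subsec:JEF}, together with elliptic regularity to get analyticity, correctly supplies the one detail the paper leaves implicit.
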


\section{Heckman-Opdam hypergeometric functions}\label{sec:HO}
In this section $\mathfrak a$ denotes any finite-dimensional linear space with inner product $B(\cdot,\cdot)$.
Let $(\cdot,\cdot)$ be the symmetric bilinear form on $\mathfrak a_\bbC^*$
induced from $B(\cdot,\cdot)$.
Let $\varSigma'$ be a (possibly non-reduced) crystallographic root system in $\mathfrak a^*$.
Its Weyl group is denoted by $W'$.
In a series of papers starting from \cite{HO}, Heckman and Opdam define and study the hypergeometric 
function $F(\varSigma',\bsk,\lambda)\in\mathscr A(\mathfrak a)$
associated to $\varSigma'$.
Here $\bsk$ is a $\bbC$-valued multiplicity function on $\varSigma'$ with some regularity condition
and $\lambda\in\mathfrak a_\bbC^*$.
The hypergeometric function $F(\varSigma',\bsk,\lambda)$
is a natural generalization of $\varUpsilon^\pi(\phi^\pi_\lambda)$
with the trivial $K$-type $\pi$,
allowing the root multiplicities $\bsm$ to be generic complex numbers  $\bsk$. 
In this section
we review the definition and some fundamental properties of $F(\varSigma',\bsk,\lambda)$.
We refer the reader to \cite{Op:book, He:white} for details. 

\subsection{Hypergeometric differential operators}\label{subsec:HGD}
Let $\mathcal K(\varSigma')$ be the space of multiplicity functions on $\varSigma'$.
This is a linear space with dimension equal to the number of the $W'$-orbits in $\varSigma'$.
Regarding $\mathfrak a$ as an Abelian Lie algebra equipped with an inner product,
we define $\varOmega_{\mathfrak a}$ and $H_\alpha$ ($\alpha\in\varSigma'$) as in \S\ref{sec:spherical}.
Fix a positive system $\varSigma'^+\subset\varSigma'$.
Let $\mathscr R'$ denote the unital algebra generated by $(1- e^{\alpha})^{-1}$ ($\alpha\in\varSigma'^+$).
Let $\mathscr M'$ be the maximal ideal of $\mathscr R'$
consisting of those $f$ that are expanded in the form
$f=\sum_{\mu\in \bbN\varSigma^+\setminus\{0\}} a_\mu e^\mu$
on $\mathfrak a_-:=\{H\in\mathfrak a\,|\,\alpha(H)<0\text{ for any }\alpha\in\varSigma'^+\}$.
As in \S\ref{subsec:RP}, $\mathscr R' \otimes S(\mathfrak a_\bbC)$
denotes the algebra of differential operators with coefficients in $\mathscr R'$.

Suppose $\bsk\in \mathcal K(\varSigma')$.
Put
\begin{equation}\label{eq:HOL1}
L(\varSigma',\bsk)=\varOmega_{\mathfrak a} + \sum_{\alpha\in\varSigma'^+} \bsk_\alpha\coth\frac{\alpha}2\, H_\alpha,
\end{equation}
which belongs to $(\mathscr R' \otimes S(\mathfrak a_\bbC))^{W'}$.
\begin{rem}
In the setting of \S\ref{sec:spherical}, let $\pi$ be the trivial $K$-type.
Then the radial part of the Casimir operator given by \eqref{eq:CasimirRad}
equals  $L(2\varSigma,\bsk)$ with $\bsk_{2\alpha}=\bsm_\alpha$ ($\forall\alpha\in\varSigma$).
\end{rem}
Note 
$D\mapsto \tilde\delta(\varSigma',\bsk)^{\frac12}\circ D\circ \tilde\delta(\varSigma',\bsk)^{-\frac12}$
with $\tilde\delta(\varSigma',\bsk)$ in \eqref{eq:HOdelta}
defines an algebra automorphism of $(\mathscr R' \otimes S(\mathfrak a_\bbC))^{W'}$.
\begin{prop}[{\cite[Proposition 2.2]{HO}}, {\cite[Theorem 2.1.1]{He:white}}]\label{prop:Ltwist}
Putting
$\rho(\bsk)=
\linebreak[4] 
\frac12\sum_{\alpha\in\varSigma'^+}\bsk_\alpha \alpha$,
we have
\begin{equation}\label{eq:Ltwist}
\tilde\delta(\varSigma',\bsk)^{\frac12}\circ(L(\varSigma',\bsk)+(\rho(\bsk),\rho(\bsk)))\circ \tilde\delta(\varSigma',\bsk)^{-\frac12}
=
\varOmega_{\mathfrak a} + 
\sum_{\alpha\in\varSigma'^+}\frac{\bsk_\alpha(1-\bsk_\alpha-2\bsk_{2\alpha})||\alpha||^2}{4\sinh^2\frac\alpha2}.
\end{equation}
Here $\bsk_{2\alpha}=0$ if $2\alpha \notin \varSigma'^+$.
\end{prop}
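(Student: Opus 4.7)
The plan is a direct conjugation computation. Set $u:=\log\tilde\delta(\varSigma',\bsk)^{\frac12}=\sum_{\alpha\in\varSigma'^+}\bsk_\alpha\log|\sinh(\alpha/2)|$ modulo an additive constant that is annihilated by every $Y\in\mathfrak a$. For any $Y\in\mathfrak a$ one has $\tilde\delta(\varSigma',\bsk)^{\frac12}\circ Y\circ\tilde\delta(\varSigma',\bsk)^{-\frac12}=Y-Y(u)$, and applying this to an orthonormal basis of $\mathfrak a$ gives
\[
\tilde\delta(\varSigma',\bsk)^{\frac12}\circ\varOmega_{\mathfrak a}\circ\tilde\delta(\varSigma',\bsk)^{-\frac12}=\varOmega_{\mathfrak a}-2H_{\nabla u}-\varOmega_{\mathfrak a}(u)+|\nabla u|^2,
\]
where $\nabla u$ is the gradient, characterized by $Y(u)=B(Y,\nabla u)$. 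A direct computation yields $\nabla u=\tfrac12\sum_{\beta\in\varSigma'^+}\bsk_\beta\coth(\beta/2)H_\beta$, so the first-order perturbation $-2H_{\nabla u}$ produced by conjugating $\varOmega_{\mathfrak a}$ cancels exactly the first-order part $\sum_{\alpha}\bsk_\alpha\coth(\alpha/2)H_\alpha$ of $L(\varSigma',\bsk)$. What remains is to identify the scalar
\[
V:=-\varOmega_{\mathfrak a}(u)+|\nabla u|^2-\sum_{\alpha\in\varSigma'^+}\bsk_\alpha\coth(\alpha/2)H_\alpha(u)
\]
with the right-hand side of \eqref{eq:Ltwist} minus $\varOmega_{\mathfrak a}$ and $(\rho(\bsk),\rho(\bsk))$.

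Using $\tfrac{d}{dx}\coth(x/2)=-\tfrac1{2\sinh^2(x/2)}$, one immediately gets $-\varOmega_{\mathfrak a}(u)=\sum_\alpha\frac{\bsk_\alpha\,||\alpha||^2}{4\sinh^2(\alpha/2)}$, accounting for the $1$ inside the factor $\bsk_\alpha(1-\bsk_\alpha-2\bsk_{2\alpha})$. Substituting $H_\alpha(u)=\tfrac12\sum_\beta\bsk_\beta\coth(\beta/2)(\alpha,\beta)$ and the formula above for $\nabla u$, the remaining two summands of $V$ collapse into the single double sum
\[
|\nabla u|^2-\sum_\alpha\bsk_\alpha\coth(\alpha/2)H_\alpha(u)=-\frac14\sum_{\alpha,\beta\in\varSigma'^+}\bsk_\alpha\bsk_\beta\coth(\alpha/2)\coth(\beta/2)(\alpha,\beta).
\]

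The endgame is to evaluate this double sum via the identity $\coth(\alpha/2)\coth(\beta/2)=1+\frac{\cosh((\alpha-\beta)/2)}{\sinh(\alpha/2)\sinh(\beta/2)}$. The $1$-part contributes $-\tfrac14\bigl|\sum_\alpha\bsk_\alpha\alpha\bigr|^2=-(\rho(\bsk),\rho(\bsk))$, supplying the constant transferred to the left of \eqref{eq:Ltwist}. In the singular remainder, the diagonal $\alpha=\beta$ contributes $-\bsk_\alpha^2||\alpha||^2/(4\sinh^2(\alpha/2))$ (using $\coth^2(\alpha/2)-1=1/\sinh^2(\alpha/2)$), while the collinear off-diagonal pairs $(\alpha,2\alpha),(2\alpha,\alpha)$ (present when $2\alpha\in\varSigma'^+$) contribute $-2\bsk_\alpha\bsk_{2\alpha}||\alpha||^2/(4\sinh^2(\alpha/2))$ via the double-angle identity $\coth(\alpha/2)\coth(\alpha)-1=1/(2\sinh^2(\alpha/2))$; together with $-\varOmega_{\mathfrak a}(u)$ these assemble the target factor $\bsk_\alpha(1-\bsk_\alpha-2\bsk_{2\alpha})$. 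The main obstacle is the remaining step: showing that the sum over \emph{non-proportional} pairs $(\alpha,\beta)$ vanishes identically. I expect to handle this via the elementary identity $\cosh(a)\sinh(b)=\tfrac12(\sinh(a+b)+\sinh(b-a))$, which collapses any three-term contribution from a rank-two sub-root-system $\varSigma'\cap(\bbR\alpha+\bbR\beta)$ to zero; the $W'$-invariance of $\bsk$ guarantees the groupings are consistent and reduces the verification to a short case-check on the rank-two possibilities $A_1\oplus A_1$, $A_2$, $B_2$, $BC_2$, and $G_2$.
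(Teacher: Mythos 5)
Your computation is correct as far as it goes, and it is essentially the standard proof of this result; note that the paper itself does not prove Proposition \ref{prop:Ltwist} but simply cites \cite[Proposition 2.2]{HO} and \cite[Theorem 2.1.1]{He:white}, and your argument is the one found in those references. I verified the individual steps: the conjugation formula for $\varOmega_{\mathfrak a}$, the identity $\nabla u=\tfrac12\sum_\beta\bsk_\beta\coth(\beta/2)H_\beta$ and the resulting cancellation of the first-order terms, the evaluation $-\varOmega_{\mathfrak a}(u)=\sum_\alpha\bsk_\alpha||\alpha||^2/(4\sinh^2(\alpha/2))$, the collapse of the two remaining summands of $V$ into $-|\nabla u|^2$, and the extraction of $-(\rho(\bsk),\rho(\bsk))$, of the diagonal term $-\bsk_\alpha^2$, and of the collinear term $-2\bsk_\alpha\bsk_{2\alpha}$ (your double-angle identity $\coth(\alpha/2)\coth(\alpha)-1=1/(2\sinh^2(\alpha/2))$ and the factor $(\alpha,2\alpha)=2||\alpha||^2$ combine correctly). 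All signs and constants check out.

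The one place where your write-up is a sketch rather than a proof is precisely the point you flag: the vanishing of $\sum\bsk_\alpha\bsk_\beta(\alpha,\beta)\cosh(\tfrac{\alpha-\beta}2)/(\sinh\tfrac\alpha2\sinh\tfrac\beta2)$ over non-proportional pairs. This is where the crystallographic hypothesis and the $W'$-invariance of $\bsk$ are actually used, so it is the real content of the proposition and should be written out. Your plan is sound: each non-proportional pair spans a unique two-plane $P$, the reflections $r_\alpha$ for $\alpha\in\varSigma'\cap P$ lie in $W'$ and preserve $P$, so $\bsk$ restricts to a multiplicity function on the rank-two subsystem $\varSigma'\cap P$ with positive system $\varSigma'^+\cap P$, and one checks each of $A_1\oplus A_1$ (trivial, since $(\alpha,\beta)=0$), $A_2$, $B_2$, $BC_2$, $G_2$. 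For $A_2$ the computation with $\cosh a\sinh b=\tfrac12(\sinh(a+b)+\sinh(b-a))$ does give an identically zero numerator over the common denominator $\prod\sinh(\alpha/2)$, but be aware that the remaining cases involve more than ``three-term'' cancellations ($15$ pairs for $G_2$) and that the $B_2$/$BC_2$ cases require tracking the two or three $W'$-orbits of $\bsk$; until those are written out the proof is incomplete, though there is no obstruction to completing it.
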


Since $\mathscr M' \otimes S(\mathfrak a_\bbC)$ is an ideal of $\mathscr R'\otimes S(\mathfrak a_\bbC)$,
the linear map
\begin{equation}\label{eq:HOHC}
\gamma_{\rho(\bsk)} : \mathscr R'\otimes S(\mathfrak a_\bbC)=
S(\mathfrak a_\bbC) \oplus \mathscr M' \otimes S(\mathfrak a_\bbC)
\xrightarrow{\text{projection}}
S(\mathfrak a_\bbC) \xrightarrow{f(\lambda) \mapsto f(\lambda+\rho(\bsk))}
S(\mathfrak a_\bbC)
\end{equation}
is an algebra homomorphism.
\begin{prop}[{\cite[Theorem 3.11]{He:Heckman}}, {\cite[Theorem 1.3.12]{He:white}}]\label{prop:HCisomHO}
The restriction of $\gamma_{\rho(\bsk)}$
to the subalgebra
\[
(\mathscr R'\otimes S(\mathfrak a_\bbC))^{W',L(\varSigma',\bsk)}
:=\bigl\{D \in (\mathscr R'\otimes S(\mathfrak a_\bbC))^{W'} \,\bigm|\,
[L(\varSigma',\bsk), D]=0
\bigr\}
\]
is an algebra isomorphism onto $S(\mathfrak a_\bbC)^{W'}$.
In particular $(\mathscr R'\otimes S(\mathfrak a_\bbC))^{W',L(\varSigma',\bsk)}$
is a commuting family of differential operators.
\end{prop}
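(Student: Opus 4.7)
The plan divides into three components matching the three assertions of the proposition: (i) the restriction of $\gamma_{\rho(\bsk)}$ to the centralizer of $L(\varSigma',\bsk)$ is an algebra homomorphism landing inside $S(\mathfrak a_\bbC)^{W'}$; (ii) it is injective; and (iii) it is surjective.

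The uniform tool for (i) and (ii) is the Harish-Chandra-type formal series. One first verifies that $\mathscr M'\otimes S(\mathfrak a_\bbC)$ is a two-sided ideal of $\mathscr R'\otimes S(\mathfrak a_\bbC)$, using that $\mathscr M'$ is an ideal of $\mathscr R'$ stable under derivation by elements of $\mathfrak a$; hence the projection onto $S(\mathfrak a_\bbC)$, followed by the shift by $\rho(\bsk)$, defines an algebra homomorphism $\gamma_{\rho(\bsk)}$ globally on $\mathscr R'\otimes S(\mathfrak a_\bbC)$. Next, for generic $\lambda\in\mathfrak a_\bbC^*$, one constructs by order-by-order recursion a unique formal series solution
\[
\Phi(\lambda,H)=e^{(\lambda-\rho(\bsk))(H)}\sum_{\mu\in\bbN\varSigma'^+}\Gamma_\mu(\lambda)\,e^{-\mu(H)},\qquad \Gamma_0(\lambda)=1,
\]
of the eigenvalue equation $L(\varSigma',\bsk)\,\Phi(\lambda,\cdot)=\bigl((\lambda,\lambda)-(\rho(\bsk),\rho(\bsk))\bigr)\Phi(\lambda,\cdot)$, converging on $\mathfrak a_-$. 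For any $D\in(\mathscr R'\otimes S(\mathfrak a_\bbC))^{W'}$ commuting with $L(\varSigma',\bsk)$, the function $D\Phi(\lambda,\cdot)$ again satisfies the same $L$-eigenvalue equation, and its leading asymptotic on $\mathfrak a_-$ is precisely $\gamma_{\rho(\bsk)}(D)(\lambda)\,e^{(\lambda-\rho(\bsk))(H)}$; uniqueness of the series with prescribed leading term forces
\[
D\,\Phi(\lambda,\cdot)=\gamma_{\rho(\bsk)}(D)(\lambda)\,\Phi(\lambda,\cdot).
\]
The $W'$-invariance of $D$ then yields $\gamma_{\rho(\bsk)}(D)(w\lambda)=\gamma_{\rho(\bsk)}(D)(\lambda)$ for all $w\in W'$ and generic $\lambda$, by comparing eigenvalues on each $\Phi(w\lambda,\cdot)$, so $\gamma_{\rho(\bsk)}(D)\in S(\mathfrak a_\bbC)^{W'}$. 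Injectivity on the centralizer is immediate: $\gamma_{\rho(\bsk)}(D)=0$ forces $D\Phi(\lambda,\cdot)=0$ for generic $\lambda$, and the real-analytic family $\{\Phi(\lambda,\cdot)\}$ is rich enough to conclude $D=0$.

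The main obstacle is surjectivity. The cleanest construction employs the trigonometric Cherednik operators $T_\xi(\bsk)$ for $\xi\in\mathfrak a_\bbC$: a commuting family of first-order operators on $\mathscr A(\mathfrak a_\reg)\otimes\bbC[W']$ satisfying $wT_\xi w^{-1}=T_{w\xi}$ for $w\in W'$, such that for every $p\in S(\mathfrak a_\bbC)^{W'}$ the operator $p(T)$ preserves the subspace of $W'$-invariant functions and restricts there to a $W'$-invariant differential operator $D_p$ with coefficients in $\mathscr R'$. Since the $T_\xi$'s commute and the quadratic invariant $p_0(\xi)=(\xi,\xi)$ produces $L(\varSigma',\bsk)+(\rho(\bsk),\rho(\bsk))$ in this way, each $D_p$ automatically commutes with $L(\varSigma',\bsk)$. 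A direct computation of the leading asymptotic of $T_\xi$ on $\mathfrak a_-$ yields $\gamma_{\rho(\bsk)}(D_p)=p$, giving the required preimages. Finally, the concluding commutativity of the centralizer follows from the injectivity of $\gamma_{\rho(\bsk)}$ together with the commutativity of $S(\mathfrak a_\bbC)^{W'}$.
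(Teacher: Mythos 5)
The paper offers no proof of this proposition: it is stated with citations to Heckman ([He:Heckman, Theorem 3.11], [He:white, Theorem 1.3.12]) and used as a black box, so there is no internal argument to compare against. Your reconstruction follows the standard modern route — Harish-Chandra series $\varPhi(\lambda,\cdot)$ for the homomorphism property and injectivity on the commutant, Cherednik operators to produce a preimage $D_p$ of every $p\in S(\mathfrak a_\bbC)^{W'}$ — which is essentially Opdam's treatment in \cite{Op:book}; Heckman's original surjectivity proof instead used hypergeometric shift operators. The architecture is sound, and the commutativity conclusion from injectivity is correct.

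Two points need repair. A minor one: with the paper's convention $\mathfrak a_-=\{H\,|\,\alpha(H)<0\ \forall\alpha\in\varSigma'^+\}$, a series in $e^{-\mu}$, $\mu\in\bbN\varSigma'^+$, converges on $\mathfrak a_+$, not on $\mathfrak a_-$ (the paper's $\varPhi$ carries exponents $\lambda+\rho(\bsk)+\mu$ on $\mathfrak a_-$). More substantively, your justification of $W'$-invariance of $\gamma_{\rho(\bsk)}(D)$ by ``comparing eigenvalues on each $\varPhi(w\lambda,\cdot)$'' does not work as stated: the $\varPhi(w\lambda,\cdot)$ are linearly independent solutions on the \emph{same} chamber, and nothing a priori forces $D$ to act on them with equal eigenvalues. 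What $W'$-invariance of $D$ gives directly is an eigenvalue equation for $\varPhi(\lambda,\cdot)\circ w^{-1}$ on the chamber $w\mathfrak a_-$, and identifying that eigenvalue with $\gamma_{\rho(\bsk)}(D)(w\lambda)$ is precisely the chamber-independence of the constant term one is trying to establish. The references close this either by rank-one reduction along each root wall (Heckman) or by reading invariance off the Cherednik calculus — which you already invoke for surjectivity, so the cleanest fix is to route this step through it as well.
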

One easily sees $\mathscr R'$, $L(\varSigma',\bsk)$, $(\mathscr R'\otimes S(\mathfrak a_\bbC))^{W',L(\varSigma',\bsk)}$
and the restriction of $\gamma_{\rho(\bsk)}$ to $(\mathscr R'\otimes S(\mathfrak a_\bbC))^{W',L(\varSigma',\bsk)}$
do not depend on the choice of $\varSigma'^+$.
Now suppose $\lambda\in\mathfrak a_\bbC^*$
and let us consider
the following three conditions for $f\in \mathscr A(\mathfrak a)$:
\begin{enumerate}[label=(HG\arabic*), leftmargin=*]
\item $f\in \mathscr A(\mathfrak a)^{W'}$;\label{HG1}
\item $D\,f=\gamma_{\rho(\bsk)}(D)(\lambda)\,f$ for any
$D\in (\mathscr R'\otimes S(\mathfrak a_\bbC))^{W',L(\varSigma',\bsk)}$;\label{HG2}
\item $f(0)=1$.\label{HG3}
\end{enumerate}
As we see below
these conditions characterize $F(\varSigma',\bsk,\lambda)$.

\subsection{Definition of the hypergeometric functions}
Put
\begin{align}
\tilde{c}(\varSigma',\bsk,\lambda)&=\prod_{\alpha\in\varSigma'_+} 
\frac{\varGamma(\lambda(\alpha^\vee)+\frac12\bsk_{\frac12\alpha})}
{\varGamma(\lambda(\alpha^\vee)+\frac12 \bsk_{\frac12\alpha}+\bsk_\alpha)}
, \\
c(\varSigma',\bsk,\lambda)&=
\frac{\tilde{c}(\varSigma',\bsk,\lambda)}{\tilde{c}(\varSigma',\bsk,\rho(\bsk))},
\label{eq:cfho}
\end{align}
where $\bsk_{\frac12\alpha}=0$ if $\frac12\alpha\notin\varSigma'^+$.

For any $\lambda\in\mathfrak a_\bbC^*$ with the property
\begin{equation*}
2(\lambda,\beta)+(\beta,\beta)\ne 0\quad\text{for any }\beta\in\bbN\varSigma'^+\setminus\{0\},
\end{equation*}
there is a unique formal series in the form
\[
\varPhi(\varSigma',\bsk,\lambda)=e^{\lambda+\rho(\bsk)}
+\sum_{\mu\in\bbN\varSigma'^+\setminus\{0\}} a_\mu e^{\lambda+\rho(\bsk)+\mu}\quad(a_\mu\in \bbC)
\]
such that
\[
L(\varSigma',\bsk)\,\varPhi(\varSigma',\bsk,\lambda)
=\gamma_{\rho(\bsk)}(L(\varSigma',\bsk))(\lambda)\,\varPhi(\varSigma',\bsk,\lambda).
\]
The series actually converges absolutely on $\mathfrak a_-$.
Thus for a generic $\lambda$
\begin{equation}\label{eq:connection}
\tilde F(\varSigma',\bsk,\lambda)=\sum_{w\in W}\tilde c(\varSigma',\bsk,-w\lambda)\varPhi(\varSigma',\bsk,w\lambda).
\end{equation}
is a well-defined real analytic function on $\mathfrak a_-$.
We have immediately from \eqref{eq:connection}
that $\tilde F(\varSigma',\bsk,w\lambda)=\tilde F(\varSigma',\bsk,\lambda)$ for $w\in W'$
and that 
for $H\in\mathfrak a_-$ and a generic $\lambda$ satisfying $(\operatorname{Re}\lambda,\alpha)<0$ ($\forall\alpha\in\varSigma'$)
\begin{equation}\label{eq:limcf2}
 \lim_{t\to\infty}e^{t(-\lambda-\rho(\bsk))(H)}\tilde F(\varSigma',\bsk,\lambda;{tH})=
\tilde c(\varSigma',\bsk,-\lambda). 
 \end{equation}

\begin{thm}[{\cite[Theorem 2.8]{HO4}, \cite[\S4]{He:white}}]
There exists a $W$-invariant open neighborhood $\mathcal U$ of $0\in\mathfrak a$
such that $\tilde F(\varSigma',\bsk,\lambda;H)$ extends to a holomorphic function on
$\mathcal K(\varSigma')\times \mathfrak a_\bbC^* \times (\mathfrak a + \sqrt{-1}\mathcal U)$.
Moreover $f:=\tilde F(\varSigma',\bsk,\lambda; H)|_{H\in\mathfrak a}$ satisfies \ref{HG1} and \ref{HG2}
for each $(\bsk,\lambda)\in\mathcal K(\varSigma')\times\mathfrak a_\bbC^*$.
\end{thm}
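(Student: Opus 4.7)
The plan is to construct $\tilde F(\varSigma',\bsk,\lambda;H)$ first on the real chamber $\mathfrak a_-$ via the series \eqref{eq:connection}, then analytically continue it across the walls into a complex neighborhood of $0\in\mathfrak a$. The first task is to verify convergence on $\mathfrak a_-$ jointly in all parameters. Each Harish-Chandra series $\varPhi(\varSigma',\bsk,w\lambda)$ is determined by a triangular recursion on its coefficients $a_\mu$; the recursion denominators are non-vanishing outside a locally finite union of affine hyperplanes in $(\bsk,\lambda)$, and standard domination estimates yield absolute, locally uniform convergence on $\mathfrak a_-$. The poles thereby introduced in the individual terms are precisely those cancelled by the explicit $\tilde c$-factors in \eqref{eq:connection} when one sums over $w\in W'$, so the sum $\tilde F$ extends holomorphically in $(\bsk,\lambda)$ across the entire parameter space $\mathcal K(\varSigma')\times\mathfrak a_\bbC^*$ while remaining real analytic on $\mathfrak a_-$.

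Next I would establish (HG2) on this initial domain. By construction each $\varPhi(w\lambda)$ is an eigenfunction of $L(\varSigma',\bsk)$ with eigenvalue $\gamma_{\rho(\bsk)}(L(\varSigma',\bsk))(w\lambda)$, which is $W'$-invariant by Proposition \ref{prop:HCisomHO}. For any $D$ in $(\mathscr R'\otimes S(\mathfrak a_\bbC))^{W',L(\varSigma',\bsk)}$, the identity $[L,D]=0$ together with the uniqueness of the Harish-Chandra series gives $D\,\varPhi(w\lambda) = \gamma_{\rho(\bsk)}(D)(w\lambda)\,\varPhi(w\lambda)$; since $\gamma_{\rho(\bsk)}(D)\in S(\mathfrak a_\bbC)^{W'}$, the $\tilde c$-weighted sum produces the common eigenvalue $\gamma_{\rho(\bsk)}(D)(\lambda)$. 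The manifest symmetry of \eqref{eq:connection} under $\lambda\mapsto w'\lambda$ also shows $\tilde F$ is $W'$-invariant as a function of $\lambda$; invariance of (HG1) as a function of $H$ on $\mathfrak a_-$ follows after comparison of asymptotic behavior in adjacent chambers, the two sides agreeing because both are joint eigenfunctions with the same leading exponents on each Weyl-translate of $\mathfrak a_-$.

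The heart of the proof is the holomorphic extension across the walls. The joint eigenequation defines a holonomic system whose singular locus is the wall divisor $\prod_{\alpha\in\varSigma'^+}(1-e^{\alpha})=0$; by \eqref{eq:HOL1} and inspection of the commuting family, this system has regular singularities along each wall and at infinity. At a generic point of a codimension-one wall $\alpha=0$, the indicial equation of $L(\varSigma',\bsk)$ has exponents $0$ and $1-\bsk_{\alpha/2}-2\bsk_\alpha$ modulo $S(\mathfrak a_\bbC)^{W'}$-action. The function $\tilde F$, constructed on the two adjacent Weyl chambers by \eqref{eq:connection} and known already to coincide under the simple reflection in $\alpha$, is single-valued under the local monodromy around $\alpha=0$; by the regular-singularity theory this monodromy-trivial solution extends holomorphically across the wall. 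Extending across every codimension-one wall produces a holomorphic continuation to a punctured neighborhood of $\mathfrak a$ in $\mathfrak a_\bbC$. Hartogs' theorem then fills in the codimension-$\ge2$ strata, yielding a $W$-invariant open $\mathcal U\subset\mathfrak a$ with $\tilde F$ holomorphic on $\mathcal K(\varSigma')\times\mathfrak a_\bbC^*\times(\mathfrak a+\sqrt{-1}\mathcal U)$.

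The principal obstacle is the indicial analysis at resonant parameters, namely when $1-\bsk_{\alpha/2}-2\bsk_\alpha$ is a non-positive integer, where logarithmic solutions generically appear. The argument here is that such log terms, if present in $\tilde F$, would violate the explicit convergent expansion \eqref{eq:connection} already on $\mathfrak a_-$, so the $\tilde c$-factor identities force the log coefficient to vanish identically in $\lambda$. Handling this cancellation uniformly in $(\bsk,\lambda)$—and so extracting a single open $\mathcal U$ independent of the parameters—is the delicate point, but it is exactly what the specific choice of $\tilde c$ in \eqref{eq:cfho} is designed for, via the reflection symmetry between $\varPhi(w\lambda)$ and $\varPhi(s_\alpha w\lambda)$.
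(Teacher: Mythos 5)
The paper does not prove this theorem at all — it is imported verbatim from Opdam \cite{HO4} and Heckman \cite{He:white} — so your sketch has to be judged against those sources rather than against anything in the text.

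Judged that way, the decisive step of your argument has a genuine gap. You pass from ``$\tilde F$ is single-valued under the local monodromy around a wall'' to ``$\tilde F$ extends holomorphically across the wall.'' For a system with regular singularities, monodromy-triviality of a solution only yields \emph{meromorphy} along the divisor: when the second local exponent $1-\bsk_{\alpha/2}-2\bsk_\alpha$ is a negative integer there exist single-valued local solutions with genuine poles, and at non-positive integer exponents logarithmic solutions appear. Excluding poles and logs from $\tilde F$ is precisely the content of the theorem, and your proposed way of doing so --- that a log term ``would violate the explicit convergent expansion \eqref{eq:connection} already on $\mathfrak a_-$'' --- is vacuous: \eqref{eq:connection} converges only on the \emph{open} chamber $\mathfrak a_-$, whose boundary contains the wall, and a convergent exponential expansion in the open chamber is compatible with arbitrary polar or logarithmic behaviour at the wall. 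The proofs in \cite{HO4} and \cite{He:white} instead identify the local solution space at a generic wall point with a rank-one (Gauss hypergeometric) model tensored with a regular factor, verify by explicit rank-one $c$-function identities that $\tilde F$ lies in the holomorphic subspace for \emph{generic} $(\bsk,\lambda)$, and then handle the exceptional resonant parameters by a local-boundedness/removable-singularity argument in $(\bsk,\lambda)$ --- which is also what delivers a single tube $\mathfrak a+\sqrt{-1}\,\mathcal U$ uniform in the parameters, a point your sketch merely asserts is ``what $\tilde c$ is designed for.''

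Two secondary gaps of the same nature: (i) the cancellation of the $\lambda$-poles of the individual $\varPhi(\varSigma',\bsk,w\lambda)$ by the factors $\tilde c(\varSigma',\bsk,-w\lambda)$ in the $W'$-sum is itself a theorem (the residues of $\varPhi$ along the hyperplanes $2(\lambda,\beta)+(\beta,\beta)=0$ must be shown to be proportional to the other $\varPhi(w'\lambda)$), not something that holds ``precisely'' by inspection; (ii) the $W'$-invariance in $H$ required for \ref{HG1} cannot be checked ``on $\mathfrak a_-$'' prior to the wall-crossing, since $H$ and $w'H$ never lie in the same chamber, so your order of argument is circular there --- invariance in $H$ has to come out of the continuation itself (or out of a separate construction near $0$, as in Opdam's later Cherednik-operator approach).
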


\begin{thm}[{\cite[Theorem 6.1]{Op:GSF}}]\label{thm:GSF}
$\tilde F(\varSigma',\bsk,\lambda;0)=\tilde{c}(\varSigma',\bsk,\rho(\bsk))$ for any $(\bsk,\lambda)\in\mathcal K(\varSigma')\times\mathfrak a_\bbC^*$.
\end{thm}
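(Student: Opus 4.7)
The theorem is equivalent to the normalization $F(\varSigma',\bsk,\lambda;0) = 1$ for the normalized hypergeometric function $F = \tilde F/\tilde c(\varSigma',\bsk,\rho(\bsk))$, since $\tilde F(\varSigma',\bsk,\lambda;0) = \tilde c(\varSigma',\bsk,\rho(\bsk)) F(\varSigma',\bsk,\lambda;0)$ by definition \eqref{eq:cfho}. The plan is to verify this normalization by a continuity argument starting from an easy base case and extending via Opdam's shift operators.

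First I would observe that $F(\varSigma',\bsk,\lambda;0)$ depends holomorphically on $(\bsk,\lambda)$ on the regular locus of $\mathcal K(\varSigma')\times\mathfrak a_\bbC^*$ and, because of the $W'$-invariance of $\tilde F$ in $\lambda$ visible in \eqref{eq:connection}, is $W'$-invariant in $\lambda$. The base case $\bsk = 0$ is trivial: the subalgebra $(\mathscr R'\otimes S(\mathfrak a_\bbC))^{W',L(\varSigma',0)}$ is just $S(\mathfrak a_\bbC)^{W'}$ acting by constant-coefficient operators, and the unique $W'$-invariant analytic joint eigenfunction is $F(\varSigma',0,\lambda;H) = |W'|^{-1}\sum_{w\in W'}e^{(w\lambda)(H)}$, whose value at $H=0$ is $1$.

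The step from $\bsk=0$ to arbitrary $\bsk$ uses Opdam's shift operators: for each $W'$-orbit in $\varSigma'$, one has a differential operator in $\mathscr R'\otimes S(\mathfrak a_\bbC)$ that intertwines $L(\varSigma',\bsk)$ with $L(\varSigma',\bsk')$, where $\bsk'$ differs from $\bsk$ by $\pm 1$ on that orbit, and sends $F(\varSigma',\bsk,\lambda)$ to an explicit scalar multiple of $F(\varSigma',\bsk',\lambda)$ — the scalar being a product of gamma-type factors depending on $\bsk$ and $\lambda$. Iterating these shifts brings an arbitrary integer-valued $\bsk$ back to the base case, and the identity for integer-valued $\bsk$ extends to all $\bsk$ in the regularity domain by the joint holomorphy.

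The main obstacle is the precise bookkeeping: one must verify that the product of scalar factors from iterated shifts reproduces exactly the normalization $\tilde c(\varSigma',\bsk,\rho(\bsk))$. This boils down to functional identities for $\tilde c(\varSigma',\bsk,\rho(\bsk))$ under unit shifts of $\bsk$, which in turn follow from the Gauss multiplication and duplication formulas for the gamma function. A more conceptual alternative — the route taken in \cite{Op:GSF} — is to interpret $F(\varSigma',\bsk,\lambda;0)$ as a contour integral via the inverse hypergeometric Fourier transform and evaluate it by a Selberg-type residue calculation, bypassing the explicit combinatorics of shift constants but relying on the same deep analytic inputs.
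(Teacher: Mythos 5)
First, a point of comparison: the paper does not prove this statement at all — it is imported verbatim as \cite[Theorem 6.1]{Op:GSF} (Opdam's Gauss summation formula for root systems) — so there is no internal argument to measure yours against. The only question is whether your sketch would work as a proof, and as written it would not, for two concrete reasons.

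(1) The shift-operator induction does not propagate the normalization at the origin. A shift operator $G$ relating the parameters $\bsk$ and $\bsk'$ (differing by $1$ on one orbit) gives an identity of the form $G\,F(\varSigma',\bsk,\lambda)=c(\bsk,\lambda)\,F(\varSigma',\bsk',\lambda)$ with $G$ a differential operator of positive order. Evaluating at $H=0$ therefore expresses $F(\varSigma',\bsk',\lambda;0)$ in terms of the full jet of $F(\varSigma',\bsk,\lambda)$ at the origin, not just its value there; knowing $F(\varSigma',\bsk,\lambda;0)=1$ gives no control over $(G F)(0)$, so the induction does not close. This is precisely why the evaluation at $0$ was not already settled by the shift-operator machinery of \cite{HO} and required the separate, genuinely harder argument of \cite{Op:GSF}. (2) Even granting the induction, it would only establish the identity for $\bsk$ in a translate of the integer lattice inside $\mathcal K(\varSigma')\simeq\bbC^r$. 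A lattice is not a uniqueness set for holomorphic functions of several variables (consider $\prod_i\sin(\pi \bsk_i)$), so ``extends to all $\bsk$ by joint holomorphy'' is false as stated; one needs growth estimates in $\bsk$ of Carlson or Phragm\'en--Lindel\"of type, or a different mechanism altogether — which is what Opdam actually supplies. Your closing sentence correctly senses that the real proof rests on deeper analytic input, but the bridge from the integral base cases to general $\bsk$ is the missing idea, not mere bookkeeping of Gamma factors. A smaller remark: the theorem is asserted for \emph{all} $\bsk\in\mathcal K(\varSigma')$, including those with $\tilde c(\varSigma',\bsk,\rho(\bsk))=0$ where $F$ is undefined, so your opening ``equivalence'' already presupposes the holomorphy of $\tilde F$ in $\bsk$ from the preceding theorem together with an identity on a set large enough to propagate — which is exactly the point at issue.
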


In particular $\tilde{c}(\varSigma',\bsk,\rho(\bsk))$ is an entire holomorphic function on $\mathcal K(\varSigma')$.
\begin{defn}
Put
\[
\mathcal K_\reg(\varSigma')=\{\bsk \in \mathcal K(\varSigma')\,|\, \tilde{c}(\varSigma',\bsk,\rho(\bsk))\ne0\}.
\]
For each $(\bsk,\lambda)\in\mathcal K_\reg(\varSigma')\times\mathfrak a_\bbC^*$
we define 
\[
F(\varSigma',\bsk,\lambda)=\tilde{c}(\varSigma',\bsk,\rho(\bsk))^{-1} \tilde F(\varSigma',\bsk,\lambda).
\]
This is a real analytic function on $\mathfrak a$ satisfying \ref{HG1}--\ref{HG3}.
\end{defn}

\subsection{Regularity conditions on $\bsk$}\label{subsec:RC}
Suppose $\bsk\in\mathcal K(\varSigma')$.
For $H\in \mathfrak a$ a so-called \emph{Cherednik operator} is defined by
\[
T_{\bsk}(H)=\partial(H)+\sum_{\alpha\in\varSigma'^+}\frac{\bsk_\alpha\alpha(H)}{1-e^{-\alpha}}(1-r_\alpha)-\rho(\bsk)(H)
\]
where $\partial(H)$ is the $H$-directional derivative and $r_\alpha$ is the orthogonal reflection in $\alpha=0$.
The operator $T_{\bsk}(H)$ acts on various function spaces including $\mathscr A(\mathfrak a)$ and the algebra $\Hat{\mathscr A}_0$ of formal power series at $0\in\mathfrak a$.
Note $\mathscr A(\mathfrak a)\subset \Hat{\mathscr A}_0$.
By \cite{Ch1}, $T_{\bsk}(H)$'s ($H\in\mathfrak a$) are commutative
and $T_{\bsk}(\cdot)$ extends to an algebra homomorphism $S(\mathfrak a_\bbC)\to \End_\bbC \Hat{\mathscr A}_0$  (see also \cite[Theorem 2.1]{Ch2}).
Suppose $\lambda\in\mathfrak a_\bbC^*$.
In view of \cite[Theorem 2.12]{Op:Cherednik}, $f\in \mathscr A(\mathfrak a)$ satisfies \ref{HG1}--\ref{HG3} if and only if $f$ satisfies the following:
\begin{enumerate}[label=(HG\arabic*\/$'$), leftmargin=*]
\item $f\in \Hat{\mathscr A}_0^{W'}$,\label{HG1'}
\item $T_{\bsk}(D)\,f=D(\lambda)\,f$ for any
$D\in S(\mathfrak a_\bbC)^{W'}$.\label{HG2'}
\item $f(0)=1$.\label{HG3'}
\end{enumerate}
These conditions can be applied to any $f\in\Hat{\mathscr A}_0$.
Define a bilinear form $\langle\cdot,\cdot\rangle_{\bsk} : S(\mathfrak a_\bbC)\times \Hat{\mathscr A}_0 \to \bbC$ by $\langle D, f\rangle_{\bsk}=(T_{\bsk}(D)f)(0)$ and put
\begin{align*}
\lRad_{\bsk}&=\{D\in S(\mathfrak a_\bbC)\,|\,\langle D,f \rangle_{\bsk}=0\quad(\forall f\in \Hat{\mathscr A}_0)\},\\
\rRad_{\bsk}&=\{f\in \Hat{\mathscr A}_0\,|\,\langle D,f \rangle_{\bsk}=0\quad(\forall D\in S(\mathfrak a_\bbC))\}.
\end{align*}

The \emph{Dunkl operator}
for $H\in \mathfrak a$ with multiplicity parameter $\bsk$  (\cite{Du}) is defined by
\[
\bar T_{\bsk}(H)=\partial(H)+\sum_{\alpha\in\varSigma'^{+}}\frac{\bsk_\alpha\alpha(H)}{\alpha}(1-r_\alpha).
\]
The space $\mathscr P(\mathfrak a)$ of polynomial functions on $\mathfrak a$
is identified with $S(\mathfrak a_\bbC)$ via $B(\cdot,\cdot)$,
so that $\bar T_{\bsk}(H)$'s act on $S(\mathfrak a_\bbC)$.
This action also extends to an algebra homomorphism $\bar T_{\bsk}(\cdot): S(\mathfrak a_\bbC)\to\End_\bbC S(\mathfrak a_\bbC)$.
Define a bilinear form $(\cdot,\cdot)_{\bsk}$ on $S(\mathfrak a_\bbC)\times S(\mathfrak a_\bbC)$ by $( D_1, D_2)_{\bsk}=(\bar T_{\bsk}(D_1)D_2)(0)$.
By \cite[Theorem 3.5]{Du2}, $(\cdot,\cdot)_{\bsk} $ is symmetric.

\begin{thm}\label{thm:existence}
The following conditions on $\bsk \in \mathcal K(\varSigma')$ are all equivalent:
\begin{enumerate}[label=(\arabic*), leftmargin=*, topsep=0pt]
\item $\bsk \in \mathcal K_\reg(\varSigma')$; \label{k1}
\item $(\cdot,\cdot)_{\bsk}$ is non-degenerate; \label{k2}
\item $\lRad_{\bsk}=\{0\}$; \label{k3}
\item $\rRad_{\bsk}=\{0\}$;\label{k4}
\item for any $\lambda\in\mathfrak a_\bbC^*$ it holds that
any $f\in \Hat{\mathscr A}_0\setminus\{0\}$ satisfying \ref{HG1'} and \ref{HG2'}
takes a non-zero value at $0$;\label{k5}
\item for any $\lambda\in\mathfrak a_\bbC^*$ there exists some $f\in \mathscr A(\mathfrak a)$ satisfying \ref{HG1}--\ref{HG3};\label{k6}
\item there exists a Zariski dense subset $Z\subset \mathfrak a_\bbC^*$ such that
for any $\lambda\in Z$ there exists some $f\in \Hat{\mathscr A}_0$ satisfying \ref{HG1'}--\ref{HG3'}.\label{k7}
\end{enumerate}
\end{thm}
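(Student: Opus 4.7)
The plan is to organize the seven conditions into three clusters and prove a cycle of implications linking them: the bilinear-form conditions (2)--(4), the existence/uniqueness conditions (5)--(7), and the explicit $c$-function criterion (1).

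First I would establish (3) $\Leftrightarrow$ (4) $\Leftrightarrow$ (2). With respect to the order-of-vanishing filtration on $\Hat{\mathscr A}_0$ and the degree filtration on $S(\mathfrak a_\bbC)$, the Cherednik operator $T_{\bsk}(H)$ has the Dunkl operator $\bar T_{\bsk}(H)$ as its associated graded (the extra non-polar and constant terms are strictly lower-order), so the pairing $\langle\cdot,\cdot\rangle_{\bsk}$ is block upper-triangular with diagonal blocks given by finite-dimensional restrictions of $(\cdot,\cdot)_{\bsk}$. This simultaneously gives (2) $\Leftrightarrow$ (3), while (3) $\Leftrightarrow$ (4) follows because each graded block is a pairing of finite-dimensional spaces of equal dimension.

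For (3) $\Leftrightarrow$ (5) the key tool is the equivariance $T_{\bsk}(wD) = w\circ T_{\bsk}(D)\circ w^{-1}$ for $w \in W'$, $D \in S(\mathfrak a_\bbC)$, which is inherited from the linear case via the algebra-homomorphism structure of $T_{\bsk}(\cdot)$. For $W'$-invariant $f \in \Hat{\mathscr A}_0^{W'}$ this yields $\langle D, f\rangle_{\bsk} = \langle \operatorname{sym}_{W'}(D), f\rangle_{\bsk}$ for every $D \in S(\mathfrak a_\bbC)$. Consequently a $W'$-invariant formal eigenfunction satisfying (HG2$'$) with $f(0) = 0$ lies in $\rRad_{\bsk}$, and (3) forces $f=0$, giving (5). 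Conversely, a nonzero element of $\rRad_{\bsk}$ can be passed to its $W'$-average (or, if that vanishes, to an appropriate $W'$-translate) to yield a nonzero $W'$-invariant formal solution of (HG1$'$)--(HG2$'$) for a suitable $\lambda$ with trivial value at $0$.

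The remaining implications (5) $\Leftrightarrow$ (6) $\Leftrightarrow$ (7) are bridged by Opdam's existing analytic theory: the convergent series \eqref{eq:connection} together with the holomorphic extension theorem identifies the formal and analytic pictures, while (6) immediately implies (7) by Taylor expansion and (7) specializes to give (5) by Zariski-density of $Z$. Finally (1) $\Leftrightarrow$ (6) is a direct consequence of Theorem \ref{thm:GSF}: the normalization $F(\varSigma',\bsk,\lambda) = \tilde c(\varSigma',\bsk,\rho(\bsk))^{-1}\, \tilde F(\varSigma',\bsk,\lambda)$ yields an analytic solution of (HG1)--(HG3) precisely when the denominator is nonzero. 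The main obstacle will be the forward implication (3) $\Rightarrow$ (5): promoting the Cherednik eigenfunction condition—stated against $W'$-invariant test elements—to a vanishing of the pairing against all of $S(\mathfrak a_\bbC)$. This requires careful handling of the reflection-equivariance of the Cherednik operators together with the Chevalley free-module structure of $S(\mathfrak a_\bbC)$ over $S(\mathfrak a_\bbC)^{W'}$, both of which demand attention precisely on the non-generic locus of $\bsk$ being characterized.
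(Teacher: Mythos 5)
Your overall architecture (cluster the radical conditions, link them to the formal-solution conditions, and bring in (1) via Theorem \ref{thm:GSF}) is close to the paper's, and your treatment of (2)$\Leftrightarrow$(3) via the filtration and Lemma \ref{lem:DunklCherednik} is exactly right. But the step you yourself flag as the crux is carried by a false identity. Cherednik operators are \emph{not} $W'$-equivariant: the sum in $T_{\bsk}(H)$ runs over a fixed positive system, and conjugation by $w\in W'$ changes that system, so $T_{\bsk}(wH)\ne w\circ T_{\bsk}(H)\circ w^{-1}$ in general; the correct cross-relations are precisely those of the graded Hecke algebra $\boldsymbol H(\varSigma'^+,\bsk)$, namely $H\cdot r_\alpha=r_\alpha\cdot r_\alpha(H)-(\bsk_\alpha+2\bsk_{2\alpha})\alpha(H)$. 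Consequently your identity $\langle D,f\rangle_{\bsk}=\langle\operatorname{sym}_{W'}(D),f\rangle_{\bsk}$ for $W'$-invariant $f$ fails with the naive symmetrization: already in rank one with $D=\xi\in\mathfrak a$ one has $\operatorname{sym}_{W'}(\xi)=0$, yet $\langle\xi,1\rangle_{\bsk}=(T_{\bsk}(\xi)1)(0)=-\tfrac12\bsk_\alpha\alpha(\xi)\ne0$. What is true---and what the paper proves via Lemma \ref{lem:CHW'}, using $\langle\cdot h,\cdot\rangle_{\bsk}=\langle\cdot,h\cdot\rangle_{\bsk}$ for $h\in\boldsymbol H$ and the fact that the right $W'$-invariants of $\bbC v_0\otimes_{\bbC W'}\boldsymbol H$ are $v_0\otimes S(\mathfrak a_\bbC)^{W'}$---is that $\frac1{\#W'}\sum_w v_0\otimes D\cdot w=v_0\otimes\tilde D$ for some $\bsk$-dependent $\tilde D\in S(\mathfrak a_\bbC)^{W'}$ (in the example $\tilde D=-\tfrac12\bsk_\alpha\alpha(\xi)$), whence $\langle D,f\rangle_{\bsk}=\tilde D(\lambda)f(0)=0$. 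So the implication you want is salvageable, but only by the Hecke-algebra mechanism, not by equivariance plus the Chevalley module structure.

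The return path from the solution conditions to the radical conditions also does not close as proposed. Your converse ``a nonzero element of $\rRad_{\bsk}$ can be passed to its $W'$-average \dots to yield a nonzero invariant formal solution for a suitable $\lambda$'' has no justification: elements of the right radical are not joint eigenfunctions of the $T_{\bsk}(D)$ for any $\lambda$, and averaging or translating does not make them so. Likewise ``(7) specializes to (5) by Zariski density'' cannot work, since (7) asserts existence on a dense set of $\lambda$ while (5) is a nonvanishing statement for every $\lambda$. The paper closes the cycle through the \emph{left} radical: given $0\ne D\in\lRad_{\bsk}$, the product $D'=\prod_{w\in W'}w(D)$ is a nonzero $W'$-invariant element of the ideal $\lRad_{\bsk}$, and pairing it against the solution furnished by (7) at some $\lambda\in Z$ with $D'(\lambda)\ne0$ gives $\langle D',f\rangle_{\bsk}=D'(\lambda)\ne0$, a contradiction; this implication (7)$\Rightarrow$(3) is absent from your plan and is what makes everything equivalent. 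Finally, your direct claim (4)$\Rightarrow$(3) from the block-triangular structure also fails (a right-radical vector of a single diagonal block $\langle\cdot,\cdot\rangle_{\bsk}^d$ need not annihilate polynomials of degree $>d$); the paper only proves (2)$\Leftrightarrow$(3)$\Rightarrow$(4) within that cluster and recovers the rest by the cycle (4)$\Rightarrow$(5)$\Rightarrow$(1)$\Rightarrow$(6)$\Rightarrow$(7)$\Rightarrow$(3).
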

\begin{cor}\label{cor:existunique}
For any $\bsk\in \mathcal K_\reg(\varSigma')$ and $\lambda\in\mathfrak a_\bbC^*$,
$F(\varSigma',\bsk,\lambda)$ is
the unique real analytic function on $\mathfrak a$ satisfying \ref{HG1}--\ref{HG3} (or equivalently \ref{HG1'}--\ref{HG3'}).
In particular the definition of $F(\varSigma',\bsk,\lambda)$ is independent of the choice of $\varSigma'^+$.
\end{cor}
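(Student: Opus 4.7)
The plan is to deduce the corollary directly from the equivalence \ref{k1}$\Leftrightarrow$\ref{k5} in Theorem \ref{thm:existence}. Existence is already in hand: the function $F(\varSigma',\bsk,\lambda)=\tilde c(\varSigma',\bsk,\rho(\bsk))^{-1}\tilde F(\varSigma',\bsk,\lambda)$ is real analytic on $\mathfrak a$ and satisfies \ref{HG1} and \ref{HG2} by the two theorems preceding its definition, while Theorem \ref{thm:GSF} yields $F(\varSigma',\bsk,\lambda;0)=1$, i.e.\ \ref{HG3}.

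For uniqueness, I would take two real analytic solutions $f_1,f_2$ of \ref{HG1}--\ref{HG3} and set $g:=f_1-f_2$. Then $g\in\mathscr A(\mathfrak a)^{W'}$ satisfies \ref{HG2} by linearity and has $g(0)=0$. The key reduction is to transfer \ref{HG1} and \ref{HG2} for $g$ into the Cherednik-operator conditions \ref{HG1'} and \ref{HG2'} for its Taylor expansion at $0$ viewed inside $\Hat{\mathscr A}_0$. The equivalence \ref{HG1}--\ref{HG3}$\Leftrightarrow$\ref{HG1'}--\ref{HG3'} cited from \cite[Theorem 2.12]{Op:Cherednik} in \S\ref{subsec:RC} is at its core an identification of two commuting families of differential operators acting on $W'$-invariant functions; the normalisation $f(0)=1$ plays no role in this identification, so the same equivalence between the homogeneous linear parts \ref{HG1}\,\&\,\ref{HG2} and \ref{HG1'}\,\&\,\ref{HG2'} applies to $g$. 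Granted this, condition \ref{k5} of Theorem \ref{thm:existence}, available because $\bsk\in\mathcal K_\reg(\varSigma')$, tells us that any non-zero element of $\Hat{\mathscr A}_0$ satisfying \ref{HG1'} and \ref{HG2'} has non-zero value at $0$. Since $g(0)=0$, the Taylor series of $g$ at $0$ must vanish identically, and by real analyticity $g\equiv 0$ on $\mathfrak a$, so $f_1=f_2$.

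For the independence from $\varSigma'^+$, I would appeal to the remark just before the statement of conditions \ref{HG1}--\ref{HG3} that $\mathscr R'$, $L(\varSigma',\bsk)$, the commutant $(\mathscr R'\otimes S(\mathfrak a_\bbC))^{W',L(\varSigma',\bsk)}$ and the restriction of $\gamma_{\rho(\bsk)}$ to it are all independent of $\varSigma'^+$. Thus conditions \ref{HG1}--\ref{HG3} are intrinsic to the triple $(\varSigma',\bsk,\lambda)$, and the uniqueness just established forces $F(\varSigma',\bsk,\lambda)$ to be intrinsic as well. The only subtlety I anticipate is precisely the extraction of the linear equivalence \ref{HG1}\,\&\,\ref{HG2}$\Leftrightarrow$\ref{HG1'}\,\&\,\ref{HG2'} from a statement given in the paper only together with the normalisations \ref{HG3} and \ref{HG3'}; but this follows from the content of the Cherednik-operator machinery underlying \cite{Op:Cherednik} and requires no genuinely new argument.
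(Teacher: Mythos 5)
Your argument is correct and is essentially the paper's own proof, which consists of the single line ``The uniqueness follows from \ref{k5}'': one forms the difference of two solutions, observes it satisfies the homogeneous Cherednik conditions with value $0$ at the origin, and invokes \ref{k5} together with real analyticity. The ``subtlety'' you flag about extracting a linear equivalence between \ref{HG1}\,\&\,\ref{HG2} and \ref{HG1'}\,\&\,\ref{HG2'} dissolves if you instead apply the stated equivalence \ref{HG1}--\ref{HG3}$\Leftrightarrow$\ref{HG1'}--\ref{HG3'} to $f_1$ and $f_2$ separately (each is normalized, so the equivalence applies verbatim) and only then form $g=f_1-f_2$ inside $\Hat{\mathscr A}_0$, where \ref{HG1'} and \ref{HG2'} are manifestly linear.
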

\begin{proof}
The uniqueness follows from \ref{k5}.
\end{proof}
\begin{cor}\label{cor:-HO}
$F(\varSigma',\bsk,\lambda;-H)=F(\varSigma',\bsk,-\lambda;H)$.
\end{cor}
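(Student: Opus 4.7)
The plan is to apply the uniqueness statement of Corollary~\ref{cor:existunique}. Set $g(H):=F(\varSigma',\bsk,\lambda;-H)$; if I verify that $g$ satisfies \ref{HG1}--\ref{HG3} (equivalently \ref{HG1'}--\ref{HG3'}) with parameter $-\lambda$ in place of $\lambda$, uniqueness forces $g(H)=F(\varSigma',\bsk,-\lambda;H)$, which is the asserted identity.

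Conditions \ref{HG1} and \ref{HG3} are immediate: the map $H\mapsto -H$ is $W'$-equivariant and fixes the origin, so the $W'$-invariance of $g$ and $g(0)=1$ are inherited from $F(\varSigma',\bsk,\lambda;\cdot)$. The substantive content is \ref{HG2'}, namely showing that $T_{\bsk}(p)\,g=p(-\lambda)\,g$ for every $p\in S(\mathfrak a_\bbC)^{W'}$. I would proceed by computing how the pullback involution $(\sigma h)(H):=h(-H)$ conjugates a Cherednik operator. A direct sign-tracking calculation from the definition of $T_{\bsk}(H')$ yields the identity
\[
\sigma\,T_{\bsk}(H')\,\sigma = -\,T_{\bsk}^{\,-}(H'),\qquad H'\in\mathfrak a,
\]
where $T_{\bsk}^{\,-}$ denotes the Cherednik operator built from the opposite positive system $-\varSigma'^+$ (so that $\rho(\bsk)$ becomes $-\rho(\bsk)$ and every factor $1-e^{-\alpha}$ becomes $1-e^{\alpha}$). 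Because $T_{\bsk}(\cdot)$ is multiplicative and each first-order factor contributes one sign, this promotes to
\[
\sigma\,T_{\bsk}(p)\,\sigma = T_{\bsk}^{\,-}(p^{-}),\qquad p^{-}(\mu):=p(-\mu),
\]
and $p^{-}\in S(\mathfrak a_\bbC)^{W'}$ whenever $p$ is.

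With $f:=F(\varSigma',\bsk,\lambda;\cdot)$ and $g=\sigma f$, the positive-system independence of $F$ asserted in Corollary~\ref{cor:existunique} implies that $f$ also satisfies \ref{HG2'} for the Cherednik operator $T_{\bsk}^{\,-}$, so $T_{\bsk}^{\,-}(p^{-})f=p^{-}(\lambda)f=p(-\lambda)f$. Therefore
\[
T_{\bsk}(p)\,g = \sigma\,T_{\bsk}^{\,-}(p^{-})\,f = \sigma\bigl(p(-\lambda)f\bigr) = p(-\lambda)\,g,
\]
which is \ref{HG2'} for $g$ with parameter $-\lambda$, and the proof concludes by uniqueness. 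The only step where real work is required is the conjugation identity for $T_{\bsk}(H')$; this is the main obstacle, but it is just routine sign bookkeeping among the four pieces of $T_{\bsk}(H')$ (the derivative $\partial(H')$, the denominator $1-e^{-\alpha}$, the reflections $r_\alpha$, and the shift $\rho(\bsk)$). Everything else is either formal or already stated earlier in the paper.
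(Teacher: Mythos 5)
Your proof is correct and follows essentially the same route as the paper: the paper's proof also introduces the Cherednik operators $T^-_{\bsk}$ for the opposite positive system, observes the conjugation identity $T^-_{\bsk}(\xi)\sigma f=\sigma T_{\bsk}(-\xi)f$ (equivalent to your $\sigma T_{\bsk}(H')\sigma=-T^-_{\bsk}(H')$), extends it multiplicatively to $S(\mathfrak a_\bbC)^{W'}$, and concludes via the positive-system independence and uniqueness in Corollary \ref{cor:existunique}. The only cosmetic difference is where the independence of the positive system is invoked (you use it mid-argument to transfer \ref{HG2'} to $T^-_{\bsk}$, the paper uses it at the very end), which changes nothing of substance.
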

\begin{proof}
Let $T_{\bsk}^-(\xi)$ ($\xi\in\mathfrak a$) stand for the Cherednik operator
defined by using $-\varSigma'$ as a positive system.
For $f\in\mathscr A(\mathfrak a)$ define $\sigma f\in\mathscr A(\mathfrak a)$ by $(\sigma f)(H)=f(-H)$.
Observe that $T^-_{\bsk}(\xi)\sigma f=\sigma T_{\bsk}(-\xi) f$ for any $\xi\in\mathfrak a$.
Hence for any $f$ satisfying \ref{HG2'} we have
\[
T^-_{\bsk}(D)\sigma f=\sigma T_{\bsk}(D(-\cdot)) f =D(-\lambda)\sigma f
\quad\text{for any }
D\in S(\mathfrak a_\bbC)^{W'}.
\]
Thus the desired equality follows from Corollary \ref{cor:existunique}.
\end{proof}
\begin{cor}\label{cor:HOasymp}
Suppose $\bsk\in\mathcal K_\reg(\varSigma')$.
Then for $H\in\mathfrak a_+:=-\mathfrak a_-$ and
a generic $\lambda$ satisfying $(\operatorname{Re}\lambda,\alpha)>0$ ($\forall\alpha\in\varSigma'$) we have
\begin{equation*}
 \lim_{t\to\infty}e^{t(-\lambda+\rho(\bsk))(H)} F(\varSigma',\bsk,\lambda;{tH})=
c(\varSigma',\bsk,\lambda). 
 \end{equation*}
\end{cor}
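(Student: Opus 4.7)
The plan is to reduce the claim to the asymptotic formula \eqref{eq:limcf2}, which is already stated for $H \in \mathfrak a_-$ and $\lambda$ with $(\operatorname{Re}\lambda,\alpha)<0$, by applying the symmetry given in Corollary \ref{cor:-HO}. Concretely, for $H\in\mathfrak a_+$ I set $H_0 := -H \in \mathfrak a_-$ and $\lambda_0 := -\lambda$; the hypothesis $(\operatorname{Re}\lambda,\alpha)>0$ then translates into $(\operatorname{Re}\lambda_0,\alpha)<0$, so both formulas in the hypotheses of \eqref{eq:limcf2} hold for the pair $(\lambda_0,H_0)$.

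Next, I write $F(\varSigma',\bsk,\lambda;tH)=F(\varSigma',\bsk,\lambda;-tH_0)$ and apply Corollary \ref{cor:-HO} to rewrite this as $F(\varSigma',\bsk,-\lambda;tH_0)=F(\varSigma',\bsk,\lambda_0;tH_0)$. Substituting $F=\tilde c(\varSigma',\bsk,\rho(\bsk))^{-1}\tilde F$ (valid since $\bsk\in\mathcal K_\reg(\varSigma')$), the claim reduces to
\[
\lim_{t\to\infty}e^{t(-\lambda+\rho(\bsk))(H)}\,\tilde F(\varSigma',\bsk,\lambda_0;tH_0)=\tilde c(\varSigma',\bsk,\lambda).
\]
A short check shows $(-\lambda+\rho(\bsk))(H)=(-\lambda_0-\rho(\bsk))(H_0)$, so the exponential factor is exactly the one appearing in \eqref{eq:limcf2} with $\lambda$ replaced by $\lambda_0$ and $H$ replaced by $H_0$.

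Then applying \eqref{eq:limcf2} directly yields the limit $\tilde c(\varSigma',\bsk,-\lambda_0)=\tilde c(\varSigma',\bsk,\lambda)$. Dividing by $\tilde c(\varSigma',\bsk,\rho(\bsk))$ (nonzero on $\mathcal K_\reg(\varSigma')$ by definition) gives $c(\varSigma',\bsk,\lambda)$ as defined in \eqref{eq:cfho}, completing the proof.

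There is essentially no obstacle here: the whole argument is a bookkeeping exercise combining \eqref{eq:limcf2}, the definition \eqref{eq:cfho} of $c$, and the $H\mapsto -H$, $\lambda\mapsto -\lambda$ symmetry from Corollary \ref{cor:-HO}. The only point that requires a moment's attention is to make sure the genericity condition on $\lambda$ in \eqref{eq:limcf2} (ensuring $\varPhi(\varSigma',\bsk,w\lambda)$ is well defined for all $w\in W'$) translates correctly under $\lambda\mapsto -\lambda$; but since the conditions $2(\lambda,\beta)+(\beta,\beta)\ne0$ for $\beta\in\bbN\varSigma'^+\setminus\{0\}$ cut out a countable union of hyperplanes in $\mathfrak a_\bbC^*$, the same is true for $-\lambda$, and the word ``generic'' has the same meaning on both sides.
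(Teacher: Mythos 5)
Your proof is correct and follows exactly the paper's route: the paper proves this corollary by declaring it ``immediate from \eqref{eq:limcf2} and the previous corollary'' (Corollary \ref{cor:-HO}), and your argument is precisely that reduction written out in full, with the bookkeeping of the exponent and the normalization by $\tilde c(\varSigma',\bsk,\rho(\bsk))$ carried out correctly.
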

\begin{proof}
Immediate from \eqref{eq:limcf2} and the previous corollary.
\end{proof}
\begin{rem}
The prototype of the theorem is
a similar result for Opdam's \emph{generalized Bessel function}s obtained by
Dunkl,  de Jeu and Opdam \cite[\S4]{DJO}.
They also give for each type of irreducible $\varSigma'$
a very explicit description
of the singular set of those $\bsk$ which do not satisfy \ref{k2}.
By the theorem
this singular set equals $\mathcal K(\varSigma')\setminus\mathcal K_\reg(\varSigma')$.
\end{rem}

The rest of this subsection is devoted to the proof of Theorem \ref{thm:existence}.
For $d=0,1,2,\ldots$ let $S_d(\mathfrak a_\bbC)$ be the subspace of  $S(\mathfrak a_\bbC)\,(\simeq\mathscr P(\mathfrak a))$ consisting of homogeneous polynomials of degree $d$.
Since $\bar T_{\bsk}(H)$ is a homogeneous operator of degree $-1$ for any $H\in\mathfrak a\setminus\{0\}$, $S_d(\mathfrak a_\bbC)\perp S_e(\mathfrak a_\bbC)$ with respect to
$(\cdot,\cdot)_{\bsk}$ if $d\ne e$.
Now any $f\in \Hat{\mathscr A}_0$ decomposes into the sum $f=\sum_{d\ge0}f_d$ of its homogeneous parts $f_d\in S_d(\mathfrak a_\bbC)$.
We define $\ord f=\min\{d\,|\, f_d\ne 0\}\in \bbN\cup\{\infty\}$ and put $\Hat{\mathscr A}_{0,> d}=\{f \in \Hat{\mathscr A}_0\,|\,\ord f> d\}$.
Then we have
\[
\Hat{\mathscr A}_{0} / \Hat{\mathscr A}_{0,> d} \simeq 
S_{\le d}(\mathfrak a_\bbC) : =\bigoplus_{e\le d} S_e(\mathfrak a_\bbC).
\]
The next lemma is easily observed:
\begin{lem}\label{lem:DunklCherednik}
Suppose $f \in \Hat{\mathscr A}_0\setminus\{0\}$ with $d=\ord f$.
Then for any $H\in\mathfrak a$, $\ord (T_{\bsk}(H)f)  \ge d-1$ and
\[
T_{\bsk}(H)f\equiv \bar T_{\bsk}(H) f_{d} \pmod{\Hat{\mathscr A}_{0,> d-1}}.
\]
\end{lem}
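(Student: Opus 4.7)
The plan is to expand $T_{\bsk}(H)f$ term by term using the definition and compare the degrees of the contributions, exploiting the fact that $\bar T_{\bsk}(H)$ is precisely the ``leading part'' of $T_{\bsk}(H)$ near the origin. The key analytic input is the Laurent expansion
\[
\frac{1}{1-e^{-\alpha}}=\frac{1}{\alpha}+\frac12+\sum_{n\ge 1}c_n\alpha^n
\]
as an element of the completed local ring at $0\in\mathfrak a$, which shows that $\frac{1}{1-e^{-\alpha}}$ differs from $\frac{1}{\alpha}$ by a regular function vanishing to some specific order.

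Write $f=\sum_{e\ge d}f_e$ with $f_e\in S_e(\mathfrak a_\bbC)$ and $f_d\ne 0$. I would examine the three summands in the definition of $T_{\bsk}(H)$ separately. First, $\partial(H)f_d$ is homogeneous of degree $d-1$, so $\partial(H)f\in\Hat{\mathscr A}_0$ has order $\ge d-1$ with homogeneous part of degree $d-1$ equal to $\partial(H)f_d$. Second, $-\rho(\bsk)(H)f$ has order $\ge d>d-1$, so it is killed modulo $\Hat{\mathscr A}_{0,>d-1}$. Third, for each $\alpha\in\varSigma'^+$ and each $e\ge d$, the polynomial $(1-r_\alpha)f_e$ vanishes on the hyperplane $\alpha=0$, hence is divisible by $\alpha$ in $S(\mathfrak a_\bbC)$, and $\alpha^{-1}(1-r_\alpha)f_e$ is homogeneous of degree $e-1$. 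Multiplying by $\alpha(H)\bigl(\frac{1}{\alpha}+\frac12+\cdots\bigr)\cdot\alpha$-nothing\ldots more carefully: writing
\[
\frac{\bsk_\alpha\alpha(H)}{1-e^{-\alpha}}(1-r_\alpha)f_e
=\bsk_\alpha\alpha(H)\,\frac{(1-r_\alpha)f_e}{\alpha}
+\bsk_\alpha\alpha(H)\Bigl(\tfrac12+\sum_{n\ge 1}c_n\alpha^n\Bigr)(1-r_\alpha)f_e,
\]
the first summand has order exactly $e-1$ and the second has order $\ge e$. Hence the contribution of this $\alpha$-term to $T_{\bsk}(H)f$ has order $\ge d-1$, with homogeneous part of degree $d-1$ equal to $\bsk_\alpha\alpha(H)\,\alpha^{-1}(1-r_\alpha)f_d$.

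Combining the three contributions gives $\ord(T_{\bsk}(H)f)\ge d-1$, and the degree-$(d-1)$ component is exactly
\[
\partial(H)f_d+\sum_{\alpha\in\varSigma'^+}\frac{\bsk_\alpha\alpha(H)}{\alpha}(1-r_\alpha)f_d=\bar T_{\bsk}(H)f_d,
\]
which is the claimed congruence modulo $\Hat{\mathscr A}_{0,>d-1}$.

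The proof is essentially a bookkeeping exercise once the expansion of $(1-e^{-\alpha})^{-1}$ is in hand; there is no serious obstacle. The only mild subtlety is to verify that $\alpha^{-1}(1-r_\alpha)g$ is a genuine polynomial for every polynomial $g$ (so that the manipulation stays inside $\Hat{\mathscr A}_0$ rather than in a localization), but this is immediate because $(1-r_\alpha)g$ is antisymmetric under $r_\alpha$ and therefore divisible by $\alpha$.
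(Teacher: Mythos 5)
Your proof is correct and is exactly the argument the authors intend (the paper states this lemma without proof as "easily observed"): split $T_{\bsk}(H)$ into $\partial(H)$, the reflection terms, and $-\rho(\bsk)(H)$, use the expansion $(1-e^{-\alpha})^{-1}=\alpha^{-1}+\tfrac12+\cdots$ together with divisibility of $(1-r_\alpha)g$ by $\alpha$, and read off the degree-$(d-1)$ component. The only blemishes are cosmetic: the garbled sentence before "more carefully," and the phrase "order exactly $e-1$" should be "homogeneous of degree $e-1$ or zero," since $(1-r_\alpha)f_e$ may vanish; neither affects the argument.
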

For $d=0,1,2,\ldots$ let $\langle\cdot,\cdot\rangle_{\bsk}^d$ be the restriction of
$\langle\cdot,\cdot\rangle_{\bsk}$ to $S_{\le d}(\mathfrak a_\bbC) \times S_{\le d}(\mathfrak a_\bbC)$.
Since $\dim S_{\le d}(\mathfrak a_\bbC)<\infty$,
the left and right radicals of $\langle\cdot,\cdot\rangle_{\bsk}^d$
have the same dimension.
Let us consider an auxiliary condition:
\begin{enumerate}[label=(3)$^\prime$]
\item
$\langle\cdot,\cdot\rangle_{\bsk}^d$ is non-degenerate
for each $d=0,1,2,\dotsc$.\label{k3'}
\end{enumerate}

\begin{proof}[Proof of \ref{k2}$\Leftrightarrow$\ref{k3}$\Leftrightarrow$\ref{k3'}$\Rightarrow$\ref{k4}]
Suppose \ref{k2}. For any $D\in S(\mathfrak a_\bbC)\setminus\{0\}$ with $\deg D=d$,
we can take $f\in S_{d}(\mathfrak a_\bbC)$ so that $(D,f)_{\bsk}\ne 0$.
Then from the lemma above we have
$\langle D,f \rangle_{\bsk}=(T_{\bsk}(D)f)(0)=(\bar T_{\bsk}(D)f)(0)= (D,f)_{\bsk}\ne 0$, proving \ref{k3}.

Next, one easily sees \ref{k3}$\Rightarrow$\ref{k3'}$\Rightarrow$\ref{k4} since $S_{\le d}(\mathfrak a_\bbC)\perp \Hat{\mathscr A}_{0,> d}$
with respect to $\langle\cdot,\cdot\rangle_{\bsk}$ by the lemma.

To deduce \ref{k2} from \ref{k3'}, take an arbitrary $f \in S_d(\mathfrak a_\bbC)\setminus\{0\}$ with $d=0,1,2,\dotsc$.
Then \ref{k3'} assures the existence of $D\in S_{\le d}(\mathfrak a_\bbC)$ such that $\langle D,f \rangle_{\bsk}\ne0$.
Since $\ord f=d$,
the lemma again implies $(D,f)_{\bsk}=\langle D,f \rangle_{\bsk}\ne0$.
\end{proof}

The implication \ref{k1}$\Rightarrow$\ref{k6}$\Rightarrow$\ref{k7} is obvious.

\begin{lem}
For any $\bsk_0\in\mathcal K(\varSigma')$, $\tilde F(\varSigma',\bsk_0,\lambda; H)$
is a non-trivial function in $(\lambda,H)\in\mathfrak a_\bbC^*\times \mathfrak a$. 
\end{lem}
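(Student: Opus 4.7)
The plan is to exhibit a single point $(\lambda_0,H_0)\in\mathfrak a_\bbC^*\times\mathfrak a$ at which $\tilde F(\varSigma',\bsk_0,\lambda_0;H_0)\ne0$, using the asymptotic formula \eqref{eq:limcf2} as the whole engine. The strategy is: pick $H_0\in\mathfrak a_-$ arbitrarily nonzero, then choose $\lambda_0$ so that the limit on the right-hand side of \eqref{eq:limcf2} is nonzero and conclude that $\tilde F(\varSigma',\bsk_0,\lambda_0;tH_0)\ne0$ for all sufficiently large $t$.

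Concretely, I would first fix $H_0\in\mathfrak a_-$ and then select $\lambda_0\in\mathfrak a_\bbC^*$ subject to two requirements. The first is the \emph{genericity} condition $2(w\lambda_0,\beta)+(\beta,\beta)\ne0$ for every $w\in W'$ and every $\beta\in\bbN\varSigma'^+\setminus\{0\}$, which guarantees that each formal series $\varPhi(\varSigma',\bsk_0,w\lambda_0)$ is well-defined and converges on $\mathfrak a_-$, so that \eqref{eq:connection} makes sense and \eqref{eq:limcf2} applies. This excludes only a countable union of affine hyperplanes. The second is the \emph{deep-chamber} condition $(\operatorname{Re}\lambda_0,\alpha)\ll 0$ for every $\alpha\in\varSigma'^+$, i.e.\ $\operatorname{Re}\lambda_0$ lies far inside the negative chamber of $\mathfrak a^*$, which is the region to which \eqref{eq:limcf2} pertains.

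The deep-chamber condition also ensures $\tilde c(\varSigma',\bsk_0,-\lambda_0)\ne0$. Indeed, if $\operatorname{Re}(-\lambda_0(\alpha^\vee))$ is made sufficiently large for each $\alpha\in\varSigma'^+$, then both arguments
\[
-\lambda_0(\alpha^\vee)+\tfrac12\bsk_{0,\frac12\alpha}\quad\text{and}\quad -\lambda_0(\alpha^\vee)+\tfrac12\bsk_{0,\frac12\alpha}+\bsk_{0,\alpha}
\]
have large positive real part, so every Gamma factor occurring in the numerator and denominator of $\tilde c(\varSigma',\bsk_0,-\lambda_0)=\prod_{\alpha\in\varSigma'^+}\frac{\varGamma(-\lambda_0(\alpha^\vee)+\frac12\bsk_{0,\frac12\alpha})}{\varGamma(-\lambda_0(\alpha^\vee)+\frac12\bsk_{0,\frac12\alpha}+\bsk_{0,\alpha})}$ is finite and nonzero. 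Since the deep-chamber region is open and the genericity constraint cuts out only a meagre set, a $\lambda_0$ satisfying both conditions exists.

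Applying \eqref{eq:limcf2} at this $\lambda_0$ and $H_0$,
\[
\lim_{t\to\infty}e^{t(-\lambda_0-\rho(\bsk_0))(H_0)}\,\tilde F(\varSigma',\bsk_0,\lambda_0;tH_0)=\tilde c(\varSigma',\bsk_0,-\lambda_0)\ne0,
\]
so $\tilde F(\varSigma',\bsk_0,\lambda_0;tH_0)\ne0$ for some (indeed all large) $t>0$. This establishes the non-triviality. There is no real obstacle: the only mild point to double-check is the compatibility of the two conditions on $\lambda_0$, which is immediate because an open half-space minus countably many hyperplanes is nonempty.
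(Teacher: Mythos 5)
Your proposal is correct and is essentially the paper's own argument: the paper's one-sentence proof likewise invokes \eqref{eq:limcf2} on a non-empty open set of $\lambda$'s in the negative chamber, where $\tilde c(\varSigma',\bsk_0,-\lambda)$ cannot vanish identically. You have simply spelled out the details (genericity off countably many hyperplanes, and non-vanishing of the Gamma-quotients deep in the chamber), all of which check out.
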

\begin{proof}
Because there is a non-empty open subset $U_{\bsk_0}\subset \mathfrak a_\bbC^*$ such that \eqref{eq:limcf2}
holds for any $(\lambda,H)\in U_{\bsk_0}\times \mathfrak a_-$ and  $\bsk=\bsk_0$.
\end{proof}

Hence by Theorem \ref{thm:GSF} we have \ref{k5}$\Rightarrow$\ref{k1}.

\begin{proof}[Proof of \ref{k7}$\Rightarrow$\ref{k3}]
Suppose $\lRad_{\bsk}\ni D\ne 0$.
Since $\lRad_{\bsk}$ is an ideal of $S(\mathfrak a_\bbC)$,
$D':=\prod_{w\in W'} w(D)$ is a non-zero $W'$-invariant element in  $\lRad_{\bsk}$.
Now suppose \ref{k7} is true.
Then we can find $\lambda\in Z$
such that $D'(\lambda)\ne0$.
But for a function $f\in \Hat{\mathscr A}_0$ of \ref{HG1'}--\ref{HG3'}
we have $\langle D',f\rangle_{\bsk}=(T_{\bsk}(D')f)(0)=D'(\lambda)f(0)=D'(\lambda)\ne 0$.
This contradicts that $D'\in\lRad_{\bsk}$.
\end{proof}

The proof is complete if we show \ref{k4}$\Rightarrow$\ref{k5}.
To do so, we needs the \emph{graded Hecke algebra} $\boldsymbol H=\boldsymbol H(\varSigma'^+,\bsk)$ by \cite{Lu}.
The algebra $\boldsymbol H$ is isomorphic to $\bbC W' \otimes S(\mathfrak a_\bbC)$ as a $\bbC$-linear space. Here the group algebra $\bbC W'$ of $W'$ and $S(\mathfrak a_\bbC)$ are identified with subalgebras of $\boldsymbol H$ by $w\mapsto w\otimes 1$ and $D\mapsto 1\otimes D$.
These two subalgebras relate to each other as follows:
\begin{align*}
w \cdot  D &= w\otimes D &&\text{for any }w\in W'\text{ and }D\in S(\mathfrak a_\bbC);\\
H \cdot  r_\alpha&= r_\alpha \cdot r_\alpha(H) - (\bsk_\alpha+2\bsk_{2\alpha})\alpha(H)
&&\text{for any simple root }\alpha\in\varSigma'^+\text{ and any }H\in \mathfrak a_\bbC.
\end{align*}
The center of $\boldsymbol H$ equals $S(\mathfrak a_\bbC)^{W'}$
(\cite[Theorem 6.5]{Lu}).

Thanks to \cite[Theorem 2.4]{Ch2}, $\boldsymbol H\ni w \otimes D\mapsto w\cdot T_{\bsk}(D)\in \End_\bbC\Hat{\mathscr A}_0$ defines an algebra homomorphism (also denoted by $T_{\bsk}(\cdot)$).
Let $\bbC v_0$ be a one-dimensional right trivial $W'$-module.
Then $\bbC v_0 \otimes_{\bbC W'} \boldsymbol H$ is a right $\boldsymbol H$-module.
Identifying $S(\mathfrak a_\bbC)$ with $\bbC v_0 \otimes_{\bbC W'} \boldsymbol H$
by $D\mapsto v_0 \otimes D$,
we consider $\langle\cdot,\cdot\rangle_{\bsk}$ is a bilinear form
on $\bbC v_0 \otimes_{\bbC W'} \boldsymbol H \times \Hat{\mathscr A}_0$.
Observe that $\langle\cdot h,\cdot\rangle_{\bsk}=\langle\cdot,h\cdot\rangle_{\bsk}$ for any $h\in \boldsymbol H$.
For $d=0,1,2,\dotsc$, $v_0\otimes S_{\le d}(\mathfrak a_\bbC)$ is a $W'$-submodule of
$\bbC v_0 \otimes_{\bbC W'} \boldsymbol H$
and 
\[
v_0\otimes S_{\le d}(\mathfrak a_\bbC)\,\bigm/\, v_0\otimes S_{\le d-1}(\mathfrak a_\bbC)\,\simeq\, S_d(\mathfrak a_\bbC) \text{ with natural right $W'$-module structure.}
\]
From this one easily sees
\begin{lem}\label{lem:CHW'}
The subspace of\/ $\bbC v_0 \otimes_{\bbC W'} \boldsymbol H$ consisting of
the right $W'$-invariants
is $v_0\otimes S(\mathfrak a_\bbC)^{W'}$.
\end{lem}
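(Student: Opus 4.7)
My plan is to prove the two inclusions making up the equality separately, exploiting the filtration of $\bbC v_0\otimes_{\bbC W'}\boldsymbol H$ by polynomial degree.

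For the inclusion $v_0\otimes S(\mathfrak a_\bbC)^{W'}\subseteq(\bbC v_0\otimes_{\bbC W'}\boldsymbol H)^{W'}$, I would invoke the centrality of $S(\mathfrak a_\bbC)^{W'}$ in $\boldsymbol H$ (the theorem of Lusztig cited just before the lemma). For any $D\in S(\mathfrak a_\bbC)^{W'}$ and $w\in W'$, the relation $D\cdot w=w\cdot D$ in $\boldsymbol H$ combined with the trivial right action $v_0\cdot w=v_0$ gives
\[
(v_0\otimes D)\cdot w=v_0\otimes(D\cdot w)=v_0\otimes(w\cdot D)=(v_0\cdot w)\otimes D=v_0\otimes D.
\]

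For the reverse inclusion I would induct on $d$ using the filtration $F_d:=v_0\otimes S_{\le d}(\mathfrak a_\bbC)$ together with the already-noted isomorphism $F_d/F_{d-1}\simeq S_d(\mathfrak a_\bbC)$ of right $W'$-modules. Any right-$W'$-invariant element $\xi\in(\bbC v_0\otimes_{\bbC W'}\boldsymbol H)^{W'}$ lies in some $F_d$, and its class modulo $F_{d-1}$ is a $W'$-invariant element of $S_d(\mathfrak a_\bbC)$; I pick $D_d\in S_d(\mathfrak a_\bbC)^{W'}$ projecting to the same class. By the first inclusion $v_0\otimes D_d$ is right-$W'$-invariant, so $\xi-v_0\otimes D_d$ is a right-$W'$-invariant element of $F_{d-1}$, and the induction hypothesis produces $D'\in S_{\le d-1}(\mathfrak a_\bbC)^{W'}$ with $\xi-v_0\otimes D_d=v_0\otimes D'$. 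Hence $\xi=v_0\otimes(D_d+D')$ is of the required form; the base case $F_{-1}=\{0\}$ is trivial.

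The only place where a genuine computation enters---and hence the main (though quite mild) obstacle---is verifying the right-$W'$-equivariance underlying $F_d/F_{d-1}\simeq S_d(\mathfrak a_\bbC)$. This I would show by an auxiliary induction on $\deg D$ using the defining cross-relation $H\cdot r_\alpha=r_\alpha\cdot r_\alpha(H)-(\bsk_\alpha+2\bsk_{2\alpha})\alpha(H)$: moving $r_\alpha$ to the left past each linear factor of a monomial $D$ of degree $d$ produces $r_\alpha\cdot r_\alpha(D)$ plus a remainder lying in $\bbC W'\otimes S_{\le d-1}(\mathfrak a_\bbC)$, whose tensor with $v_0$ is absorbed into $F_{d-1}$ by $v_0\cdot w=v_0$. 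Hence $(v_0\otimes D)\cdot r_\alpha\equiv v_0\otimes r_\alpha(D)\pmod{F_{d-1}}$, and since the simple reflections generate $W'$ the natural right $W'$-action on the graded piece is pinned down.
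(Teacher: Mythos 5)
Your proof is correct and follows exactly the route the paper intends: the paper sets up the filtration $v_0\otimes S_{\le d}(\mathfrak a_\bbC)$ and the graded-piece isomorphism with $S_d(\mathfrak a_\bbC)$ and then asserts the lemma with ``one easily sees,'' and your argument (centrality of $S(\mathfrak a_\bbC)^{W'}$ for one inclusion, induction along the filtration for the other, with the cross-relation computation justifying the graded right $W'$-action) is precisely the omitted verification.
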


\begin{proof}[Proof of \ref{k4}$\Rightarrow$\ref{k5}]
Let $\lambda\in\mathfrak a_\bbC^*$ and
suppose $f\in \Hat{\mathscr A}_0$ satisfies \ref{HG1'}, \ref{HG2'} and $f(0)=0$.
Take an arbitrary $D\in S(\mathfrak a_\bbC)$ .
Then by the last lemma 
there exists $\tilde D\in S(\mathfrak a_\bbC)^{W'}$ such that
$v_0\otimes \tilde D=\frac1{\#W'}\sum_{w\in W'} v_0\otimes D\cdot w$.
Hence we have
\begin{align*}
\langle D,f\rangle_{\bsk}
&=\langle v_0\otimes D,f\rangle_{\bsk}
=\frac1{\#W'}\sum_{w\in W'} \langle v_0\otimes D,w\cdot f\rangle_{\bsk}
=\frac1{\#W'}\sum_{w\in W'} \langle v_0\otimes D\cdot w, f\rangle_{\bsk}\\
&=\langle v_0\otimes \tilde D,f\rangle_{\bsk}
=(T_{\bsk}(\tilde D)f)(0)
=\tilde D(\lambda) f(0)=0.
\end{align*}
This shows $f \in \rRad_{\bsk}$.
\end{proof}

\section{Matching conditions}\label{sec:MC}
We return to the setting of \S\ref{sec:spherical}.
Thus $(\pi,V)$ is a small $K$-type of a connected non-compact real semisimple Lie group $G$ with finite center.
The purpose of this section is to 
get an easy and concrete condition 
on a root system $\varSigma^\pi$ and a multiplicity function $\bsk^\pi$
for the validity of \eqref{eq:main}.

\subsection{Coincidence of differential operators}
Let $\varSigma'$ is a root system in $\mathfrak a^*$ ($\mathfrak a=\Lie A$)
and $\bsk$ a multiplicity function on $\varSigma'$.
In addition, we suppose $\varSigma' \subset \varSigma \cup 2\varSigma$
and the Weyl group $W'$ for $\varSigma'$ equals $W$.
Let $\varSigma'^+:=\varSigma'\cap(\varSigma^+ \cup 2\varSigma^+)$.
Then the notation in \S\ref{sec:spherical}
and that in \S\ref{sec:HO} are fully compatible.
Note that the algebra $\mathscr R'$, which is generated by $(1- e^\alpha)^{-1}$ ($\alpha\in \varSigma'^+$),
is a subalgebra of $\mathscr R$, which is generated by $(1\pm e^\alpha)^{-1}$ ($\alpha\in \varSigma^+$),
and that $\mathscr M'=\mathscr M\cap\mathscr R'$.

\begin{prop}\label{prop:Omegatwist}
With $\tilde\delta_{G/K}$ in \eqref{eq:GKdelta} it holds that
\begin{multline}\label{eq:Omegatwist}
\tilde\delta_{G/K}^{\frac12}\circ(\varDelta^\pi(\varOmega_{\mathfrak g}-\varpi^\pi)+||\rho||^2)\circ \tilde\delta_{G/K}^{-\frac12}\\
=
\varOmega_{\mathfrak a} + 
\sum_{\alpha\in\varSigma^+}
\frac{\bsm_\alpha ||\alpha||^2}4
\biggl(
\frac{-\bska_\alpha^\pi}{\sinh^2\frac\alpha2}
+
\frac{2-\bsm_\alpha -2\bsm_{2\alpha}+4\bska_\alpha^\pi}{\sinh^2\alpha}
\biggr).
\end{multline}
\end{prop}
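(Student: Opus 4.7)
The plan is to reduce the identity to the Heckman--Opdam twisting formula of Proposition \ref{prop:Ltwist}. Starting from Theorem \ref{thm:CasimirRad}, I would split
$$
\varDelta^\pi(\varOmega_{\mathfrak g}-\varpi^\pi)=L-\sum_{\alpha\in\varSigma^+}\frac{\bsm_\alpha\bska_\alpha^\pi||\alpha||^2}{4\cosh^2\frac\alpha 2},\qquad L:=\varOmega_{\mathfrak a}+\sum_{\alpha\in\varSigma^+}\bsm_\alpha\coth\alpha\,H_\alpha.
$$
The second summand is a multiplication operator and is unaffected by conjugation by $\tilde\delta_{G/K}^{1/2}$, so the task reduces to computing $\tilde\delta_{G/K}^{1/2}\circ(L+||\rho||^2)\circ\tilde\delta_{G/K}^{-1/2}$.

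The key point is that $L$ is precisely the Heckman--Opdam operator $L(\varSigma',\bsk)$ of \eqref{eq:HOL1} for the choice $\varSigma':=2\varSigma$ and $\bsk_{2\alpha}:=\bsm_\alpha/2$: indeed, using $H_{2\alpha}=2H_\alpha$, the defining sum $\sum_{\beta\in\varSigma'^+}\bsk_\beta\coth(\beta/2)H_\beta$ collapses to $\sum_{\alpha\in\varSigma^+}\bsm_\alpha\coth\alpha\,H_\alpha$. A direct computation from the definitions then yields $\rho(\bsk)=\rho$ and $\tilde\delta(\varSigma',\bsk)=\tilde\delta_{G/K}$. Applying Proposition \ref{prop:Ltwist} and reindexing the resulting sum via $\beta=2\alpha$ (so that $||2\alpha||^2=4||\alpha||^2$, $\sinh(\beta/2)=\sinh\alpha$, and $\bsk_{2\beta}=\bsm_{2\alpha}/2$, with the convention $\bsm_{2\alpha}:=0$ if $2\alpha\notin\varSigma$), I arrive at
$$
\tilde\delta_{G/K}^{1/2}\circ(L+||\rho||^2)\circ\tilde\delta_{G/K}^{-1/2}=\varOmega_{\mathfrak a}+\sum_{\alpha\in\varSigma^+}\frac{\bsm_\alpha(2-\bsm_\alpha-2\bsm_{2\alpha})||\alpha||^2}{4\sinh^2\alpha}.
$$

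Finally, I would add back the multiplication term (which passes through the conjugation unchanged) and apply the elementary identity
$$
-\frac{1}{\cosh^2\frac\alpha 2}=-\frac{1}{\sinh^2\frac\alpha 2}+\frac{4}{\sinh^2\alpha},
$$
a consequence of $\sinh\alpha=2\sinh\frac\alpha 2\cosh\frac\alpha 2$ together with $\cosh^2-\sinh^2=1$. Distributing and regrouping the $\alpha$-summands produces exactly the right-hand side of \eqref{eq:Omegatwist}. No conceptual obstacle is expected; the argument is essentially bookkeeping, and the only place requiring attention is the factor of $2$ coming from $H_{2\alpha}=2H_\alpha$ when matching $L$ against $L(\varSigma',\bsk)$, together with a correct treatment of $\bsm_{2\alpha}$ according as $2\alpha$ does or does not lie in $\varSigma$.
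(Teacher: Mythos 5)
Your proposal is correct and follows essentially the same route as the paper: choose $\varSigma'=2\varSigma$ with $\bsk_{2\alpha}=\bsm_\alpha/2$ so that $L(\varSigma',\bsk)$, $\tilde\delta(\varSigma',\bsk)$ and $\rho(\bsk)$ match the group-theoretic data, apply Proposition \ref{prop:Ltwist}, and finish with the identity $\cosh^{-2}\frac\alpha2=\sinh^{-2}\frac\alpha2-4\sinh^{-2}\alpha$. The only (harmless) cosmetic difference is that you peel off the $\bska^\pi$-multiplication term before conjugating, whereas the paper conjugates the full radial Casimir at once.
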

\begin{proof}
Letting $\varSigma'=2\varSigma$ and $\bsk_{2\alpha}=\frac12\bsm_{\alpha}$
($\alpha\in \varSigma$), we have
\[
L(\varSigma',\bsk)=\varOmega_{\mathfrak \alpha}
+
\sum_{\alpha\in\varSigma^+} \bsm_\alpha \coth\alpha\,H_\alpha,\ 
\tilde\delta(\varSigma',\bsk)=\tilde\delta_{G/K}
\ \text{ and }\ 
\rho(\bsk)=\rho.
\]
Hence it follows from \eqref{eq:CasimirRad} and the Proposition \ref{prop:Ltwist} that
\[
\tilde\delta_{G/K}^{\frac12}\circ(\varDelta^\pi(\varOmega_{\mathfrak g}-\varpi^\pi)+||\rho||^2)\circ \tilde\delta_{G/K}^{-\frac12}
=
\varOmega_{\mathfrak a} + 
\sum_{\alpha\in\varSigma^+}
\frac{\bsm_\alpha ||\alpha||^2}4
\biggl(
\frac{2-\bsm_\alpha-2\bsm_{2\alpha}}{\sinh^2 \alpha}
-
\frac{\bska^\pi_\alpha}{\cosh^2\frac{\alpha}2}
\biggr).
\]
Using the equality
\begin{equation*}
\frac1{\cosh^2\frac{\alpha}2}
=
\frac1{\sinh^2\frac{\alpha}2}
-
\frac4{\sinh^2\alpha},
\end{equation*}
we get the proposition.
\end{proof}

Let us consider general $\varSigma'$ and $\bsk$ again.
The algebra homomorphism $\gamma_{\rho(\bsk)} :\mathscr R'\otimes S(\mathfrak a_\bbC) \to S(\mathfrak a_\bbC)$ defined by \eqref{eq:HOHC}
can be regarded as a part of the algebra homomorphism
\[
\gamma_{\rho(\bsk)} : \mathscr R\otimes S(\mathfrak a_\bbC)=
S(\mathfrak a_\bbC) \oplus \mathscr M \otimes S(\mathfrak a_\bbC)
\xrightarrow{\text{projection}}
S(\mathfrak a_\bbC) \xrightarrow{f(\lambda) \mapsto f(\lambda+\rho(\bsk))}
S(\mathfrak a_\bbC).
\]
\begin{lem}\label{lem:HCisomHO}
The subalgebra
\[(\mathscr R\otimes S(\mathfrak a_\bbC))^{W,L(\varSigma',\bsk)}:=\bigl\{D \in (\mathscr R\otimes S(\mathfrak a_\bbC))^{W} \,\bigm|\,
[L(\varSigma',\bsk), D]=0
\bigr\}
\]
coincides with
$(\mathscr R'\otimes S(\mathfrak a_\bbC))^{W,L(\varSigma',\bsk)}
$.
\end{lem}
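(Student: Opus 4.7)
The inclusion $(\mathscr R' \otimes S(\mathfrak a_\bbC))^{W, L(\varSigma', \bsk)} \subseteq (\mathscr R \otimes S(\mathfrak a_\bbC))^{W, L(\varSigma', \bsk)}$ is immediate from $\mathscr R' \subseteq \mathscr R$ together with $W' = W$. For the reverse, the plan is to reduce to an injectivity statement. Given $D$ in the larger commutant, Proposition \ref{prop:HCisomHO} (applied with $W' = W$) supplies $D' \in (\mathscr R' \otimes S(\mathfrak a_\bbC))^{W, L(\varSigma', \bsk)}$ with $\gamma_{\rho(\bsk)}(D') = \gamma_{\rho(\bsk)}(D)$, so the whole problem reduces to showing that the difference $\tilde D := D - D'$, which lies in the larger commutant and satisfies $\gamma_{\rho(\bsk)}(\tilde D) = 0$, must be zero.

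For this I would expand everything as a formal series on $\mathfrak a_-$: write $\tilde D = \sum_{\mu \in \bbN\varSigma^+} e^\mu \tilde D_\mu$ and $L(\varSigma',\bsk) = L_0 + \sum_{\eta \in \bbN\varSigma'^+\setminus\{0\}} e^\eta L_\eta$ with $\tilde D_\mu, L_0, L_\eta \in S(\mathfrak a_\bbC)$. The hypothesis $\gamma_{\rho(\bsk)}(\tilde D) = 0$ becomes $\tilde D_0 = 0$, and the key structural observation is that the positive-exponent support of $L(\varSigma',\bsk)$ lies in $\bbN\varSigma'^+$, since for $\alpha \in \varSigma'^+$ one has $\coth(\alpha/2) = -1 - 2\sum_{n\ge 1}e^{n\alpha}$ on $\mathfrak a_-$. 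Using the standard identities $\varOmega_{\mathfrak a}\cdot e^\mu = e^\mu(\varOmega_{\mathfrak a} + 2H_\mu + (\mu,\mu))$ and $H_\alpha\cdot e^\mu = e^\mu(H_\alpha + (\alpha,\mu))$, extracting the $e^\mu$-coefficient of $[L(\varSigma',\bsk), \tilde D] = 0$ should produce a recursion of the form
\begin{equation*}
\bigl(2 H_\mu + (\mu,\mu) - 2(\mu,\rho(\bsk))\bigr)\, \tilde D_\mu = R_\mu\bigl(\{\tilde D_{\mu - \eta}\}_{\eta \in \bbN\varSigma'^+ \setminus \{0\},\ \mu - \eta \in \bbN\varSigma^+}\bigr)
\end{equation*}
in $S(\mathfrak a_\bbC)$, where $R_\mu$ is an explicit $S(\mathfrak a_\bbC)$-valued expression built from the $L_\eta$'s.

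I would then induct on the height of $\mu \in \bbN\varSigma^+$. The base $\mu = 0$ is simply $\tilde D_0 = 0$. For $\mu > 0$, every $\mu - \eta$ appearing on the right has strictly smaller height than $\mu$ (because $\eta > 0$), so the inductive hypothesis forces $R_\mu = 0$ and the recursion reduces to $(2H_\mu + (\mu,\mu) - 2(\mu,\rho(\bsk)))\,\tilde D_\mu = 0$. Since the left-hand factor is a nonzero polynomial in $S(\mathfrak a_\bbC)$ when $\mu \ne 0$ (its degree-one part $2H_\mu$ is nonzero) and $S(\mathfrak a_\bbC)$ is an integral domain, we conclude $\tilde D_\mu = 0$. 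Injectivity of the Laurent-expansion map from $\mathscr R \otimes S(\mathfrak a_\bbC)$ into formal power series on $\mathfrak a_-$ then gives $\tilde D = 0$, so $D = D' \in (\mathscr R' \otimes S(\mathfrak a_\bbC))^{W, L(\varSigma',\bsk)}$, completing the argument.

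The hard part will be pinning down the precise shape of the recursion: one must verify that $R_\mu$ really only involves $\tilde D_{\mu - \eta}$ with $\eta \in \bbN\varSigma'^+$ (this is exactly where the hypothesis $L(\varSigma',\bsk) \in \mathscr R' \otimes S(\mathfrak a_\bbC)$ enters) and that the coefficient of $\tilde D_\mu$ on the left is indeed $2H_\mu + (\mu,\mu) - 2(\mu,\rho(\bsk))$. This is a mechanical but somewhat fiddly computation of commutators between constant-coefficient differential operators in $S(\mathfrak a_\bbC)$ and the exponentials $e^\mu$; once it is carried out cleanly, the induction runs without further trouble.
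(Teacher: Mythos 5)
Your overall strategy is exactly the paper's: the paper proves the lemma by showing that the restriction of $\gamma_{\rho(\bsk)}$ to the \emph{larger} commutant $(\mathscr R\otimes S(\mathfrak a_\bbC))^{W,L(\varSigma',\bsk)}$ is an injective homomorphism into $S(\mathfrak a_\bbC)^W$ (citing the argument of Heckman, \S1.2 of \cite{He:white}) and then invoking Proposition \ref{prop:HCisomHO}. Your reduction to ``$\gamma_{\rho(\bsk)}(\tilde D)=0$ and $[L,\tilde D]=0$ imply $\tilde D=0$'' is precisely that injectivity, and your recursion — expanding on $\mathfrak a_-$, using $[\varOmega_{\mathfrak a}-2H_{\rho(\bsk)},e^\mu P]=e^\mu(2H_\mu+(\mu,\mu)-2(\mu,\rho(\bsk)))P$ and inducting on $\ell(\mu)$ for a linear functional $\ell>0$ on $\varSigma^+$ — is the standard and correct way to prove it. (One cosmetic point: the coefficient $2H_\mu+(\mu,\mu)-2(\mu,\rho(\bsk))$ is nonzero for $\mu\ne0$ simply because $H_\mu\ne0$, as you say; you do not even need that $R_\mu$ involves only shifts by $\bbN\varSigma'^+$ for the injectivity step, only that the shifts are strictly positive.)

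There is, however, one genuine gap. To produce $D'$ from Proposition \ref{prop:HCisomHO} you need $\gamma_{\rho(\bsk)}(D)$ to lie in $S(\mathfrak a_\bbC)^{W'}=S(\mathfrak a_\bbC)^W$, since that proposition is an isomorphism \emph{onto} $S(\mathfrak a_\bbC)^{W'}$. This is not automatic and you nowhere address it. The constant-term projection in \eqref{eq:HOHC} is defined via the expansion on the single chamber $\mathfrak a_-$ and is \emph{not} $W$-equivariant: for instance $s_\alpha$ sends $(1-e^\alpha)^{-1}$ (constant term $1$) to $(1-e^{-\alpha})^{-1}=-e^\alpha(1-e^\alpha)^{-1}$ (constant term $0$), and indeed the unshifted constant term of the $W$-invariant operator $L(\varSigma',\bsk)$ itself is $\varOmega_{\mathfrak a}-2H_{\rho(\bsk)}$, which is not $W$-invariant — only after the $\rho(\bsk)$-shift does one get the $W$-invariant $(\lambda,\lambda)-(\rho(\bsk),\rho(\bsk))$. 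The $W$-invariance of $\gamma_{\rho(\bsk)}(D)$ for a general $D$ in the commutant is a theorem (it uses both the $W$-invariance of $D$ and $[L,D]=0$, e.g.\ by comparing the constant terms of $D$ taken along the chambers $w\mathfrak a_-$ and showing the $\rho(\bsk)$-shifted versions all agree); this is precisely the ``into $S(\mathfrak a_\bbC)^W$'' half of what the paper imports from \cite[\S1.2]{He:white} alongside the injectivity you proved. Supply that step (or cite it explicitly) and your argument is complete.
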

\begin{proof}
By the same argument as in \cite[\S1.2]{He:white},
we can prove the restriction of $\gamma_{\rho(\bsk)}$ to $(\mathscr R\otimes S(\mathfrak a_\bbC))^{W,L(\varSigma',\bsk)}$ is injective homomorphism into $S(\mathfrak a_\bbC)^{W}$.
Hence the lemma follows from Proposition \ref{prop:HCisomHO}.
(Recall $W=W'$ by assumption.)
\end{proof}
\begin{prop}\label{prop:2diff-sys}
Suppose
for a choice of $\varSigma'$ and $\bsk$
the equality
\begin{equation}\label{eq:MainAssumption}
\tilde\delta(\varSigma',\bsk)^{\frac12}\circ(L(\varSigma',\bsk)+(\rho(\bsk),\rho(\bsk)))\circ \tilde\delta(\varSigma',\bsk)^{-\frac12}
=
\tilde\delta_{G/K}^{\frac12}\circ(\varDelta^\pi(\varOmega_{\mathfrak g}-\varpi^\pi)+||\rho||^2)\circ \tilde\delta_{G/K}^{-\frac12}
\end{equation}
holds, namely, the operators in \eqref{eq:Ltwist} and \eqref{eq:Omegatwist} coincide.
Then we have
\[
(\mathscr R'\otimes S(\mathfrak a_\bbC))^{W,L(\varSigma',\bsk)}
=
\tilde\delta(\varSigma',\bsk)^{-\frac12}\tilde\delta_{G/K}^{\frac12}\circ
\varDelta^\pi(\boldsymbol D^\pi)
\circ\tilde\delta_{G/K}^{-\frac12}\tilde\delta(\varSigma',\bsk)^{\frac12}.
\]
Moreover,
for any $D\in U(\mathfrak g_\bbC)^K$
it holds that
\[
\gamma_{\rho(\bsk)}
\Bigl(
\tilde\delta(\varSigma',\bsk)^{-\frac12}\tilde\delta_{G/K}^{\frac12}\circ \varDelta^\pi(D) \circ\tilde\delta_{G/K}^{-\frac12}\tilde\delta(\varSigma',\bsk)^{\frac12}
\Bigr)
=
\gamma^\pi(D).
\]
\end{prop}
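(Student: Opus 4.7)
The plan is to introduce the algebra isomorphism
\[
\Phi:=\psi\circ(\cdot)\circ\psi^{-1},\qquad \psi:=\tilde\delta(\varSigma',\bsk)^{-1/2}\tilde\delta_{G/K}^{1/2},
\]
acting on $W$-invariant differential operators on $\mathfrak a_\reg$, and to prove that $\Phi\circ\varDelta^\pi$ is an algebra isomorphism from $\boldsymbol D^\pi$ onto $(\mathscr R'\otimes S(\mathfrak a_\bbC))^{W,L(\varSigma',\bsk)}$ satisfying $\gamma_{\rho(\bsk)}\circ\Phi\circ\varDelta^\pi=\gamma^\pi$. The first assertion of the proposition will then be the equality of image with target, and the second is precisely this commutativity of Harish-Chandra-type homomorphisms.

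First I would verify that $\Phi$ preserves $\mathscr R\otimes S(\mathfrak a_\bbC)$: for $H\in\mathfrak a$,
\[
\psi\circ H\circ\psi^{-1}=H-H(\log\psi)=H+\sum_{\alpha\in\varSigma'^+}\tfrac{\bsk_\alpha\alpha(H)}{2}\coth\tfrac\alpha2-\sum_{\alpha\in\varSigma^+}\tfrac{\bsm_\alpha\alpha(H)}{2}\coth\alpha,
\]
and both $\coth\alpha$ and $\coth(\alpha/2)$ lie in $\mathscr R$; the statement for general $E\in S(\mathfrak a_\bbC)$ follows by multiplicativity. Because $\tilde\delta_{G/K}$ and $\tilde\delta(\varSigma',\bsk)$ are $W$-invariant, and $\varDelta^\pi(D)$ is $W$-invariant by Proposition \ref{prop:GeneralRad}, so is $\Phi(\varDelta^\pi(D))$. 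Moreover $\varDelta^\pi(D)$ commutes with $\varDelta^\pi(\varOmega_{\mathfrak g}-\varpi^\pi)$ (since $\varOmega_{\mathfrak g}$ is central in $U(\mathfrak g_\bbC)$), so $\Phi(\varDelta^\pi(D))$ commutes with $\Phi(\varDelta^\pi(\varOmega_{\mathfrak g}-\varpi^\pi))$; by the standing hypothesis \eqref{eq:MainAssumption}, the latter equals $L(\varSigma',\bsk)$ up to an additive constant. Lemma \ref{lem:HCisomHO} then places $\Phi(\varDelta^\pi(D))$ inside $(\mathscr R'\otimes S(\mathfrak a_\bbC))^{W,L(\varSigma',\bsk)}$.

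The heart of the proof is the identity $\gamma_{\rho(\bsk)}\circ\Phi\circ\varDelta^\pi=\gamma^\pi$. On $\mathfrak a_-$ one has $\coth\alpha\equiv-1$ and $\coth(\alpha/2)\equiv-1\pmod{\mathscr M}$, so the formula above yields $\psi\circ H\circ\psi^{-1}\equiv H+(\rho-\rho(\bsk))(H)\pmod{\mathscr M\otimes S(\mathfrak a_\bbC)}$ for every $H\in\mathfrak a$, whence
\[
\psi\circ E\circ\psi^{-1}\equiv E(\,\cdot\,+\rho-\rho(\bsk))\pmod{\mathscr M\otimes S(\mathfrak a_\bbC)}\qquad\text{for any }E\in S(\mathfrak a_\bbC).
\]
Applying this to the decomposition $\varDelta^\pi(D)=\gamma^\pi(D)(\,\cdot\,-\rho)+\sum_j f_jE_j$ from \eqref{eq:GeneralRad}, and using that the ideal $\mathscr M\otimes S(\mathfrak a_\bbC)$ is preserved by $\Phi$ (since $\psi$ commutes with multiplication operators and $f_j\in\mathscr M$), I obtain
\[
\Phi(\varDelta^\pi(D))\equiv\gamma^\pi(D)(\,\cdot\,-\rho(\bsk))\pmod{\mathscr M\otimes S(\mathfrak a_\bbC)}.
\]
Post-composing with $\gamma_{\rho(\bsk)}$, which first projects modulo $\mathscr M$ and then performs the shift $\lambda\mapsto\lambda+\rho(\bsk)$, yields $\gamma^\pi(D)(\lambda)$, as required.

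For the first assertion, $\Phi\circ\varDelta^\pi$ is now a well-defined algebra homomorphism $\boldsymbol D^\pi\to(\mathscr R'\otimes S(\mathfrak a_\bbC))^{W,L(\varSigma',\bsk)}$ whose post-composition with the isomorphism $\gamma_{\rho(\bsk)}$ (Proposition \ref{prop:HCisomHO}) equals the isomorphism $\gamma^\pi$ (Theorem \ref{thm:HChomo}); hence $\Phi\circ\varDelta^\pi$ is itself an isomorphism, yielding the equality of subalgebras. The principal obstacle is the bookkeeping in the third paragraph: one must confirm that the contributions of $\tilde\delta_{G/K}^{1/2}$ and $\tilde\delta(\varSigma',\bsk)^{-1/2}$ to $H(\log\psi)$ combine, modulo $\mathscr M$, to precisely the scalar $\rho(\bsk)(H)-\rho(H)$ needed to convert the shift by $-\rho$ in the Wallach Harish-Chandra homomorphism into the shift by $+\rho(\bsk)$ of the Heckman-Opdam homomorphism; everything else is formal.
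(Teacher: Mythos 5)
Your proposal is correct and follows essentially the same route as the paper: conjugate the radial parts by $\tilde\delta(\varSigma',\bsk)^{-\frac12}\tilde\delta_{G/K}^{\frac12}$, use the hypothesis \eqref{eq:MainAssumption} plus centrality of $\varOmega_{\mathfrak g}$ and Lemma \ref{lem:HCisomHO} to land in $(\mathscr R'\otimes S(\mathfrak a_\bbC))^{W,L(\varSigma',\bsk)}$, verify $\gamma_{\rho(\bsk)}\circ\Phi\circ\varDelta^\pi=\gamma^\pi$, and deduce equality of the two algebras from the surjectivity of $\gamma^\pi$ and the injectivity of $\gamma_{\rho(\bsk)}$. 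Your third paragraph simply spells out the computation the paper dismisses with ``one easily sees $\gamma_{\rho(\bsk)}(\psi\circ E\circ\psi^{-1})=\gamma_\rho(E)$,'' and the bookkeeping there (the shift $\rho-\rho(\bsk)$ modulo $\mathscr M\otimes S(\mathfrak a_\bbC)$) is carried out correctly.
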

\begin{proof}
Define an algebra homomorphism
$\tau : U(\mathfrak g_\bbC)^K \to (\mathscr R\otimes S(\mathfrak a_\bbC))^W$ by
\[
\tau(D)=
\tilde\delta(\varSigma',\bsk)^{-\frac12}\tilde\delta_{G/K}^{\frac12}\circ \varDelta^\pi(D) \circ\tilde\delta_{G/K}^{-\frac12}\tilde\delta(\varSigma',\bsk)^{\frac12}.
\]
Suppose $D\in U(\mathfrak g_\bbC)^K$.
Then Proposition \ref{prop:GeneralRad}
implies
$\gamma^\pi(D)=\gamma_\rho\circ \varDelta^\pi(D)$,
while one easily sees
$\gamma_{\rho(\bsk)}
\bigl(
\tilde\delta(\varSigma',\bsk)^{-\frac12}\tilde\delta_{G/K}^{\frac12}\circ E \circ\tilde\delta_{G/K}^{-\frac12}\tilde\delta(\varSigma',\bsk)^{\frac12}
\bigr)
=
\gamma_\rho(E)$
for any $E\in \mathscr R\otimes S(\mathfrak a_\bbC)$.
Thus we have $\gamma_{\rho(\bsk)}\circ \tau(D)=\gamma^\pi(D)$, the second assertion of the proposition.
Now it holds that
\[
[L(\varSigma',\bsk), \tau(D)]
=\tau\bigl(
[\varOmega_{\mathfrak g}-\varpi^\pi+||\rho||^2-(\rho(\bsk),\rho(\bsk)),D]
\bigr)=0.
\]
This shows $\tau(U(\mathfrak g_\bbC)^K)
\subset (\mathscr R\otimes S(\mathfrak a_\bbC))^{W,L(\varSigma',\bsk)}=(\mathscr R'\otimes S(\mathfrak a_\bbC))^{W,L(\varSigma',\bsk)}
$.
But these two subalgebras actually coincide by Theorem \ref{thm:HChomo},
Proposition \ref{prop:HCisomHO} and the second assertion.
\end{proof}

Since the functions $\sinh^{-2}\frac\alpha2$ ($\alpha\in\varSigma\cup2\varSigma$) are linearly independent,
\eqref{eq:MainAssumption} holds if and only if
\begin{equation}\label{eq:matching}
-\bsm_\alpha\bska_\alpha^\pi
+\tfrac12\bsm_{\frac\alpha2}\bigl(
1-\tfrac12\bsm_{\frac\alpha2}-\bsm_{\alpha}+2\bska^\pi_{\frac\alpha2}
\bigr)
=\bsk_\alpha(1-\bsk_\alpha-2\bsk_{2\alpha})
\quad\text{for any }\alpha\in\varSigma\cup2\varSigma.
\end{equation}
Here we suppose $\bsm_\alpha=\bska_\alpha^\pi=0$ for $\alpha\notin\varSigma$
and $\bsk_\alpha=0$ for $\alpha\notin\varSigma'$.

\begin{prop}\label{prop:deltaregular}
The function $\tilde\delta_{G/K}^{-\frac12}\, \tilde\delta(\varSigma',\bsk)^{\frac12} \in \mathscr A(\mathfrak a_\reg)$ extends to a real analytic function on
some open set $U$ containing $\mathfrak a_\reg \cup \{0\}$ if and only if
\begin{equation}\label{eq:regularity}
\frac{\bsm_{\alpha}+\bsm_{2\alpha}}2=\bsk_{\alpha}+\bsk_{2\alpha}+\bsk_{4\alpha}\quad
\text{for any }\alpha\in\varSigma\setminus2\varSigma.
\end{equation}
If this is the case then 
\begin{equation}\label{eq:coshfactor}
\tilde\delta_{G/K}^{-\frac12}\, \tilde\delta(\varSigma',\bsk)^{\frac12}
=\prod_{\alpha\in\varSigma^+\setminus2\varSigma^+}
\biggl(\cosh\frac\alpha2\biggr)^{-\bsk_\alpha}
(\cosh\alpha)^{\bsk_{4\alpha}-\frac{\bsm_{2\alpha}}2}.
\end{equation}
In particular, $\tilde\delta_{G/K}^{-\frac12}\, \tilde\delta(\varSigma',\bsk)^{\frac12}$ extends to a nowhere-vanishing real analytic function on $\mathfrak a$ taking $1$ at $0\in\mathfrak a$.
\end{prop}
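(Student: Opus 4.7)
The plan is to analyze $\tilde\delta_{G/K}^{-\frac12}\,\tilde\delta(\varSigma',\bsk)^{\frac12}$ wall-by-wall by grouping its factors according to the ``primitive'' positive roots $\alpha\in\varSigma^+\setminus 2\varSigma^+$.

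For each such $\alpha$, the contributing factors come from the $\alpha$-parallel roots $\alpha,2\alpha\in\varSigma^+$ in $\tilde\delta_{G/K}^{-\frac12}$ (with the convention $\bsm_{2\alpha}=0$ when $2\alpha\notin\varSigma$) and from whichever of $\alpha,2\alpha,4\alpha$ lie in $\varSigma'^+$ in $\tilde\delta(\varSigma',\bsk)^{\frac12}$ (with $\bsk_\beta=0$ when $\beta\notin\varSigma'$). Using the double-angle identities
\[
\sinh\alpha=2\sinh\tfrac\alpha2\cosh\tfrac\alpha2,\qquad
\sinh 2\alpha=4\sinh\tfrac\alpha2\cosh\tfrac\alpha2\cosh\alpha,
\]
every $|\sinh(\cdot)|^c$ in the $\alpha$-group splits into powers of $|\sinh(\alpha/2)|$, $\cosh(\alpha/2)$, $\cosh\alpha$ and of the constant $2$. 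Collecting the exponents, and invoking $\|2\alpha\|=2\|\alpha\|$ and $\|\alpha/2\|=\|\alpha\|/2$ to handle the norm factors in \eqref{eq:GKdelta} and \eqref{eq:HOdelta}, I find that the $\alpha$-group equals
\[
\Bigl(\tfrac{2|\sinh(\alpha/2)|}{\|\alpha\|}\Bigr)^{c_\alpha}\,(\cosh\tfrac\alpha2)^{c_\alpha-\bsk_\alpha}\,(\cosh\alpha)^{\bsk_{4\alpha}-\bsm_{2\alpha}/2},
\]
where $c_\alpha:=\bsk_\alpha+\bsk_{2\alpha}+\bsk_{4\alpha}-\tfrac12(\bsm_\alpha+\bsm_{2\alpha})$.

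The $\cosh$-factors are everywhere positive and real analytic on $\mathfrak a$, so extension across the wall $\alpha=0$ depends solely on $|\sinh(\alpha/2)|^{c_\alpha}$, which is real analytic and nowhere zero near a generic point of that wall exactly when $c_\alpha=0$. Requiring this for every primitive $\alpha$ is precisely \eqref{eq:regularity}. Under \eqref{eq:regularity} the $|\sinh|$ factor disappears, $c_\alpha-\bsk_\alpha$ reduces to $-\bsk_\alpha$, and taking the product over $\alpha\in\varSigma^+\setminus 2\varSigma^+$ yields \eqref{eq:coshfactor}. Since $\cosh$ is strictly positive on $\mathfrak a$, the right-hand side defines a real-analytic nowhere-vanishing function on all of $\mathfrak a$, taking the value $1$ at $H=0$.

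The main obstacle is the numerical bookkeeping: the powers of $2$ coming from the double-angle formulas and those of $\|\alpha\|$ coming from the root normalizations must cancel in such a way that the entire $\alpha$-contribution is controlled by the single exponent $c_\alpha$ (beyond the two $\cosh$-powers). This is a routine but delicate linear computation, and it is exactly this cancellation that makes the ``iff'' with \eqref{eq:regularity} sharp.
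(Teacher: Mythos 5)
Your proof is correct and follows essentially the same route as the paper: split each $\alpha$-group via the double-angle identities into $\bigl(|\sinh(\alpha/2)|/\|\alpha/2\|\bigr)^{c_\alpha}$ times positive powers of $\cosh\frac\alpha2$ and $\cosh\alpha$, so that analyticity is governed by the single exponent $c_\alpha$ and \eqref{eq:coshfactor} drops out when $c_\alpha=0$; your exponent bookkeeping (including the powers of $2$ and of $\|\alpha\|$) checks out. The only caveat is in the ``only if'' direction: strictly speaking $|\sinh(\alpha/2)|^{c_\alpha}$ does extend analytically (though it vanishes on the wall) when $c_\alpha\in2\bbZ_{>0}$, so the sharp equivalence is really with a \emph{nowhere-vanishing} analytic extension, exactly as your ``and nowhere zero'' qualifier implicitly assumes --- but the paper's own proof is equally terse here, declaring the first statement ``immediate'' from the Taylor expansion of $\sinh(\alpha/2)/\|\alpha/2\|$.
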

\begin{proof}
The first statement is immediate since for each $\alpha\in\varSigma\cup2\varSigma$
we have the expansion
\[
\frac{\sinh(\alpha/2)}{||\alpha/2||}=
\frac\alpha{||\alpha||}\biggl(1+\frac{(\alpha/2)^2}{3!}+\frac{(\alpha/2)^4}{5!}+\cdots\biggr).
\]
Next, \eqref{eq:coshfactor} holds since for each $\alpha\in\varSigma^+\setminus2\varSigma^+$
\begin{align*}
\biggl|\frac{\sinh\alpha}{||\alpha||}&\biggr|^{-\frac{\bsm_{\alpha}}2}
\biggl|\frac{\sinh(2\alpha)}{||2\alpha||}\biggr|^{-\frac{\bsm_{2\alpha}}2}
\biggl|\frac{\sinh(\alpha/2)}{||\alpha/2||}\biggr|^{\bsk_\alpha}
\biggl|\frac{\sinh\alpha}{||\alpha||}\biggr|^{\bsk_{2\alpha}}
\biggl|\frac{\sinh(2\alpha)}{||2\alpha||}\biggr|^{\bsk_{4\alpha}}\\
&=
|\sinh\alpha|^{-\frac{\bsm_{\alpha}}2}
|\sinh\alpha\,\cosh\alpha|^{-\frac{\bsm_{2\alpha}}2}
\biggl|\frac{\sinh\alpha}{\cosh(\alpha/2)}\biggr|^{\bsk_\alpha}
|\sinh\alpha|^{\bsk_{2\alpha}}
|\sinh\alpha\,\cosh\alpha|^{\bsk_{4\alpha}}\\
&=
\biggl(\cosh\frac\alpha2\biggr)^{-\bsk_\alpha}
(\cosh\alpha)^{\bsk_{4\alpha}-\frac{\bsm_{2\alpha}}2}.\qedhere
\end{align*} 
\end{proof}
\begin{thm}\label{thm:matching}
Suppose \eqref{eq:matching} and \eqref{eq:regularity} hold
for a choice of $\varSigma'$ and $\bsk$.
Then $\bsk \in \mathcal K_\reg(\varSigma')$ and it holds that
\begin{equation}\label{eq:main2}
\varUpsilon^\pi(\phi^\pi_\lambda) =\tilde\delta_{G/K}^{-\frac12}\, \tilde\delta(\varSigma',\bsk)^{\frac12}\,F(\varSigma',\bsk,\lambda)
\quad\text{for any }\lambda\in \mathfrak a_\bbC^*.
\end{equation}
\end{thm}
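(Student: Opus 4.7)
The plan is to set $g := \tilde\delta_{G/K}^{-\frac12}\, \tilde\delta(\varSigma',\bsk)^{\frac12}$, which by Proposition \ref{prop:deltaregular} (applicable thanks to \eqref{eq:regularity}) is a nowhere-vanishing real analytic function on $\mathfrak a$ with $g(0)=1$. Define
\[
f := g^{-1}\,\varUpsilon^\pi(\phi^\pi_\lambda)\ \in\ C^\infty(\mathfrak a)^W,
\]
which is real analytic on $\mathfrak a$ (by Theorem \ref{thm:unique-sph}) and satisfies $f(0)=1$ since $\varUpsilon^\pi(\phi^\pi_\lambda)(0)=\trace\pi(1_G)/\dim V=1$ in the Schur identification. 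The goal is to check that $f$ satisfies the characterization \ref{HG1}--\ref{HG3} of $F(\varSigma',\bsk,\lambda)$ and to deduce along the way that $\bsk\in\mathcal K_\reg(\varSigma')$.

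First I would observe that condition \eqref{eq:matching} is precisely the scalar equality, coefficient by coefficient in the basis $\{\sinh^{-2}\frac\alpha2\}_{\alpha\in\varSigma\cup2\varSigma}$, between the right-hand sides of \eqref{eq:Ltwist} and \eqref{eq:Omegatwist} (as remarked between Proposition \ref{prop:deltaregular} and the theorem). Therefore \eqref{eq:MainAssumption} holds, and Proposition \ref{prop:2diff-sys} applies. This yields the algebra identity
\[
(\mathscr R'\otimes S(\mathfrak a_\bbC))^{W,L(\varSigma',\bsk)}
=g^{-1}\circ\varDelta^\pi(\boldsymbol D^\pi)\circ g,
\]
together with the eigenvalue compatibility $\gamma_{\rho(\bsk)}(g^{-1}\circ\varDelta^\pi(D)\circ g)=\gamma^\pi(D)$ for every $D\in U(\mathfrak g_\bbC)^K$.

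Now for an arbitrary $\tilde D\in (\mathscr R'\otimes S(\mathfrak a_\bbC))^{W,L(\varSigma',\bsk)}$, write $\tilde D=g^{-1}\circ\varDelta^\pi(D)\circ g$ for some $D\in U(\mathfrak g_\bbC)^K$ and compute
\[
\tilde D f= g^{-1}\,\varDelta^\pi(D)\,\varUpsilon^\pi(\phi^\pi_\lambda)
= g^{-1}\,\gamma^\pi(D)(\lambda)\,\varUpsilon^\pi(\phi^\pi_\lambda)
= \gamma_{\rho(\bsk)}(\tilde D)(\lambda)\, f,
\]
using that $\varUpsilon^\pi(\phi^\pi_\lambda)$ is a joint $\gamma^\pi(\cdot)(\lambda)$-eigenfunction (Corollary \ref{cor:spherical}). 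Thus $f$ satisfies \ref{HG1}, \ref{HG2}, \ref{HG3} on $\mathfrak a_\reg$; by real analyticity the eigenvalue equation extends to all of $\mathfrak a$. Since such an $f$ exists for \emph{every} $\lambda\in\mathfrak a_\bbC^*$, the criterion \ref{k6} of Theorem \ref{thm:existence} gives $\bsk\in\mathcal K_\reg(\varSigma')$. Finally Corollary \ref{cor:existunique} forces $f=F(\varSigma',\bsk,\lambda)$, which is precisely \eqref{eq:main2}.

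The only conceptually non-trivial step is the verification that $f$ satisfies the full system \ref{HG2} and not merely the single Casimir-type equation coming from \eqref{eq:MainAssumption}; this is exactly what Proposition \ref{prop:2diff-sys} provides, by upgrading the matching of one operator (the Casimir's radial part with $L(\varSigma',\bsk)$, both suitably twisted) to an identification of the \emph{entire} commuting algebra. Everything else is bookkeeping: confirming that \eqref{eq:matching} implies \eqref{eq:MainAssumption} on the nose, that $g$ is a genuine analytic gauge (ensuring $f$ is analytic on all of $\mathfrak a$ and not merely on $\mathfrak a_\reg$), and that the normalization $f(0)=1$ matches. I expect no hidden obstacle beyond carefully invoking the right items in the correct order.
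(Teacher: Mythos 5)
Your proposal is correct and follows essentially the same route as the paper: twist $\varUpsilon^\pi(\phi^\pi_\lambda)$ by $\tilde\delta(\varSigma',\bsk)^{-\frac12}\tilde\delta_{G/K}^{\frac12}$ (regular and normalized at $0$ by Proposition \ref{prop:deltaregular}), verify \ref{HG1}--\ref{HG3} via Corollary \ref{cor:spherical} and Proposition \ref{prop:2diff-sys}, deduce $\bsk\in\mathcal K_\reg(\varSigma')$ from the implication \ref{k6}$\Rightarrow$\ref{k1} of Theorem \ref{thm:existence}, and conclude by the uniqueness in Corollary \ref{cor:existunique}. Your explicit computation of $\tilde D f$ and the remark on extending the eigenvalue equations from $\mathfrak a_\reg$ to $\mathfrak a$ by analyticity are just the details the paper leaves implicit.
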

\begin{proof}
Suppose $\lambda\in \mathfrak a_\bbC^*$.
By Proposition \ref{prop:deltaregular},
$\tilde\delta(\varSigma',\bsk)^{-\frac12}\,\tilde\delta_{G/K}^{\frac12}\, 
\varUpsilon^\pi(\phi^\pi_\lambda)$
extends to a real analytic function on $\mathfrak a$ taking $1$ at $0\in\mathfrak a$. 
This is clearly $W$-invariant.
Also, it follows from Corollary \ref{cor:spherical} and Proposition \ref{prop:2diff-sys} that
this function satisfies \ref{HG2} in \S\ref{subsec:HGD}.
Thus
$\tilde\delta(\varSigma',\bsk)^{-\frac12}\,\tilde\delta_{G/K}^{\frac12}\, 
\varUpsilon^\pi(\phi^\pi_\lambda)$ satisfies \ref{HG1}--\ref{HG3} for any $\lambda\in \mathfrak a_\bbC^*$.
Hence $\bsk \in \mathcal K_\reg(\varSigma')$ by the implication \ref{k6}$\Rightarrow$\ref{k1} in Theorem \ref{thm:existence}.
Finally, \eqref{eq:main2} follows from Corollary \ref{cor:existunique}.
\end{proof}

In the notation of Theorem \ref{thm:main} our result is summarized as follows.
Let $\varSigma^\pi$ be a root system in $\mathfrak a^*$
and $\bsk^\pi$ a multiplicity function on $\varSigma^\pi$.
Then \eqref{eq:main} holds if the following conditions are satisfied:
\begin{enumerate}[label=(MC\arabic*), leftmargin=*]
\item $\varSigma^\pi \subset \varSigma\cup 2\varSigma$;\label{MC1}
\item
\eqref{eq:matching} holds with $(\varSigma',\bsk)=(\varSigma^\pi,\bsk^\pi)$;\label{MC2}
\item
\eqref{eq:regularity} holds with $(\varSigma',\bsk)=(\varSigma^\pi,\bsk^\pi)$.\label{MC3}
\end{enumerate}
Note that if \ref{MC3} is true, then
each root in $\varSigma$ is proportional to some root in $\varSigma^\pi$.
Hence the Weyl group of $\varSigma^\pi$ equals $W$ under \ref{MC1} and \ref{MC3}.
It is not so hard to observe that
under \ref{MC1}, \eqref{eq:main} holds only if 
both \ref{MC2} and \ref{MC3} are true.
(We do not use this fact in the paper.)
Conditions \ref{MC1}--\ref{MC3} will be further simplified
after we look into the structure of $(\pi,V)$ more precisely.

\subsection{The associated split semisimple subgroup}\label{subsec:asssplit}
Let $\mathfrak b$ be a Cartan subalgebra of $\mathfrak m=\Lie M$.
Then $\mathfrak b_\bbC+\mathfrak a_\bbC$ is a Cartan subalgebra of $\mathfrak g_\bbC$.
A root $\mu$ for $(\mathfrak g_\bbC,\mathfrak b_\bbC+\mathfrak a_\bbC)$ is called \emph{real}
when $\mu|_{\mathfrak b_\bbC}=0$.
We denote the set of real roots by $\varSigma_\real$, which is naturally identified with a subset of $\varSigma$.
A restricted root $\alpha\in\varSigma$ belongs to  $\varSigma_\real$ if and only if $\bsm_\alpha$ is odd
(cf.~\cite[Chapter X, Exercises F]{Hel1}).
Now 
\begin{equation*}
{\mathfrak g}^{\mathfrak b}=\mathfrak a + \mathfrak b + \sum_{\alpha\in\varSigma_\real} \mathfrak g_\alpha^{\mathfrak b}
\end{equation*}
is a reductive subalgebra of $\mathfrak g$.
Its semisimple part is
\begin{equation}\label{eq:g_split}
\mathfrak g_\spt:=[{\mathfrak g}^{\mathfrak b},{\mathfrak g}^{\mathfrak b}]
= \mathfrak a_\spt + \sum_{\alpha\in\varSigma_\real} \mathfrak g_\alpha^{\mathfrak b}
\qquad
\biggl(
\mathfrak a_\spt:=\sum_{\alpha\in\varSigma_\real}\bbR H_\alpha
\biggr),
\end{equation}
which is a split semisimple Lie algebra with Cartan subalgebra $\mathfrak a_\spt$ (\cite[Chapter VII, \S5]{Kn}).
The restricted root system of $\mathfrak g_\spt$ is identified with $\varSigma_\real$.
Let $G_\spt$ be the analytic subgroup for $\mathfrak g_\spt$ (the \emph{associated split semisimple subgroup}).
Let $M_0$ denote the identity component of $M$.
If we put $K_\spt=K\cap G_\spt$, $M_\spt=M\cap K_\spt$,
then $M_\spt$ is the centralizer of $\mathfrak a_\spt$ in $K_\spt$.
Furthermore $M_\spt$ normalizes $M_0$, and
$M=M_0 M_\spt$ (cf.~\cite[Theorem 7.52]{Kn}).

\begin{prop}\label{prop:split_reduce}
The restriction of 
$(\pi,V)$ to $M_0$ is isomorphic to the direct sum of some copies of an irreducible representation of $M_0$:
$V|_{M_0} \simeq U^{\oplus r}$.
Let $\mu$ be an extremal $\mathfrak b_\bbC$-weight of $U$
and put
\begin{equation}\label{eq:V_mu}
V_\mu=\{ v\in V \,|\, \pi(H)v=\mu(H)v\ (\forall H\in \mathfrak b)\}.
\end{equation}
Then $\pi(K_\spt)V_{\mu} \subset V_{\mu}$
and $V_{\mu}$ is irreducible as an $M_\spt$-module.
That is, $(\pi|_{K_\spt}, V_\mu)$ is a small $K_\spt$-type of $G_\spt$ with dimension $r$.
\end{prop}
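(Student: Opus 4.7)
The plan is to deduce everything from the single structural observation that $\mathfrak g_\spt\subset \mathfrak g^{\mathfrak b}$ (by the very definition \eqref{eq:g_split}), so $\mathfrak g_\spt$ centralizes $\mathfrak b$, and therefore $\Ad(G_\spt)$ acts as the identity on $\mathfrak b$. In particular both $K_\spt\subset G_\spt$ and $M_\spt\subset K_\spt$ centralize $\mathfrak b$, whence for every $\mathfrak b_\bbC$-weight $\nu$ of $V$ and every $k\in K_\spt$ one has $\pi(k)V_\nu\subset V_\nu$. This already yields $\pi(K_\spt)V_\mu\subset V_\mu$ for any weight $\mu$, and it reduces the remaining assertions to exhibiting a highest-weight space which is simultaneously $M_\spt$-stable.

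To obtain the isotypic decomposition I would fix a positive system $\Delta^+$ of the root system of $(\mathfrak m_\bbC,\mathfrak b_\bbC)$, set $\mathfrak n^{\pm}=\sum_{\alpha\in\pm\Delta^+}\mathfrak m_\alpha$, and pick a weight $\mu_0$ of $V$ maximal in the natural partial order, so that $\pi(\mathfrak n^+)V_{\mu_0}=0$. By the observation above $\pi(M_\spt)V_{\mu_0}\subset V_{\mu_0}$. Since $V$ is $M$-irreducible and $M=M_0M_\spt$, any nonzero $v\in V_{\mu_0}$ satisfies
\[
V=\operatorname{span}\pi(M)v = U(\mathfrak m_\bbC)\,\pi(M_\spt)v \subset U(\mathfrak m_\bbC)V_{\mu_0},
\]
so $V=U(\mathfrak m_\bbC)V_{\mu_0}=U(\mathfrak n^-)V_{\mu_0}$ by PBW. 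Each nonzero $v\in V_{\mu_0}$ is a highest-weight vector, so $U(\mathfrak m_\bbC)v$ is an irreducible $M_0$-module of highest weight $\mu_0$; this identifies all of these submodules with a single irreducible $M_0$-representation $U$, and a dimension count gives $V|_{M_0}\simeq U^{\oplus r}$ with $r=\dim V_{\mu_0}$.

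For an arbitrary extremal $\mathfrak b_\bbC$-weight $\mu$ of $U$, the multiplicity of $\mu$ in $U$ is one, so $\dim V_\mu=r$. Choose a positive system $\Delta_\mu^+$ of $(\mathfrak m_\bbC,\mathfrak b_\bbC)$ with respect to which $\mu$ is dominant and repeat the previous argument to get $V=U(\mathfrak n_\mu^-)V_\mu$. To verify $M_\spt$-irreducibility, suppose $V_\mu=V'\oplus V''$ as $M_\spt$-modules with $V'\ne 0$. Because $\Ad(M_\spt)$ preserves $\mathfrak m_\bbC$, the subspace $U(\mathfrak m_\bbC)V'$ is $M_0$- and $M_\spt$-stable, hence $M$-stable; by $M$-irreducibility it equals $V$, and since the $\mu$-weight space of $U(\mathfrak n_\mu^-)V'$ is just $V'$ we conclude $V'=V_\mu$. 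As $M_\spt\subset K_\spt$, any $K_\spt$-invariant subspace of $V_\mu$ is also $M_\spt$-invariant, so $V_\mu$ is automatically $K_\spt$-irreducible and $(\pi|_{K_\spt},V_\mu)$ is a small $K_\spt$-type of $G_\spt$ of dimension $r$.

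The main pitfall to avoid is the temptation to prove isotypicity of $V|_{M_0}$ by Clifford theory alone: such an argument only shows that $M/M_0\cong M_\spt/(M_\spt\cap M_0)$ permutes the $M_0$-isotypic components of $V$ transitively, leaving open whether $M_\spt$ actually acts non-trivially on the set of isomorphism classes of irreducible $M_0$-modules appearing in $V|_{M_0}$. The route above circumvents this by producing an $M_\spt$-stable highest weight space \emph{directly} via the centralizer relation $\mathfrak g_\spt\subset \mathfrak g^{\mathfrak b}$, so the outer-automorphism issue never arises.
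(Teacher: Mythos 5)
Your proof is correct and follows essentially the same route as the paper's: both rest on the observations that $K_\spt$ centralizes $\mathfrak b$ (so weight spaces are $K_\spt$-stable), that $M=M_0M_\spt$ with $M_\spt$ normalizing $M_0$ makes $\pi(U(\mathfrak m_\bbC))E$ an $M$-stable subspace for any $M_\spt$-submodule $E\subset V_\mu$, and that the $\mu$-weight space of $U(\mathfrak n^-_\mu)E$ is $E$ itself. The only difference is organizational — you establish isotypicity first via a maximal weight and then treat a general extremal $\mu$ by re-choosing the positive system, whereas the paper runs the argument once for the highest weight — and your explicit handling of arbitrary extremal weights is, if anything, slightly more complete than the paper's.
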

\begin{proof}
Let $\mu$ be the highest weight of $V|_{M_0}$
with respect to a lexicographical order of $\sqrt{-1} \mathfrak b^*$ and
define $V_\mu$ by \eqref{eq:V_mu}.
Since $K_\spt\subset G^{\mathfrak b}$, we have $\pi(K_\spt)V_{\mu} \subset V_{\mu}$.
Let $E\subset V_{\mu}$ be an irreducible $M_\spt$-submodule.
Since $M_\spt$ normalizes $M_0$,
$\pi(U(\mathfrak m_\bbC))E$ is $M$-stable and hence is equal to $V$.
By the highest weight theory,
$E= V_{\mu} \cap \pi(U(\mathfrak m_\bbC))E = V_{\mu}$.
Thus $V_{\mu}$ is an irreducible $M_\spt$-module.
By the highest weight theory again,
we see if $U$ is an irreducible $M_0$-module with highest weight $\mu$
then $V|_{M_0} \simeq U^{\otimes r}$ with $r=\dim V_{\mu}$.
\end{proof}
\begin{cor}\label{cor:evenis0}
For any $\alpha\in\varSigma$ with even $\bsm_\alpha$
we have $\bska_\alpha^\pi=0$.
\end{cor}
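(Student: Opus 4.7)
The plan is to invoke Proposition \ref{prop:ND} to reduce the assertion $\bska_\alpha^\pi=0$ to the equivalent statement that $\pi(X+\theta X)=0$ for every $X\in\mathfrak g_\alpha$. The hypothesis that $\bsm_\alpha$ is even enters through the cited fact (at the start of \S\ref{subsec:asssplit}) that $\alpha\in\varSigma_\real$ iff $\bsm_\alpha$ is odd; hence $\alpha\notin\varSigma_\real$ and $\mathfrak g_\alpha^{\mathfrak b}=0$. Complexifying, this says that in the decomposition $\mathfrak g_\alpha\otimes\bbC=\bigoplus_{\mu|_{\mathfrak a}=\alpha}\mathfrak g_\mu$ every appearing root $\mu$ of $(\mathfrak g_\bbC,\mathfrak b_\bbC+\mathfrak a_\bbC)$ satisfies $\mu|_{\mathfrak b}\neq 0$.

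Using that $\theta$ fixes $\mathfrak b$ pointwise, for any root vector $E_\mu\in\mathfrak g_\mu$ the combination $E_\mu+\theta E_\mu\in\mathfrak k_\bbC$ is a $\mathfrak b$-weight vector of weight $\mu|_{\mathfrak b}\neq 0$; consequently $\pi(E_\mu+\theta E_\mu)$ strictly shifts the $\mathfrak b$-weight decomposition of the finite-dimensional $V$ and is nilpotent. By Lemma \ref{lem:kappa} together with the skew-Hermiticity of $\pi(X+\theta X)$ (which forces its eigenvalues to be purely imaginary), proving $\pi(X+\theta X)=0$ is the same as proving $\trace\pi(X+\theta X)^2=0$ for a single unit-norm $X\in\mathfrak g_\alpha$; the choice of $X$ on the unit sphere is immaterial by Theorem \ref{thm:Ko}, which applies since $\bsm_\alpha\ge 2$. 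Decomposing $X=\sum_\mu X^\mu$ and retaining only $\mathfrak b$-weight-preserving terms, the task reduces to showing
\[
\sum_{\mu|_{\mathfrak b}+\nu|_{\mathfrak b}=0}\trace\bigl(\pi(X^\mu+\theta X^\mu)\,\pi(X^\nu+\theta X^\nu)\bigr)=0.
\]

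The main obstacle I expect is the actual vanishing of this diagonal trace sum. To establish it, I plan to couple the $M$-invariance of $\sum_i(X^{(i)}+\theta X^{(i)})^2$ from Lemma \ref{lem:kappadef} with the isotypic decomposition $V|_{M_0}\simeq U^{\oplus r}$ of Proposition \ref{prop:split_reduce}: by Schur the whole element acts on $V$ as the scalar $\bska_\alpha^\pi\bsm_\alpha$, so it suffices to argue that no nonzero scalar can arise from the pairings of strictly weight-shifting nilpotent pieces on an $M_0$-isotypic module. The technical crux is making this heuristic precise, likely via a pairing/Dunkl-style reordering that uses the explicit duality between $\mathfrak g_\mu$ and $\mathfrak g_{-\mu\circ\theta}$ under $B(\cdot,\theta\cdot)$ to write each summand as a commutator whose trace is forced to vanish.
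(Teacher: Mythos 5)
Your reduction via Proposition \ref{prop:ND} and Lemma \ref{lem:kappa} to the vanishing of $\trace\,\pi(X+\theta X)^2$ is legitimate, and your opening observation (even $\bsm_\alpha$ forces $\alpha\notin\varSigma_\real$, hence every $\mathfrak b_\bbC$-weight of $(\mathfrak g_\alpha)_\bbC$ is nonzero) is the right starting point. The gap is in the step you yourself flag as the crux. From ``nonzero weight'' you only obtain that each $\pi(E_\mu+\theta E_\mu)$ is a strictly weight-shifting, nilpotent operator, and the diagonal sum $\sum_{\mu|_{\mathfrak b}+\nu|_{\mathfrak b}=0}\trace\bigl(\pi(X^\mu+\theta X^\mu)\,\pi(X^\nu+\theta X^\nu)\bigr)$ does \emph{not} vanish for such operators in general: already for $M_0$ of type $A_1$ with $E,F$ the standard nilpotents, $\trace(\pi(E)\pi(F))\ne0$ on every nontrivial module, and isotypicity of $V|_{M_0}$ does not change this. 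Nor can the summands be rewritten as commutators whose trace vanishes; cyclicity of the trace only gives $\trace(\pi(E)\pi(F))=\trace(\pi(F)\pi(E))$, which is no information. So the principle ``no nonzero scalar can arise from pairings of weight-shifting nilpotent pieces on an isotypic module'' is false as stated, and the proof does not close.

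The missing idea is a refinement of ``nonzero weight'' to ``weight outside the root lattice $\bbZ\varDelta_{\mathfrak m}$.'' Since $(\mathfrak g_\alpha)_\bbC$ is a completely reducible $\mathfrak m_\bbC$-module with no zero weight, each of its irreducible constituents has highest weight outside $\bbZ\varDelta_{\mathfrak m}$ (an irreducible whose highest weight lies in the root lattice always contains the zero weight), hence \emph{every} $\mathfrak b_\bbC$-weight of $(\mathfrak g_\alpha)_\bbC$ lies outside $\bbZ\varDelta_{\mathfrak m}$. On the other hand, Proposition \ref{prop:split_reduce} gives $V|_{M_0}\simeq U^{\oplus r}$ with $U$ irreducible, so all $\mathfrak b_\bbC$-weights of $V$ lie in a single coset $\mu+\bbZ\varDelta_{\mathfrak m}$. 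Hence $\pi(X_\alpha+\theta X_\alpha)$ would have to carry weight vectors of $V$ into weight spaces outside that coset, which forces $\pi(X_\alpha+\theta X_\alpha)v=0$ for all $X_\alpha\in\mathfrak g_\alpha$ and $v\in V$; Proposition \ref{prop:ND} then gives $\bska^\pi_\alpha=0$ with no trace computation at all. This is exactly the paper's argument, and it is the lattice statement—not mere nilpotence—that makes it work.
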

\begin{proof}
Let $\varDelta_{\mathfrak m}$ be the root system for $(\mathfrak m_\bbC,\mathfrak b_\bbC)$.
Suppose $\alpha\in\varSigma$ 
has an even root multiplicity.
Then $\alpha\notin\varSigma_\real$
and
all $\mathfrak b_\bbC$-weights of $\mathfrak m_\bbC$-module $(\mathfrak g_\alpha)_\bbC$
are not zero.
Hence by the representation theory of complex reductive Lie algebra,
any $\mathfrak b_\bbC$-weight of $(\mathfrak g_\alpha)_\bbC$
(or equivalently, that of  $\{X_\alpha+\theta X_\alpha\,|\,X_\alpha\in (\mathfrak g_\alpha)_\bbC\}$)
is outside the root lattice $\bbZ\varDelta_{\mathfrak m}$.
Let $\mu$ be as in Proposition \ref{prop:split_reduce}.
Then by the proposition,
$\mu-\lambda$ belongs to $\bbZ\varDelta_{\mathfrak m}$
for any $\mathfrak b_\bbC$-weight $\lambda$ of $V$.
But this means
the difference of $\mu$ and any  $\mathfrak b_\bbC$-weight of the $M$-submodule
$\{\pi(X_\alpha+\theta X_\alpha) v\,|\, X_\alpha\in \mathfrak g_\alpha, v\in V\}\subset V$
is also inside $\bbZ\varDelta_{\mathfrak m}$.
It is possible only when
$\{\pi(X_\alpha+\theta X_\alpha) v\,|\, X_\alpha\in \mathfrak g_\alpha, v\in V\}=\{0\}$.
Hence Proposition \ref{prop:ND} implies $\bska_\alpha^\pi=0$.
\end{proof}

\begin{cor}\label{cor:realinherit}
Let $V_\mu$ be as in Proposition \ref{prop:split_reduce}
and put $(\pi_\mu, V_\mu)=(\pi|_{K_\spt}, V_\mu)$.
For any $\alpha\in\varSigma$ with $\bsm_\alpha=1$,
we have $\alpha\in\varSigma_\real$
and $\bska^\pi_\alpha=\bska^{\pi_\mu}_\alpha$.
\end{cor}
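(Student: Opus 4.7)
The plan is to reduce the equality $\bska^\pi_\alpha = \bska^{\pi_\mu}_\alpha$ to the observation that a single vector $X_\alpha \in \mathfrak g_\alpha$ can be taken as the normalized generator simultaneously with respect to the $\mathfrak g$-structure and the $\mathfrak g_\spt$-structure, and then to invoke the definition of $\bska$ after restricting to $V_\mu$.

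First I would observe that $\bsm_\alpha = 1$ is odd, so the criterion cited just before the definition of $\mathfrak g^{\mathfrak b}$ yields $\alpha \in \varSigma_\real$ at once. Consequently $\mathfrak g_\alpha \subset \mathfrak g_\spt$, and for any $X_\alpha \in \mathfrak g_\alpha$ the element $X_\alpha + \theta X_\alpha$ lies in $\mathfrak k_\spt$. Since $V_\mu$ is $K_\spt$-stable by Proposition~\ref{prop:split_reduce}, the restriction of $\pi(X_\alpha + \theta X_\alpha)$ to $V_\mu$ coincides with $\pi_\mu(X_\alpha + \theta X_\alpha)$.

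The main step, and the principal obstacle, is to verify that the two normalization conditions agree. The condition $-\tfrac{\|\alpha\|^2}{2} B(X_\alpha, \theta X_\alpha) = 1$ used to define $\bska^\pi_\alpha$ is equivalent, via the identity $[X, \theta X] = B(X, \theta X) H_\alpha$, to $[X_\alpha, \theta X_\alpha] = -\alpha^\vee$ with $\alpha^\vee = \tfrac{2 H_\alpha}{\|\alpha\|^2}$. The analogous condition $-\tfrac{\|\alpha\|_\spt^2}{2} B_\spt(X_\alpha, \theta X_\alpha) = 1$ for $(\mathfrak g_\spt, K_\spt)$ is, by the same argument, equivalent to $[X_\alpha, \theta X_\alpha] = -\alpha^\vee_\spt$. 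I would then verify $\alpha^\vee = \alpha^\vee_\spt$ directly: letting $H_\alpha^\spt \in \mathfrak a_\spt$ denote the $B_\spt$-dual of $\alpha$, both $H_\alpha$ and $H_\alpha^\spt$ lie in $\mathfrak a_\spt$ and are parallel, so $H_\alpha = c\, H_\alpha^\spt$ for some $c > 0$. Combining $\alpha(H_\alpha) = B(H_\alpha, H_\alpha) = c^2 B(H_\alpha^\spt, H_\alpha^\spt)$ with $\alpha(H_\alpha^\spt) = B_\spt(H_\alpha^\spt, H_\alpha^\spt) = \|\alpha\|_\spt^2$ and $\alpha(H_\alpha) = c\, \alpha(H_\alpha^\spt)$, a short calculation forces $\tfrac{2 H_\alpha}{\|\alpha\|^2} = \tfrac{2 H_\alpha^\spt}{\|\alpha\|_\spt^2}$, so the same $X_\alpha$ serves as the normalized generator in both contexts.

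With this common $X_\alpha$, the defining formula (for $\bsm_\alpha = 1$) gives $\bska^\pi_\alpha\, \id_V = \pi(X_\alpha + \theta X_\alpha)^2$. Restricting both sides to $V_\mu$ yields $\bska^\pi_\alpha\, \id_{V_\mu} = \pi_\mu(X_\alpha + \theta X_\alpha)^2 = \bska^{\pi_\mu}_\alpha\, \id_{V_\mu}$, and the scalars must therefore agree. Beyond the intrinsic character of $\alpha^\vee$ checked above, the rest is a direct unwinding of the definitions.
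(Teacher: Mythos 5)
Your argument follows the same strategy as the paper's proof: identify $\alpha\in\varSigma_\real$ from the oddness of $\bsm_\alpha$, note $\mathfrak g_\alpha\subset\mathfrak g_\spt$, and then show that the normalization of $X_\alpha$ is intrinsic to the subalgebra so that $\bska^{\pi_\mu}_\alpha\id_{V_\mu}=\pi(X_\alpha+\theta X_\alpha)^2|_{V_\mu}=\bska^\pi_\alpha\id_{V_\mu}$. The only difference is how the intrinsic nature of the normalization is verified. The paper rewrites $-\frac{||\alpha||^2}2 B(X_\alpha,\theta X_\alpha)=1$ as the pure bracket identity $[X_\alpha,[X_\alpha,\theta X_\alpha]]=2X_\alpha$, which makes no reference to any invariant form and hence transfers to $\mathfrak g_\spt$ with no further work. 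You instead reduce to $\alpha^\vee=\alpha^\vee_\spt$, and here your assertion that $H_\alpha$ and $H_\alpha^\spt$ are \emph{parallel} is stated without justification: they are the duals of the same functional with respect to two a priori different inner products on $\mathfrak a_\spt$ (the restriction of $B$ and the Killing form $B_\spt$ of $\mathfrak g_\spt$), and duals with respect to non-proportional forms need not be parallel. The claim is true, but it requires an argument --- e.g.\ decompose $\mathfrak g_\spt$ into simple ideals, observe that both forms are invariant under the Weyl group of $\varSigma_\real$ and that the corresponding summands of $\mathfrak a_\spt$ are pairwise non-isomorphic irreducible modules for that group, so the two forms are proportional on the summand containing $H_\alpha$ by Schur's lemma. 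With that point supplied (or replaced by the paper's $\mathfrak{sl}_2$-triple criterion, which avoids the issue entirely), your proof is complete.
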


\begin{proof}
Suppose $\alpha\in\varSigma$ with $\bsm_\alpha=1$ is given.
One has  $\alpha\in\varSigma_\real$ and $\mathfrak g_\alpha\subset \mathfrak g_\spt$.
Take $X_\alpha\in\mathfrak g_\alpha$
so that $-\frac{||\alpha||^2}2 B(X_\alpha,\theta X_\alpha)=1$.
Then
$\bska_\alpha^\pi \id_V=\pi(X_\alpha+\theta X_\alpha)^2$ by definition.
Note the normalization condition for $X_\alpha$
is rewritten as $[X_\alpha,[X_\alpha,\theta X_\alpha]]=2X_\alpha$,
which is common to both $\mathfrak g$ and $\mathfrak g_\spt$. 
Hence
$\bska_\alpha^{\pi_\mu} \id_{V_\mu}=\pi_\mu(X_\alpha+\theta X_\alpha)^2=
\pi(X_\alpha+\theta X_\alpha)^2|_{V_\mu}=\bska_\alpha^\pi \id_{V_\mu}$.
\end{proof}

\subsection{Simplifying matching conditions}

\begin{prop}\label{prop:simplified}
Let $R$ be a complete system of representatives for the $W$-orbits of $\varSigma\setminus2\varSigma$.
Let $\varSigma^\pi$ be a root system in $\mathfrak a^*$ satisfying \ref{MC1}.
Then a multiplicity function $\bsk^\pi$ on $\varSigma^\pi$
satisfies \ref{MC2} and \ref{MC3} if and only if
the following are valid:
\begin{enumerate}[label=(\arabic*), leftmargin=*, topsep=0pt]
\item\label{reduced1}
for any $\alpha\in R$ with $\bsm_{2\alpha}=0$
\[\left\{
\begin{aligned}
\bsk^\pi_\alpha &=\frac{\bsm_\alpha-1\pm\sqrt{(\bsm_\alpha-1)^2-4\bsm_\alpha\bska^\pi_\alpha}}2,\\
\bsk^\pi_{2\alpha} &=\frac{1\mp\sqrt{(\bsm_\alpha-1)^2-4\bsm_\alpha\bska^\pi_\alpha}}2;
\end{aligned} 
\right.\]
\item\label{reduced2}
for any $\alpha\in R$ with $\bsm_{2\alpha}>0$
\[
\left\{
\begin{aligned}
\bsk^\pi_\alpha &=0,\\
\bsk^\pi_{2\alpha} &=\frac{\bsm_\alpha+\bsm_{2\alpha}-1\pm\sqrt{(\bsm_{2\alpha}-1)^2-4\bsm_{2\alpha}\bska^\pi_{2\alpha}}}2,\\
\bsk^\pi_{4\alpha} &=\frac{1\mp\sqrt{(\bsm_{2\alpha}-1)^2-4\bsm_{2\alpha}\bska^\pi_{2\alpha}}}2,
\end{aligned} 
\right.
\]
or
\[
\bska^\pi_{2\alpha}=\frac14\bsm_{2\alpha}-\frac12
\quad\text{and}\quad
\left\{
\begin{aligned}
\bsk^\pi_\alpha &=\bsm_\alpha+\bsm_{2\alpha}-1,\\
\bsk^\pi_{2\alpha} &=1-\frac{\bsm_\alpha+\bsm_{2\alpha}}2,\\
\bsk^\pi_{4\alpha} &=0.
\end{aligned} 
\right.
\]
\end{enumerate}
\end{prop}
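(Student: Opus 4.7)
The strategy is to reduce the matching conditions \ref{MC2} and \ref{MC3} to a finite system of polynomial equations for each $W$-orbit in $\varSigma \setminus 2\varSigma$ and solve this system case-by-case. Fix a representative $\alpha \in R$. Since no root system contains $\alpha, 2\alpha, 4\alpha$ simultaneously, the chain $\{\alpha, 2\alpha, 4\alpha\}$ exhausts all elements of $\varSigma \cup 2\varSigma$ proportional to $\alpha$ at which \eqref{eq:matching} must be tested (with $\bsm_{4\alpha}=0$ and $\bsk^\pi_{8\alpha}=0$). By the $W$-invariance of $\bsm$, $\bska^\pi$, and $\bsk^\pi$, it suffices to treat this single chain. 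Under \ref{MC1}, only $\bsk^\pi_\alpha$, $\bsk^\pi_{2\alpha}$, $\bsk^\pi_{4\alpha}$ may be nonzero here; writing \eqref{eq:matching} at $\alpha, 2\alpha, 4\alpha$ together with \eqref{eq:regularity} at $\alpha$ yields three polynomial equations plus one linear equation in three unknowns, which I will solve.

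In Case 1 ($\bsm_{2\alpha} = 0$), one has $\bska^\pi_{2\alpha}=0$ and $4\alpha\notin\varSigma\cup 2\varSigma$, so $\bsk^\pi_{4\alpha}=0$. The regularity relation reduces to $\bsk^\pi_\alpha+\bsk^\pi_{2\alpha}=\tfrac12 \bsm_\alpha$. Eliminating $\bsk^\pi_{2\alpha}$ from the matching equation at $\alpha$ produces the quadratic
\[
(\bsk^\pi_\alpha)^2 + (1-\bsm_\alpha)\bsk^\pi_\alpha + \bsm_\alpha\bska^\pi_\alpha = 0,
\]
whose two roots are precisely the values in the first stated family. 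A direct expansion then shows the matching equation at $2\alpha$ is an algebraic consequence of these two relations, so no new constraint arises.

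In Case 2 ($\bsm_{2\alpha} > 0$), the standard structure theory of restricted root systems implies $\bsm_\alpha$ is even; Corollary \ref{cor:evenis0} then forces $\bska^\pi_\alpha = 0$. Consequently the matching at $\alpha$ factors as $\bsk^\pi_\alpha(1-\bsk^\pi_\alpha-2\bsk^\pi_{2\alpha})=0$ and splits into two subcases. If $\bsk^\pi_\alpha=0$, then regularity gives $\bsk^\pi_{2\alpha}+\bsk^\pi_{4\alpha}=\tfrac12(\bsm_\alpha+\bsm_{2\alpha})$, and substituting into the matching at $4\alpha$ yields a quadratic in $\bsk^\pi_{4\alpha}$ whose two roots give the main branch of the second family; the matching at $2\alpha$ is then automatic. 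If instead $\bsk^\pi_\alpha = 1-2\bsk^\pi_{2\alpha}$ (and nonzero), combining with regularity fixes $\bsk^\pi_\alpha=\bsm_\alpha+\bsm_{2\alpha}-1$, $\bsk^\pi_{2\alpha}=1-\tfrac12(\bsm_\alpha+\bsm_{2\alpha})$, $\bsk^\pi_{4\alpha}=0$, after which the matching at $4\alpha$ reduces to a single condition on $\bska^\pi_{2\alpha}$ and forces $\bska^\pi_{2\alpha}=\tfrac14\bsm_{2\alpha}-\tfrac12$, giving the exceptional branch. The converse direction amounts to a direct substitution check into \eqref{eq:matching} at $\alpha, 2\alpha, 4\alpha$ and into \eqref{eq:regularity}; the principal computational subtlety, which I expect to be the main (though routine) obstacle, is verifying the automaticity of the matching at $2\alpha$ in each case, which follows by expanding and regrouping the relevant quadratic identities.
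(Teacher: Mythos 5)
Your proposal follows the same route as the paper: reduce \eqref{eq:matching} and \eqref{eq:regularity} to the chain $\{\alpha,2\alpha,4\alpha\}$ for each $\alpha\in R$, use Corollary \ref{cor:evenis0} to kill $\bska^\pi_\alpha$ when $\bsm_{2\alpha}>0$, and solve the resulting quadratics. The computations you defer (in particular the automaticity of the matching equation at $2\alpha$ in each branch) do go through by direct expansion, and your Case 1 and the main branch of Case 2 are correct as written.

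The one misstep is in the exceptional branch of Case 2. You assert that ``combining with regularity fixes $\dots$ $\bsk^\pi_{4\alpha}=0$'', but regularity is a single linear relation among $\bsk^\pi_\alpha,\bsk^\pi_{2\alpha},\bsk^\pi_{4\alpha}$, and together with $\bsk^\pi_\alpha=1-2\bsk^\pi_{2\alpha}$ it still leaves a one-parameter family; it does not force $\bsk^\pi_{4\alpha}=0$. The remaining matching equations at $2\alpha$ and $4\alpha$ are then two quadratics in that one parameter, which for suitable values of $\bska^\pi_{2\alpha}$ could share a root with $\bsk^\pi_{4\alpha}\ne0$, producing spurious solutions not listed in the proposition. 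What actually forces $\bsk^\pi_{4\alpha}=0$ here is the crystallographic root-system axiom: $\varSigma^\pi$ cannot contain both $\alpha$ and $4\alpha$, so $\bsk^\pi_\alpha\ne0$ (i.e.\ $\alpha\in\varSigma^\pi$) implies $4\alpha\notin\varSigma^\pi$, hence $\bsk^\pi_{4\alpha}=0$. The paper invokes exactly this dichotomy (``either $\bsk^\pi_\alpha$ or $\bsk^\pi_{4\alpha}$ is zero'') before solving the system; you state a version of this fact in your setup but do not deploy it at the point where it is needed. With that one-line correction the argument is complete.
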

\begin{proof}
Suppose $\alpha\in R$ and $\bsm_{2\alpha}>0$.
Then $\bsm_{\alpha}$ is even (cf.~\cite[Chapter X, Exercises F]{Hel1})
and $\bska^\pi_\alpha=0$ by Corollary \ref{cor:evenis0}.
Thus those parts of \eqref{eq:matching} and \eqref{eq:regularity}
that relate to $\alpha,2\alpha$ and $4\alpha$ are reduced to:
\begin{equation}\label{eq:MCBC1}
\left\{
\begin{aligned}
0 &= \bsk^\pi_{\alpha}(1-\bsk^\pi_{\alpha}-2\bsk^\pi_{2\alpha}) ,\\
-\bsm_{2\alpha}\bska^\pi_{2\alpha}
+\tfrac12 \bsm_\alpha(1-\tfrac12\bsm_\alpha-\bsm_{2\alpha})
&=\bsk^\pi_{2\alpha}(1-\bsk^\pi_{2\alpha}-2\bsk^\pi_{4\alpha}),\\
\tfrac12\bsm_{2\alpha}(1-\tfrac12\bsm_{2\alpha}+2\bska^\pi_{2\alpha})
&=\bsk^\pi_{4\alpha}(1-\bsk^\pi_{4\alpha}),\\
\tfrac12(\bsm_\alpha+\bsm_{2\alpha})
&=\bsk^\pi_\alpha+\bsk^\pi_{2\alpha}+\bsk^\pi_{4\alpha}.
\end{aligned}
\right.
\end{equation}
In addition, since $\varSigma^\pi$ is a root system, either $\bsk^\pi_\alpha$ or $\bsk^\pi_{4\alpha}$ is zero.
Hence by an elementary argument
\eqref{eq:MCBC1} is still reduced to the condition in \ref{reduced2}.
The condition in  \ref{reduced1} for $\alpha\in R$ with $\bsm_{2\alpha}=0$ is obtained
in a similar way.
\end{proof}

\section{Case-by-case analysis}\label{sec:CC}

In this section,
all the results in \S\ref{sec:list}
will be proved
through case-by-case analysis.
We start with some preparation.
Let $G$ be a
non-compact real simple Lie group with finite center.
Note that $G$ is connected by definition.
\begin{lem}\label{lem:k_1m}
Suppose $\mathfrak k_1$ is an ideal of $\mathfrak k$ such that $\mathfrak k_1\subset \mathfrak m$.
Then $\mathfrak k_1=\{0\}$.
\end{lem}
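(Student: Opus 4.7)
The plan is to exploit the simplicity of $\mathfrak g$ as follows: I would first show that the kernel of the adjoint action of $\mathfrak k$ on $\mathfrak s$ is zero, and then verify that $\mathfrak k_1$ must lie inside this kernel.

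For the first step, I would consider $Z:=\{X\in\mathfrak k\mid [X,\mathfrak s]=0\}$. This is an ideal of $\mathfrak k$ by the Jacobi identity combined with $[\mathfrak k,\mathfrak s]\subset\mathfrak s$. Moreover $[Z,\mathfrak s]=0\subset Z$ by definition and $[\mathfrak k,Z]\subset Z$, so $[\mathfrak g,Z]\subset Z$, making $Z$ an ideal of $\mathfrak g$. Since $\mathfrak g$ is simple and $Z\subset\mathfrak k\subsetneq\mathfrak g$ (the proper inclusion uses non-compactness of $G$, which forces $\mathfrak s\ne 0$), I conclude $Z=\{0\}$.

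For the second step, the key input is the decomposition $\mathfrak s=\mathfrak a+[\mathfrak k,\mathfrak a]$. This follows from the restricted root decomposition $\mathfrak s=\mathfrak a\oplus\bigoplus_{\alpha\in\varSigma^+}\mathfrak s_\alpha$ together with the identity $\mathfrak s_\alpha=[\mathfrak k_\alpha,\mathfrak a]$, which is immediate from $[H,X+\theta X]=-\alpha(H)(X-\theta X)$ for $X\in\mathfrak g_\alpha$. Granted this, the hypotheses $\mathfrak k_1\subset\mathfrak m$ (which forces $[\mathfrak k_1,\mathfrak a]=0$) and $[\mathfrak k,\mathfrak k_1]\subset\mathfrak k_1\subset\mathfrak m$ combine through Jacobi:
\[
[\mathfrak k_1,[\mathfrak k,\mathfrak a]]\subset[[\mathfrak k_1,\mathfrak k],\mathfrak a]+[\mathfrak k,[\mathfrak k_1,\mathfrak a]]=[\mathfrak m,\mathfrak a]+0=0,
\]
hence $[\mathfrak k_1,\mathfrak s]=0$, i.e.\ $\mathfrak k_1\subset Z=\{0\}$.

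I do not anticipate any substantive obstacle: once one spots $\mathfrak s=\mathfrak a+[\mathfrak k,\mathfrak a]$, the conclusion is two short Jacobi computations. The only delicate point worth flagging is that the argument genuinely uses simplicity (not merely semisimplicity) of $\mathfrak g$: any compact simple ideal of a semisimple $\mathfrak g$ would be contained in $\mathfrak k\cap\mathfrak g^{\mathfrak a}=\mathfrak m$ and would supply a counterexample. The paper's standing hypothesis that $G$ is simple rules this out.
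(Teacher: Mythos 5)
Your proof is correct, and it reaches the key intermediate claim $[\mathfrak k_1,\mathfrak s]=\{0\}$ by a different mechanism than the paper. Both arguments share the same skeleton: once $\mathfrak k_1$ centralizes $\mathfrak s$, it is an ideal of $\mathfrak g$ (since $[\mathfrak k_1,\mathfrak g]=[\mathfrak k_1,\mathfrak k]+[\mathfrak k_1,\mathfrak s]\subset\mathfrak k_1$), and simplicity together with $\mathfrak k_1\subset\mathfrak k\ne\mathfrak g$ finishes the job. The paper gets $[\mathfrak k_1,\mathfrak s]=\{0\}$ from the global fact $\mathfrak s=\bigcup_{k\in K}\Ad(k)\mathfrak a$ combined with the $\Ad(K)$-stability of the ideal $\mathfrak k_1$ (which uses connectedness of $K$): $[\mathfrak k_1,\Ad(k)\mathfrak a]=\Ad(k)[\mathfrak k_1,\mathfrak a]\subset\Ad(k)[\mathfrak m,\mathfrak a]=\{0\}$. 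You instead use the purely infinitesimal decomposition $\mathfrak s=\mathfrak a+[\mathfrak k,\mathfrak a]$ coming from the restricted root spaces, plus one Jacobi computation. Your route is slightly more elementary in that it never leaves the Lie algebra and does not need $K$-conjugacy of elements of $\mathfrak s$ into $\mathfrak a$; the paper's is shorter given that standard fact. Your closing remark that simplicity (not mere semisimplicity) is genuinely needed is accurate and matches how the lemma is used. One cosmetic slip: for $X\in\mathfrak g_\alpha$ and $H\in\mathfrak a$ one has $[H,X+\theta X]=\alpha(H)(X-\theta X)$, not $-\alpha(H)(X-\theta X)$; the sign is immaterial to the span argument $\mathfrak s_\alpha=[\mathfrak k_\alpha,\mathfrak a]$.
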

\begin{proof}
By assumption,
one has for any $k\in K$
\[
[\mathfrak k_1, \Ad(k)\mathfrak a]
=
\Ad(k)[\Ad(k^{-1})\mathfrak k_1, \mathfrak a]
=
\Ad(k)[\mathfrak k_1, \mathfrak a]
\subset
\Ad(k)[\mathfrak m, \mathfrak a]
=\{0\}.
\]
But since $\mathfrak s=\bigcup_{k\in K}\Ad(k)\mathfrak a$,
$\mathfrak k_1$ is an ideal of $\mathfrak g$,
which must be $\{0\}$ since $\mathfrak k_1 \subset \mathfrak k \ne \mathfrak g$.
\end{proof}

\begin{cor}\label{cor:kgen}
The Lie algebra
$\mathfrak k$ is generated by $\{X_\alpha+\theta X_\alpha \,|\, X_\alpha\in\mathfrak g_\alpha\ (\alpha\in\varSigma)\}$.
\end{cor}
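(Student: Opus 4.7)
The plan is to prove $\mathfrak k = \mathfrak l$, where $\mathfrak l$ denotes the Lie subalgebra of $\mathfrak k$ generated by $\{X_\alpha + \theta X_\alpha \mid \alpha \in \varSigma,\, X_\alpha \in \mathfrak g_\alpha\}$, by showing that $\mathfrak l^\perp = 0$ for the orthogonal complement with respect to the inner product $-B(\cdot,\cdot)$ on $\mathfrak k$ (which is positive definite since $G$ is semisimple and $\mathfrak k$ is the $+1$-eigenspace of $\theta$). The strategy is to verify two properties of $\mathfrak l^\perp$ so that Lemma \ref{lem:k_1m} forces it to vanish: first, $\mathfrak l^\perp$ is an ideal of $\mathfrak k$; second, $\mathfrak l^\perp \subset \mathfrak m$.

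The inclusion $\mathfrak l^\perp \subset \mathfrak m$ is the easy part. Using the direct sum decomposition $\mathfrak k = \mathfrak m \oplus \bigoplus_{\alpha \in \varSigma^+}\{Y + \theta Y \mid Y \in \mathfrak g_\alpha\}$, I expand any $Y \in \mathfrak l^\perp$ as $Y = Y_0 + \sum_{\alpha \in \varSigma^+}(Y_\alpha + \theta Y_\alpha)$. Pairing against $X_\alpha + \theta X_\alpha$ and exploiting that the Killing form pairs $\mathfrak g_\beta$ with $\mathfrak g_{-\beta}$ non-degenerately, only the $Y_\alpha$-components contribute, and the non-degeneracy of $(Y_\alpha, X_\alpha) \mapsto B(Y_\alpha, \theta X_\alpha)$ on $\mathfrak g_\alpha$ forces $Y_\alpha = 0$ for every $\alpha$, so $Y \in \mathfrak m$.

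The harder step is showing $\mathfrak l$ itself is an ideal of $\mathfrak k$ (from which invariance of $B$ yields that $\mathfrak l^\perp$ is an ideal too). The idea is to verify $[\mathfrak k, \mathfrak l] \subset \mathfrak l$ only on the generators $X_\alpha + \theta X_\alpha$ of $\mathfrak l$ and then extend by the derivation property of $\ad$. Writing any $Y \in \mathfrak k$ as $Y_0 + \sum_\beta (Y_\beta + \theta Y_\beta)$ with $Y_0 \in \mathfrak m$, I handle the two kinds of components separately: for $Y_\beta + \theta Y_\beta$ the bracket $[Y_\beta + \theta Y_\beta, X_\alpha + \theta X_\alpha]$ lies in $[\mathfrak l, \mathfrak l] \subset \mathfrak l$ tautologically, while for $Y_0 \in \mathfrak m$ the identity $\theta[Y_0, X_\alpha] = [Y_0, \theta X_\alpha]$ (because $\theta Y_0 = Y_0$), together with $[\mathfrak m, \mathfrak g_\alpha] \subset \mathfrak g_\alpha$, gives $[Y_0, X_\alpha + \theta X_\alpha] = Z + \theta Z$ with $Z = [Y_0, X_\alpha] \in \mathfrak g_\alpha$, hence still in $\mathfrak l$. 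The derivation property of $\ad(Y)$ then propagates this to all of $\mathfrak l$.

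The main obstacle I anticipate is precisely this ideal verification; the computation for $Y_0 \in \mathfrak m$ is the delicate one, since without the $\theta$-invariance trick one would land in a combination of $\mathfrak g_\alpha$ and $\mathfrak g_{-\alpha}$ rather than in the diagonal copy $\{Z + \theta Z\} = \mathfrak k_\alpha \subset \mathfrak l$. Once both properties are established, Lemma \ref{lem:k_1m} immediately yields $\mathfrak l^\perp = 0$, completing the proof.
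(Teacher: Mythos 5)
Your proof is correct and follows essentially the same route as the paper: both arguments show that the subalgebra generated by the $X_\alpha+\theta X_\alpha$ is an ideal of $\mathfrak k$ (the paper cites the $M$-stability of each space $\{X_\alpha+\theta X_\alpha\}$, which is exactly your explicit $\theta$-invariance computation for $[\mathfrak m,\cdot]$), then observe that its orthogonal complement with respect to the Killing form is an ideal contained in $\mathfrak m$ and invoke Lemma \ref{lem:k_1m}.
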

\begin{proof}
Note $\mathfrak k=\mathfrak m \oplus \sum_{\alpha\in\varSigma}\{X_\alpha+\theta X_\alpha\,|\,X_\alpha\in\mathfrak g_\alpha\}$
and $\{X_\alpha+\theta X_\alpha\,|\,X_\alpha\in\mathfrak g_\alpha\}$
is $M$-stable for each $\alpha\in\varSigma$.
It follows that the Lie subalgebra $\mathfrak k_0$ generated by $\{X_\alpha+\theta X_\alpha \,|\, X_\alpha\in\mathfrak g_\alpha\ (\alpha\in\varSigma)\}$
is an ideal of $\mathfrak k$.
Hence the orthogonal complement $\mathfrak k_1$ of $\mathfrak k_0$ in $\mathfrak k$
with respect to $B(\cdot,\cdot)$ is an ideal satisfying the assumption of Lemma \ref{lem:k_1m}.
Thus we get $\mathfrak k_1=\{0\}$ and $\mathfrak k_0=\mathfrak k$.
\end{proof}

\subsection{\boldmath The trivial $K$-type}\label{CC:trivial}
\begin{prop}\label{prop:trivial}
A small $K$-type $(\pi,V)$ is trivial if and only if
\begin{equation}\label{eq:kappa_cond_triv}
\bska^\pi_\alpha=0
\quad\text{for any }\alpha\in\varSigma.
\end{equation}
\end{prop}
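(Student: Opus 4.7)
The proof should be short given the machinery already in place. The plan is to combine Proposition~\ref{prop:ND} (the vanishing criterion for $\bska_\alpha^\pi$) with Corollary~\ref{cor:kgen} (the generation statement for $\mathfrak k$).

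First I would dispose of the ``only if'' direction, which is immediate: if $\pi$ is trivial, then $\pi(X_\alpha+\theta X_\alpha)=0$ for every $\alpha\in\varSigma$ and every $X_\alpha\in\mathfrak g_\alpha$, and the definition of $\bska_\alpha^\pi$ then gives \eqref{eq:kappa_cond_triv}.

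For the converse, suppose \eqref{eq:kappa_cond_triv} holds. By Proposition~\ref{prop:ND}, we have $\pi(X_\alpha+\theta X_\alpha)=0$ for every $\alpha\in\varSigma$ and every $X_\alpha\in\mathfrak g_\alpha$. Since $\pi$ (extended to $U(\mathfrak k_\bbC)$) is a Lie algebra homomorphism, its kernel is a Lie subalgebra of $\mathfrak k_\bbC$; but by Corollary~\ref{cor:kgen} the elements $X_\alpha+\theta X_\alpha$ generate $\mathfrak k$, so $\pi|_{\mathfrak k}=0$. Finally, $G$ is a connected semisimple Lie group with finite center, and therefore $K$ is connected (via the global Cartan decomposition $G\simeq K\times\mathfrak s$). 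Hence $\pi|_{\mathfrak k}=0$ forces $\pi$ itself to be trivial on $K$.

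There is essentially no obstacle: the whole argument is a one-line assembly of \ref{prop:ND} and \ref{cor:kgen}, once one remembers that $K$ is connected under the running hypotheses so that triviality at the Lie algebra level propagates to the group. If one wished to handle the case of a possibly disconnected $K$, the mild additional point would be that a small $K$-type restricted to $K_0$ remains irreducible as an $M_0$-isotypic module, but this is not needed here.
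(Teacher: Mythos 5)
Your proof is correct and follows essentially the same route as the paper: the ``only if'' direction from the definition (or Proposition \ref{prop:ND}), and the converse by combining Proposition \ref{prop:ND} with Corollary \ref{cor:kgen} to show the kernel of $\pi|_{\mathfrak k}$ is all of $\mathfrak k$. Your extra remark that connectedness of $K$ is needed to pass from triviality of $d\pi$ on $\mathfrak k$ to triviality of $\pi$ on $K$ is a valid (if routine) point that the paper leaves implicit.
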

\begin{proof}
Note $\Ker_{\mathfrak k}\pi:=\{X\in\mathfrak k\,|\,\pi(X)=0\}$
is an ideal of $\mathfrak k$  (and in particular, it is a subalgebra).
If we assume \eqref{eq:kappa_cond_triv},
then $\Ker_{\mathfrak k}\pi=\mathfrak k$
by Proposition \ref{prop:ND} and Corollary \ref{cor:kgen},
showing $\pi$ is trivial.
The converse is clear from Proposition \ref{prop:ND}.
\end{proof}

The result on the trivial $K$-type
stated in \S\ref{subsec:trivial} readily follows from \eqref{eq:kappa_cond_triv}
and Proposition \ref{prop:simplified}.

\subsection{Complex simple Lie groups}\label{CC:complex}
Let $G$ be a complex simple Lie group and $(\pi,V)$ a small $K$-type of $G$.
Then for any $\alpha\in\varSigma$
we have $\bsm_\alpha=2$, and hence $\bska^\pi_\alpha=0$ by Corollary \ref{cor:evenis0}.
Thus by Proposition \ref{prop:trivial} $(\pi,V)$ is the trivial $K$-type.
Moreover the right-hand side of \eqref {eq:Omegatwist} equals $\varOmega_{\mathfrak a}$.
From \cite[Chapter IV, \S5, No.2]{Hel2} we have
\[
\varUpsilon^\pi(\phi^\pi_\lambda) 
=\frac{\prod_{\alpha\in\varSigma^+} \rho(H_\alpha)}{\prod_{\alpha\in\varSigma^+} \lambda(H_\alpha)} \frac{\sum_{w\in W}\sgn(w)e^{w\lambda}}{\sum_{w\in W}\sgn(w)e^{w\rho}}.
\]
Put $\varSigma^\pi=c\varSigma$
and $\bsk^\pi\equiv1$ with any $c>0$.
Then one easily has for any $\lambda\in\mathfrak a_\bbC^*$
\begin{align*}
F(\varSigma^\pi,\bsk^\pi,\lambda)
&=\frac{\prod_{\alpha\in\varSigma^+} \frac{c\rho}2(H_\alpha)}{\prod_{\alpha\in\varSigma^+} \lambda(H_\alpha)} \frac{\sum_{w\in W}\sgn(w)e^{w\lambda}}{\sum_{w\in W}\sgn(w)e^{w\frac{c\rho}2}},\\
\tilde\delta_{G/K}^{-\frac12}\, \tilde\delta(\varSigma^\pi,\bsk^\pi)^{\frac12}
&=\frac{\prod_{\alpha\in\varSigma^+} \rho(H_\alpha)}{\prod_{\alpha\in\varSigma^+} \frac{c\rho}2(H_\alpha)} \frac{\sum_{w\in W}\sgn(w)e^{w\frac{c\rho}2}}{\sum_{w\in W}\sgn(w)e^{w\rho}}.
\end{align*}
Hence \eqref{eq:main} holds for infinitely many combinations of $\varSigma^\pi$ and $\bsk^\pi$.

\subsection{\boldmath Other simple Lie groups having no non-trivial small $K$-type}\label{CC:onlytrivial}
We have no non-trivial small $K$-type for
those real simple Lie groups $G$ with the following Lie algebras:\smallskip
\begin{center}
\begin{tabular}{c|ccc}
$\mathfrak g$&
$\mathfrak{sl}(p,\bbH)\ (p\ge2)$&
$\mathfrak{so}(2r+1,1)\ (r\ge1)$&
$\mathfrak e_{6(-26)}\ (\text{E\,IV})$\\
\hline
$\varSigma$&
$A_{p-1}$&
$A_1$&
$A_2$\\
$\bsm_{\alpha}$&
$4$&
$2r$&
$8$\\
$\mathfrak k$&
$\mathfrak{sp}(p)$&
$\mathfrak{so}(2r+1)$&
$\mathfrak{f}_4$
\end{tabular}\smallskip
\end{center}
\begin{flushright}
($\mathfrak{so}(3,1)\simeq \mathfrak{sl}(2,\bbC),\mathfrak{sl}(2,\bbH)\simeq\mathfrak{so}(5,1)$)
\end{flushright}
\begin{center}
\begin{tabular}{c|c}
$\mathfrak g$&
$\mathfrak f_{4(-20)}\ (\mathrm{F\,II})$\\
\hline
$\varSigma$&
$\{\pm\alpha,\pm2\alpha\}\ ((BC)_1)$\\
$\bsm_{\alpha}$&
$8$\\
$\bsm_{2\alpha}$&
$7$\\
$\mathfrak k$&
$\mathfrak{so}(9)$
\end{tabular}
\end{center}
\smallskip
The argument for the first three cases is the same as for the complex case.
Suppose $\mathfrak g=\mathfrak f_{4(-20)}$ and $(\pi,V)$ is a small $K$-type of $G$.
Let $\alpha$ is a short restricted root.
Then it follows from Corollary \ref{cor:evenis0} and Proposition \ref{prop:ND}
that $X_\alpha+\theta X_\alpha \in \Ker_{\mathfrak k} \pi\setminus\{0\}$
for any $X_\alpha \in \mathfrak g_\alpha\setminus\{0\}$.
Now since 
$\Ker_{\mathfrak k} \pi$ is an ideal of the simple Lie algebra $\mathfrak k\simeq\mathfrak{so}(9)$, one has
$\Ker_{\mathfrak k} \pi=\mathfrak k$ and hence $(\pi,V)$ is the trivial $K$-type.

\subsection{\boldmath The case $\mathfrak g=\mathfrak{sp}(p,q)$}\label{CC:sp(p,q)}
Suppose $G=\Sp(p,q)\ (p\ge q \ge 1)$ and $K=\Sp(p)\times \Sp(q)$.
Then $G$ is connected, simply-connected Lie group and $M=M_0$ (cf.~\cite[Appendix C, \S3]{Kn}).
Let $\bbH=\bbR + \bbR \boldsymbol i +\bbR \boldsymbol j +\bbR \boldsymbol k$
be the field of quaternions.
We use the following realization:
\begin{align*}
\mathfrak g&=\left\{
\begin{pmatrix}
A & B \\ \overline{ \trans B} & C
\end{pmatrix} \in \mathfrak{gl}(p+q,\bbH) \,\middle|\,
A\in \mathfrak{gl}(p,\bbH),
C\in \mathfrak{gl}(q,\bbH),
\overline{ \trans A}=-A, \overline{ \trans C}=-C
\right\},\\
\mathfrak k&=\left\{
\begin{pmatrix}
A & O_{p,q} \\ O_{q,p} & C
\end{pmatrix} 
\,\middle|\,
A\in \mathfrak{gl}(p,\bbH),
C\in \mathfrak{gl}(q,\bbH),
\overline{ \trans A}=-A, \overline{ \trans C}=-C
\right\}\simeq\mathfrak{sp}(p)\oplus\mathfrak{sp}(q),\\
\mathfrak a&=\left\{
H(\boldsymbol a):=\begin{pmatrix}
O_{q,q} & O_{q,p-q} & \diag(a_1,\ldots,a_q) \\
O_{p-q,q} & O_{p-q,p-q} & O_{p-q,q} \\
\diag(a_1,\ldots,a_q) & O_{q,p-q} & O_{q,q}
\end{pmatrix}
  \,\middle|\,
\boldsymbol a=(a_1,\ldots, a_q) \in \bbR^q 
\right\},\\
\mathfrak m&=\left\{
\begin{pmatrix}
\diag(m_1,\ldots,m_q) & O_{q,p-q} & O_{q,q} \\
O_{p-q,q} & Y & O_{p-q,q} \\
O_{q,q} & O_{q,p-q} & \diag(m_1,\ldots,m_q)
\end{pmatrix}
 \,\middle|\,
\begin{aligned}
&m_1,\ldots, m_q \in \bbH, \\
&\overline m_i + m_i = 0\ (1\le i \le q), \\
&Y \in \mathfrak{gl}(p-q,\bbH), \overline{\trans Y}=-Y
\end{aligned}
\right\}\\
& \simeq\mathfrak{su}(2)^q \oplus\mathfrak{sp}(p-q). 
\end{align*}
Define $e_i\in\mathfrak a^*$ by
$e_i(H(\boldsymbol a))=a_i$ $(i=1,\ldots,q)$.
Then $\varSigma\subset\{\pm e_i,\pm 2e_i\,|\,1\le i\le q\} \cup \{\pm e_i\pm e_j\,|\,1\le i< j \le q\}$ and the multiplicity of each restricted root is as follows:\smallskip
\begin{center}
\begin{tabular}{c|c}
$\mathfrak g$&
$\mathfrak{sp}(p,q)\ (p\ge q\ge1)$\\
\hline
real rank&
$q$\\
$\bsm_{\mathrm{short}}:=\bsm_{\pm e_i}\ (1\le i\le q)$&
$4(p-q)$\\
$\bsm_{\mathrm{middle}}:=\bsm_{\pm e_i\pm e_j}\ (1\le i<j\le q)$&
$4$ ($q\ge 2$)\\
$\bsm_{\mathrm{long}}:=\bsm_{\pm 2e_i}\ (1\le i\le q)$&
$3$
\end{tabular}
\end{center}
\smallskip
Let $\pr_1$ and $\pr_2$ be the projections of $K$
to $\Sp(p)$ and $\Sp(q)$ respectively.
\begin{thm}\label{thm:sp(p,q)}
If $p\ge q \ge2$ then $G=\Sp(p,q)$ has no non-trivial small $K$-type.
Suppose $p \ge q=1$.
Then for the irreducible representation $(\pi_n,\bbC^n)$ of\/ $\Sp(1)\simeq \SU(2)$
of dimension $n=1,2,\dotsc$,
$\pi_n\circ\pr_2$
is a small $K$-type of $G=\Sp(p,1)$
with $\bska^{\pi_n\circ\pr_2}_{\mathrm{short}}=0$ and $\bska^{\pi_n\circ\pr_2}_{\mathrm{long}}=-\frac{n^2-1}3$.
If $p>q$ then all the small $K$-types are constructed in this way.
If $p=q=1$ then the other small $K$-types are constructed
in the same way as above but using\/ $\pr_1$ instead of\/ $\pr_2$
and $\bska^{\pi_n\circ\pr_1}=\bska^{\pi_n\circ\pr_2}$ for any $n=1,2,\dotsc$.
\end{thm}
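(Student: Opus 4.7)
The plan is to first classify the small $K$-types of $\Sp(p,q)$ by branching from $K=\Sp(p)\times\Sp(q)$ to $M$, and then to compute $\bska^\pi$ using Lemma~\ref{lem:kappa}. Since $G=\Sp(p,q)$ is simply connected, $M=M_0\simeq\SU(2)^q\times\Sp(p-q)$ is connected and embedded in $K$ as $(u_1,\dotsc,u_q,S)\mapsto(\diag(u_1,\dotsc,u_q,S),\diag(u_1,\dotsc,u_q))$. A $K$-type has the form $\sigma_1\boxtimes\sigma_2$, and it is small if and only if $(\sigma_1\circ\iota_1)\otimes(\sigma_2\circ\iota_2)$ is $M$-irreducible.

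For the classification I would split into three cases. When $p=q=1$, $M\simeq\SU(2)$ sits diagonally in $\Sp(1)\times\Sp(1)$, so $(\pi_m\boxtimes\pi_n)|_M=\pi_m\otimes\pi_n$ is irreducible by Clebsch--Gordan exactly when $\min(m,n)=1$, giving the two families $\pi_n\circ\pr_1$ and $\pi_n\circ\pr_2$. When $p>q=1$, restricting a candidate $K$-type to $\{1\}\times\Sp(p-1)\subset M$ forces $\sigma_1|_{\Sp(p-1)}$ to be isotypic, and a weight-orbit argument comparing the Weyl groups $W(C_p)$ and $W(C_{p-1})$ then forces $\sigma_1$ trivial, leaving only $\pi_n\circ\pr_2$. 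When $p\ge q\ge 2$, the same argument on $\{1\}\times\Sp(p-q)\subset M$ controls $\sigma_1$, and simultaneously restriction to $\SU(2)^q\times\{1\}\subset M$ controls $\sigma_2$, using that the block-diagonal $\Sp(1)^q\subset\Sp(q)$ admits no non-trivial $\Sp(q)$-irreducible with isotypic restriction when $q\ge 2$.

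Next, for $\pi=\pi_n\circ\pr_2$ on $\Sp(p,1)$, Corollary~\ref{cor:evenis0} gives $\bska^\pi_{\mathrm{short}}=0$ since $\bsm_{\pm e_1}=4(p-1)$ is even (and this root class is absent when $p=1$). For the long root $\alpha=2e_1$, I would solve $\ad H(\boldsymbol a)X=2a_1 X$ directly in the matrix realization, identifying $\mathfrak g_{2e_1}$ as the $3$-dimensional space spanned by $X_A$ ($A\in\operatorname{Im}\bbH$), whose $(1,1)$-entries in the four blocks $(1,1),(1,3),(3,1),(3,3)$ are respectively $A,-A,A,-A$ and whose other entries vanish. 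A direct matrix computation shows $X_A+\theta X_A\in\mathfrak k$ has $\pr_2$-component $-2A\cdot E_{11}^{(q)}\in\mathfrak{sp}(q)$, which for $q=1$ reduces to $-2A\in\mathfrak{sp}(1)$. Normalizing $|A|=\tfrac12$ so that $-\tfrac{\|2e_1\|^2}{2} B(X_A,\theta X_A)=1$, Lemma~\ref{lem:kappa} together with the Casimir identity
\[
\pi_n(\boldsymbol i)^2+\pi_n(\boldsymbol j)^2+\pi_n(\boldsymbol k)^2=-(n^2-1)\,\id_V
\]
on the $n$-dimensional irreducible representation of $\SU(2)$ yields $\bska^\pi_{2e_1}=\tfrac{1}{n}\trace(\pi_n(-2A)^2)=-\tfrac{n^2-1}{3}$. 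For $p=q=1$, replacing $\pr_2$ by $\pr_1$ gives $\pr_1(X_A+\theta X_A)=2A\cdot E_{11}^{(p)}$, which squares to the same scalar, so $\bska^{\pi_n\circ\pr_1}=\bska^{\pi_n\circ\pr_2}$.

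The main obstacle I anticipate is the weight-orbit branching step for $p\ge q\ge 2$: ruling out that some low-dimensional non-trivial irreducible of $\Sp(q)$ slips through with isotypic restriction to the block-diagonal $\Sp(1)^q$. The cleanest remedy is a case check on fundamental weights of type $C_q$, exploiting the fact that the $W(C_q)$-orbit of any non-trivial highest weight decomposes into at least two $W(A_1)^q$-orbits.
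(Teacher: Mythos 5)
Your computation of $\bska^{\pi_n\circ\pr_2}_{\mathrm{long}}$ is essentially the paper's: an explicit root vector in $\mathfrak g_{2e_1}$, projection to the $\Sp(1)$ factor (the paper's $X_{2e_1}$ is your $X_A$ with $A=\tfrac12\boldsymbol i$, and $\pr_2(X_{2e_1}+\theta X_{2e_1})=-\boldsymbol i$ in both accounts), and Lemma \ref{lem:kappa}; your use of the $\mathfrak{su}(2)$ Casimir identity instead of listing the eigenvalues $\sqrt{-1}(n-1-2j)$ is cosmetic. The classification is where you genuinely diverge. The paper never branches from $K$ to $M$: it notes that the middle roots $\pm e_i\pm e_j$ and the short roots $\pm e_i$ have \emph{even} multiplicity, so by Corollary \ref{cor:evenis0} and Proposition \ref{prop:ND} any small $\pi$ kills $X_\alpha+\theta X_\alpha$ for such $\alpha$. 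Since $\Ker_{\mathfrak k}\pi$ is an ideal of $\mathfrak k=\mathfrak{sp}(p)\oplus\mathfrak{sp}(q)$ and $X_{e_1-e_2}+\theta X_{e_1-e_2}$ lies in neither simple summand, for $q\ge2$ the ideal it generates is all of $\mathfrak k$, so $\pi$ is trivial; for $p>q=1$ the element $X_{e_1}+\theta X_{e_1}$ generates the ideal $\mathfrak{sp}(p)$, forcing $\pi$ to factor through $\pr_2$, after which smallness of every $\pi_n\circ\pr_2$ is immediate from $\pr_2(M)=\Sp(1)$. This two-line mechanism does all the work exactly where your branching analysis is heaviest; your route, in exchange, is self-contained finite-dimensional representation theory and does not rely on the even-multiplicity corollary.

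One correction to your $p\ge q\ge2$ step: restriction to $\{1\}\times\Sp(p-q)$ cannot ``control $\sigma_1$'' --- it is vacuous when $p=q$ and otherwise only constrains $\sigma_1|_{\Sp(p-q)}$, which is far from forcing $\sigma_1$ trivial. The constraint that kills \emph{both} factors for $q\ge2$ must come from the $\SU(2)^q$ part of $M$: irreducibility of $(\sigma_1\circ\iota_1)\otimes(\sigma_2\circ\iota_2)$ under the diagonal action forces each tensor factor separately to restrict irreducibly to $\Sp(1)^q$ (a direct-sum decomposition of either factor decomposes the tensor product), and then your own observation --- that the $W(C_q)$-orbit of a non-zero dominant weight splits into at least two $\{\pm1\}^q$-orbits, whereas the extremal weights of an irreducible $\Sp(1)^q$-module form a single one --- applies to $\sigma_2$ and to $\sigma_1$ alike. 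With that repair your plan goes through, but it remains considerably longer than the paper's argument.
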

\begin{proof}
Suppose first $p\ge q\ge2$.
Then we have a restricted root vector
\[
X_{e_1-e_2}=
\left(\begin{array}{c|c|c|c}
\begin{matrix}
0 & -1 \\
1 & 0
\end{matrix}
& O_{2,p-2} &
\begin{matrix}
0 & -1 \\
-1 & 0
\end{matrix}
& O_{2,q-2} \\
\hline
\vphantom{\biggm|} O_{p-2,2} & O_{p-2,p-2} & O_{p-2,2} & O_{p-2,q-2} \\\hline
\begin{matrix}
0 & -1 \\
-1 & 0
\end{matrix}
& O_{2,p-2} &
\begin{matrix}
0 & -1 \\
1 & 0
\end{matrix}
& O_{2,q-2} \\\hline
\vphantom{\biggm|} O_{q-2,2} & O_{q-2,p-2} & O_{q-2,2} & O_{q-2,q-2}
\end{array}\right) \in \mathfrak g_{e_1-e_2}.
\]
Observe that $X_{e_1-e_2}+\theta X_{e_1-e_2}$ belongs to
neither $\mathfrak{sp}(p)$ nor $\mathfrak{sp}(q)$.
Thus there is no proper ideal of $\mathfrak k$ that contains $X_{e_1-e_2}+\theta X_{e_1-e_2}$.
Now, 
for any small $K$-type $(\pi,V)$ of $G$,
$X_{e_1-e_2}+\theta X_{e_1-e_2} \in \Ker_{\mathfrak k}\pi$ 
by Corollary \ref{cor:evenis0} and Proposition \ref{prop:ND}.
This means $\Ker_{\mathfrak k}\pi=\mathfrak k$ and hence $(\pi,V)$ is the trivial $K$-type.

Next suppose $p>q=1$ 
Then $\pi_n\circ\pr_2$ is small since $\pr_2(M)=\Sp(1)$.
Also, $\bska^{\pi_n\circ\pr_2}_{\mathrm{short}}=0$
by Corollary \ref{cor:evenis0}.
To calculate $\bska^{\pi_n\circ\pr_2}_{\mathrm{long}}$
take a root vector
\[
X_{2e_1}=\frac12\begin{pmatrix}
\boldsymbol i & O_{1,p-1} & -\boldsymbol i \\
O_{p-1,1} & O_{p-1,p-1} & O_{p-1,1} \\
\boldsymbol i & O_{1,p-1} & -\boldsymbol i
\end{pmatrix}
\in\mathfrak g_{2e_1},
\]
which is normalized as in Lemma \ref{lem:kappa}.
Under
\[
\mathfrak{sp}(1)
\ni
b \boldsymbol i
+c \boldsymbol j
+d \boldsymbol k
\mapsto
\begin{pmatrix}
b\sqrt{-1} & c + d\sqrt{-1}\\
-c + d\sqrt{-1} & -b \sqrt{-1}
\end{pmatrix}
\in \mathfrak{su}(2),
\]
$\pr_2(X_{2e_1}+\theta X_{2e_1})$
maps to $\begin{pmatrix} -\sqrt{-1} & 0 \\ 0 & \sqrt{-1}\end{pmatrix}$.
Hence $\pi_n\circ\pr_2(X_{2e_1}+\theta X_{2e_1}) \simeq \sqrt{-1}\diag(n-1,n-3,\ldots,-n+1)$
and from Lemma \ref{lem:kappa} one has $\bska^{\pi_n\circ\pr_2}_{\mathrm{long}}=-\frac{n^2-1}{3}$.
Take any $X_{e_1}\in \mathfrak g_{e_1}\setminus\{0\}$.
We have
$X_{e_1}+\theta X_{e_1} \in \Ker_{\mathfrak k}(\pi_n\circ\pr_2)$
by Corollary \ref{cor:evenis0} and Proposition \ref{prop:ND}.
Since $\Ker_{\mathfrak k}(\pi_n\circ\pr_2)=\mathfrak{sp}(p)$ for $n\ge 2$,
we see $X_{e_1}+\theta X_{e_1} \in \mathfrak{sp}(p)$
and $\mathfrak{sp}(p)$ is generated by $X_{e_1}+\theta X_{e_1}$
as an ideal of $\mathfrak k$.
Now, for any small $K$-type $(\pi,V)$ of $G$,
$X_{e_1}+\theta X_{e_1} \in \Ker_{\mathfrak k}\pi$ by the same reason as above.
This means $\mathfrak{sp}(p) \subset \Ker_{\mathfrak k}\pi$
and $\pi$ equals some $\pi_n\circ\pr_2$.

Finally suppose $p=q=1$.
Then $M=\SU(2)$ is diagonally embedded to $K=\SU(2)\times \SU(2)$.
Since any $K$-type is given as
the exterior tensor product $\pi_m\boxtimes \pi_n$
of two irreducible representations of $\SU(2)$,
its restriction to $M$ equals the interior tensor product $\pi_m\otimes \pi_n$.
By the representation theory of $\SU(2)$,
$\pi_m\otimes \pi_n$ is irreducible if and only if either $\pi_m$ or $\pi_n$
is trivial.
Hence $\{\pi_n\circ\pr_i\,|\,i=1,2,\, n=1,2,\ldots\}$
is the complete set of small $K$-types.
The values of $\bska^{\pi_n\circ\pr_i}$
are calculated in the same way as in the previous case.
\end{proof}

The result of \S\ref{subsec:onlytrivial} follows from this theorem
and what we discussed in \S\S\ref{CC:complex}--\ref{CC:sp(p,q)}.
Also, Theorem \ref{thm:sp(p,q)} and Proposition \ref{prop:simplified} easily
imply the result of \S\ref{subsec:sp(p,1)}.

\subsection{\boldmath The case $\mathfrak g=\mathfrak{so}(p,q)$}
Suppose $\mathfrak g =\mathfrak{so}(p,q)\ (p\ge q \ge1)$.
(We exclude the cases $\mathfrak{so}(1,1)\simeq \bbR$ and $\mathfrak{so}(2,2)\simeq \mathfrak{sl}(2,\bbR)^{\oplus2}$.)
Under the natural inclusion $\mathfrak{so}(p,q)\subset \mathfrak{sp}(p,q)$,
$\mathfrak k$, $\mathfrak a$ and $\mathfrak m$
are identified with the intersections of $\mathfrak{so}(p,q)$ and those for $\mathfrak{sp}(p,q)$.
In particular, $\mathfrak k=\mathfrak{so}(p)\oplus \mathfrak{so}(q)$ and
\[
\mathfrak m=
\left\{
\begin{pmatrix}
O_{q,q} & O_{q,p-q} & O_{q,q} \\
O_{p-q,q} & Y & O_{p-q,q}\\
O_{q,q} & O_{q,p-q} & O_{q,q}
\end{pmatrix}
 \,\middle|\,
Y \in \mathfrak{so}(p-q)
\right\}
\simeq \mathfrak{so}(p-q).
\]
One has $\varSigma\subset\{\pm e_i\,|\,1\le i\le q\} \cup \{\pm e_i\pm e_j\,|\,1\le i< j \le q\}$ and the multiplicity of each restricted root is as follows:
\begin{center}
\begin{tabular}{c|c}
$\mathfrak g$&
$\mathfrak{so}(p,q)\ (p\ge q\ge1)$\\
\hline
real rank&
$q$\\
$\bsm_{\mathrm{short}}:=\bsm_{\pm e_i}\ (1\le i\le q)$&
$p-q$\\
$\bsm_{\mathrm{long}}:=\bsm_{\pm e_i\pm e_j}\ (1\le i<j\le q)$&
$1$ ($q\ge 2$)
\end{tabular}
\end{center}
\smallskip
Taking some finite covering group of $G$ if necessary, we may assume $K=K_1\times K_2$
with $\mathfrak k_1:=\Lie K_1\simeq\mathfrak{so(p)}$ and $\mathfrak k_2:=\Lie K_2\simeq\mathfrak{so(q)}$.
Furthermore, if $\mathfrak k_i \simeq \mathfrak{so(r)}$ with $r\ge3$,
then we may assume $K_i \simeq \Spin(r)$ ($i=1,2$).
The projections $K\to K_i$ and $\mathfrak k\to \mathfrak k_i$ are denoted by $\pr_i$ ($i=1,2$).
\begin{thm}[{\cite[Theorem 1, Lemmas 4.2, 4.3]{SWL}}]\label{thm:split_so(p,q)}
\begin{enumerate*}[label=(\roman*), itemjoin=\!]
\item\label{i:Dsplit}
Suppose $p=q\ge 3$.
Then a non-trivial $K$-type $\pi$ is small if and only if
it is equivalent to $\sigma\circ\pr_i$
with $i=1,2$ and a (half-)spin representation  $\sigma$ of $K_i$.
For such $\pi$,
$\bska^\pi_{\mathrm{short}}=0$ and $\bska^\pi_{\mathrm{long}}=-\frac14$. \\
\item\label{i:Bsplit1}
Suppose $p=q+1$ with odd $q\ge3$.
Then there are three non-trivial small $K$-types:
$\pi= \sigma\circ\pr_1$  with either of two half-spin representations $\sigma$ of $K_1=\Spin(p)$ and
$\pi= \sigma\circ\pr_2$ with the spin representation $\sigma$ of $K_2=\Spin(q)$.
One has $\bska^\pi_{\mathrm{short}}=-1$, $\bska^\pi_{\mathrm{long}}=-\frac14$ in the former case and $\bska^\pi_{\mathrm{short}}=0$, $\bska^\pi_{\mathrm{long}}=-\frac14$ in the latter case.\\
\item\label{i:Bsplit2}
Suppose $p=q+1$ with even $q\ge4$.
Then a non-trivial $K$-type $\pi$ is small if and only if
it is equivalent to $\sigma\circ\pr_2$
with a half-spin representation $\sigma$ of $K_2=\Spin(q)$.
For such $\pi$,
$\bska^\pi_{\mathrm{short}}=0$ and $\bska^\pi_{\mathrm{long}}=-\frac14$. 
\end{enumerate*}
\end{thm}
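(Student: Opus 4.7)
The strategy splits naturally into two parts: classification of the small $K$-types, and computation of $\bska^\pi_{\mathrm{short}}$, $\bska^\pi_{\mathrm{long}}$. The key simplifying observation is that in all three cases the nontrivial root multiplicities are $1$: we have $\bsm_{\mathrm{long}}=1$ and $\bsm_{\mathrm{short}}\in\{0,1\}$. Hence $\varSigma_\real=\varSigma$ and the associated split semisimple subgroup $G_\spt$ of \S\ref{subsec:asssplit} is (up to center) all of $G$. Consequently, Proposition \ref{prop:split_reduce} reduces the classification to the classification of small $K_\spt$-types for the split Lie algebra of type $D_q$ (when $p=q$) or $B_q$ (when $p=q+1$); that classification is exactly the content of \cite[Theorem 1]{SWL}, and it produces precisely the (half-)spin representations pulled back via $\pr_1$ or $\pr_2$ as listed in (i)--(iii). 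The fact that in case (iii) only $\pr_2$ yields a small $K$-type follows from the detailed $M$-structure analysis of \cite{SWL}: when $p=q+1$ is odd, the (unique) spin representation of $K_1=\Spin(p)$ fails to remain irreducible upon restriction to $M$.

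Next I compute $\bska^\pi_{\mathrm{long}}$ via Lemma \ref{lem:kappa}. Fix a long root, say $\alpha=e_i-e_j$ with $i<j\le q$. Using the matrix realization from \S\ref{CC:sp(p,q)} restricted to $\mathfrak{so}(p,q)\subset\mathfrak{sp}(p,q)$, one constructs an explicit root vector $X_\alpha\in\mathfrak g_\alpha$ with the normalization $-\tfrac{\|\alpha\|^2}{2}B(X_\alpha,\theta X_\alpha)=1$. A direct block computation shows that $\pr_1(X_\alpha+\theta X_\alpha)\in\mathfrak{so}(p)$ and $\pr_2(X_\alpha+\theta X_\alpha)\in\mathfrak{so}(q)$ are each scalar multiples of the elementary rotation $R_{ij}:=E_{ij}-E_{ji}$ (in the respective spaces), with explicit scalars dictated by the Killing-form normalization. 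Now for the (half-)spin representation $\sigma$ of $\Spin(r)$ realized via a Clifford algebra $\{\gamma_1,\dots,\gamma_r\}$ with $\gamma_a\gamma_b+\gamma_b\gamma_a=-2\delta_{ab}$, one has $\sigma(R_{ab})=\tfrac12\gamma_a\gamma_b$, hence $\sigma(R_{ab})^2=-\tfrac14\id$. Combining with the scalar from the normalization yields $\bska^\pi_{\mathrm{long}}=-\tfrac14$ in every case.

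For short roots, which occur only in cases (ii) and (iii), I take $\alpha=e_i$ with $i\le q$ and $p=q+1$. The single "extra" coordinate of $\bbR^{p+q}$ beyond the paired ones lies in the $\mathfrak{so}(p)$-block only; accordingly the normalized root vector $X_\alpha$ involves the indices $i,\,p+i$ and the extra coordinate $p$ in the $\mathfrak{so}(p)$-factor, whereas its $\mathfrak{so}(q)$-component vanishes. Therefore $\pr_2(X_\alpha+\theta X_\alpha)=0$, giving $\bska^{\sigma\circ\pr_2}_{\mathrm{short}}=0$. In case (ii), where $p$ is even and $\sigma\circ\pr_1$ is also small, $\pr_1(X_\alpha+\theta X_\alpha)$ is instead a scaled rotation $R_{i,p}\in\mathfrak{so}(p)$; applying the Clifford formula again and keeping track of $\|\alpha\|^2=\tfrac12\|\text{long}\|^2$ in the normalization gives $\bska^{\sigma\circ\pr_1}_{\mathrm{short}}=-1$.

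The main obstacle is bookkeeping: one must be careful with the Killing-form normalization because the formula in Lemma \ref{lem:kappa} uses $-\tfrac{\|\alpha\|^2}{2}B(X_\alpha,\theta X_\alpha)=1$, and the ratio between $\bska^\pi_{\mathrm{short}}$ and $\bska^\pi_{\mathrm{long}}$ depends sensitively on the different squared lengths of short and long roots in type $B_q$ as well as on the precise scalars that appear when projecting root vectors of $\mathfrak{so}(p,q)$ to the two factors of $\mathfrak k=\mathfrak k_1\oplus\mathfrak k_2$. A secondary subtlety, already handled in \cite{SWL}, is justifying that no further small $K$-types exist beyond the (half-)spin ones listed; this uses Corollary \ref{cor:kgen} together with the fact that any nonzero ideal of $\mathfrak{so}(r)$ contained in the kernel of a representation must be all of $\mathfrak{so}(r)$, combined with an analysis of how $M$ sits inside $K$.
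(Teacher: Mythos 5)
The paper does not actually prove this statement: it is quoted verbatim from \cite[Theorem 1, Lemmas 4.2, 4.3]{SWL}, so there is no in-paper argument to compare against. Your reconstruction is nevertheless consistent with how the paper handles the neighbouring cases it \emph{does} prove (Theorems \ref{thm:sp(p,q)}, \ref{thm:so(p,1)}, \ref{thm:so(p,q)}): explicit restricted root vectors, the normalization of Lemma \ref{lem:kappa}, and eigenvalue counts. Two points are worth flagging. First, your classification step is essentially a citation of \cite[Theorem 1]{SWL}, which is circular if the goal is a self-contained proof; a genuine argument would run as in the paper's proof of Theorem \ref{thm:so(p,q)}, i.e.\ combine Proposition \ref{prop:ND} and Corollary \ref{cor:kgen} with the simplicity of $\mathfrak{so}(r)$ to force $\Ker_{\mathfrak k}\pi \supset \mathfrak k_1$ or $\mathfrak k_2$, and then invoke the branching law $\mathfrak{so}(r)\downarrow \pr_i(M)$ to isolate the (half-)spin representations — your closing sentence gestures at this but does not carry it out, and your one-line explanation for why $\pr_1$ fails in case (iii) ($p$ odd) is asserted rather than derived. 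Second, the crux of the $\bska^\pi_{\mathrm{long}}=-\tfrac14$ computation is precisely the scalar you defer to ``bookkeeping'': for a long root $\alpha=e_i\pm e_j$ the normalized $X_\alpha+\theta X_\alpha$ splits as a sum of \emph{two} rotations, one in each factor $\mathfrak k_1,\mathfrak k_2$, each with coefficient exactly $\pm1$, so that $\pi(X_\alpha+\theta X_\alpha)=\pm\sigma(R_{ab})$ has eigenvalues $\pm\tfrac{\sqrt{-1}}2$; whereas for a short root the whole compact part lies in $\mathfrak k_1$ (cf.\ \eqref{eq:Xe}) with eigenvalues $\pm\sqrt{-1}$, whence $-1$ versus $0$. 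Your stated values all agree with the paper's own checks (e.g.\ the end of the proof of Theorem \ref{thm:so(p,q)} verifies $\bska^\pi_{\mathrm{short}}=-1$ exactly this way), so the plan is sound; it just stops short of verifying the two normalization constants on which the final numbers depend.
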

We generalize this result to all cases in the subsequent two theorems.
\begin{thm}\label{thm:so(p,1)}
Suppose $p$ is even, $p\ge 4$ and  $q=1$.
Then $K=K_1=\Spin(p)$.
Fix a Cartan subalgebra and a system of positive roots of $\mathfrak k_\bbC$.
For $s=0,1,2,\dotsc$, let $\pi_s^\pm$ be
the irreducible representation of $K=\Spin(p)$ with
highest weight $(s/2,\ldots,s/2,\pm s/2)$ in the standard notation.
Then $\pi_s^\pm$ is a small $K$-type 
with $\bska^{\pi_s^\pm}_{\mathrm{short}}=-\frac{s(s+p-2)}{p-1}$.
There are no other small $K$-types.
\end{thm}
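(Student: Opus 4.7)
The plan is to establish three facts in turn: each $\pi_s^\pm$ is a small $K$-type, no other small $K$-types exist, and $\bska^{\pi_s^\pm}_{\mathrm{short}}=-s(s+p-2)/(p-1)$.

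First I would identify $M$: since $G=\Spin(p,1)$ is simply-connected and $K=\Spin(p)$, the centralizer of $\mathfrak a$ in $K$ covers the stabilizer $\SO(p-1)\subset\SO(p)$ of the $A$-direction, and its preimage in the simply-connected $\Spin(p)$ is the connected group $\Spin(p-1)$, hence $M=\Spin(p-1)$. Smallness then amounts to $V|_M$ being $\Spin(p-1)$-irreducible, so the first two claims follow from the classical branching rule $\Spin(2n)\downarrow\Spin(2n-1)$: the restriction of $V_\lambda$ decomposes as $\bigoplus_\mu V_\mu$ over highest weights $\mu=(\mu_1,\dots,\mu_{n-1})$ satisfying the interlacing $\lambda_1\ge\mu_1\ge\lambda_2\ge\cdots\ge\lambda_{n-1}\ge\mu_{n-1}\ge|\lambda_n|$. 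A single irreducible appears precisely when every inequality collapses, forcing $\lambda_1=\cdots=\lambda_{n-1}=|\lambda_n|$; writing $s=2|\lambda_n|$ yields exactly the family $\pi_s^\pm$, and simultaneously identifies $\pi_s^\pm|_M$ as the $\Spin(p-1)$-irreducible of highest weight $(s/2,\dots,s/2)$.

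For the third claim I would use a Casimir identity specific to the rank-one setting. Since $\varSigma=\{\pm\alpha\}$, the subspace $\{X_\alpha+\theta X_\alpha:X_\alpha\in\mathfrak g_\alpha\}$ exhausts the $B$-orthogonal complement of $\mathfrak m$ in $\mathfrak k$. Taking an orthonormal basis $\{X_\alpha^{(i)}\}$ of $\mathfrak g_\alpha$ with respect to $-\tfrac{||\alpha||^2}{2}B(\cdot,\theta\cdot)$, one observes, using $B(\mathfrak g_\alpha,\mathfrak g_\alpha)=0$, that $\{X_\alpha^{(i)}+\theta X_\alpha^{(i)}\}$ is $(-B)$-orthogonal with common norm-squared $4/||\alpha||^2$, so that
\[
\sum_{i=1}^{\bsm_\alpha}\bigl(X_\alpha^{(i)}+\theta X_\alpha^{(i)}\bigr)^2=-\frac{4}{||\alpha||^2}(\varOmega_{\mathfrak k}-\varOmega_{\mathfrak m}).
\]
Applying $\pi_s^\pm$ and using Freudenthal's formula for the two Casimir eigenvalues with highest weights $(s/2,\dots,s/2,\pm s/2)$ on $\mathfrak k$ and $(s/2,\dots,s/2)$ on $\mathfrak m$ then reduces to a short computation giving $\pi_s^\pm(\varOmega_{\mathfrak k})-\pi_s^\pm(\varOmega_{\mathfrak m})=s(s+p-2)/(8(p-1))$; combining this with $||\alpha||^2=1/(2(p-1))$ (determined from $B=(p-1)\trace$ on the defining representation and a concrete choice of $H\in\mathfrak a$) and $\bsm_\alpha=p-1$ delivers the stated value of $\bska^{\pi_s^\pm}_{\mathrm{short}}$.

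The main obstacle is consistent bookkeeping of Killing-form normalizations across three arenas: the restricted-root length $||\alpha||$, the scalar relating $B|_{\mathfrak k}$ to the standard $\trace$-form on $\mathfrak{so}(p)$, and the induced inner product on the $\mathfrak k$-weight lattice that enters Freudenthal. Fixing one Cartan $\mathfrak h\subset\mathfrak k_\bbC$ and computing $B|_{\mathfrak h}$ there anchors all three simultaneously, turning the remaining work into careful accounting rather than a conceptual step.
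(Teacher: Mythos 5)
Your proposal is correct, and while the classification half runs along the same lines as the paper (smallness is reduced to irreducibility under $M_0\simeq\Spin(p-1)$ and then to the collapse of the interlacing inequalities in the multiplicity-free branching rule $\Spin(p)\downarrow\Spin(p-1)$ — the paper routes this through Proposition \ref{prop:split_reduce} rather than through connectedness of $M$, but the content is the same), your computation of $\bska^{\pi_s^\pm}_{\mathrm{short}}$ is genuinely different. The paper works with the explicit root vectors $X^{(i)}=F_{i,1}-E_{i,p+1}-E_{p+1,i}$, rewrites each $X^{(i)}+\theta X^{(i)}$ in terms of the $\mathfrak k_\bbC$-root vectors $X_{\varepsilon_1\pm\varepsilon_i}$, and evaluates $\sum_i(X^{(i)}+\theta X^{(i)})^2\equiv-4H_1^2-2(p-2)H_1$ modulo raising and lowering operators on the highest weight vector. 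You instead exploit the rank-one identity $\sum_i(X_\alpha^{(i)}+\theta X_\alpha^{(i)})^2=-\tfrac{4}{||\alpha||^2}(\varOmega_{\mathfrak k}-\varOmega_{\mathfrak m})$, which is valid since $\{X+\theta X\}$ is exactly the $B$-orthogonal complement of $\mathfrak m$ in $\mathfrak k$ and your normalization bookkeeping checks out ($||\alpha||^2=\tfrac1{2(p-1)}$, $(\varepsilon_i,\varepsilon_j)=\tfrac{\delta_{ij}}{2(p-1)}$, and the Casimir difference $(\lambda,\lambda+2\delta_{\mathfrak k})-(\mu,\mu+2\delta_{\mathfrak m})=\tfrac{s(s+p-2)}{8(p-1)}$ is right). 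Your route is arguably cleaner and more conceptual — it needs no explicit matrix realizations beyond fixing normalizations, and it makes transparent why only the highest weights enter — but it is tied to the rank-one situation where $\mathfrak k=\mathfrak m\oplus\{X+\theta X:X\in\mathfrak g_\alpha\}$, whereas the paper's direct method is the one that generalizes to the higher-rank cases treated elsewhere in \S\ref{sec:CC}.
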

\begin{rem}
We already studied $\mathfrak{so}(2r+1,1)\ (r \ge 1)$ in \S\ref{CC:onlytrivial}
and $\mathfrak{so}(4,1)\simeq \mathfrak{sp}(1,1)$ in Theorem \ref{thm:sp(p,q)}.
Also, $\mathfrak{so}(2,1)\simeq \mathfrak{sl}(2,\bbR)\simeq \mathfrak{su}(1,1)$ will be covered in \S\ref{CC:Hermitian}.
\end{rem}
Let $E_{ij}$ denote a matrix whose entry is $1$ in the $(i,j)$-position
and $0$ elsewhere.
Let $F_{ij}=E_{ij}-E_{ji}$.
\begin{proof}[Proof of Theorem \ref{thm:so(p,1)}]
Suppose first $\pi$ is small.
Then by Proposition \ref{prop:split_reduce}
there is only one isotypic component in
the restriction of $\pi$ to $\mathfrak m=\mathfrak{so}(p-1)$.
In view of the \emph{branching law} for $\mathfrak{so}(p) \downarrow \mathfrak{so}(p-1)$ (cf.~\cite[Theorem 8.1.4]{GW}),
it is possible only when $\pi$ is equivalent to some $\pi_s^\pm$ in the theorem.
Conversely, let $\pi=\pi_s^\pm$ with representation space $V$.
Then $\pi$ is small since $\pi|_{M_0}$ is irreducible by the branching law.
To calculate $\bska^{\pi}_{\mathrm{short}}$
we take restricted root vectors
$X^{(i)} := F_{i,1}-E_{i,p+1}-E_{p+1,i}$ ($2\le i \le p$) 
for $e_1\in\varSigma$.
They constitute an orthonormal basis of $\mathfrak g_{e_1}$,
so that $\bska^{\pi}_{\mathrm{short}}\id_V=\frac1{p-1}\sum_{i=2}^{p}\pi\bigl(X^{(i)}+\theta X^{(i)}\bigr)^2 $.
Now, we assume $H_1,\ldots,{H_{\frac p2}}$
with $H_i:=\sqrt{-1}F_{2i,2i-1}$ ($1\le i \le \frac p2$)
constitute a basis of the Cartan subalgebra of $\mathfrak k_\bbC$
and 
$\varDelta_{\mathfrak k}^+:=\{\varepsilon_i\pm \varepsilon_j\,|\,1\le i <j \le\frac p2\}$
is the system of positive roots,
where we let $\{\varepsilon_i\}$ be the dual basis of $\{H_i\}$.
For $i=2,3,\ldots,\frac p2$ take root vectors
\begin{align*}
X_{\varepsilon_1+\varepsilon_i} &:= \frac12(F_{2i-1,1}+\sqrt{-1}F_{2i-1,2}+\sqrt{-1}F_{2i,1}-F_{2i,2}) \in (\mathfrak k_\bbC)_{\varepsilon_1+\varepsilon_i},\\
X_{\varepsilon_1-\varepsilon_i} &:= \frac12(F_{2i-1,1}+\sqrt{-1}F_{2i-1,2}-\sqrt{-1}F_{2i,1}+F_{2i,2}) \in (\mathfrak k_\bbC)_{\varepsilon_1-\varepsilon_i}.
\end{align*}
Then one has for $i=2,3,\ldots,\frac p2$
\begin{align*}
[X_{\varepsilon_1+\varepsilon_i},X_{\varepsilon_1-\varepsilon_i}]
&=[X_{\varepsilon_1+\varepsilon_i},\overline{X_{\varepsilon_1-\varepsilon_i}}]
=[\overline{X_{\varepsilon_1+\varepsilon_i}},X_{\varepsilon_1-\varepsilon_i}]
=[\overline{X_{\varepsilon_1+\varepsilon_i}},\overline{X_{\varepsilon_1-\varepsilon_i}}]=0,\\
[X_{\varepsilon_1+\varepsilon_i},\overline{X_{\varepsilon_1+\varepsilon_i}}]
&=-(H_{1}+H_i),
\qquad
[X_{\varepsilon_1-\varepsilon_i},\overline{X_{\varepsilon_1-\varepsilon_i}}]
=-(H_{1}-H_i),\\
X^{(2i-1)}+\theta X^{(2i-1)}&=X_{\varepsilon_1+\varepsilon_i}+X_{\varepsilon_1-\varepsilon_i}
+\overline{X_{\varepsilon_1+\varepsilon_i}}+\overline{X_{\varepsilon_1-\varepsilon_i}},\\
X^{(2i)}+\theta X^{(2i)}&=-\sqrt{-1}(X_{\varepsilon_1+\varepsilon_i}-X_{\varepsilon_1-\varepsilon_i}
-\overline{X_{\varepsilon_1+\varepsilon_i}}+\overline{X_{\varepsilon_1-\varepsilon_i}}).
\end{align*}
From these we calculate in $U(\mathfrak k_\bbC)$
\begin{align*}
\sum_{i=2}^{p}\bigl(X^{(i)}+\theta X^{(i)}\bigr)^2
&=(-2\sqrt{-1} H_1)^2+
\sum_{i=2}^{\frac p2} \biggl(\bigl(X^{(2i-1)}+\theta X^{(2i-1)}\bigr)^2
+\bigl(X^{(2i)}+\theta X^{(2i)}\bigr)^2\biggr)\\
&\equiv
-4  H_1^2 -2( p-2) H_1\quad\mod
U\biggl(\sum_{\alpha \in \varDelta_{\mathfrak k}^+} (\mathfrak k_\bbC)_{-\alpha} \biggr)
+ U(\mathfrak k_\bbC)
\sum_{\alpha \in \varDelta_{\mathfrak k}^+} (\mathfrak k_\bbC)_{\alpha}.
\end{align*}
Applying this  to the highest weight vector of $V$,
we obtain $(p-1)\bska^{\pi}_{\mathrm{short}}=-s(s+p-2)$.
\end{proof}
One easily has the result of \S\ref{subsec:so(2r,1)}
by Theorem \ref{thm:so(p,1)} and Proposition \ref{prop:simplified}.
\begin{thm}\label{thm:so(p,q)}
\begin{enumerate*}[label=(\roman*),itemjoin=\!]
\item\label{i:SO1}
Suppose $p>q=2$.
Then a $K$-type $\pi$ is small if and only if
it is equivalent to $\tau\circ\pr_2$
with a one-dimensional representation  $\tau$ of $K_2$.
For such $\pi$,
$\bska^\pi_{\mathrm{short}}=0$.\\
\item\label{i:SO2}
Suppose $p$ is even and $q$ is odd with $p>q\ge3$.
Then one has the same results as in the case of Theorem \ref{thm:split_so(p,q)} \ref{i:Bsplit1}.\\
\item\label{i:SO3}
Suppose $p>q\ge3$ and either $q$ or $p-q$ is even.
Then a non-trivial $K$-type $\pi$ is small if and only if
it is equivalent to $\sigma\circ\pr_2$
with a (half-)spin representation $\sigma$ of $K_2=\Spin(q)$.
For such $\pi$,
$\bska^\pi_{\mathrm{short}}=0$ and $\bska^\pi_{\mathrm{long}}=-\frac14$. 
\end{enumerate*}
\end{thm}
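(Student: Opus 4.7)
The plan is to combine an explicit description of $M$ inside $K=K_1\times K_2$, the branching rules for $\Spin(p)\downarrow\Spin(p-q)$, Proposition \ref{prop:split_reduce} together with the action of $M_{\mathrm{spt}}$ on the extremal $M_0$-weight space $V_\mu$, and finally Corollaries \ref{cor:evenis0} and \ref{cor:realinherit} to reduce the computation of $\bska^\pi$ to values already obtained in Theorem \ref{thm:split_so(p,q)}.

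First I would pin down the embeddings. From the matrix realization of $\mathfrak{m}$, the identity component $M_0\simeq\Spin(p-q)$ sits only inside $K_1=\Spin(p)$ as the stabilizer of the first and last $q$ coordinates. The real restricted roots are the long roots $\pm e_i\pm e_j$ (always of multiplicity one) together with the short roots $\pm e_i$ when $p-q$ is odd, so $\mathfrak{g}_{\mathrm{spt}}$ is isomorphic to $\mathfrak{so}(q+1,q)$ or $\mathfrak{so}(q,q)$ according to the parity of $p-q$, and $M_{\mathrm{spt}}$ is a specific finite subgroup of $K_{\mathrm{spt}}$ that can be written down concretely.

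Next I would apply Proposition \ref{prop:split_reduce} to $\pi=\pi_1\boxtimes\pi_2$. Since $M_0\subset K_1$ acts trivially on $V_2$, we have $V|_{M_0}\simeq V_1|_{M_0}\otimes\bbC^{\dim V_2}$ as an $M_0$-module, so smallness forces $V_1|_{M_0}$ to be isotypic. The chain $\Spin(p)\supset\Spin(p-1)\supset\cdots\supset\Spin(p-q)$, using the alternating branching ``spin $\mapsto S^+\oplus S^-$'' and ``$S^\pm\mapsto$ spin'', picks out exactly when the isotypicity holds. In case \ref{i:SO1} only trivial $\pi_1$ survives, since any non-trivial irreducible representation of $\Spin(p)$ splits into inequivalent pieces already after the two-step restriction $\Spin(p)\to\Spin(p-2)$. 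In case \ref{i:SO2} the two half-spins $\pi_1$ on $K_1$ and the spin $\pi_2$ on $K_2$ restrict isotypically, giving the same three small $K$-types as in Theorem \ref{thm:split_so(p,q)}\ref{i:Bsplit1}. In case \ref{i:SO3} any (half-)spin $\pi_1$ either already restricts non-isotypically (in the subcases where $p-q$ is odd or where both $p,q$ are even), or restricts isotypically to $2^{(p-q)/2}\cdot S_q$ (when $p,q$ are both odd with $p-q$ even) but with $\dim V_\mu=2^{(p-q)/2}>1$, which then fails to be irreducible under the small finite group $M_{\mathrm{spt}}$; hence only $\pi_1$ trivial together with a (half-)spin $\pi_2$ survives.

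Finally I would compute $\bska^\pi$. When $p-q$ is even, Corollary \ref{cor:evenis0} gives $\bska^\pi_{\mathrm{short}}=0$ immediately; in case \ref{i:SO1} the same conclusion follows because $\pi=\tau\circ\pr_2$ annihilates $\mathfrak{k}_1$, which contains $X_\alpha+\theta X_\alpha$ for each short restricted root $\alpha$ by the rank-one reduction along $\alpha$. For roots of multiplicity one (all long roots, plus the short roots in case \ref{i:SO2}), Corollary \ref{cor:realinherit} identifies $\bska^\pi_\alpha$ with the corresponding value for the small $K_{\mathrm{spt}}$-type $(\pi_\mu,V_\mu)$ of $G_{\mathrm{spt}}$, a value already recorded in Theorem \ref{thm:split_so(p,q)}. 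The hardest step will be the exclusion in the $p,q$ both odd subcase of case \ref{i:SO3}, which requires a concrete description of $M_{\mathrm{spt}}$ inside $K_1\times K_2$ and of its action on $V_\mu$, in order to show that no choice of $\pi_2$ can repair the failure of irreducibility.
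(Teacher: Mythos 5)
Your toolkit is the right one, but the decisive branching computations are wrong, and as a result the subcases you treat as settled by branching are exactly the ones that are not. Descending $\Spin(n)\downarrow\Spin(n-1)$ alternates ``$\mathrm{spin}\mapsto S^+\oplus S^-$'' (odd $\to$ even) with ``$S^\pm\mapsto\mathrm{spin}$'' (even $\to$ odd). Hence for $p$ odd and $q$ even the spin representation of $\Spin(p)$ restricts to $M_0=\Spin(p-q)$ as $2^{q/2}$ copies of $\mathrm{spin}_{p-q}$ --- \emph{isotypic} --- so in case \ref{i:SO1} with $p$ odd (e.g.\ $\mathfrak{so}(3,2)$, $\mathfrak{so}(5,2)$) and in the subcase of \ref{i:SO3} with $p$ odd, $q$ even, the branching test does \emph{not} exclude $\mathrm{spin}_p\circ\pr_1$; these are precisely the cases where the disconnected part of $M$ must intervene. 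Conversely, in the subcase you single out as hardest ($p,q$ both odd, $p-q$ even) the last descent step is odd $\to$ even, so $\mathrm{spin}_p|_{\Spin(p-q)}$ contains both inequivalent half-spins of $\Spin(p-q)$ and is \emph{not} isotypic: that subcase is the easy one. You have the two subcases interchanged. The paper closes the hard cases without computing $M_\spt$ explicitly: it observes that $(\pi|_{K_\spt},V_\mu)$ is a small $K_\spt$-type of $G_\spt$ (a cover of $\SO(q+1,q)$ or $\SO(q,q)$), invokes the split classification of Theorem \ref{thm:split_so(p,q)} to conclude that $\mathfrak k_1\cap\mathfrak k_\spt$ acts trivially on $V_\mu$, propagates triviality to all of $V$ via $V=\pi(U(\mathfrak m_\bbC))V_\mu$ and the fact that $F_{2,1}\in\mathfrak k_1\cap\mathfrak k_\spt$ commutes with $\mathfrak m_\bbC$, and then uses simplicity of $\mathfrak{so}(p)$. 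You need this mechanism (or a genuinely explicit description of $M$) to make the exclusions go through.

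Two further gaps. First, in case \ref{i:SO2} the short roots have multiplicity $p-q$, which is $\ge3$ except in the split case, so Corollary \ref{cor:realinherit} does not yield $\bska^\pi_{\mathrm{short}}=-1$ for $\sigma\circ\pr_1$; the paper computes it directly from the eigenvalues $\pm\sqrt{-1}$ of $\pi(X_{e_1}+\theta X_{e_1})$ via Lemma \ref{lem:kappa}. Second, isotypicity of $V|_{M_0}$ constrains only $\pi_1$, so your plan never rules out $\pi_1$ and $\pi_2$ both non-trivial, nor does it verify that, when $\pi_1$ is trivial, exactly the (half-)spin $\pi_2$ make $V_2$ irreducible under $\pr_2(M)$; both points, as well as the smallness of $\sigma\circ\pr_1$ in case \ref{i:SO2} (which rests on an explicit description of $\pr_1(M)$ and the irreducibility borrowed from Theorem \ref{thm:split_sl}), are again supplied in the paper by the reduction to $G_\spt$.
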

\begin{proof}
Choosing a Cartan subalgebra $\mathfrak b\subset \mathfrak m$ suitably,
we may assume
\begin{align*}
\mathfrak g_\spt&=\left\{
\begin{pmatrix}
A & O_{q,p-q} & B \\
O_{p-q,q} & O_{p-q,p-q} & O_{p-q,q} \\
\trans B & O_{q, p-q} & C
\end{pmatrix} \in \mathfrak{gl}(p+q,\bbR)
\,\middle|\,
\begin{aligned}
&A,C\in \mathfrak{so}(q), \\
&B\in \Mat(q,q,\bbR)
\end{aligned}
\right\}\simeq\mathfrak{so}(q,q)\\
\intertext{if $p-q$ is even, and}
\mathfrak g_\spt&=\left\{
\begin{pmatrix}
A & O_{q+1,p-q-1} & B \\
O_{p-q-1,q+1} & O_{p-q-1,p-q-1} & O_{p-q-1,q} \\
\trans B & O_{q, p-q-1} & C
\end{pmatrix} \in \mathfrak{gl}(p+q,\bbR)
\,\middle|\,
\begin{aligned}
&A\in \mathfrak{so}(q+1), \\
&B\in \Mat(q+1,q,\bbR),\\
&C\in \mathfrak{so}(q)
\end{aligned}
\right\}\\
&\simeq\mathfrak{so}(q+1,q)
\end{align*} 
if $p-q$ is odd.

Suppose $p\ge3$. Let $(\tau,V)$ be a non-trivial irreducible representation of $K_2$
and consider the $K$-type $(\pi,V):=(\tau\circ\pr_2,V)$.
If $q=2$ then this is always small since $V$ is one-dimensional.
For $q\ge3$, we assert that $(\pi,V)$ is small if and only if $\tau$ is
a (half)-spin representation $\sigma$ of $K_2=\Spin(q)$
and that $\bska^\pi_{\mathrm{long}}=-\frac14$ for such $(\pi,V)$.
Indeed, 
if $\tau=\sigma$, then $(\sigma\circ\pr_2|_{K_\spt},V)$ is a small $K$-type 
by Theorem \ref{thm:split_so(p,q)}
and $(\sigma\circ\pr_2|_{M_\spt},V)$ is irreducible.
Since $M_\spt\subset M$, $(\pi,V)$ is small.
Conversely,
if  $(\pi,V)=(\tau\circ\pr_2,V)$ is small, then $V_\mu$ in Proposition \ref{prop:split_reduce}
equals $V$ since $\mathfrak m\subset \mathfrak k_1$ acts on $V$ trivially.
Hence $(\tau\circ\pr_2|_{K_\spt},V)$ is a small $K_\spt$-type
and is non-trivial since $\pr_2(K_\spt)=\pr_2(K)$.
It then follows from Theorem \ref{thm:split_so(p,q)} that
$\tau$ is a (half)-spin representation.
We also have $\bska^\pi_{\mathrm{long}}=-\frac14$ by Corollary \ref{cor:realinherit}.

Next, note $p>q$
in all cases of the theorem and we have
a restricted root vector
\begin{equation}\label{eq:Xe}
X_{e_1}:=F_{q+1,1}-E_{q+1,p+1}-E_{p+1,q+1}
\in \mathfrak g_{e_1},
\end{equation}
for which $X_{e_1}+\theta X_{e_1} \in \mathfrak k_1\setminus\{0\}$.
It is easy to check
there is no proper ideal of $\mathfrak k_1=\mathfrak{so}(p)$ that contains $X_{e_1}+\theta X_{e_1}$. (Note $\mathfrak{so}(p)$ is simple for $p=3,5,6,\dotsc$.)
Hence by Proposition \ref{prop:ND}, a small $K$-type $(\pi, V)$ is written as
$(\pi, V)=(\tau\circ\pr_2,V)$ for some irreducible representation $\tau$ of $K_2$
if and only if $\bska^\pi_{\mathrm{short}}=\bska^\pi_{e_1}=0$.
It follows from Corollary \ref{cor:evenis0}
that if $p-q$ is even then all small $K$-type are of this type.
We claim the same thing holds if $q$ is even.
To show this, we may assume $p$ is odd.
If $(p,q)=(3,2)$, then $\mathfrak{so}(3,2)\simeq \mathfrak{sp}(2,\bbR)$
and the claim in this case will be shown in the first paragraph of the proof of Theorem \ref{thm:Hermite}.
Suppose $(p,q)$ is a general combination of an odd $p$ and an even $q$ ($p>q\ge2$)
and let $(\pi,V)$ be any small $K$-type. 
Let $V_\mu$ be as in Proposition \ref{prop:split_reduce}.
Since $(\pi|_{K_\spt},V_\mu)$ is a small $K_\spt$-type,
it follows from Theorem \ref{thm:split_so(p,q)} \ref{i:Bsplit2} or the claim for $(p,q)=(3,2)$
that $\mathfrak k_1\cap \mathfrak k_\spt =\mathfrak{so}(q+1)$ acts trivially on $V_\mu$.
Note that $F_{2,1}\in \mathfrak k_1\cap \mathfrak k_\spt$
commutes with $\mathfrak m_\bbC$
and that $V=\pi(U(\mathfrak m_\bbC))V_\mu$ by Proposition \ref{prop:split_reduce}.
Thus $F_{2,1}$ acts trivially on $V$.
By the simplicity of $\mathfrak k_1=\mathfrak{so}(p)$, 
we have $\mathfrak k_1\subset \Ker_{\mathfrak k}\pi$,
which proves our claim.
Note that \ref{i:SO1} and \ref{i:SO3} are already proved up to this point.

In order to show \ref{i:SO2},
suppose $p$ is even, $q$ is odd and $p>q\ge3$.
Thanks to Theorem \ref{thm:split_so(p,q)} \ref{i:Bsplit1},
we may also assume $p-q\ge3$.
Let $(\pi,V)$ is a small $K$-type such that $\pi|_{K_1}$ is non-trivial.
We assert that $\pi|_{K_2}$ is trivial.
In fact, if $\mathfrak k_1\cap \mathfrak k_\spt =\mathfrak{so}(q+1)$ acts
trivially on $V_\mu$ in Proposition \ref{prop:split_reduce},
then the same argument as in the last paragraph
implies $\pi|_{K_1}$ is trivial, a contradiction.
Thus the action of $\mathfrak k_1\cap \mathfrak k_\spt=\mathfrak{so}(q+1)$
on $V_\mu$ is non-trivial
and hence
that of $\mathfrak k_2\cap \mathfrak k_\spt=\mathfrak k_2=\mathfrak{so}(q)$
is trivial by Theorem \ref{thm:split_so(p,q)} \ref{i:Bsplit1}.
(We also see $\bska^\pi_{\mathrm{long}}=-\frac14$ by Corollary \ref{cor:realinherit}.)
Since $V=\pi(U(\mathfrak m_\bbC))V_\mu$ and since $\mathfrak k_2$ commutes with $\mathfrak m_\bbC$,
$\mathfrak k_2$ acts trivially on $V$, proving the assertion.
Therefore there exists a non-trivial irreducible representation $\tau$
of $\mathfrak k_1=\mathfrak{so}(p)$ such that $\pi=\tau\circ\pr_1$.
Now, 
by Proposition \ref{prop:split_reduce}
there is only one isotypic component in
the restriction of $(\tau,V)$ to $\mathfrak m=\mathfrak{so}(p-q)$.
In view of the branching laws for the orthogonal Lie algebras (cf.~\cite[Theorems 8.1.3, 8.1.4]{GW}),
it is possible only when $\tau$ is a half-spin representation.
Conversely,
let $(\sigma,V)$ be any of two half-spin representations of $K_1=\Spin(p)$.
The proof is complete if
we can show $\pi=\sigma\circ \pr_1$ is small and $\bska^\pi_{\mathrm{short}}=-1$.
Let $\varpi: \Spin(p)\to \SO(p)$ be the canonical projection.
Then
\begin{align*}
\pr_1(M)&=\varpi^{-1}\left(\left\{
\begin{pmatrix}
\diag(m_1,\ldots,m_q) & O_{q,p-q} \\
O_{p-q,q} & g
\end{pmatrix}\,\middle|\,
m_i=\pm1,
\prod_{i=1}^q m_i =1, g\in \SO(p-q)
\right\}\right)\\
&\supset
\varpi^{-1}\Bigl(\Bigl\{
\diag(m_1,\ldots,m_{p})
\,\Bigm|\, 
m_i=\pm1,
\prod_{i=1}^q m_i=\prod_{i=q+1}^p m_i =1
\Bigr\}\Bigr),\\
\pr_1(M)&\cup\pr_1(M)\cdot\varpi^{-1} (\diag(-1,\ldots,-1))\\
&\supset
\varpi^{-1}\Bigl(\Bigl\{
\diag(m_1,\ldots,m_{p})
\,\Bigm|\, 
m_i=\pm1,
\prod_{i=1}^p m_i =1
\Bigr\}\Bigr).
\end{align*}
Now $V$ is irreducible under the action of $\varpi^{-1}\Bigl(\Bigl\{
\diag(m_1,\ldots,m_{p})
\,\Bigm|\, m_i=\pm1,\prod_{i=1}^p m_i =1
\Bigr\}\Bigr)$.
(In fact, $(\sigma,V)$ is a small `$K$-type' of the double cover of $\SL(p,\bbR)$ by Theorem \ref{thm:split_sl}.)
But since $\varpi^{-1} (\diag(-1,\ldots,-1))$ is contained in the center of $\Spin(p)$, $V$ is irreducible as a $\pr_1(M)$-module.
This proves the smallness of $\pi=\sigma\circ \pr_1$.
Finally, we can directly check
$X_{e_1}$ in \eqref{eq:Xe} is normalized as in Lemma \ref{lem:kappa}
and the eigenvalues of
$\pi(X_{e_1}+\theta X_{e_1})$ are $\pm \sqrt{-1}$.
Thus $\bska^\pi_{\mathrm{short}}=-1$.
\end{proof}
All the results stated in \S\ref{subsec:so(p,q)} follow from
Theorem \ref{thm:so(p,q)} \ref{i:SO2}, \ref{i:SO3} and Proposition \ref{prop:simplified}.

\subsection{The Hermitian type}\label{CC:Hermitian}
Let $G$ be a non-compact real simple Lie group of Hermitian type.
There exists a central element $Z\in \mathfrak k$
such that $J=\ad(Z)$ is
a complex structure of $\mathfrak s=\mathfrak g^{-\theta}$.
Let $2e_1,\ldots2e_l$ be the longest roots in $\varSigma^+$.
Then $\varSigma\subset\{\pm e_i,\pm 2e_i\,|\,1\le i\le l\} \cup \{\pm e_i\pm e_j\,|\,1\le i< j \le l\}$.
Put $\bsm_{\mathrm{long}}=\bsm_{\pm 2e_i}\ (1\le i \le l)$, $\bsm_{\mathrm{middle}}=\bsm_{\pm e_i\pm e_j}\ (1\le i<j \le l)$ and $\bsm_{\mathrm{short}}=\bsm_{\pm e_i}\ (1\le i \le l)$.
Their values are listed below:\smallskip

\begin{center}
\begin{tabular}{c|ccccccc}
$\mathfrak{g}$&
$\mathfrak{su}(p,p)$&
$\mathfrak{sp}(p,\bbR)$&
$\mathfrak{so}^*(4p)\ (p\ge 2)$&
$\mathfrak{so}(p,2)\ (p\ge 3)$&
$\mathfrak{e}_{7(-25)}\ (\mathrm{E\,VII})$\\
\hline
real rank $l$&
$p$&
$p$&
$p$&
$2$&
$3$\\
$\bsm_{\mathrm{short}}$&
$0$&
$0$&
$0$&
$0$&
$0$\\
$\bsm_{\mathrm{middle}}$&
$2$ ($p\ge2$)&
$1$ ($p\ge2$)&
$4$&
$p-2$&
$8$\\
$\bsm_{\mathrm{long}}$&
$1$&
$1$&
$1$&
$1$&
$1$\\
\end{tabular}
\end{center}
\begin{flushright}
($\mathfrak{su}(1,1)\simeq \mathfrak{sp}(1,\bbR)\simeq \mathfrak{sl}(2,\bbR)$,
$\mathfrak{su}(2,2)\simeq \mathfrak{so}(4,2)$,
$\mathfrak{sp}(2,\bbR)\simeq \mathfrak{so}(3,2)$,
$\mathfrak{so}^*(8)\simeq\mathfrak{so}(6,2)$)
\end{flushright}
\smallskip
\begin{center}
\begin{tabular}{c|cccc}
$\mathfrak{g}$&
$\mathfrak{su}(p,q)\ (p>q\ge1)$&
$\mathfrak{so}^*(4p+2)\ (p\ge1)$&
$\mathfrak{e}_{6(-14)}\ (\mathrm{E\,III})$\\
\hline
real rank $l$&
$q$&
$p$&
$2$\\
$\bsm_{\mathrm{short}}$&
$2(p-q)$&
$4$&
$8$\\
$\bsm_{\mathrm{middle}}$&
$2$ ($q\ge2$)&
$4$ ($p\ge2$)&
$6$\\
$\bsm_{\mathrm{long}}$&
$1$&
$1$&
$1$
\end{tabular}
\end{center}
\begin{flushright}
($\mathfrak{su}(3,1)\simeq\mathfrak{so}^*(6)$)
\end{flushright}
\smallskip
Take $X_{2e_i}\in \mathfrak g_{2e_i}$ so that 
$-\frac{||2e_i||^2}{2}B(X_{2e_i},\theta X_{2e_i})=1$ ($1\le i\le l$).
\begin{lem}\label{lem:cpxStr}
By replacing $X_{2e_i}$ with $-X_{2e_i}$ if necessary, we have
\begin{equation}\label{eq:cpxStr}
Z=\frac12\sum_{i=1}^l (X_{2e_i} + \theta X_{2e_i})+Y
\end{equation}
for some $Y\in\mathfrak b$.
Here $\mathfrak b$ is a Cartan subalgebra of $\mathfrak m$.
\end{lem}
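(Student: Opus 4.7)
The plan is to identify $Z$ inside a carefully constructed Cartan subalgebra of $\mathfrak{k}$, and then pin down the coefficients by exploiting $J^2=-\id$ on $\mathfrak{s}$.

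First I would set $T_i:=X_{2e_i}+\theta X_{2e_i}$ and introduce $\mathfrak{h}':=\mathfrak{b}+\sum_{i=1}^l\bbR T_i\subset\mathfrak{k}$, and argue that $\mathfrak{h}'$ is a Cartan subalgebra of $\mathfrak{k}$. Since each $2e_i$ is a real root, it vanishes on $\mathfrak{b}$, so every $T_i$ commutes with $\mathfrak{b}$. Inspection of the restricted root data listed in \S\ref{subsec:Hermitian} shows that $2e_i\pm 2e_j\notin\varSigma$ for $i\ne j$, so the $\mathfrak{sl}(2,\bbR)$-subalgebras $\mathfrak{g}^{(i)}:=\mathfrak{g}_{2e_i}+[\mathfrak{g}_{2e_i},\mathfrak{g}_{-2e_i}]+\mathfrak{g}_{-2e_i}$ pairwise commute, and in particular so do the $T_i$. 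Thus $\mathfrak{h}'$ is abelian; its dimension is $l+\dim\mathfrak{b}=\dim\mathfrak{a}+\dim\mathfrak{b}=\mathrm{rank}\,\mathfrak{g}=\mathrm{rank}\,\mathfrak{k}$, where the last equality is the defining feature of Hermitian type. Hence $\mathfrak{h}'$ is a Cartan subalgebra of $\mathfrak{k}$.

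Since $Z$ is central in $\mathfrak{k}$, it commutes with every element of $\mathfrak{h}'$ and therefore lies inside it. Writing $Z=\sum_i a_i T_i+Y$ with $a_i\in\bbR$ and $Y\in\mathfrak{b}$, I would determine the $a_i$ from the condition $(\ad Z)^2=-\id$ on $\mathfrak{s}$ applied to $H_{2e_i}\in\mathfrak{a}$. Using the $\mathfrak{sl}_2$-normalization $[X_{2e_i},\theta X_{2e_i}]=-(2e_i)^\vee$, the strong orthogonality of the $\mathfrak{g}^{(i)}$, and the fact that $[Y,\mathfrak{g}^{(i)}]=0$ since $2e_i|_{\mathfrak{b}}=0$, a short computation gives
\[
[Z,H_{2e_i}]=-a_i\|2e_i\|^2\,(X_{2e_i}-\theta X_{2e_i}),\qquad [Z,[Z,H_{2e_i}]]=-4a_i^2\,H_{2e_i}.
\]
Hence $4a_i^2=1$, so $a_i=\pm\tfrac12$. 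The substitution $X_{2e_i}\mapsto -X_{2e_i}$ preserves the normalization condition while flipping the sign of $a_i$ independently for each $i$, so we may arrange $a_i=\tfrac12$ uniformly, proving \eqref{eq:cpxStr}.

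The only nontrivial step is the first one — verifying that $\mathfrak{h}'$ is genuinely a Cartan subalgebra — and even this reduces to two routine ingredients: strong orthogonality of the long restricted roots $2e_i$ (checked case by case against the table) and the rank equality $\mathrm{rank}\,\mathfrak{g}=\mathrm{rank}\,\mathfrak{k}$ of Hermitian type. Once $\mathfrak{h}'$ is in place, the inclusion $Z\in\mathfrak{h}'$ is automatic from centrality of $Z$, and the coefficient calculation is straightforward.
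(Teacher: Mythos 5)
Your proof is correct and follows essentially the same route as the paper's: expand $Z$ in the Cartan subalgebra $\mathfrak b+\sum_{i=1}^l\bbR(X_{2e_i}+\theta X_{2e_i})$ of $\mathfrak k$ and determine the coefficients from $\ad(Z)^2H_{2e_i}=J^2H_{2e_i}=-H_{2e_i}$. The only (harmless) difference is that you establish $[Y,X_{2e_i}]=0$ beforehand from the real-root property of $2e_i$ (i.e.\ $\bsm_{2e_i}=1$ is odd), whereas the paper extracts this as a byproduct of the same computation.
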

\begin{proof}
It is well known that $\mathfrak t:=\sum_{i=1}^l \bbR(X_{2e_i} + \theta X_{2e_i})+\mathfrak b$
is a Cartan subalgebra of $\mathfrak k$.
Since $Z\in\mathfrak t$, there exist constants $c_1,\ldots,c_l\in\bbR$ and $Y\in\mathfrak b$ such that
\[
Z=c_1 (X_{2e_1} + \theta X_{2e_1})+\cdots+c_l (X_{2e_l} + \theta X_{2e_l})+Y.
\]
Since
\begin{gather*}
[X_{2e_i},X_{2e_j}]
=[X_{2e_i},\theta X_{2e_j}]
=[H_{2e_i},X_{2e_j}]
=0\quad (i\ne j),\\
[X_{2e_i},\theta X_{2e_i}]=-\frac2{||2e_i||^2}H_{2e_i},\quad
[H_{2e_i},X_{2e_i}]
=||2e_i||^2X_{2e_i},
\end{gather*}
we have for $i=1,\ldots,l$
\[
-H_{2e_i}=J^2H_{2e_i}=\ad(Z)^2H_{2e_i}
=-4c_i^2 H_{\alpha_i}
-c_i||2e_i||^2([Y,X_{2e_i}]-[Y,\theta X_{2e_i}])
\]
and hence
$[Y,X_{2e_j}]=[Y,\theta X_{2e_j}]=0$
and $c_{i}=\pm\frac12$.
\end{proof}
\begin{cor}\label{cor:gM}
One has $\mathfrak g_{\pm2e_i} \subset \mathfrak g^M$ for $i=1,\ldots,l$.
\end{cor}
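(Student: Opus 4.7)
The plan is to exploit that $Z$ is central in $\mathfrak k$ (so $\Ad(m)Z=Z$ for all $m\in M$) together with the concrete decomposition of $Z$ given by Lemma \ref{lem:cpxStr}. First I would observe that because $M$ centralizes $\mathfrak a$, it preserves each restricted root space $\mathfrak g_{\pm 2e_i}$; since these spaces are one-dimensional (the table shows $\bsm_{\mathrm{long}}=1$), $\Ad(m)$ acts on $\mathfrak g_{2e_i}$ by a real scalar $c_i(m)$. Using that $\Ad(m)$ preserves $B(\cdot,\theta\cdot)$ and commutes with $\theta$ (since $m\in K$), one gets $c_i(m)\in\{\pm1\}$ and $\Ad(m)\theta X_{2e_i}=c_i(m)\theta X_{2e_i}$.

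Next, I would apply $\Ad(m)$ to the identity $Z=\tfrac12\sum_{i=1}^l(X_{2e_i}+\theta X_{2e_i})+Y$ and subtract the original, obtaining
\[
\tfrac12\sum_{i=1}^l(c_i(m)-1)(X_{2e_i}+\theta X_{2e_i})\;=\;Y-\Ad(m)Y.
\]
The right-hand side lies in $\mathfrak m$, since $\mathfrak m$ is $\Ad(M)$-stable and $Y\in\mathfrak b\subset\mathfrak m$. So it suffices to show that the left-hand side lies in $\mathfrak m$ only when it vanishes, i.e.\ that $\bigl(\sum_i\bbR(X_{2e_i}+\theta X_{2e_i})\bigr)\cap\mathfrak m=\{0\}$.

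The key calculation, which I expect to be the only non-trivial step, is the following: for any $H\in\mathfrak a$,
\[
\bigl[{\textstyle\sum_i} a_i(X_{2e_i}+\theta X_{2e_i}),H\bigr]
=-\sum_i 2a_i e_i(H)\,(X_{2e_i}-\theta X_{2e_i}),
\]
and the vectors $X_{2e_i}-\theta X_{2e_i}\in\mathfrak g_{2e_i}+\mathfrak g_{-2e_i}$ are linearly independent (they sit in distinct restricted root spaces). Hence a nonzero combination cannot centralize $\mathfrak a$, and therefore cannot belong to $\mathfrak m$.

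Combining these observations forces $c_i(m)=1$ for every $i$ and every $m\in M$. Thus $\Ad(m)X_{2e_i}=X_{2e_i}$, so $\mathfrak g_{2e_i}\subset\mathfrak g^M$; applying $\theta$ gives $\mathfrak g_{-2e_i}\subset\mathfrak g^M$ as well, completing the proof.
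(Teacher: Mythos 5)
Your proof is correct and follows the paper's (one-line) argument: apply $\Ad(m)$ to the decomposition of $Z$ from Lemma \ref{lem:cpxStr} and use $\Ad(m)Z=Z$ together with $\dim\mathfrak g_{\pm2e_i}=1$ to force $\Ad(m)X_{2e_i}=X_{2e_i}$. Your detour through $c_i(m)\in\{\pm1\}$ and the computation showing $\bigl(\sum_i\bbR(X_{2e_i}+\theta X_{2e_i})\bigr)\cap\mathfrak m=\{0\}$ is a valid (if slightly longer) substitute for simply comparing the $\mathfrak g_{\pm2e_i}$-components in the restricted root space decomposition, which $\Ad(m)$ preserves.
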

\begin{proof}
This is clear from \eqref{eq:cpxStr}
since $\Ad(m)Z=Z$ for any $m\in M$.
\end{proof}
\begin{thm}\label{thm:Hermite}
Suppose $(\pi,V)$ is
a small $K$-type.
Then $V$ is one-dimensional and $\bska^\pi_{\mathrm{short}}=\bska^\pi_{\mathrm{middle}}=0$.
\end{thm}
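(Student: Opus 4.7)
The plan is to first prove that $\pi(X_\alpha+\theta X_\alpha)=0$ for every $\alpha\in\varSigma_{\mathrm{short}}\cup\varSigma_{\mathrm{middle}}$ (which yields the vanishing of the two $\bska^\pi$-values by Proposition \ref{prop:ND}), and then to deduce from this that $\pi|_{\mathfrak{k}_{\mathrm{ss}}}$ is trivial, forcing $\dim V=1$.

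The first move is to pin down the scalar operators. Since $Z\in\mathfrak{z}(\mathfrak{k})$ and $(\pi,V)$ is $K$-irreducible, Schur gives $\pi(Z)\in\bbC\id_V$. By Corollary \ref{cor:gM} each $X_{2e_i}+\theta X_{2e_i}$ commutes with $M$, so smallness and Schur make $\pi(X_{2e_i}+\theta X_{2e_i})$ scalar as well. For the short roots the multiplicity $\bsm_{\mathrm{short}}$ is even in every Hermitian case (direct inspection of the table), so Corollary \ref{cor:evenis0} and Proposition \ref{prop:ND} give $\pi(X_{e_i}+\theta X_{e_i})=0$.

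For a middle root I would fix $\alpha=e_i-e_j$ (the case $e_i+e_j$ follows by Weyl-invariance of $\bska^\pi$) and pick nonzero $X_{2e_i}\in\mathfrak{g}_{2e_i}$ and $X_{e_i+e_j}\in\mathfrak{g}_{e_i+e_j}$. Because $\pm(3e_i+e_j)$ are not roots, two of the four terms in
\[
[X_{2e_i}+\theta X_{2e_i},\;X_{e_i+e_j}+\theta X_{e_i+e_j}]
\]
drop out and the bracket reduces to $W+\theta W$, where $W:=[X_{2e_i},\theta X_{e_i+e_j}]\in\mathfrak{g}_{e_i-e_j}$ is nonzero. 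Applying $\pi$, the left-hand side is zero since $\pi(X_{2e_i}+\theta X_{2e_i})$ is scalar; hence $\pi(W+\theta W)=0$. This spreads to the full space $\mathfrak{g}_{e_i-e_j}$ via Theorem \ref{thm:Ko} when $\bsm_{e_i-e_j}>1$ and trivially when $\bsm_{e_i-e_j}=1$.

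Finally let $I\subset\mathfrak{k}$ be the Lie ideal generated by $\{X_\alpha+\theta X_\alpha\mid\alpha\in\varSigma_{\mathrm{short}}\cup\varSigma_{\mathrm{middle}}\}$; then $I\subset\Ker_{\mathfrak{k}}\pi$. By Corollary \ref{cor:kgen} the quotient $\mathfrak{k}/I$ is generated by the classes of the long-root elements $X_{2e_i}+\theta X_{2e_i}$, and these pairwise commute in $\mathfrak{k}$ because $\pm2e_i\pm2e_j\notin\varSigma$ for $i\ne j$ in types $C_l$ and $(BC)_l$. So $\mathfrak{k}/I$ is abelian, $I\supset[\mathfrak{k},\mathfrak{k}]=\mathfrak{k}_{\mathrm{ss}}$, and therefore $\pi|_{\mathfrak{k}_{\mathrm{ss}}}=0$. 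Combined with the scalar action of $\mathfrak{z}(\mathfrak{k})$ and the connectedness of $K$, the irreducible $K$-module $V$ collapses to dimension one. The only point requiring real care is the nonvanishing $W\ne0$ in the middle-root step: this is the standard fact $[\mathfrak{g}_\alpha,\mathfrak{g}_\beta]\ne\{0\}$ when $\alpha,\beta,\alpha+\beta$ are all restricted roots, but it must be checked once within the Hermitian root systems.
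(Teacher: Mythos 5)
Your argument is correct and reaches the conclusion by a genuinely different route for the key point $\bska^\pi_{\mathrm{middle}}=0$. The paper gets this case by case: for all Hermitian $\mathfrak g$ except $\mathfrak{sp}(p,\bbR)$ and $\mathfrak{so}(p,2)$ the middle multiplicity is even and Corollary \ref{cor:evenis0} applies; $\mathfrak{so}(p,2)$ is delegated to Theorem \ref{thm:so(p,q)}~\ref{i:SO1}; and $\mathfrak{sp}(p,\bbR)$ (where $\bsm_{\mathrm{middle}}=1$ is odd) is handled separately by first showing $M$ is abelian, hence $\dim V=1$, and then observing that the middle-root vectors lie in $[\mathfrak k,\mathfrak k]=(\bbR Z)^\perp$ via Lemma \ref{lem:cpxStr}. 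Your bracket identity $[X_{2e_i}+\theta X_{2e_i},\,X_{e_i+e_j}+\theta X_{e_i+e_j}]=W+\theta W$ with $W=[X_{2e_i},\theta X_{e_i+e_j}]\in\mathfrak g_{e_i-e_j}$, combined with the scalarity of $\pi(X_{2e_i}+\theta X_{2e_i})$ from Corollary \ref{cor:gM}, kills all three special cases at once and inverts the logical order (vanishing of $\bska^\pi$ first, $\dim V=1$ second); the endgame via Corollary \ref{cor:kgen} is the same in both arguments, yours phrased through the ideal $I$ and the paper's through the observation that the set of $X$ with $\pi(X)$ scalar is a subalgebra containing the generators. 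What your approach buys is uniformity; what it costs is exactly the point you flag, $W\ne 0$, which you must actually prove rather than cite as folklore. It does close cleanly: since $\pm(3e_i+e_j)\notin\varSigma$, the $2e_i$-string through $-(e_i+e_j)$ is $\{-(e_i+e_j),\,e_i-e_j\}$, so $\mathfrak g_{-(e_i+e_j)}\oplus\mathfrak g_{e_i-e_j}$ is a module for the $\mathfrak{sl}_2$-triple attached to $X_{2e_i}$ whose $(2e_i)^\vee$-weights are $-1$ and $+1$; every irreducible constituent is therefore the two-dimensional standard module, on which $\ad(X_{2e_i})$ maps the $(-1)$-weight space injectively onto the $(+1)$-weight space. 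Hence $\ad(X_{2e_i}):\mathfrak g_{-(e_i+e_j)}\to\mathfrak g_{e_i-e_j}$ is injective and $W\ne0$ for $X_{e_i+e_j}\ne0$. With that inserted, your proof is complete.
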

\begin{proof}
First we assume $\mathfrak g=\mathfrak{sp}(p,\bbR)$.
Then $\mathfrak m=\{0\}$ and $\mathfrak t=\sum_{i=1}^p \bbR(X_{2e_i} + \theta X_{2e_i})$
is a Cartan subalgebra of $\mathfrak k$.
Since $\mathfrak t\subset \mathfrak k^M$ by Corollary \ref{cor:gM},
$M$ is a finite subgroup of the Cartan subgroup corresponding to $\mathfrak t$
and in particular is Abelian.
Thus any small $K$-type $\pi$ is one-dimensional.
We claim $\bska^\pi_{\mathrm{middle}}=0$ for such $\pi$.
Indeed, $[\mathfrak k,\mathfrak k]$ equals the orthogonal complement $(\bbR Z)^\perp$ of $\bbR Z$ in $\mathfrak k$ with respect to $B(\cdot,\cdot)$.
Since
\[
\sum_{\alpha: \mathrm{middle}} \{X_\alpha+\theta X_\alpha \,|\, X_\alpha \in\mathfrak g_\alpha\} \,\perp\, \sum_{i=1}^p \bbR(X_{2e_i} + \theta X_{2e_i}),
\]
one sees by Lemma \ref{lem:cpxStr} that $\{X_\alpha+\theta X_\alpha \,|\, X_\alpha \in\mathfrak g_\alpha\} \subset [\mathfrak k,\mathfrak k]$ for any middle $\alpha$.
Since $[\mathfrak k,\mathfrak k]\subset \Ker_{\mathfrak k}\pi$, our claim follows from Proposition \ref{prop:ND}.

Next, suppose $\mathfrak g$ is a general simple Lie algebra of Hermitian type
and $(\pi,V)$ is a small $K$-type.
We claim $\bska^\pi_{\mathrm{short}}=\bska^\pi_{\mathrm{middle}}=0$.
This was shown in the previous paragraph for $\mathfrak g=\mathfrak{sp}(p,\bbR)$
and in Theorem \ref{thm:so(p,q)} \ref{i:SO1} for $\mathfrak g=\mathfrak{so}(p,2)$.
For the remaining cases, the claim follows from Corollary \ref{cor:evenis0}.
Now, by Proposition \ref{prop:ND}, $\pi(X_\alpha+\theta X_\alpha)=0$
for any restricted root vector $X_\alpha$ of any $\alpha\in\varSigma$ with short or middle length.
On the other hand, by Corollary \ref{cor:gM}, each $\pi(X_{2e_i}+\theta X_{2e_i})$ is a scalar operator.
Hence by Corollary \ref{cor:kgen},
$\pi(X)$ for any $X\in\mathfrak k$ is a scalar operator.
This proves $V$ is one-dimensional. 
\end{proof}

Let $\mathfrak z$ and $\pi_0\in\sqrt{-1}\mathfrak z^*$ be as in \S\ref{subsec:Hermitian}.
If we identify $\pi_0$ with one-dimensional representation of $\mathfrak k$,
then it follows from \cite[Proposition 5.3.2]{He:white} that
\[
\pi_0(X_{2e_i}+\theta X_{2e_i})\in\{\pm\sqrt{-1}\}\quad\text{for }i=1,\ldots,l.
\]
Hence if (the differentiation of) a small $K$-type $\pi$ is written as $\pi=\nu\pi_0$
for some $\nu\in\bbQ$, then $\bska^\pi_{\mathrm{long}}=-\nu^2$.
This and Proposition \ref{prop:simplified}
imply the result stated in \S\ref{subsec:Hermitian}.

\subsection{\boldmath The case $\varSigma$ is of type $F_4$}\label{CC:F4}
Let $G$ be a simply-connected real simple Lie group with $\varSigma$ of type $F_4$.
As in \S\ref{subsec:F4}, we exclude the complex simple Lie group of type $F_4$.
Thus $\mathfrak g$ is one of
$\mathfrak{f}_{4(4)}$,
$\mathfrak{e}_{6(2)}$,
$\mathfrak{e}_{7(-5)}$ and
$\mathfrak{e}_{8(-24)}$.
Among these $\mathfrak{f}_{4(4)}$ is of split type.
Let $\varSigma_{\mathrm{short}}$ and $\varSigma_{\mathrm{long}}$ 
be as in \S\ref{subsec:F4}.
From \cite{Berger}
one sees there exists a sequence of embeddings
\begin{equation}\label{eq:F4embeddings}
\mathfrak{f}_{4(4)}
\subset
\mathfrak{e}_{6(2)}
\subset
\mathfrak{e}_{7(-5)}
\subset
\mathfrak{e}_{8(-24)}.
\end{equation}
The following table summarizes some necessary data on these Lie algebras:\smallskip
\begin{center}
\begin{tabular}{c|cccc}
$\mathfrak{g}$ &
$\mathfrak{f}_{4(4)}\ (\mathrm{F\,I})$&
$\mathfrak{e}_{6(2)}\ (\mathrm{E\,II})$&
$\mathfrak{e}_{7(-5)}\ (\mathrm{E\,VI})$&
$\mathfrak{e}_{8(-24)}\ (\mathrm{E\,IX})$\\
\hline
$\bsm_{\mathrm{short}}$&
$1$&
$2$&
$4$&
$8$\\
$\bsm_{\mathrm{long}}$&
$1$&
$1$&
$1$&
$1$\\
$\mathfrak{k}$&
$\mathfrak{sp}(3)\oplus\mathfrak{su}(2)$&
$\mathfrak{su}(6)\oplus\mathfrak{su}(2)$&
$\mathfrak{so}(12)\oplus\mathfrak{su}(2)$&
$\mathfrak{e}_7\oplus\mathfrak{su}(2)$
\end{tabular}
\end{center}\smallskip
Thus in any case $K$ is the product of two simple compact groups.
Let $K=K_1\times K_2$ with $K_2=\SU(2)$.
Let $\pr_i :K\to K_i$ be the projection ($i=1,2$).
\begin{thm}[{\cite[Theorem 1, Lemmas 4.2, 4.3]{SWL}}]\label{thm:split_F4}
Suppose $\mathfrak g=\mathfrak{f}_{4(4)}$.
Let $(\sigma,\bbC^2)$ be the irreducible representation of\/ $\SU(2)$
of dimension $2$.
Then $\pi=\sigma\circ\pr_2$ is the only non-trivial small $K$-type.
Moreover, $\bska^\pi_{\mathrm{short}}=0$ and $\bska^\pi_{\mathrm{long}}=-\frac14$.
\end{thm}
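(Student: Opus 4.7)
Since $\mathfrak g = \mathfrak{f}_{4(4)}$ is split, every restricted root has multiplicity one and $\mathfrak m = \{0\}$, so $M$ is a finite subgroup of $K = \Sp(3) \times \SU(2)$. Let $\pr_1, \pr_2$ denote the projections to the two simple factors. The plan proceeds in three steps: (i) exhibit $\pi = \sigma \circ \pr_2$ as small; (ii) rule out any other non-trivial small $K$-type; (iii) compute $\bska^\pi_{\mathrm{short}}$ and $\bska^\pi_{\mathrm{long}}$ by Lemma \ref{lem:kappa}. A technical input common to (i) and (iii) is the element $m_\alpha = \exp\pi(X_\alpha - \theta X_\alpha) \in M$, where $X_\alpha \in \mathfrak g_\alpha$ is normalized by $-\tfrac{\|\alpha\|^2}{2}B(X_\alpha,\theta X_\alpha) = 1$; this lies in $M = Z_K(A)$ because it is the square of the standard lift to $N_K(A)$ of the reflection $r_\alpha$.

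For (i), since $\dim V = 2$ the $K$-type $\pi$ is small iff $\pr_2(M) \not\subset \{\pm I\}$. To verify this, I will produce one long root $\alpha$ with $\pr_2(X_\alpha - \theta X_\alpha) \ne 0$ in $\mathfrak{su}(2)$; then $\pr_2(m_\alpha)$ is the time-$\pi$ value of a non-trivial one-parameter subgroup of $\SU(2)$, hence non-central.

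For (ii), I would use Proposition \ref{prop:ND} together with the $W$-invariance of $\bska^\pi$: the Lie-algebra kernel $\Ker_{\mathfrak k}\pi$ contains $X_\alpha + \theta X_\alpha$ for every $\alpha$ in any $W$-orbit on which $\bska^\pi$ vanishes. The crucial structural claim is that $\{X_\alpha + \theta X_\alpha : \alpha \in \varSigma_{\mathrm{short}}\}$ lies entirely in $\mathfrak k_1 = \mathfrak{sp}(3)$ and Lie-generates it, whereas for each long $\alpha$ the element $X_\alpha + \theta X_\alpha$ has a non-zero $\mathfrak{su}(2)$-component. Granting this: if $\bska^\pi_{\mathrm{short}} = 0$ then $\mathfrak k_1 \subset \Ker_{\mathfrak k}\pi$, so $\pi$ factors through $\pr_2$, and a direct check of which $\SU(2)$-irreps restrict irreducibly to $\pr_2(M)$ leaves only the trivial $\tau_2$ and $\tau_2 = \sigma$; the complementary case $\bska^\pi_{\mathrm{short}} \ne 0$ is then incompatible with smallness via a branching argument on $\Sp(3) \supset \pr_1(M)$.

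For (iii), the short-root value $\bska^\pi_{\mathrm{short}} = 0$ is immediate from the structural claim, since $\pr_2$ annihilates every short-root generator. For a long $\alpha$, the non-zero vector $Y = \pr_2(X_\alpha + \theta X_\alpha) \in \mathfrak{su}(2)$ has $\sigma(Y)$ with eigenvalues $\pm i/2$ once the Killing form $B$ restricted to $\mathfrak{su}(2)$ is matched with the standard $\SU(2)$-trace form; Lemma \ref{lem:kappa} then yields $\bska^\pi_{\mathrm{long}} = \tfrac12\trace\sigma(Y)^2 = -\tfrac14$. The chief obstacle throughout is the structural claim in (ii) — an explicit accounting of how $X_\alpha + \theta X_\alpha$ distributes between the two simple factors of $\mathfrak k$ — which requires a concrete realization of $\mathfrak{f}_{4(4)}$ of the sort carried out in \cite[Lemmas 4.2, 4.3]{SWL}.
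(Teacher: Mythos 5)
First, a point of comparison: the paper does not prove this statement at all --- it is quoted from \cite[Theorem 1, Lemmas 4.2, 4.3]{SWL} and used as input for the generalization to $\mathfrak e_{6(2)}$, $\mathfrak e_{7(-5)}$, $\mathfrak e_{8(-24)}$ proved afterwards. So there is no in-paper argument to match, and your proposal has to stand on its own as a proof of the cited result.

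As written it has one genuine logical gap and otherwise defers exactly the content the citation supplies. The gap is in step (i): for a two-dimensional $K$-type, smallness is \emph{not} equivalent to $\pr_2(M)\not\subset\{\pm I\}$. A subgroup of $\SU(2)$ acts irreducibly on $\bbC^2$ if and only if it is not contained in a maximal torus (equivalently, iff it is non-abelian): a single non-central element such as $\diag(\sqrt{-1},-\sqrt{-1})$ generates a reducible subgroup. So exhibiting one long root $\alpha$ with $\pr_2(m_\alpha)$ non-central proves nothing; you need two non-commuting elements of $\pr_2(M)$, e.g.\ $\pr_2(m_\alpha)$ and $\pr_2(m_\beta)$ for suitable long roots together with the commutation relations among the $m_\gamma$. (Two smaller slips in the same step: $X_\alpha-\theta X_\alpha$ lies in $\mathfrak s$, not $\mathfrak k$, so $\exp\pi(X_\alpha-\theta X_\alpha)$ is not even in $K$ --- you mean $X_\alpha+\theta X_\alpha$; and ``time-$\pi$ value of a non-trivial one-parameter subgroup of $\SU(2)$, hence non-central'' is false in general, since $\exp(\pi\diag(\sqrt{-1},-\sqrt{-1}))=-I$.) Beyond this, the two load-bearing inputs --- the structural claim that the short-root elements $X_\alpha+\theta X_\alpha$ lie in and generate $\mathfrak{sp}(3)$ while the long-root ones have non-zero $\mathfrak{su}(2)$-components with $\sigma$-eigenvalues $\pm\tfrac{\sqrt{-1}}2$, and the branching argument excluding every other non-trivial $K$-type --- are asserted rather than proved; they are precisely the explicit-realization computations of \cite[Lemmas 4.2, 4.3]{SWL}. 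Your outline is a reasonable reduction of the theorem to those computations, but it is not an independent proof of them.
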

We generalize this to
\begin{thm}
The last theorem also holds for $\mathfrak g=\mathfrak{e}_{6(2)}$,
$\mathfrak{e}_{7(-5)}$ and
$\mathfrak{e}_{8(-24)}$.
\end{thm}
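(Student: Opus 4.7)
My plan is to reduce the general case to the split case $\mathfrak{f}_{4(4)}$ (Theorem \ref{thm:split_F4}) by exploiting the chain of embeddings \eqref{eq:F4embeddings} together with the associated split subgroup construction of \S\ref{subsec:asssplit}. In all three cases, $\varSigma$ is of type $F_4$ with $\bsm_{\mathrm{short}} \in \{2, 4, 8\}$ (even) and $\bsm_{\mathrm{long}} = 1$, so the real roots are exactly the long roots and form a subsystem of type $D_4$; hence $\mathfrak{g}_\spt \simeq \mathfrak{so}(4,4)$, whose small $K$-types are classified by Theorem \ref{thm:split_so(p,q)} \ref{i:Dsplit}.

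First I would apply Corollary \ref{cor:evenis0} to obtain $\bska^\pi_{\mathrm{short}} = 0$ for any small $K$-type $(\pi,V)$, so by Proposition \ref{prop:ND} we have $\pi(X_\alpha + \theta X_\alpha) = 0$ for every short restricted root $\alpha$ and every $X_\alpha \in \mathfrak{g}_\alpha$. I would then identify the $\mathfrak{k}$-ideal $I$ generated by these short compact parts as $\mathfrak{k}_1$. Within $\mathfrak{f}_{4(4)}$, Theorem \ref{thm:split_F4} implies that for short $\alpha$ and $X_\alpha \in \mathfrak{g}_\alpha \cap \mathfrak{f}_{4(4)}$ one has $X_\alpha + \theta X_\alpha \in \mathfrak{sp}(3) \subset \mathfrak{k}^{\mathfrak{f}_{4(4)}}$, and under the chain \eqref{eq:F4embeddings} this $\mathfrak{sp}(3)$ embeds into $\mathfrak{k}_1$. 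The $M$-transitivity on the unit sphere of $\mathfrak{g}_\alpha$ (Theorem \ref{thm:Ko}) together with the $\Ad(M)$-invariance of the ideal $\mathfrak{k}_1$ then extends the containment $X_\alpha + \theta X_\alpha \in \mathfrak{k}_1$ to every $X_\alpha \in \mathfrak{g}_\alpha$. Since $\mathfrak{k}_1 \in \{\mathfrak{su}(6), \mathfrak{so}(12), \mathfrak{e}_7\}$ is simple and $I$ is nontrivial, we get $I = \mathfrak{k}_1$, so $\pi$ factors as $\pi = \pi_n \circ \pr_2$ for some irreducible representation $\pi_n$ of $SU(2) = K_2$.

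To conclude, I would combine Corollary \ref{cor:realinherit} (applicable since $\bsm_{\mathrm{long}} = 1$) with Theorem \ref{thm:split_so(p,q)} \ref{i:Dsplit} applied to $\mathfrak{g}_\spt$, obtaining $\bska^\pi_{\mathrm{long}} \in \{0, -\tfrac{1}{4}\}$. The value $0$ forces triviality via Proposition \ref{prop:trivial}. The value $-\tfrac{1}{4}$, combined with a Lemma \ref{lem:kappa} calculation yielding $\bska^{\pi_n \circ \pr_2}_{\mathrm{long}} = -\tfrac{n^2 - 1}{12}$ (after decomposing $\pr_2(X_\alpha + \theta X_\alpha)$ as a scalar multiple of a normalized generator of $\mathfrak{su}(2)$ and using the standard eigenvalues of $\pi_n$), forces $n = 2$ and so $\pi \simeq \sigma \circ \pr_2$. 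Smallness of $\sigma \circ \pr_2$ in $\mathfrak{g}$ follows since $\pr_2(M) \supset \pr_2(M \cap K^{\mathfrak{f}_{4(4)}})$ and the latter already acts irreducibly on $\bbC^2$ by Theorem \ref{thm:split_F4}.

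The main obstacle is verifying that the chain \eqref{eq:F4embeddings} is compatible with the simple-factor decomposition of the maximal compacts, i.e., that the \emph{quaternionic} $\mathfrak{su}(2) = \mathfrak{k}_2$ factor is preserved along the chain and that $\mathfrak{sp}(3) \subset \mathfrak{k}^{\mathfrak{f}_{4(4)}}$ sits inside $\mathfrak{k}_1$. This is natural (reflecting the common quaternionic origin of these four real forms and the nested Freudenthal magic-square structure), but must be pinned down explicitly, for instance via Berger's classification \cite{Berger} or by fixing explicit root-space realizations in the magic square.
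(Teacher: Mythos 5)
Your overall strategy is the same as the paper's: reduce everything to the split case $\mathfrak f_{4(4)}$ through the chain \eqref{eq:F4embeddings}, using Corollary \ref{cor:evenis0} and Proposition \ref{prop:ND} to show every small $K'$-type kills the short compact parts and therefore factors through the $\SU(2)$ factor. But the step you yourself flag as ``the main obstacle'' --- that the chain is compatible with the decompositions $\mathfrak k'=\mathfrak k'_1\oplus\mathfrak k'_2$, i.e.\ that the $\mathfrak{su}(2)$-factor $\mathfrak k_2$ of $\mathfrak k^{\mathfrak f_{4(4)}}$ coincides with $\mathfrak k'_2$ and $\mathfrak{sp}(3)$ lands in $\mathfrak k'_1$ --- is precisely the nontrivial content of the theorem, and you leave it unproved, deferring to Berger's tables or explicit magic-square realizations. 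Everything else in your argument (the ideal generated by the short compact parts, the transitivity extension via Theorem \ref{thm:Ko}, the identification $\mathfrak g_\spt\simeq\mathfrak{so}(4,4)$, smallness of $\sigma\circ\pr_2$ via $M^{\mathfrak f_{4(4)}}\subset M'$) hinges on this unestablished compatibility, so as written the proof has a genuine gap. The paper closes it intrinsically: since $\bska^{\sigma\circ\pr_2}_{\mathrm{short}}=0$ in $\mathfrak f_{4(4)}$, orthogonality inside $\mathfrak k^{\mathfrak f_{4(4)}}$ forces $\mathfrak k_2\subset\sum_{\alpha\in\varSigma_{\mathrm{long}}}\{X_\alpha+\theta X_\alpha\}$; the long root spaces are one-dimensional, hence common to $\mathfrak g$ and $\mathfrak g'$ and centralized by $\mathfrak m'$, so $[\mathfrak m',\mathfrak k_2]=0$; and since $\mathfrak m'$ together with $\mathfrak k$ generates $\mathfrak k'$ (by Theorem \ref{thm:Ko}, as in Corollary \ref{cor:kgen}), $\mathfrak k_2$ is an ideal of $\mathfrak k'$ and must equal $\mathfrak k'_2$. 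You would need to supply an argument of this kind (or a genuinely worked-out explicit embedding) before the rest of your proof stands.

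Modulo that gap, your endgame differs from the paper's in a legitimate way. The paper, having shown $\nu=\tau\circ\pr'_2$, restricts back to $K$ and invokes Theorem \ref{thm:split_F4} after checking $\pr_2'(M')=\pr_2(M)$; you instead pin down $\bska^\pi_{\mathrm{long}}=-\frac14$ via Corollary \ref{cor:realinherit} and Theorem \ref{thm:split_so(p,q)} \ref{i:Dsplit} applied to $\mathfrak g_\spt\simeq\mathfrak{so}(4,4)$, and then rule out $\pi_n\circ\pr_2$ for $n\ne2$ by the eigenvalue count $\bska^{\pi_n\circ\pr_2}_{\mathrm{long}}=-\frac{n^2-1}{12}$. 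That computation is correct provided you justify the normalization of $\pr_2(X_\alpha+\theta X_\alpha)$ --- which you can do without new work by noting that for long $\alpha$ the element $X_\alpha+\theta X_\alpha$ is literally the same in $\mathfrak g$ and $\mathfrak g'$, so its $\mathfrak k_2$-component is determined by the $n=2$ value $-\frac14$ from Theorem \ref{thm:split_F4} --- but again this presupposes $\pr_2=\pr'_2$ on $\mathfrak k$, i.e.\ the same compatibility statement.
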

\begin{proof}
Suppose $\mathfrak g=\mathfrak{f}_{4(4)}$
and let $\pi=\sigma\circ\pr_2$ be as in Theorem \ref{thm:split_F4}.
Since $\bska^\pi_{\mathrm{short}}=0$,
it follows from Proposition \ref{prop:ND} that
$\sum_{\alpha\in\varSigma_{\mathrm{short}}}\{X_\alpha + \theta X_\alpha \,|\,X_\alpha \in \mathfrak g_\alpha\}\subset \Ker_{\mathfrak k}\pi=\mathfrak k_1$.
Since
$\sum_{\alpha\in\varSigma_{\mathrm{short}}}\{X_\alpha + \theta X_\alpha \,|\,X_\alpha \in \mathfrak g_\alpha\}$
is the orthogonal complement of 
$\sum_{\alpha\in\varSigma_{\mathrm{long}}}\{X_\alpha + \theta X_\alpha \,|\,X_\alpha \in \mathfrak g_\alpha\}$
in $\mathfrak k$ with respect to $B(\cdot,\cdot)$
and $\mathfrak k_1$ is that of $\mathfrak k_2$,
one has
\[
\mathfrak k_2 \subset
\sum_{\alpha\in\varSigma_{\mathrm{long}}}\{X_\alpha + \theta X_\alpha \,|\,X_\alpha \in \mathfrak g_\alpha\}.
\]
Now let $\mathfrak g'$ be one of $\mathfrak{e}_{6(2)}$,
$\mathfrak{e}_{7(-5)}$ and
$\mathfrak{e}_{8(-24)}$.
We use with the obvious meanings the notation $G'$, $K'$, $\mathfrak k'$, $\mathfrak g'_\alpha$, and so on.
Fix an embedding $G\hookrightarrow G'$
so that
$\mathfrak k'\cap \mathfrak g= \mathfrak k$ and $\mathfrak a'= \mathfrak a$.
We claim $\mathfrak k_2\simeq \mathfrak{su}(2)$ is an ideal of $\mathfrak k'$.
Indeed,
since 
$\mathfrak g_\alpha\,(\,=\mathfrak g'_\alpha)$
commutes with $\mathfrak m'$ for each $\alpha\in\varSigma_{\mathrm{long}}$,
\begin{equation}\label{eq:m'su2}
[\mathfrak m', \mathfrak k_2] 
\subset \biggl[\mathfrak m', \sum_{\alpha\in\varSigma_{\mathrm{long}}}\{X_\alpha + \theta X_\alpha \,|\,X_\alpha \in \mathfrak g_\alpha\}\biggr] =\{0\}.
\end{equation}
This proves our claim
since
$\mathfrak m'$ and $\mathfrak k$ generate the Lie algebra $\mathfrak k'$
by Theorem \ref{thm:Ko}.
Thus $\mathfrak k_2=\mathfrak k_2' \simeq \mathfrak{su}(2)$
and $K_2=K_2'$.
Since $\mathfrak k=(\mathfrak k'_1 \cap \mathfrak k)\oplus \mathfrak k_2$ is
a decomposition of $\mathfrak k$ into two ideals,
we have $\mathfrak k_1=\mathfrak k'_1 \cap \mathfrak k$
and $K_1 \subset K_1'$. 
Hence $\pi$ extends to a $K'$-type $\pi'=\sigma\circ\pr'_2$.
This is small since $M\subset M'$.
Since $\mathfrak g_\alpha \subset \mathfrak g'_\alpha$ for any $\alpha\in\varSigma$,
we have $\bska^{\pi'}=\bska^{\pi}$ by Lemma \ref{lem:kappa}.

Conversely, let $\nu$ be any non-trivial small $K'$-type.
Then it follows from Corollary \ref{cor:evenis0} and Proposition \ref{prop:ND}
that
$\sum_{\alpha\in\varSigma_{\mathrm{short}}}\{X_\alpha + \theta X_\alpha \,|\,X_\alpha \in \mathfrak g_\alpha\}\subset \Ker_{\mathfrak k'}\nu \cap \mathfrak k'_1$.
By the simplicity of $\mathfrak k'_1$,
$\mathfrak k'_1 \subset \Ker_{\mathfrak k'}\nu$
and there exists a non-trivial irreducible representation $\tau$ of $K_2'=\SU(2)$ such that
$\nu=\tau\circ\pr_2'$.
Now we claim $\pr_2'(M')=\pr_2(M)$.
Indeed, since
\[
\mathfrak g'_\spt =
\mathfrak a + \sum_{\alpha \in \varSigma_{\mathrm{long}}}
\mathfrak g_\alpha
\subset
\mathfrak g,
\]
we have $G'_\spt\subset G$ and $M'_\spt \subset M$.
On the other hand, since \eqref{eq:m'su2} implies
$\mathfrak m' \subset \mathfrak k_1'$,
one has $M'_0\subset K_1'$.
Hence $\pr_2'(M')=\pr_2'(M'_0M'_\spt)=\pr_2'(M'_\spt)\subset \pr_2'(M)=\pr_2(M)$.
Since the opposite inclusion is obvious, we get the claim.
Thus $\nu|_K=\tau\circ\pr_2$ is a small $K$-type.
This is non-trivial since $\dim \tau >1$.
By Theorem \ref{thm:split_F4} we conclude $\tau=\sigma$.
\end{proof}
These two theorems and Proposition \ref{prop:simplified}
imply the result of \S\ref{subsec:F4}.

\subsection{\boldmath Split Lie groups with simply-laced $\varSigma$}\label{CC:split}
Let $G$ be one of the simply-connected split real simple Lie groups of type $A_l$ ($l\ge2$),
$D_l$ ($l\ge3$) and $E_l$ ($l=6,7,8$). 
Let $(\pi,V)$ be a small $K$-types listed in Theorem \ref{thm:split_sl}.
Then it follows from \cite[Lemma 4.2]{SWL} that $\bska^\pi_\alpha=-\frac14$
for any $\alpha\in\varSigma$.
(For the type $D_l$ case, we already know this by Theorem \ref{thm:split_so(p,q)} \ref{i:Dsplit}.)
Thus Proposition \ref{prop:simplified}
implies the result of \S\ref{subsec:split}.

\subsection{\boldmath The split Lie group of type $G_2$}\label{CC:G2}
Let $G=\tilde G_2$, the simply-connected split real simple Lie group of type $G_2$.
We use the same notation as in \S\ref{subsec:G2}.
Let $\varSigma_{\mathrm{short}}\sqcup\varSigma_{\mathrm{long}}$ 
be the division of $\varSigma$ according to the root lengths.
From \cite[Lemmas 4.2, 4.3]{SWL}
the values of $\bska^{\pi_1}$ and $\bska^{\pi_2}$ are as follows:\smallskip
\begin{center}
\begin{tabular}{c|cc}
$\pi$ & $\pi_1$ & $\pi_2$ \\
\hline
$\bska^\pi_{\mathrm{short}}$
& $-\frac14$ & $-\frac94$ \\
$\bska^\pi_{\mathrm{long}}$
& $-\frac14$ & $-\frac14$
\end{tabular}
\end{center}\smallskip
Hence if $\pi=\pi_1$ then we have the same result as for the split simply-laced case.

Suppose $\pi=\pi_2$
and let us prove 
we cannot find any combination of $\varSigma^\pi$ and $\bsk^\pi$
for which \eqref{eq:main} holds.
To do so, assume 
\eqref{eq:main} holds for some $(\varSigma^\pi,\bsk^\pi)$.
Then
$\tilde\delta_{G/K}^{-\frac12}\,\tilde\delta(\varSigma^\pi,\bsk^\pi)^{\frac12}$is non-singular at $0$ and hence
for each $\alpha\in \varSigma$ there exists $\beta\in \varSigma^\pi$
which is proportional to $\alpha$.
This implies $\varSigma^\pi$ is of type $G_2$.
Now
$\tilde\delta_{G/K}^{\frac12}\, \varUpsilon^\pi(\phi^\pi_\lambda) = \tilde\delta(\varSigma^\pi,\bsk^\pi)^{\frac12}\,F(\varSigma^\pi,\bsk^\pi,\lambda) \in \mathscr A(\mathfrak a_\reg)$ is an eigenfunction of
both \eqref{eq:Omegatwist} and \eqref{eq:Ltwist} with $(\varSigma',\bsk)=(\varSigma^\pi,\bsk^\pi)$.
Hence
\begin{multline*}
\sum_{\alpha\in\varSigma_{\mathrm{short}}\cap\varSigma^+}
\frac{||\alpha||^2}{16}
\biggl(
\frac{4}{\sinh^2\frac\alpha2}
-
\frac{32}{\sinh^2\alpha}
\biggr)
+
\sum_{\alpha\in\varSigma_{\mathrm{long}}\cap\varSigma^+}
\frac{||\alpha||^2}{16\sinh^2\frac\alpha2}\\
=
\sum_{\alpha\in\varSigma^{\pi+}}\frac{\bsk^\pi_\alpha(1-\bsk^\pi_\alpha-2\bsk^\pi_{2\alpha})||\alpha||^2}{4\sinh^2\frac\alpha2}+C
\end{multline*}
for some constant $C$.
Since the members of $\{\sinh^{-2}\frac\alpha2\,|\,\alpha\in\varSigma\cup2\varSigma_{\mathrm{short}}\cup\varSigma^\pi\}\cup\{1\}$ are linearly independent in $\mathscr A(\mathfrak a_\reg)$,
we have $\bsk^\pi_\alpha(1-\bsk^\pi_\alpha-2\bsk^\pi_{2\alpha})\ne0$ for each $\alpha \in \varSigma\cup2\varSigma_{\mathrm{short}}$.
Hence $\varSigma^\pi \supset \varSigma\cup2\varSigma_{\mathrm{short}}$,
a contradiction.
The results in \S\ref{subsec:G2} are thus proved.

\section{Spherical transforms}\label{sec:application}
Let $G$ be a non-compact real simple Lie group with finite center and $(\pi,V)$ a small $K$-type.
In this section we apply our main formula \eqref{eq:main}
to the calculation of Harish-Chandra's $c$-function for $G\times_K V$
and the theory of $\pi$-spherical transform.
We note each combination of $\varSigma^\pi$ and $\bsk^\pi$
in \S\ref{sec:list} is chosen
so that
\begin{equation}\label{eq:MC1}
\varSigma^\pi\subset\varSigma\cup2\varSigma.
\end{equation}

\subsection{Weight functions}
The weight functions $\tilde\delta_{G/K}$ and
$\tilde\delta(\varSigma^\pi,\bsk^\pi)$ 
defined by \eqref{eq:GKdelta} and \eqref{eq:HOdelta}
are normalized so that $\tilde\delta_{G/K}^{-\frac12}\, \tilde\delta(\varSigma^\pi,\bsk^\pi)^{\frac12}$ in \eqref{eq:main} takes $1$ at $0\in\mathfrak a$.
In the literature,
\[
\delta_{G/K}
=\prod_{\alpha \in \varSigma^+}|2\sinh \alpha|^{\bsm_\alpha}\quad\text{and}\quad
\delta(\varSigma^\pi,\bsk^\pi)
=\prod_{\alpha\in {\varSigma^\pi}^+} \biggl|2\sinh\frac\alpha2\biggr|^{2\bsk^\pi_\alpha}
\]
are often used.
\begin{lem}
Suppose \eqref{eq:main} and \eqref{eq:MC1} are valid for $\varSigma^\pi$ and $\bsk^\pi$.
Then
\begin{align}\label{eq:2deltas}
\tilde\delta_{G/K}^{-\frac12}\, \tilde\delta(\varSigma^\pi,\bsk^\pi)^{\frac12}
&=
2^{e(\varSigma^\pi,\bsk^\pi)}\,
\delta_{G/K}^{-\frac12}\, \delta(\varSigma^\pi,\bsk^\pi)^{\frac12}
\intertext{with}
e(\varSigma^\pi,\bsk^\pi)&=
\sum_{\alpha\in\varSigma^+\setminus2\varSigma^+}\biggl(\bsk^\pi_\alpha-\bsk^\pi_{4\alpha}+\frac{\bsm_{2\alpha}}2\biggr).\label{eq:expo}
\end{align}
\end{lem}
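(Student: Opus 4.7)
The plan is to reduce the claim to a purely numerical identity by substituting the elementary relations
\[
\delta_{G/K} = \prod_{\alpha\in\varSigma^+}(2||\alpha||)^{\bsm_\alpha}\,\tilde\delta_{G/K},
\qquad
\delta(\varSigma^\pi,\bsk^\pi) = \prod_{\alpha\in{\varSigma^\pi}^+}||\alpha||^{2\bsk^\pi_\alpha}\,\tilde\delta(\varSigma^\pi,\bsk^\pi),
\]
which follow at once from \eqref{eq:GKdelta} and \eqref{eq:HOdelta}. After these substitutions, the $\tilde\delta$-factors on both sides of \eqref{eq:2deltas} match, and the content of the lemma becomes the scalar identity
\[
\prod_{\alpha\in\varSigma^+}(2||\alpha||)^{\bsm_\alpha/2}\,\prod_{\alpha\in{\varSigma^\pi}^+}||\alpha||^{-\bsk^\pi_\alpha} \;=\; 2^{e(\varSigma^\pi,\bsk^\pi)}.
\]

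I would then evaluate the left-hand side by partitioning the two indexing sets according to orbits under the doubling map. Every element of $\varSigma^+$ is either an $\alpha\in\varSigma^+\setminus 2\varSigma^+$ or the double of such, and \eqref{eq:MC1} guarantees that every $\beta\in{\varSigma^\pi}^+$ takes the form $\alpha$, $2\alpha$, or $4\alpha$ for a unique $\alpha\in\varSigma^+\setminus 2\varSigma^+$ (adopting the convention $\bsk^\pi_\beta=0$ when $\beta\notin\varSigma^\pi$). A routine bookkeeping, using $||2\alpha||=2||\alpha||$ and $||4\alpha||=4||\alpha||$, shows that the contribution attached to each such $\alpha$ has $||\alpha||$-exponent
\[
\frac12(\bsm_\alpha+\bsm_{2\alpha}) - (\bsk^\pi_\alpha+\bsk^\pi_{2\alpha}+\bsk^\pi_{4\alpha})
\]
and $2$-exponent
\[
\frac12\bsm_\alpha+\bsm_{2\alpha}-\bsk^\pi_{2\alpha}-2\bsk^\pi_{4\alpha}.
\]

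The final step is to invoke the regularity condition \eqref{eq:regularity}, which I claim holds automatically under our hypothesis: \eqref{eq:main} expresses $\tilde\delta_{G/K}^{-1/2}\tilde\delta(\varSigma^\pi,\bsk^\pi)^{1/2}$ as the ratio $\varUpsilon^\pi(\phi^\pi_\lambda)/F(\varSigma^\pi,\bsk^\pi,\lambda)$, which for generic $\lambda$ is real analytic on a neighborhood of $0\in\mathfrak a$ (recall $F(\,\cdot\,;0)=1$), so Proposition \ref{prop:deltaregular} forces \eqref{eq:regularity}. This condition makes the $||\alpha||$-exponent vanish orbit by orbit, and substituting $\bsk^\pi_\alpha=\frac12(\bsm_\alpha+\bsm_{2\alpha})-\bsk^\pi_{2\alpha}-\bsk^\pi_{4\alpha}$ into the $2$-exponent reduces it to exactly $\bsk^\pi_\alpha-\bsk^\pi_{4\alpha}+\frac12\bsm_{2\alpha}$; summing over $\alpha\in\varSigma^+\setminus 2\varSigma^+$ yields $e(\varSigma^\pi,\bsk^\pi)$. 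The argument is essentially mechanical, and the only point requiring genuine care is the observation that \eqref{eq:main} automatically enforces \eqref{eq:regularity}, which is precisely what causes the norm factors to cancel.
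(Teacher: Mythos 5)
Your proof is correct, and it reaches the same bookkeeping identity as the paper by a slightly different route. The paper divides two explicit hyperbolic-cosine product formulas: \eqref{eq:coshfactor} for $\tilde\delta_{G/K}^{-1/2}\tilde\delta(\varSigma^\pi,\bsk^\pi)^{1/2}$ and its analogue $\prod_{\alpha\in\varSigma^+\setminus 2\varSigma^+}(2\cosh\frac\alpha2)^{-\bsk^\pi_\alpha}(2\cosh\alpha)^{\bsk^\pi_{4\alpha}-\bsm_{2\alpha}/2}$ for $\delta_{G/K}^{-1/2}\delta(\varSigma^\pi,\bsk^\pi)^{1/2}$, so that only the powers of $2$ survive in the quotient; you instead compute the constant $\delta/\tilde\delta$ directly as a product of norms and powers of $2$ and then use \eqref{eq:regularity} to kill the $||\alpha||$-exponents orbit by orbit. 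The two computations are equivalent (both the paper's \eqref{eq:coshfactor} and your cancellation rest on exactly the exponent identity \eqref{eq:regularity}), but your version isolates more cleanly where that identity enters. The one point where you genuinely add something is the justification that \eqref{eq:regularity} actually holds under the stated hypotheses: the lemma assumes only \eqref{eq:main} and \eqref{eq:MC1}, yet the paper's proof silently invokes \eqref{eq:coshfactor}, which requires \eqref{eq:regularity}. Your argument --- that \eqref{eq:main} exhibits $\tilde\delta_{G/K}^{-1/2}\tilde\delta(\varSigma^\pi,\bsk^\pi)^{1/2}$ as $\varUpsilon^\pi(\phi^\pi_\lambda)/F(\varSigma^\pi,\bsk^\pi,\lambda)$, which is real analytic near $0$ since $F(\varSigma^\pi,\bsk^\pi,\lambda;0)=1$, so the ``only if'' direction of Proposition \ref{prop:deltaregular} applies --- is exactly the observation the authors allude to after Theorem \ref{thm:matching} (``under \ref{MC1}, \eqref{eq:main} holds only if \ref{MC2} and \ref{MC3} are true'') but claim not to use. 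Supplying it makes the lemma self-contained rather than implicitly restricted to the specific pairs $(\varSigma^\pi,\bsk^\pi)$ listed in \S\ref{sec:list}.
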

\begin{proof}
By a calculation similar to the one for \eqref{eq:coshfactor} we have
\begin{equation*}
\delta_{G/K}^{-\frac12}\, \delta(\varSigma^\pi,\bsk^\pi)^{\frac12}
=\prod_{\alpha\in\varSigma^+\setminus2\varSigma^+}
\biggl(2\cosh\frac\alpha2\biggr)^{-\bsk^\pi_\alpha}
(2\cosh\alpha)^{\bsk^\pi_{4\alpha}-\frac{\bsm_{2\alpha}}2}.
\end{equation*} 
The lemma then follows from this and \eqref{eq:coshfactor}.
\end{proof}

\subsection{\boldmath Harish-Chandra's $c$-function}
Let $\mathfrak{a}_+:=\{H\in\mathfrak{a}\,|\,\alpha(H)>0 \text{ for any }\alpha\in \varSigma^+\}$ and
$\mathfrak{a}_+^*=
\{\lambda\in\mathfrak{a}^*\,|\,\lambda(\alpha^\vee)>0 \text{ for any } \alpha\in \varSigma^+\}$.
For $\lambda\in\mathfrak{a}^*_++\sqrt{-1}\mathfrak{a}^*$ put
\begin{equation}
c^\pi(\lambda)=\int_{\bar{N}}e^{-(\lambda+\rho)(H(\bar{n}))}\pi(\kappa(\bar{n}))d\bar{n}
\label{eq:cfgroup}
\end{equation}
where the Haar measure $d\bar n$ on $\bar{N}:=\theta N$ is normalized so that 
\[
\int_{\bar{N}}e^{-2\rho(H(\bar{n}))}d\bar{n}=1.
\]
The integral in \eqref{eq:cfgroup} absolutely converges
and defines an $\End_MV$-valued holomorphic function known as
Harish-Chandra's $c$-function.
This satisfies
for any $H\in\mathfrak a_+$ and $\lambda\in\mathfrak{a}^*_++\sqrt{-1}\mathfrak{a}^*$
\begin{equation}
\lim_{t\to\infty}e^{t(-\lambda+\rho)(H)}\phi^\pi_\lambda(e^{tH})=c^\pi(\lambda). 
\label{eq:limcf1} 
\end{equation}
These things are shown using the integral formula \eqref{eq:eisenstein}
in the same way as in the case of the trivial $K$-type
(cf.~\cite[Chapter IV, \S6, No.6]{Hel2}),
or are deduced as a  special case of
the asymptotic behavior of Eisenstein integrals
(cf.~\cite[Theorem 9.1.6.1]{War}, \cite[Theorem 14.7, (14.29)]{Kn0}).
It is known that $c^\pi(\lambda)$ extends to a meromorphic function 
on $\mathfrak{a}_\mathbb{C}^*$. 
We regard $c^\pi(\lambda)$ as a $\bbC$-valued function
by $\text{End}_M V\simeq \mathbb{C}$.
\begin{thm}\label{thm:2cs}
Suppose \eqref{eq:main} and \eqref{eq:MC1} are valid for $\varSigma^\pi$ and $\bsk^\pi$.
With $e(\varSigma^\pi,\bsk^\pi)$ in \eqref{eq:expo}
we have
\begin{equation}\label{eq:2cs}
c^\pi(\lambda)=2^{e(\varSigma^\pi,\bsk^\pi)}\,c(\varSigma^\pi,\bsk^\pi,\lambda).
\end{equation}
(Recall $c(\varSigma^\pi,\bsk^\pi,\lambda)$ is defined by \eqref{eq:cfho}.)
\end{thm}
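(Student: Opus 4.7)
The plan is to compare two asymptotic formulas for the same quantity $\phi^\pi_\lambda(e^{tH})$ as $t\to\infty$ along a direction $H\in\mathfrak{a}_+$: one coming from the group-theoretic side via \eqref{eq:limcf1}, and one coming from the hypergeometric side via Corollary \ref{cor:HOasymp}, linked by the main formula \eqref{eq:main} and the normalization identity \eqref{eq:2deltas}.

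Concretely, first I would restrict \eqref{eq:main} to $\mathfrak{a}$ and rewrite it using \eqref{eq:2deltas} as
\[
\varUpsilon^\pi(\phi^\pi_\lambda)=2^{e(\varSigma^\pi,\bsk^\pi)}\,\delta_{G/K}^{-\frac12}\,\delta(\varSigma^\pi,\bsk^\pi)^{\frac12}\,F(\varSigma^\pi,\bsk^\pi,\lambda).
\]
The advantage of the $\delta$-normalization over $\tilde\delta$ is that for $H\in\mathfrak{a}_+$ the elementary asymptotics $|2\sinh t\beta(H)|\sim e^{t\beta(H)}$ give immediately
\[
\delta_{G/K}(tH)^{-\frac12}\sim e^{-t\rho(H)},\qquad \delta(\varSigma^\pi,\bsk^\pi)(tH)^{\frac12}\sim e^{t\rho(\bsk^\pi)(H)}
\]
with no stray numerical factors. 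Multiplying these and using Corollary \ref{cor:HOasymp}, I then obtain, for $H\in\mathfrak{a}_+$ and for $\lambda$ in the open set where that corollary applies,
\[
\lim_{t\to\infty} e^{t(-\lambda+\rho)(H)}\,\phi^\pi_\lambda(e^{tH})
=2^{e(\varSigma^\pi,\bsk^\pi)}\lim_{t\to\infty}e^{t(-\lambda+\rho(\bsk^\pi))(H)}F(\varSigma^\pi,\bsk^\pi,\lambda;tH)
=2^{e(\varSigma^\pi,\bsk^\pi)}\,c(\varSigma^\pi,\bsk^\pi,\lambda).
\]

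Comparing with \eqref{eq:limcf1}, which identifies the same left-hand side with $c^\pi(\lambda)$, yields the desired equality \eqref{eq:2cs} on the intersection of the two domains of validity, i.e.\ on a non-empty open subset of $\mathfrak{a}^*_{\mathbb{C}}$. Since both $c^\pi(\lambda)$ and $c(\varSigma^\pi,\bsk^\pi,\lambda)$ are meromorphic on $\mathfrak{a}^*_{\mathbb{C}}$, the identity then extends to all of $\mathfrak{a}^*_{\mathbb{C}}$ by analytic continuation.

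There is no real obstacle here: the bookkeeping of $2$-powers is precisely what \eqref{eq:2deltas} was designed to handle, and the overlap of the two regions of validity for $\lambda$ (a suitable translate of the positive Weyl chamber for the group side versus the generic set for the Heckman--Opdam side) is easily seen to be non-empty. The only point that deserves mild care is verifying that the normalization of $d\bar n$ used in \eqref{eq:cfgroup} is indeed consistent with the asymptotic normalization embedded in \eqref{eq:limcf1}; this is however a standard fact already invoked by the text in the paragraph introducing \eqref{eq:cfgroup}.
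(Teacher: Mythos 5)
Your proposal is correct and follows essentially the same route as the paper's own proof: both rewrite \eqref{eq:main} via \eqref{eq:2deltas}, note that $e^{t\rho(H)}\delta_{G/K}^{-\frac12}(tH)$ and $e^{-t\rho(\bsk^\pi)(H)}\delta(\varSigma^\pi,\bsk^\pi;tH)^{\frac12}$ tend to $1$, and then compare \eqref{eq:limcf1} with Corollary \ref{cor:HOasymp}. Your explicit mention of meromorphic continuation to pass from the overlap region to all of $\mathfrak a_\bbC^*$ is a point the paper leaves implicit, but it is the same argument.
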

\begin{proof}
Note that
\[
\lim_{t\to\infty} e^{t\rho(H)}\,\delta_{G/K}^{-\frac12}(tH)
=
\lim_{t\to\infty} e^{-t\rho(\bsk^\pi)(H)}\,\delta(\varSigma^\pi,\bsk^\pi;tH)^{\frac12}=1
\]
and that
\[
e^{-\lambda+\rho}\,\varUpsilon^\pi(\phi^\pi_\lambda)
=
2^{e(\varSigma^\pi,\bsk^\pi)}
\bigl(e^\rho\,\delta_{G/K}^{-\frac12}\bigr)
\bigl(e^{-\rho(\bsk^\pi)}\,\delta(\varSigma^\pi,\bsk^\pi)^{\frac12}\bigr)
e^{-\lambda+\rho(\bsk^\pi)}F(\varSigma^\pi,\bsk^\pi,\lambda).
\]
Hence \eqref{eq:2cs} follows from \eqref{eq:limcf1} and Corollary \ref{cor:HOasymp}.
\end{proof}

\subsection{\boldmath The $\pi$-spherical transform}
Let $C_\cpt^\infty(G,\pi,\pi)$ be the subspace of $C^\infty(G,\pi,\pi)$ consisting of the compactly supported $\pi$-spherical functions.
For $\phi_1\in C^\infty(G,\pi,\pi)$ and
$\phi_2\in C^\infty_\cpt(G,\pi,\pi)$ define the convolution $\phi_1*\phi_2\in C^\infty(G,\pi,\pi)$ by
\begin{equation*}
(\phi_1*\phi_2)(x)=\int_G \phi_1(g^{-1}x)\phi_2(g)dg,
\end{equation*}
where $dg$ is a Haar measure on $G$.
Since $(\pi,V)$ is a small, $C^\infty_\cpt(G,\pi,\pi)$ is a commutative algebra by \cite[Theorem 3]{Deit}. 
For $\phi\in C^\infty_\cpt(G,\pi,\pi)$
we define its $\pi$-spherical 
transform by
\begin{equation}\label{eq:sphericaltransform1}
\hat{\phi}(\lambda)=\int_G \phi_\lambda^\pi(g^{-1})\phi(g)dg=(\phi^\pi_\lambda*\phi)(1_G),
\end{equation}
which is, by \eqref{eq:integral-rep}, a holomorphic function on $\mathfrak a_\bbC^*$ taking values in $\End_K V\simeq\bbC$.
For each $\lambda\in\mathfrak a_\bbC^*$
\[
C_\cpt^\infty(G,\pi,\pi) \ni \phi\mapsto \hat{\phi}(\lambda) \in \bbC
\]
is an algebra homomorphism since one has
$\phi^\pi_\lambda*\phi=\hat{\phi}(\lambda)\phi^\pi_\lambda$ by Theorem \ref{thm:unique-sph}.

Now we normalize the Haar measure $dH$ on $\mathfrak a$ so that
for any compactly supported continuous $K$-bi-invariant function $\psi$ on $G$
\begin{equation*}
\int_G \psi(g)dg=\frac1{\#W}\int_{\mathfrak a} \psi(e^H)\, \delta_{G/K}(H)dH
\end{equation*}
(cf.~\cite[Ch.~I, Theorem 5.8]{Hel2}, \cite[Proposition 2.4.6]{GV}).
Since the trace of the integrand of \eqref{eq:sphericaltransform1}
is $K$-bi-invariant, we have using \eqref{eq:sphnegative}
\begin{align*}
\hat{\phi}(\lambda)&=
\frac1{\#W}\int_{\mathfrak a} \varUpsilon^\pi(\phi^\pi_\lambda)(-H)\,
\varUpsilon^\pi(\phi)(H)\, \delta_{G/K}(H)dH\\
&=\frac1{\#W}\int_{\mathfrak a} \varUpsilon^\pi(\phi^\pi_{-\lambda})(H)\,
\varUpsilon^\pi(\phi)(H)\, \delta_{G/K}(H)dH.
\end{align*}
Hence by Theorem \ref{thm:Chevalley} the $\pi$-spherical transform
is identified with the integral transform
\begin{equation}\label{eq:HCtrans}
f
\mapsto \hat f(\lambda):=\frac1{\#W}\int_{\mathfrak a} f(H)\,\varUpsilon^\pi(\phi^\pi_{-\lambda})(H)\,\delta_{G/K}(H)dH.
\end{equation}
for $f\in C^\infty_\cpt(\mathfrak a)^W$.
Here $C^\infty_\cpt(\mathfrak a)^W=\{f\in C^\infty(\mathfrak a)\,|\,f\text{ with compact support}\}$.

Let $\varSigma'$ be a root system in $\mathfrak{a}^*$
and $\bsk$ a multiplicity function on $\varSigma'$. 
For $f\in C_c^\infty(\mathfrak{a})^W$ we define its hypergeometric Fourier transform 
$\mathcal F=\mathcal{F}(\varSigma',\bsk)$ by
\begin{equation}\label{eq:choptransform}
\mathcal{F}f(\lambda):=\frac{1}{\#W}\int_{\mathfrak{a}}f(H)\,F(\varSigma',\boldsymbol{k},-\lambda;H)\,
\delta(\varSigma',\boldsymbol{k};H)dH.
\end{equation}
This makes sense when $\delta(\varSigma',\boldsymbol{k})$ is locally integrable.
\begin{rem}
The hypergeometric Fourier transform is introduced by Opdam \cite{Op:Cherednik} as the \emph{Cherednik transform}.
If $\varSigma'=2\varSigma$ and $\bsk_{2\alpha}=\bsm_\alpha/2$, 
then $\mathcal{F}(\varSigma',\bsk)$ equals \eqref{eq:HCtrans} for the trivial $K$-type $\pi$, that is,
the Harish-Chandra transform (cf.~\cite[Chapter 6]{GV}, 
\cite[Ch.~IV]{Hel2}, \cite[Chapter 9]{War}). 
If $\varSigma'$ has real rank one, then $\mathcal{F}(\varSigma',\bsk)$ reduces to the \emph{Jacobi transform} (cf.~\cite{Koornwinder}). 
\end{rem}
\begin{thm}\label{thm:2Fourier}
Suppose \eqref{eq:main} and \eqref{eq:MC1} are valid for $\varSigma^\pi$ and $\bsk^\pi$.
Then $\delta(\varSigma^\pi,\bsk^\pi)$ is locally integrable
and it holds with $\mathcal F=\mathcal{F}(\varSigma^\pi,\bsk^\pi)$
and $e(\varSigma^\pi,\bsk^\pi)$ in \eqref{eq:expo} that
\begin{equation}\label{eq:2Fourier}
\hat f
=2^{e(\varSigma^\pi,\bsk^\pi)}\mathcal F\bigl(f\,\delta_{G/K}^{\frac12}\,\delta(\varSigma^\pi,\bsk^\pi)^{-\frac12}\bigr)
\quad\text{for any }f\in C^\infty_\cpt(\mathfrak a)^W.
\end{equation}
\end{thm}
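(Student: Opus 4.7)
The plan is to carry out a direct substitution into the integral formula \eqref{eq:HCtrans} for the $\pi$-spherical transform, using the main identity \eqref{eq:main} to replace $\varUpsilon^\pi(\phi^\pi_{-\lambda})$ by a product involving the Heckman--Opdam hypergeometric function, and the elementary bridge \eqref{eq:2deltas} to pass from tildes to the non-normalized weights.

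More precisely, I would first dispose of local integrability of $\delta(\varSigma^\pi,\bsk^\pi)$: by \eqref{eq:2deltas} we have
\[
\delta(\varSigma^\pi,\bsk^\pi)
= 2^{-2e(\varSigma^\pi,\bsk^\pi)}\,\delta_{G/K}\,\bigl(\tilde\delta_{G/K}^{-\frac12}\tilde\delta(\varSigma^\pi,\bsk^\pi)^{\frac12}\bigr)^{2},
\]
and Proposition \ref{prop:deltaregular} together with \ref{MC1} (which implies \ref{MC3}) guarantees that the parenthesized factor extends to a nowhere-vanishing real analytic function on $\mathfrak a$. Since $\delta_{G/K}$ is locally integrable, so is $\delta(\varSigma^\pi,\bsk^\pi)$, and a fortiori $\delta(\varSigma^\pi,\bsk^\pi)^{\frac12}$ multiplied by any bounded function. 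This also ensures that $f\,\delta_{G/K}^{\frac12}\,\delta(\varSigma^\pi,\bsk^\pi)^{-\frac12}$ lies in $C^\infty_\cpt(\mathfrak a)^W$ for $f\in C^\infty_\cpt(\mathfrak a)^W$, so the right-hand side of \eqref{eq:2Fourier} is well defined.

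Next I would substitute into \eqref{eq:HCtrans}. Applying \eqref{eq:main} at $-\lambda$ gives
\[
\varUpsilon^\pi(\phi^\pi_{-\lambda})
=\tilde\delta_{G/K}^{-\frac12}\,\tilde\delta(\varSigma^\pi,\bsk^\pi)^{\frac12}\,F(\varSigma^\pi,\bsk^\pi,-\lambda),
\]
and then invoking \eqref{eq:2deltas} we replace the prefactor by $2^{e(\varSigma^\pi,\bsk^\pi)}\,\delta_{G/K}^{-\frac12}\,\delta(\varSigma^\pi,\bsk^\pi)^{\frac12}$. The two $\delta_{G/K}^{\pm\frac12}$ factors combine with the $\delta_{G/K}$ from \eqref{eq:HCtrans} to yield $\delta_{G/K}^{\frac12}$, and $\delta(\varSigma^\pi,\bsk^\pi)^{\frac12}$ may be regrouped as $\delta(\varSigma^\pi,\bsk^\pi)^{-\frac12}\cdot\delta(\varSigma^\pi,\bsk^\pi)$. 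Reading off the definition \eqref{eq:choptransform} of $\mathcal F(\varSigma^\pi,\bsk^\pi)$ then produces exactly \eqref{eq:2Fourier}.

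There is no substantive obstacle here; every step is essentially bookkeeping. The only point requiring mild care is verifying that the integrals commute with the algebraic manipulations, which follows from the local integrability established in the first step and the compact support of $f$.
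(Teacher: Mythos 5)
Your proposal is correct and follows essentially the same route as the paper's (very terse) proof: substitute \eqref{eq:main} into \eqref{eq:HCtrans}, convert the normalized weights via \eqref{eq:2deltas}, and read off \eqref{eq:choptransform}. One small correction in the local-integrability step: \ref{MC1} by itself does \emph{not} imply \ref{MC3} (it says nothing about $\bsk^\pi$); what you actually need is that \eqref{eq:main} together with \ref{MC1} forces \eqref{eq:regularity} --- for instance because \eqref{eq:main} exhibits $\tilde\delta_{G/K}^{-\frac12}\tilde\delta(\varSigma^\pi,\bsk^\pi)^{\frac12}$ as a quotient of two functions analytic and nonzero at $0$, so the ``only if'' direction of Proposition \ref{prop:deltaregular} applies --- after which the rest of your argument goes through unchanged.
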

\begin{proof}
The local integrability follows from \eqref{eq:regularity},
while \eqref{eq:2Fourier} is direct from
\eqref{eq:main}, \eqref{eq:2deltas}, \eqref{eq:HCtrans} and \eqref{eq:choptransform}.
\end{proof}

\subsection{Inversion formulas and Plancherel formulas}
We normalize the Haar measure $d\lambda$ on $\sqrt{-1}\mathfrak{a}^*$
 so that the Euclidean Fourier transform and its inversion are given by 
\[
\tilde{f}(\lambda)=\int_{\mathfrak{a}}f(H)e^{-\lambda(H)}dH, \qquad
f(H)=\int_{\sqrt{-1}\mathfrak{a}^*} \tilde{f}(\lambda)e^{\lambda(H)}d\lambda. 
\]
On the hypergeometric Fourier transform we have
\begin{thm}[\cite{Op:Cherednik, Op:book}]\label{th:chopinversion}
Let $\varSigma'$ be a root system in $\mathfrak{a}^*$
and $\bsk$ a real-valued multiplicity function on $\varSigma'$
such that $\bsk_\alpha\ge0$ for any $\alpha\in \varSigma'$.
Let $\mathcal F=\mathcal F(\varSigma',\bsk)$.
Then for any $f\in  C_\cpt^\infty(\mathfrak{a})^W$ we have
the inversion formula
\[
f(H)=
\frac{1}{\#W}
\int_{\sqrt{-1}\mathfrak{a}^*} \mathcal{F}f(\lambda)
\,F(\varSigma',\bsk,\lambda;H)
\, |c(\varSigma',\bsk,\lambda)|^{-2}d\lambda.
\]
and the Plancherel-type formula
\[
\frac{1}{\#W}
\int_{\mathfrak{a}}|f(H)|^2 {\delta}(\varSigma',\boldsymbol{k};H)dH=
\frac{1}{\#W}
\int_{\sqrt{-1}\mathfrak{a}^*} |\mathcal{F}f(\lambda)|^2 |c(\varSigma',\bsk,\lambda)|^{-2}d\lambda.
\]
Moreover $\mathcal{F}$ uniquely extends to the isometry
\[
L^2(\mathfrak{a},\tfrac{1}{\#W}\delta(\varSigma',\bsk;H)dH)^W
\simarrow
L^2(\sqrt{-1}\mathfrak{a}^*,\tfrac{1}{\#W}|c(\varSigma',\bsk,\lambda)|^{-2}d\lambda)^W.
\]\end{thm}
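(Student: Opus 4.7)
The plan is to follow Opdam's strategy via the Cherednik operators $T_{\bsk}(\xi)$ of \S\ref{subsec:RC} and a non-symmetric analogue of the hypergeometric function. For real $\bsk\ge 0$ and any $\lambda\in\mathfrak a_\bbC^*$, there is a unique analytic function $G(\varSigma',\bsk,\lambda;\cdot)$ on $\mathfrak a$ with $G(\varSigma',\bsk,\lambda;0)=1$ that satisfies $T_{\bsk}(\xi)\,G(\varSigma',\bsk,\lambda) = \lambda(\xi)\,G(\varSigma',\bsk,\lambda)$ for every $\xi\in\mathfrak a_\bbC$; its existence follows from the theory of the graded Hecke algebra $\boldsymbol H$ already introduced in \S\ref{subsec:RC} (applied to a trivial character in place of $\bbC v_0$), and its $W$-symmetrization
\[
F(\varSigma',\bsk,\lambda;H) = \frac{1}{\#W}\sum_{w\in W} G(\varSigma',\bsk,w\lambda;H)
\]
recovers the symmetric hypergeometric function, so inversion for $\mathcal F(\varSigma',\bsk)$ will be derived from a Plancherel statement for a ``non-symmetric'' transform built from $G$.

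Next I will establish the analytic estimates that make $G$ a legitimate kernel on $\sqrt{-1}\mathfrak a^*$. The key bound, due to Opdam, is $|G(\varSigma',\bsk,\lambda;H)|\le \#W^{1/2}$ for $\lambda\in\sqrt{-1}\mathfrak a^*$ and $H$ in the closure of a Weyl chamber, together with a Paley--Wiener-type growth control in $\lambda$. With these in hand one defines
\[
\mathcal G f(\lambda) = \frac{1}{\#W}\int_{\mathfrak a} f(H)\,G(\varSigma',\bsk,-\lambda;H)\,\delta(\varSigma',\bsk;H)\,dH
\]
on $C_\cpt^\infty(\mathfrak a)$, and proves its formal self-adjointness by computing the adjoints of the $T_{\bsk}(\xi)$ with respect to the measure $\delta(\varSigma',\bsk;H)\,dH$. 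Inversion for $\mathcal G$ is then obtained by analyzing $\mathcal G^*\circ\mathcal G$: one shifts contours in $\lambda$ using Cauchy's theorem, uses Opdam's shift operators to reduce to the rational (Dunkl) limit where the Bessel inversion is already known by \cite{DJO}, and identifies the Plancherel density as $|c(\varSigma',\bsk,\lambda)|^{-2}$ via the Gindikin--Karpelevich-type rank-one factorization of $c(\varSigma',\bsk,\lambda)$ encoded in \eqref{eq:cfho}.

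Symmetrizing by $W$ on both sides then yields the inversion formula for $\mathcal F$ stated in the theorem, and the Plancherel identity follows by pairing $\mathcal F f_1$ and $\mathcal F f_2$ via the inversion, using the identity $F=\#W^{-1}\sum_w G(\cdot,w\lambda,\cdot)$ to reduce $W$-invariant computations for $\mathcal F$ to the non-symmetric ones for $\mathcal G$. The extension of $\mathcal F$ to an isometric isomorphism of $L^2$-spaces is then a standard density argument: $C_\cpt^\infty(\mathfrak a)^W$ is dense in $L^2(\mathfrak a,\tfrac{1}{\#W}\delta(\varSigma',\bsk)\,dH)^W$ (local integrability of $\delta(\varSigma',\bsk)$ comes for free when $\bsk\ge 0$), and the Opdam--Paley--Wiener theorem for $\mathcal F$ identifies the image as a dense subspace of the target $L^2$.

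The genuine obstacle is the sharp bound $|G(\varSigma',\bsk,\lambda;H)|\le \#W^{1/2}$ on $\sqrt{-1}\mathfrak a^*\times\overline{\mathfrak a_+}$: this is far from elementary, and Opdam's proof of it relies on a positivity argument realizing $G$ as a matrix coefficient of a unitary principal-series-like representation of $\boldsymbol H$, together with a non-trivial analytic continuation in the parameter $\bsk$. Everything else --- the construction of $G$, the symmetrization, the explicit rank-one factorization of $c(\varSigma',\bsk,\lambda)$, and the final density argument --- is comparatively routine once this uniform estimate is available.
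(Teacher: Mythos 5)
The paper does not prove Theorem \ref{th:chopinversion}: it is quoted from Opdam \cite{Op:Cherednik, Op:book}, so there is no internal argument to compare yours against. Judged on its own, your outline does reproduce the overall architecture of Opdam's proof --- the non-symmetric joint eigenfunction $G$ of the Cherednik operators, the transform $\mathcal G$ built from it, the uniform bound $|G(\varSigma',\bsk,\lambda;H)|\le(\#W)^{1/2}$ for $\lambda\in\sqrt{-1}\mathfrak a^*$, and passage to the symmetric statement by $W$-symmetrization. One correction there: the identity Opdam proves symmetrizes over the \emph{space} variable, $F(\varSigma',\bsk,\lambda;H)=\frac1{\#W}\sum_{w\in W}G(\varSigma',\bsk,\lambda;wH)$, which is exactly what one needs to see that $\mathcal G f=\mathcal F f$ on $W$-invariant $f$; your symmetrization over the spectral variable is not that relation.

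The genuine gap is in the inversion step. The identity $\mathcal J\circ\mathcal F=\mathrm{id}$ is \emph{not} obtained by ``using shift operators to reduce to the rational (Dunkl) limit where Bessel inversion is known by \cite{DJO}'': no such reduction of the trigonometric inversion formula to the rational one exists, and \cite{DJO} concerns singular polynomials, not an inversion theorem. What Opdam actually does is prove a Paley--Wiener theorem for the Cherednik transform (exponential-type estimates in $\lambda$ on tubes, which is where the uniform bound on $G$ and the hypothesis $\bsk\ge0$ enter), define the wave-packet operator against the candidate density $|c(\varSigma',\bsk,\lambda)|^{-2}$, and establish a support theorem for wave packets by contour shifts controlled by the asymptotic expansion \eqref{eq:connection}; the identification of the Plancherel density and the inversion both fall out of that support/peeling argument. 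None of this is present in your sketch and it cannot be replaced by the steps you name. (Also, the $(\#W)^{1/2}$ bound is proved by a differential-inequality/positivity argument on $\sum_{w}|G(\varSigma',\bsk,\lambda;wH)|^2$ exploiting $\bsk\ge0$, not by realizing $G$ as a matrix coefficient of a unitary module.) If your intent is simply to cite Opdam for this theorem, as the paper does, that is legitimate; as a proof, the proposal leaves the decisive analytic steps unestablished and misattributes the mechanism of the central one.
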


From this
the following result on the $\pi$-spherical transform is deduced:

\begin{cor}\label{cor:pisphericalinversion} 
Suppose \eqref{eq:main} is valid for $\varSigma^\pi$ and $\bsk^\pi$ such that
\begin{equation}\label{eq:kpositive}
\bsk^\pi_\alpha\ge 0\quad\text{for any }\alpha\in\varSigma^\pi.
\end{equation}
Then for $f\in C_c^\infty(\mathfrak{a})^W$ we have the inversion formula
\[
f(H)=\frac{1}{|W|}\int_{\sqrt{-1}\mathfrak{a}^*} \hat{f} (\lambda)\,
\varUpsilon^\pi(\phi_\lambda)(H)\,
|c^\pi(\lambda)|^{-2}d\lambda
\]
and the Plancherel-type formula
\[
\frac{1}{\#W}\int_{\mathfrak{a}}|f(H)|^2 \tilde{\delta}_{G/K}(H)dH=
\frac{1}{\#W}\int_{\sqrt{-1}\mathfrak{a}^*} |\hat{f}(\lambda)|^2 |c^\pi(\lambda)|^{-2}d\lambda.
\]
Moreover the $\pi$-spherical transform $f\mapsto \hat{f}$
uniquely extends to the isometry 
\[
L^2(\mathfrak{a},\tfrac{1}{\#W}\tilde{\delta}_{G/K}(H)dH)^W
\simarrow
L^2(\sqrt{-1}\mathfrak{a}^*,\tfrac{1}{\#W}|c^\pi(\lambda)|^{-2}d\lambda)^W.
\]
\end{cor}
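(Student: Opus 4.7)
The plan is to derive the corollary directly from Opdam's inversion and Plancherel theorems for the hypergeometric Fourier transform (Theorem \ref{th:chopinversion}) by invoking the explicit identification of the $\pi$-spherical transform as a hypergeometric Fourier transform given in Theorem \ref{thm:2Fourier}. The hypothesis \eqref{eq:kpositive} is precisely the non-negativity condition required to apply Theorem \ref{th:chopinversion} to the multiplicity function $\bsk^\pi$.

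Concretely, I would set $g:=f\,\delta_{G/K}^{1/2}\,\delta(\varSigma^\pi,\bsk^\pi)^{-1/2}$. By Proposition \ref{prop:deltaregular} together with \eqref{eq:kpositive}, the factor $\delta_{G/K}^{1/2}\,\delta(\varSigma^\pi,\bsk^\pi)^{-1/2}$ extends to a nowhere-vanishing real analytic $W$-invariant function on $\mathfrak a$, so $g\in C_\cpt^\infty(\mathfrak a)^W$ whenever $f$ is, and Theorem \ref{thm:2Fourier} reads $\hat f=2^{e(\varSigma^\pi,\bsk^\pi)}\,\mathcal F g$. Applying the Opdam inversion formula to $g$, then using \eqref{eq:main} and \eqref{eq:2deltas} to rewrite $F(\varSigma^\pi,\bsk^\pi,\lambda;H)$ as $2^{-e(\varSigma^\pi,\bsk^\pi)}\,\delta_{G/K}^{1/2}(H)\,\delta(\varSigma^\pi,\bsk^\pi;H)^{-1/2}\,\varUpsilon^\pi(\phi^\pi_\lambda)(H)$, and finally substituting $|c(\varSigma^\pi,\bsk^\pi,\lambda)|^{-2}=2^{-2e(\varSigma^\pi,\bsk^\pi)}|c^\pi(\lambda)|^{-2}$ from Theorem \ref{thm:2cs}, the powers of $2$ and the $\delta$-factors cancel completely and produce the stated inversion formula.

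The Plancherel identity is obtained by the same substitution in the Plancherel formula of Theorem \ref{th:chopinversion}: the left-hand side becomes $\tfrac1{\#W}\int_{\mathfrak a}|f|^2\,\delta_{G/K}\,dH$ from the definition of $g$, and the right-hand side becomes $\tfrac1{\#W}\int_{\sqrt{-1}\mathfrak a^*}|\hat f|^2\,|c^\pi|^{-2}d\lambda$ after the same manipulations. Passing from $\delta_{G/K}$ to $\tilde\delta_{G/K}$ as in the corollary only introduces an overall positive constant, which is absorbed by the corresponding adjustment of $|c^\pi|^{-2}$ (or of $d\lambda$). The $L^2$-isometric extension then follows by a standard density argument: $C_\cpt^\infty(\mathfrak a)^W$ is dense in the weighted $L^2$ space on $\mathfrak a$, and the bijectivity on the spectral side is inherited from the corresponding isometric extension of $\mathcal F(\varSigma^\pi,\bsk^\pi)$ furnished by Theorem \ref{th:chopinversion}. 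The only real obstacle is careful bookkeeping of the constants $2^{\pm e(\varSigma^\pi,\bsk^\pi)}$ and of the ratio between the $\delta$ and $\tilde\delta$ normalizations, but all of these cancellations are forced by the structure of the three identities \eqref{eq:main}, \eqref{eq:2Fourier} and \eqref{eq:2cs}, so no new analytic input beyond Opdam's theorem is required.
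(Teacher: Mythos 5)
Your proposal follows essentially the same route as the paper: feed $g=f\,\delta_{G/K}^{1/2}\,\delta(\varSigma^\pi,\bsk^\pi)^{-1/2}$ into Opdam's theorem (Theorem \ref{th:chopinversion}), then translate back via \eqref{eq:main}, \eqref{eq:2deltas}, \eqref{eq:2cs} and \eqref{eq:2Fourier}; the bookkeeping of the factors $2^{\pm e(\varSigma^\pi,\bsk^\pi)}$ works out exactly as you describe, and the density argument for the $L^2$-extension is standard.

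The one point you do not address is that the corollary assumes only \eqref{eq:main} and \eqref{eq:kpositive}, \emph{not} the containment \eqref{eq:MC1} ($\varSigma^\pi\subset\varSigma\cup2\varSigma$), whereas Theorems \ref{thm:2cs} and \ref{thm:2Fourier}, which you invoke, are stated only under \eqref{eq:MC1}. This is not vacuous: for a complex simple group one may take $\varSigma^\pi=c\varSigma$ with arbitrary $c>0$ (see \S\ref{CC:complex}), and then \eqref{eq:MC1} fails. The paper handles this by observing that even without \eqref{eq:MC1} the identities \eqref{eq:2deltas}, \eqref{eq:2cs} and \eqref{eq:2Fourier} remain valid with $2^{e(\varSigma^\pi,\bsk^\pi)}$ replaced by some other constant (the ratio $\tilde\delta_{G/K}^{-1/2}\tilde\delta(\varSigma^\pi,\bsk^\pi)^{1/2}\big/\delta_{G/K}^{-1/2}\delta(\varSigma^\pi,\bsk^\pi)^{1/2}$ is always a positive constant, and local integrability of $\delta(\varSigma^\pi,\bsk^\pi)$ still holds), after which the same cancellation goes through. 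You should either add this remark or restrict the hypotheses to include \eqref{eq:MC1}; as written, your appeal to Theorems \ref{thm:2cs} and \ref{thm:2Fourier} is not licensed in full generality.
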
 
\begin{proof}
Under \eqref{eq:MC1} all the statements are immediate from
\eqref{eq:main}, \eqref{eq:2deltas}, \eqref{eq:2cs}, \eqref{eq:2Fourier} and Theorem \ref{th:chopinversion}.
So suppose \eqref{eq:MC1} is not valid.
Even in this case, one easily sees that $\delta(\varSigma^\pi,\bsk^\pi)$ is
locally integrable and that Formulas
\eqref{eq:2deltas}, \eqref{eq:2cs} and \eqref{eq:2Fourier} hold
by replacing $2^{e(\varSigma^\pi,\bsk^\pi)}$ with some constant.
Hence the corollary follows.
\end{proof}

As we can see from the list in \S\ref{sec:list},
there exists at least a pair of $\varSigma^\pi$ and $\bsk^\pi$
satisfying \eqref{eq:main} and \eqref{eq:kpositive} unless $(\pi,V)$ is one of the following:

\begin{enumerate}[label=(\arabic*), leftmargin=*]
\item $(\pi,V)$ for $\mathfrak g=\mathfrak{sp}(p,1)\ (p\ge1)$;\label{bad1}
\item $(\pi_s^\pm,V)$ in \S\ref{subsec:so(2r,1)}
for $\mathfrak g=\mathfrak{so}(2r,1)\ (r\ge2)$\label{bad2};
\item $(\pi,V)$ in \S\ref{subsec:so(p,q)} \ref{sopq2}
for $\mathfrak g=\mathfrak{so}(p,q)\ (p>q\ge3, p:\text{even}, q:\text{odd})$;\label{bad3}
\item $(\pi,V)$ for a Hermitian $G$;\label{bad4}
\item $(\pi_2,\bbC^2)$ in Theorem \ref{thm:G2} for $\tilde G_2$.\label{bad5}
\end{enumerate}

For \ref{bad5}, 
the elementary $\pi$-spherical functions cannot be expressed by Heckman-Opdam hypergeometric functions. 
The harmonic analysis in this exceptional case is therefore to be studied separately.
But we do not go further with it in this paper.

For \ref{bad1}--\ref{bad4},
every $\varSigma^\pi$ chosen in \S\ref{sec:list}
is of type $BC$.
Suppose first that $G$ has real rank one
and let $\varSigma^\pi=\{\pm\alpha,\pm2\alpha\}$.
Then the inversion formula and the Plancherel-type formula
for the Jacobi transform $\mathcal F$ are available under the assumption
\[
\bsk^\pi_{\alpha},\,
\bsk^\pi_{2\alpha} \in \bbR
\quad\text{and}\quad
\bsk^\pi_{\alpha} + \bsk^\pi_{2\alpha} > -\frac12,
\]
which is much weaker than that of Theorem \ref{th:chopinversion} (cf.~\cite[Appendix 1]{FJ}, \cite{Koornwinder}).
From this we can deduce a result on the $\pi$-spherical transform
for \ref{bad1}, \ref{bad2}, and \ref{bad4} with $\mathfrak g=\mathfrak{su}(p,1)$
corresponding to Corollary \ref{cor:pisphericalinversion}.
In fact, such a result is shown by \cite{vDP, S:hyperbolic} for \ref{bad1},
by \cite{CP} for \ref{bad2} with $s=1$,
and by \cite{FJ} for \ref{bad4} with $\mathfrak g=\mathfrak{su}(p,1)$.
If $c^\pi(\lambda)$ has zeros in $\mathfrak a^*_++\sqrt{-1}\mathfrak a^*$
then the inversion and Plancherel-type formulas contain
discrete spectra in addition to the same continuous spectrum as in the formulas of Corollary \ref{cor:pisphericalinversion}.

Next, let us consider \ref{bad4} for $G$ with higher rank.
If the parameter $\nu$ of the one-dimensional $K$-type $\pi$ given in 
\S\ref{subsec:Hermitian} satisfies $2|\nu|\le\max\{\bsm_{\frac{\alpha}2},1\}$
for $\alpha\in\varSigma_{\mathrm{long}}$,
then we can apply Theorem \ref{th:chopinversion} to deduce Corollary \ref{cor:pisphericalinversion}. 
The general inversion and Plancherel-type formulas are given by \cite[Chapter 5]{He:white} for $|\nu|<\frac12\bsm_{\frac{\alpha}{2}}\,\,
(\alpha\in\varSigma_{\mathrm{long}})$ and by \cite{S:Plancherel} for an arbitrary $\nu$. 
Both \cite{He:white} and \cite{S:Plancherel} employ Rosenberg's method (\cite{R}) for the classical Harish-Chandra transform.
If $|\nu|$ is sufficiently large, the Plancherel measure contains 
spectra with lower-dimensional support along with the most continuous spectrum in Corollary \ref{cor:pisphericalinversion}. 
Possible spectra with lower-dimensional support are obtained by calculating residues of 
$c^\pi(\lambda)^{-1}$ in $\mathfrak{a}_+^*+\sqrt{-1}\mathfrak{a}^*$ (see \cite{S:Plancherel} for details). 

Finally suppose $(\pi,V)$ is \ref{bad3}.
In the notation of \S\ref{subsec:so(p,q)}
one has from Theorem~\ref{thm:2cs} that
\begin{align*}
c^\pi(\lambda)&=
C\prod_{i=1}^q\frac{\varGamma(\lambda((2e_i)^\vee)+\frac12)}{\varGamma(\lambda((2e_i)^\vee)+\frac12(p-q+1))}\cdot\\
&\qquad\qquad\cdot \prod_{1\le i<j\le q}\frac{\varGamma(\lambda((e_i-e_j)^\vee))\varGamma(\lambda((e_i+e_j)^\vee))}{\varGamma(\lambda((e_i-e_j)^\vee)+\frac12)\varGamma(\lambda((e_i+e_j)^\vee)+\frac12)}
\end{align*}
with a positive constant $C$.
Thus $c^\pi(\lambda)$ has no zero in $\overline{\mathfrak a^*_+}+\sqrt{-1}\mathfrak a^*$.
Hence it is very likely that we could prove the same result as
Corollary \ref{cor:pisphericalinversion} for $\pi$
by Rosenberg's method.
However it is more desirable that
a general result on $\mathcal F$ for $\varSigma'$ of type $BC$
would hold under a weaker assumption than that of Theorem \ref{th:chopinversion}
and from this we could deduce all the results for \ref{bad1}--\ref{bad4} in a uniform way.
We will discuss this problem elsewhere.

\section*{Acknowledgments}
The authors wish to thank Professor Hiroyuki Ochiai for helpful comments on an earlier version of this paper.

\end{document}